\newtheorem{theorem}{Theorem}[chapter]
\newtheorem*{theorem*}{Theorem}
\newtheorem{lemma}[theorem]{Lemma}
\theoremstyle{definition}
\newtheorem{definition}[theorem]{Definition}
\newtheorem{example}[theorem]{Example}
\newtheorem{proposition}[theorem]{Proposition}
\newtheorem*{examples*}{Examples}
\newtheorem*{theoremA}{Theorem A}
\newtheorem*{theoremB}{Theorem B}
\newtheorem*{theoremC}{Theorem C}
\newtheorem*{theoremD}{Theorem D}
\newtheorem*{theoremE}{Theorem E}
\theoremstyle{remark}
\newtheorem{remark}[theorem]{Remark}
\def\R{\ensuremath\mathbb{R}}
\def\T{\mathbb{T}}
\def\id{\text{Id}}
\numberwithin{section}{chapter}
\numberwithin{equation}{chapter}
\newcommand\DrawGenus[7]{
  \pgfmathsetmacro{\xstart}{#1 - (0.985*#4)}
  \pgfmathsetmacro{\ystart}{#2 + (0.2*#3)}
	\draw[color = #6, rotate around={#5:(#1,#2)}, #7] (\xstart, \ystart) arc (190:350:#4  and #3);
	\draw[color = #6, rotate around={#5:(#1,#2)}, #7] (\xstart, \ystart) arc (190:210:#4  and #3) arc (150:30:#4  and #3) arc (330:350:#4  and #3);
}
\newcommand\DrawFilledGenus[8]{
  \pgfmathsetmacro{\xstart}{#1 - (0.985*#4)}
  \pgfmathsetmacro{\ystart}{#2 + (0.2*#3)}
	\draw[color = #6, rotate around={#5:(#1,#2)}, #7] (\xstart, \ystart) arc (190:350:#4  and #3);
	\draw[color = #6, rotate around={#5:(#1,#2)}, #7] (\xstart, \ystart) arc (190:210:#4  and #3) arc (150:30:#4  and #3) arc (330:350:#4  and #3);
	\draw[color = #6, rotate around={#5:(#1,#2)}, #7, fill = #8] (\xstart, \ystart) arc (190:210:#4  and #3) arc (150:30:#4  and #3) arc (-30:-150:#4  and #3);
}
\newcommand\DrawDonut[7]{
  \pgfmathsetmacro{\fctr}{.08}
  \pgfmathsetmacro{\newwidth}{0.5*#4}
  \pgfmathsetmacro{\newheight}{0.5*#3}
  \draw[color = #6, rotate around={#5:(#1,#2)}, #7] (#1, #2) ellipse (#4  and #3);
  \DrawGenus{#1}{#2}{\newheight}{\newwidth}{#5}{#6}{#7}
}
\newcommand\DrawFilledDonutops[8]{
  \pgfmathsetmacro{\fctr}{.08}
  \pgfmathsetmacro{\newwidth}{0.5*#4}
  \pgfmathsetmacro{\newheight}{0.5*#3}
  \draw[color = #6, rotate around={#5:(#1,#2)}, #7, fill = #6, opacity = .6] (#1, #2) ellipse (#4  and #3);
  \DrawFilledGenus{#1}{#2}{\newheight}{\newwidth}{#5}{#6}{#7}{#8}
}
\tikzstyle{mytheorembox} = [draw=vdgreen, fill=blue!20, very thick, rectangle, rounded corners, inner sep=10pt, inner ysep=15pt]
\tikzstyle{mytheoremfancytitle} =[fill=vdgreen, text=white]
\definecolor{vdblue}{rgb}{0,0,.3}
\definecolor{dblue}{rgb}{0,0,.7}
\definecolor{lblue}{rgb}{.3,.3,1}
\definecolor{vlblue}{rgb}{.7,.7,1}
\definecolor{vvlblue}{rgb}{.9,.9,1}
\definecolor{vdred}{rgb}{.3,0,0}
\definecolor{dred}{rgb}{.7,0,0}
\definecolor{lred}{rgb}{1,.3,.3}
\definecolor{vlred}{rgb}{1,.7,.7}
\definecolor{vdgreen}{rgb}{0,.2,0}
\definecolor{dgreen}{rgb}{0,.4,0}
\definecolor{lgreen}{rgb}{.3,1,.3}
\definecolor{vlgreen}{rgb}{.7,1,.7}
\definecolor{lyellow}{rgb}{1,1,.3}
\definecolor{gray1}{rgb}{0.22,0.22,0.22}
\definecolor{gray2}{rgb}{0.28,0.28,0.28}
\definecolor{gray3}{rgb}{0.36,0.36,0.36}
\definecolor{gray4}{rgb}{0.44,0.44,0.44}
\definecolor{gray5}{rgb}{0.52,0.52,0.52}
\definecolor{gray6}{rgb}{0.6,0.6,0.6}
\definecolor{gray7}{rgb}{0.68,0.68,0.68}
\definecolor{gray8}{rgb}{0.76,0.76,0.76}
\definecolor{color1}{rgb}{1,0,0}
\definecolor{color2}{rgb}{0.98,0,0.816}
\definecolor{color3}{rgb}{0.717,0,1}
\definecolor{color4}{rgb}{0,0,1}
\definecolor{color5}{rgb}{0,1,1}
\definecolor{color6}{rgb}{0,1,0}
\definecolor{color8}{rgb}{1,1,0}
\definecolor{color7}{rgb}{1,0.651,0}
\begin{document}
\frontmatter
\title{Action-angle coordinates and KAM theory for singular symplectic manifolds}

\author{Eva Miranda}
\address{Laboratory of Geometry and Dynamical Systems, Department of Mathematics \& IMTech, Universitat Polit\`ecnica de Catalunya, Barcelona and CRM, Centre de Recerca Matemàtica, Bellaterra}
\email{evamiranda@upc.edu}

\thanks{ Both authors are supported by the project PID2019-103849GB-I00 of the Spanish State Agency  AEI /10.13039/501100011033 and by the AGAUR Gencat project 2021 SGR 00603. Eva Miranda is supported by the Catalan Institution for Research and Advanced Studies via an ICREA Academia Prize 2021 and by the Alexander Von Humboldt foundation via a Friedrich Wilhelm Bessel Research Award.  Eva Miranda is also supported by the Spanish State
Research Agency, through the Severo Ochoa and Mar\'{\i}a de Maeztu Program for Centers and Units
of Excellence in R\&D (project CEX2020-001084-M). Eva Miranda also acknowledges partial support from the grant
“Computational, dynamical and geometrical complexity in fluid dynamics”, Ayudas Fundación BBVA a Proyectos de Investigación Científica 2021.  }

\author{Arnau Planas}
\address{Department of Mathematics, Universitat Polit\`ecnica de Catalunya, Barcelona}
\email{arnau.planas.bahi@gmail.com}

\date{May, 2023}
\subjclass[2020]{ 53D05, 53D20, 70H08, 37J35, 37J40 ;\\ 37J39, 58D19 }
\keywords{\texttt{amsbook}, AMS-\LaTeX}

\maketitle
\dedicatory{Dedicated to the memory of Amelia Galcerán Sorribes.  }
\setcounter{page}{4}
\tableofcontents

%

\chapter*{Preface}

{\begin{flushright}{\begin{minipage}{5cm}{\emph{I confess I envy the planets — they've got their own orbits and nothing stands on their way.} 
\\
Intermezzo, Mykhailo Kotsiubynsky.}\end{minipage}
}\end{flushright}}

This monograph explores classification and perturbation problems for integrable systems on a class of Poisson manifolds called $b^m$-Poisson manifolds. This is the first class of Poisson manifolds for which perturbation theory is established outside the symplectic category. Even if the class of $b^m$-Poisson manifolds is not ample enough to represent the wild set of Poisson manifolds, this investigation can be seen as a first step for the study of perturbation theory for general Poisson manifolds. Whenever the Poisson manifold can be described as an $E$-symplectic manifold (\cite{enest} and \cite{evageoff}) the theory developed in this monograph yields more than a mild generalization in Poisson Geometry and, this \emph{toy example},  sets the path to consider KAM theory in the general realm of Poisson manifolds. Reduction theorems and $b^m$-symplectic manifolds have been recently explored in \cite{anastasiaeva}. This monograph contributes to the theory opening the investigation of perturbation theory on these manifolds thus completing other facets in the study of their dynamics as the recent work on the Arnold conjecture \cite{cedricjoaquimeva}.

Symplectic geometry has been the common language of physics as the position-momentum tandem can be modelled over a cotangent bundle. Cotangent bundles are naturally endowed with a symplectic form which is a non-degenerate closed $2$-form. The symplectic form of the cotangent bundle is given by the differential of the Liouville one-form.

$b^m$-Poisson manifolds are manifolds that are symplectic away from a hypersurface along which they satisfy some transversality properties. They often model problems on symplectic manifolds with boundary such as the study of their deformation quantization and celestial mechanics.  As on the complementary of the critical set the manifolds are symplectic, extending the investigation of Hamiltonian dynamics to this realm is key to understanding the Hamiltonian Dynamics on the compactification of symplectic manifolds. Several regularization transformations used in celestial mechanics (as McGehee or Moser regularization) provide examples of such compactifications.

One of the interesting properties of $b^m$-Poisson manifolds is that their investigation can be achieved considering the language of $b^m$-forms. That is to say, we can work with forms that are symplectic away from the critical set and admit a smooth extension as a \emph{form over a Lie algebroid} generalizing De Rham forms as form over the standard Lie algebroid of the tangent bundle of the manifold. To consider $b^m$-forms the standard tangent bundle is replaced by  the $b^m$-tangent bundle. This allows us to mimic symplectic geometry by replacing the cotangent bundle with the dual of the $b^m$-tangent bundle. However, Poisson geometry leaves its footprint and new invariants which can be identified as the modular class of the Poisson structure arise already at the semilocal level.

Contrary to the initial expectations,  several of the results for $b^m$-symplectic manifolds do not resemble the $b$-case so far. Considering these more general singularities yields a better understanding of the general Poisson case and the different levels of complexity. As an illustration of this phenomena: in the study of quantization of those systems an interesting pattern makes the quantization radically different in the even and odd case \cite{GMWbquant, GMWbmquant} and the resulting model is finite-dimensional in the $b$-case. Understanding how the different degrees $m$ are related is a hard task: The desingularization technique introduced by Guillemin-Miranda-Weitsman in \cite{GMW17} turned out to have important applications in the investigation of complexity properties of toric $b^m$-symplectic manifolds \cite{GMWbmconvexity} and to the study of the Arnold conjecture in this set-up \cite{cedricjoaquimeva}. In this monograph, we explore a new facet of these manifolds: that of perturbation theory.



In the second part of the monograph, we consider integrable systems on these manifolds. Under mild conditions, integrable systems on these singular manifold have an associated "generalized" Hamiltonian action of tori in a neighbourhood of a Liouville torus.  We use this generalized Hamiltonian group action to prove the existence of action-angle coordinates in a neighborhood of a Liouville torus.  The action-angle coordinate theorem that we prove furnishes a semi-local normal form within the vicinity of a Liouville torus for the $b^m$-symplectic structure.  which depends on the modular weight of the connected component of the critical set in which the Liouville torus is lying as well as the modular weights of the associated toric action. This action-angle theorem allows us to identify a neighborhood of the Liouville torus with the $b^m$-cotangent lift of the action of a torus acting by translations on itself. This interpretation of the action-angle theorem as cotangent lift allows us to pinpoint the modular weight as their only semilocal invariant. In doing so, we compare this action-angle coordinate theorem with the classical action-angle coordinate theorems for symplectic manifolds and an action-angle theorem for folded symplectic manifolds (\cite{EvaRobert}).

In part 3 of the monograph we study perturbation theory in this new set-up and examine some potential applications to physical systems. In particular, we prove a KAM theorem for $b^m$-Poisson manifolds which clearly refines and improves the one obtained for $b$-Poisson manifolds in \cite{KMS16}. As an outcome of this result together with the extension of the desingularization techniques of Guillemin-Miranda-Weitsman to the realm of integrable systems, we obtain a KAM theorem for folded symplectic manifolds where KAM theory has never been considered before. In the way, we also obtain a brand-new KAM theorem for symplectic manifolds where the perturbation keeps track of a distinguished hypersurface. In celestial mechanics, this distinguished hypersurface can be the line at infinity or the collision set. Escape orbits in celestial mechanics can often be compactified as singular orbits associated with these singular structures.

\aufm{Barcelona, May 2023, Eva Miranda and Arnau Planas}


\mainmatter
\part{Introduction and preliminaries}
\chapter{Introduction}
\label{ch:introduction}

Both symplectic and Poisson geometry emerge from the study of classical mechanics. Both are broad fields widely studied and with powerful results. But as Poisson structures are far more general than the symplectic ones, most outstanding results in symplectic geometry do not translate well to Poisson manifolds. $b^m$-Poisson structures, also known as $b^m$-symplectic structures, bridge this gap by extending symplectic structures in a controlled manner. This allows fundamental results in symplectic geometry to carry over to $b^m$-symplectic geometry. However, adapting theories such as deformation or Moser theory requires additional work, as discussed in \cite{GMPS} and other sources.


The study of $b^m$-Poisson geometry sparked from the study of symplectic manifold with boundary (see \cite{Melrose93} and \cite{NT96}). In the last years, the interest in this field increased after the classification result for $b$-Poisson structures obtained in \cite{Radko02}. Later on, \cite{GMP14} translated these structures to the language of forms and started applying symplectic tools to study them. A lot of papers in the following years studied different aspects of these structures: \cite{GMP10}, \cite{GMP14}, \cite{GMP15}, \cite{GMW17}, \cite{MO2} and  \cite{GUAL14} are some examples.

Inspired by the study of manifolds with boundary, we work on a pair of manifolds $(M,Z)$ where $Z$ is a hypersurface and call this pair $b$-manifold

In this context, \cite{Scott16} generalized the $b$-symplectic forms by allowing higher degrees of degeneracy of the Poisson structures. The $b^m$-symplectic structures inherit most of the properties of $b$-symplectic structures. This booklet focuses on  different aspects of the investigation of $b^m$-symplectic structures covering mainly integrable systems and KAM theory. First, we present some preliminary notions necessary to address the problem of perturbation.  We present an action-angle theorem for $b^m$-Poisson structures and state and prove the KAM theory equivalent in manifolds with $b^m$-symplectic structures.

\section{Structure and results of this monograph}

\subsection{Part $1$: Introduction and Preliminaries}

In the preliminaries, we give the basic notions that lead to the questions we are addressing in this booklet. In the first part, we introduce the concept of $b$-Poisson manifolds or $b$-symplectic manifolds, a class of Poisson manifold which is symplectic outside a critical hypersurface. It study comes motivated by the investigation of manifolds with boundary. Next, we talk about a generalization of these structures, that allows a higher degree of degeneracy of the structure: the $b^m$-symplectic structures. These structures are the main focus of our investigations. A key concept that will play an important role in this book is the study of the desingularization of these singular structures. Finally, we give a short introduction to KAM theory, a theory that will be generalized in the setting of $b^m$-manifolds in the last chapter.

Motivation comes from several examples of singular symplectic structures appearing naturally in classical problems of celestial mechanics which are discussed in the last chapter of the monograph. We also describe the difficulties of finding these examples and the subtleties of dealing with these singular structures in the exploration of conservative systems.

\subsection{Part $2$: Action-angle coordinates and cotangent models for $b^m$-integrable systems}

In this Chapter we define the concept of $b^m$-functions and $b^m$-integrable systems. We present several examples of $b^m$-integrable systems that come from classical mechanics. After all this, we present a version of the action-angle theorem for $b^m$-symplectic manifolds.

\begin{theoremA}
Let $(M,x,\omega,F)$ be a $b^m$-integrable system, where $F = (f_1 = a_0 \log(x) + \sum_{j=1}^{m-1} a_j\frac{1}{x^j}, f_2,\ldots,f_n)$. Let $m\in Z$ be a regular point, and such that the integral manifold through $m$ is compact. Let $\mathcal{F}_m$ be the Liouville torus through $m$.
Then, there exists a neighborhood $U$ of $\mathcal{F}_m$ and coordinates $(\theta_1,\ldots,\theta_n,\sigma_1,\ldots,\sigma_n):\mathcal{U}\rightarrow\mathbb{T}^n\times B^n$ such that:

\begin{enumerate}
\item We can find an equivalent integrable system $F = (f_1 = a_0'\log(x) + \sum_{j=1}^{m-1} a_j'\frac{1}{x^j})$ such that $a_0',\ldots, a_{m-1}' \in \mathbb{R}$,
\item $$\omega|_\mathcal{U} = \left(\sum_{j=1}^m c_j'\frac{c}{\sigma_1^j}d\sigma_1\wedge d\theta_n\right) + \sum_{i=2}^{n} d \sigma_i\wedge d\theta_i$$ where $c$ is the modular period and $c_j' = -(j-1)a_{j-1}'$, also
\item the coordinates $\sigma_1,\ldots,\sigma_n$ depend only on $f_n,\ldots f_n$.
\end{enumerate}

\end{theoremA}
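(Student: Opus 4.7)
The plan is to adapt the classical Liouville--Arnold proof to the $b^m$-setting by first building a $b^m$-Hamiltonian $\mathbb{T}^n$-action near $\mathcal{F}_m$ and then using it to produce both the normal form for $\omega$ and the normalization of $f_1$. Throughout, the essential new feature compared to the symplectic case is that $f_1$ is a $b^m$-function, so $X_{f_1}$ is a genuine $b^m$-vector field (a smooth section of the $b^m$-tangent bundle, tangent to $Z$ to the correct order) rather than an ordinary smooth vector field on $M$.

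First I would use the commuting flows of $X_{f_1},\ldots,X_{f_n}$ to generate a local $\mathbb{R}^n$-action on a saturated neighborhood $\mathcal{U}$ of $\mathcal{F}_m$. These flows are well-defined because each $X_{f_i}$ is a $b^m$-vector field, and they commute because $\{f_i,f_j\}=0$. Compactness of the Liouville torus forces the isotropy at a point of $\mathcal{F}_m$ to be a full-rank lattice $\Lambda\subset\mathbb{R}^n$, and the quotient yields a free $b^m$-Hamiltonian $\mathbb{T}^n$-action on $\mathcal{U}$. Picking a $\mathbb{Z}$-basis of $\Lambda$ and rescaling the generators to be $2\pi$-periodic produces candidate action functions $\sigma_1,\ldots,\sigma_n$; since these are linear combinations of the $f_i$ with coefficients depending only on the leaf, item (3) is automatic.

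Next I would establish item (1): the singular coefficients of $\sigma_1$ are constant on $\mathcal{U}$. The point is that $\sigma_1$ is a $b^m$-Hamiltonian for a $2\pi$-periodic flow, so its singular part --- which lies in the space spanned by $\log(x),1/x,\ldots,1/x^{m-1}$ --- is determined by the periods of the closed $b^m$-one-form $\iota_{X_{\sigma_1}}\omega$ against the cycles of $\mathcal{F}_m$ that are picked out by $d x/x, d x/x^2,\ldots$ in the $b^m$-cohomology of $\mathcal{U}$. These cycles vary continuously through the family of Liouville tori in $\mathcal{U}$, the periods are locally constant, and the leading one is precisely the modular weight of the component of $Z$ through $\mathcal{F}_m$. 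Replacing the ambient $f_1$ by this normalized $\sigma_1$ produces the equivalent integrable system of item (1).

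Finally I would obtain item (2) by an equivariant Moser argument on $\mathbb{T}^n\times B^n$: both $\omega$ and the model form on the right-hand side of item (2) are $\mathbb{T}^n$-invariant $b^m$-symplectic forms with the same moment map components $\sigma_i$, so they agree in $b^m$-cohomology and are interpolated by a $\mathbb{T}^n$-equivariant $b^m$-symplectomorphism that fixes $\mathcal{F}_m$. The relation $c_j'=-(j-1)a_{j-1}'$ then follows by expanding $d\sigma_1$ in the base coordinate attached to the defining function $x$, and the factor $c$ records the modular period of the component of $Z$. The main obstacle is the third stage: making the $a_j'$ constant on $\mathcal{U}$ rather than functions of $\sigma_2,\ldots,\sigma_n$ requires an equivariant Poincar\'e lemma for $b^m$-forms and the identification of the $a_j'$ with genuinely semilocal invariants; both should follow by combining the Guillemin--Miranda--Weitsman desingularization technique with the standard fact that periods are locally constant across a torus fibration.
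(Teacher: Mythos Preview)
Your broad outline matches the paper's three-step strategy (uniformize periods, show the resulting $\mathbb{T}^n$-action is $b^m$-Hamiltonian, normalize), but there is a genuine gap in the first stage and two real differences of approach afterward.

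The gap: you assert that the quotient ``yields a free $b^m$-Hamiltonian $\mathbb{T}^n$-action,'' but uniformizing the period lattice only produces periodic \emph{Poisson} vector fields $Y_i = \sum_j \lambda_i^j(b)\, X_{f_j}$ (from $\mathcal{L}_{Y_i}\mathcal{L}_{Y_i}\omega=0$ and periodicity one gets $\mathcal{L}_{Y_i}\omega=0$, no more). Proving each $Y_i$ is actually $b^m$-Hamiltonian is the heart of the argument, and you supply no mechanism. The paper's device is concrete: using Proposition~\ref{prop:mod_period} one sees that for $b$ on $Z$ the lattice $\Lambda_b$ sits inside $\mathbb{R}^{n-1}\times c\mathbb{Z}$, so one may choose the basis with $\lambda_i^n$ vanishing along $Z$ for $i<n$; then $Y_i$ has no $\partial/\partial y_1$ component there, which forces $\iota_{Y_i}\omega$ to vanish on $Z$ and hence to be a genuinely \emph{smooth} closed $1$-form. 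The ordinary Poincar\'e lemma on the product neighborhood then furnishes smooth primitives $\sigma_i$, while $\iota_{Y_1}\omega = c\, df_1$ directly. Nothing in your proposal handles this passage from Poisson to Hamiltonian.

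The differences: for item (1), the paper does not argue via locally constant periods or desingularization. Once the $\mathbb{T}^n$-action is in hand, it invokes Lemma~\ref{lemma:ctt_coefs} (Lemma~3.2 of \cite{GMP17}), which says outright that the moment map of a $b^m$-Hamiltonian toric action near $Z$ has constant singular coefficients $a_j'$; your period argument and appeal to Guillemin--Miranda--Weitsman are a different (and vaguer) route to the same conclusion. For item (2), the paper does not run an equivariant Moser path: it applies the $b^m$-Darboux--Carath\'eodory theorem (Theorem~\ref{bmdarbouxcaratheodory}) to write $\omega = \sum_i d\sigma_i \wedge dq_i$ locally near a point of the torus, observes that $X_{\sigma_i}=\partial/\partial q_i$ are the generators of the action, and then propagates the $q_i$ to global angle coordinates along the $\mathbb{T}^n$-orbits, checking that the bracket relations persist under the flows. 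Your Moser route could in principle close, but the cohomological input it requires (that $\omega$ and the model form agree in $b^m$-cohomology) essentially presupposes that the $a_j'$ already equal the modular weights, so the ordering of your argument risks circularity.
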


\subsection{Part $3$: KAM theory on $b^m$-symplectic manifolds and applications to Celestial Mechanics}
In this chapter we provide several  KAM theorems for (singular) symplectic manifolds including $b^m$-symplectic manifolds.

 We begin by considering perturbation theory for $b^m$-symplectic manifolds. Then we give an outline of how to construct the $b^m$-symplectomorphism that will be the main character of the proof of the KAM theorem for $b^m$-symplectic manifolds. After this, we show some technical results that are needed for the proof. These technical results even if quite similar to the standard KAM equivalents, have some subtleties that need to be addressed. We end the chapter with the proof of the $b^m$-KAM theorem and several applications to establish KAM theorems in other singular situations (folded symplectic manifolds) and on symplectic manifolds with prescribed invariant hypersurfaces.

The first KAM theorem is the following:

\textcolor{black}{
\begin{theoremB}
Let $\mathcal{G} \subset \mathbb{R}^n$, $n\geq 2$ be a compact set.
Let $H(\phi, I) = \hat h (I) + f(\phi,I)$, where $\hat h$ is a $b^m$-function $\hat h (I) = h(I) + q_0 \log(I_1) + \sum_{i=1}^{m-1} \frac{q_i}{I_1^i}$ defined on $\mathcal{D}_\rho(G)$, with $h(I)$ and $f(\phi,I)$ analytic.
Let $\hat u = \frac{\partial \hat h}{\partial I}$ and $u = \frac{\partial h}{\partial I}$.
Assume $|\frac{\partial u}{\partial I}|_{G,\rho_2} \leq M$, $|u|_\xi \leq L$.
Assume that $u$ is $\mu$ non-degenerate ($|\frac{\partial u}{\partial I}|\geq \mu|v|$ for some $\mu \in \mathbb{R}^+$ and $I \in \mathcal{G}$. Take $a = 16M$.
Assume that $u$ is one-to-one on $\mathcal{G}$ and its range $F = u(\mathcal{G})$ is a $D$-set.
Let $\tau>n-1,\gamma>0$ and $0 < \nu < 1$. Let
\begin{enumerate}
\item \begin{equation}\label{eq:int_kam1}
\varepsilon:=\|f\|_{\mathcal{G}, \rho} \leq \frac{\nu^2 \mu^2 \hat \rho^{2\tau+2}}{2^{4\tau+32}L^6M^3} \gamma^2,
\end{equation}
\item \begin{equation}\label{eq:int_kam2}
\gamma \leq \min(\frac{8LM\rho_2}{\nu \hat \rho^{\tau+1}}, \frac{L}{\mathcal{K}'})
\end{equation}
\item \begin{equation}\label{eq:int_kam3}
\mu \leq \min(2^{\tau+5}L^2 M,2^7\rho_1 L^4 K^{\tau+1},\beta\nu^{\tau+1}2^{2\tau+1}\rho_1^\tau),
\end{equation}
\end{enumerate}
where $\hat \rho := \min \left(\frac{\nu\rho_1}{12(\tau+2)},1\right)$.
Define the set $\hat G = \hat G_\gamma := \{I \in  \mathcal{G}-\frac{2\gamma}{\mu} | u(I) \text{ is } \tau,\gamma,c,\hat q- Dioph.\}$.
Then, there exists a real continuous map $\mathcal{T}: \mathcal{W}_{\frac{\rho_1}{4}}(\mathbb{T}^n)\times \hat G \rightarrow \mathcal{D}_\rho(\mathcal{G})$ analytic with respect the angular variables such that
\begin{enumerate}
\item\label{kam:point1} For all $I \in \hat G$ the set $\mathcal{T}(\mathbb{T}^n\times \{I\})$ is an invariant torus of $H$, its frequency vector is equal to $u(I)$.
\item\label{kam:point2} Writing $\mathcal{T}(\phi,I)=(\phi + \mathcal{T}_\phi(\phi,I), I + \mathcal{T}_I(\phi,I))$ with estimates
$$|\mathcal{T}_\phi(\phi,I)| \leq \frac{2^{2\tau + 15} M L^2}{\nu^2 \hat \rho^{2\tau+1}}\frac{\varepsilon}{\gamma^2}$$
$$|\mathcal{T}_I(\phi,I))| \leq \frac{2^{10+\tau} L (1+M)}{\nu \hat \rho^{\tau+1}}\frac{\varepsilon}{\gamma}$$
\item\label{kam:point3} $\text{meas} [(\mathbb{T}^n\times \mathcal{G})\setminus\mathcal{T}(\mathbb{T}^n\times \hat G)] \leq C \gamma$ where $C$ is
a really complicated constant depending on $n$,  $\mu$,  $D$,  $\text{diam} F$,  $M$, $\tau$, $\rho_1$, $\rho_2$, $K$ and $L$.
\end{enumerate}
\end{theoremB}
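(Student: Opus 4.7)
The plan is to follow the classical KAM iterative scheme of Kolmogorov-Arnold-Moser, adapted to the $b^m$-setting by exploiting the fact that the singular part of $\hat{h}$ depends only on $I_1$, so that the frequency map $\hat{u}(I) = \partial\hat{h}/\partial I$ is real-analytic on the shrunken domain $\mathcal{G} - \tfrac{2\gamma}{\mu}$ where $I_1$ is bounded away from zero. Concretely, I would construct a sequence of canonical transformations $\Phi_k$ defined through generating functions $S_k$, where $\Phi_0 = \mathrm{Id}$ and $\Phi_k = \Phi_{k-1}\circ \psi_k$, so that $H\circ\Phi_k = \hat h_k(I) + f_k(\phi,I)$ with $\|f_k\|$ decreasing super-exponentially (Newton-type quadratic convergence). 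The limit map $\mathcal{T} = \lim_k \Phi_k$ will be the conjugation claimed in the theorem, and its restriction to $\mathbb{T}^n\times\{I\}$ for Diophantine $I\in\hat G$ parametrizes the invariant torus with frequency $\hat u(I) = u(I)$, since the singular terms $q_0\log(I_1)+\sum q_i/I_1^i$ contribute a fixed, $I$-dependent but $\phi$-independent shift to the frequency that is already absorbed in $u$.

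At each iteration step I would linearize around the current approximate invariant torus and solve the cohomological equation
\begin{equation*}
\hat u_k(I)\cdot \partial_\phi S_k(\phi,I) = \bar f_k(I) - f_k(\phi,I),
\end{equation*}
where $\bar f_k$ is the angular average. Expanding $f_k - \bar f_k = \sum_{j\neq 0} f_{k,j}(I)e^{i\langle j,\phi\rangle}$, each Fourier coefficient is divided by $i\langle j, \hat u_k(I)\rangle$. The Diophantine condition on $u(I)$ together with the quantitative closeness $|\hat u_k - u|\lesssim \varepsilon/(\gamma\hat\rho^{\tau+1})$ ensures $|\langle j,\hat u_k(I)\rangle|\geq \tfrac{\gamma}{2|j|^\tau}$ for $I\in\hat G$, and hence standard Cauchy/Paley-Wiener estimates on analytic functions in $\mathcal{W}_\rho(\mathbb{T}^n)\times \mathcal{D}_\rho(\hat G)$ yield bounds of the form
\begin{equation*}
\|S_k\|_{\rho_k-\delta_k}\leq \frac{C}{\delta_k^{\tau+1}\gamma}\|f_k\|_{\rho_k}, \qquad \|\psi_k - \mathrm{Id}\|\leq \frac{C'}{\delta_k^{\tau+1}\gamma}\|f_k\|_{\rho_k}.
\end{equation*}
Choosing $\rho_k$ shrinking geometrically and $\delta_k$ summable gives $\|f_{k+1}\|\lesssim \|f_k\|^2/(\delta_k^{2\tau+2}\gamma^2)$, which converges provided the initial smallness assumption \eqref{eq:int_kam1} holds. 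Hypotheses \eqref{eq:int_kam2}, \eqref{eq:int_kam3} are exactly what is needed to guarantee that the domain losses in $I$ do not exceed the margin $\tfrac{2\gamma}{\mu}$ carved out in the definition of $\hat G$, so that the iteration stays inside $\mathcal{D}_\rho(\mathcal{G})$ throughout.

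The measure estimate in item \eqref{kam:point3} comes from transporting the measure of the complement of the Diophantine set $\{\omega\in F : \omega \text{ is }(\tau,\gamma,c,\hat q)\text{-Dioph.}\}$ through the inverse frequency map $u^{-1}$, using $\mu$-non-degeneracy and the $D$-set hypothesis on $F$. Standard Arnold-type arguments show the measure of resonant strips is bounded by $C\gamma\,\mathrm{diam}(F)^{n-1}$, times the Jacobian factor $\mu^{-1}$, giving the stated bound. Finally, the size bounds on $\mathcal{T}_\phi$ and $\mathcal{T}_I$ in item \eqref{kam:point2} follow by summing the geometric series of corrections $\psi_k-\mathrm{Id}$.

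The main obstacle, and the point where the $b^m$-structure makes the argument genuinely different from the classical case, is controlling the iteration uniformly up to the critical hypersurface $\{I_1 = 0\}$. As $I_1\to 0$ the derivatives of $\hat h$ blow up polynomially, so the constants $M$ and $L$ implicitly encode this blow-up and the scheme only converges on compact subdomains staying at distance $\geq 2\gamma/\mu$ from $Z$; the delicate part is ensuring that after infinitely many iterations the cumulative loss in $I$ remains below this threshold, which is exactly what the specific form of the right-hand sides in \eqref{eq:int_kam1}-\eqref{eq:int_kam3} is engineered to guarantee. Once this bookkeeping is carried out, the classical KAM machinery carries through, and the $b^m$-symplectic character of the transformation $\mathcal{T}$ is preserved because each $\psi_k$ is the time-one flow of a Hamiltonian vector field generated by $S_k$, which is a $b^m$-Hamiltonian since $\hat h$ is.
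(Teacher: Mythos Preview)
Your proposal contains a fundamental misconception about what the $b^m$-KAM theorem actually asserts and how the $b^m$-symplectic structure enters the argument. You treat the problem as a standard KAM scheme on a symplectic manifold away from $Z=\{I_1=0\}$, assuming that $\hat u=\partial\hat h/\partial I$ is the frequency vector and that the set $\hat G$ avoids a neighbourhood of $Z$. Both are incorrect. The shrinking $\mathcal{G}-\tfrac{2\gamma}{\mu}$ is a boundary shrink in the sense of the paper's notation $G-b=\{I:\mathcal U_b(I)\subset G\}$; it does not bound $I_1$ away from zero, and the theorem is expressly meant to cover Liouville tori lying \emph{on} $Z$. Your claim that ``the scheme only converges on compact subdomains staying at distance $\ge 2\gamma/\mu$ from $Z$'' would reduce the result to the classical symplectic KAM theorem and miss the entire point.

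The actual mechanism is that the Hamiltonian vector field of $\hat h$ with respect to the $b^m$-symplectic form $\omega=\bigl(\sum_j c_j/I_1^j\bigr)dI_1\wedge d\phi_1+\sum_{i\ge 2}dI_i\wedge d\phi_i$ has frequency vector $u'=\bar{\mathcal B}(I_1)u+\bar{\mathcal A}(I_1)$, which is \emph{smooth and finite across} $Z$ (its first component tends to $1/\mathcal K'$ as $I_1\to 0$), even though $\hat u$ itself blows up. Consequently the cohomological equation is the $b^m$-Poisson bracket equation $\{W,\hat h\}=R_{\le K}$, whose small divisors are
\[
k\bar{\mathcal B}(I_1)u+k\bar{\mathcal A}(I_1)=k_1\mathcal B(I_1)u_1+\bar k\cdot\bar u+k_1\mathcal A(I_1),
\]
not $\langle k,\hat u\rangle$. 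These divisors are bounded and the adapted Diophantine condition (the ``$\tau,\gamma,c,\hat q$-Diophantine'' hypothesis in the statement) controls them uniformly in a neighbourhood of $Z$; the bound $\gamma\le L/\mathcal K'$ in \eqref{eq:int_kam2} is precisely what guarantees the non-resonance set is nonempty \emph{at} $Z$. The bounds $M,L$ in the hypotheses are on the smooth part $u=\partial h/\partial I$, not on $\hat u$, so they do not ``encode the blow-up'' as you suggest. Without the corrected cohomological equation and the analysis of $\mathcal A,\mathcal B$ near $I_1=0$ (Proposition~\ref{prop:14}, Lemma~\ref{lemma:mesure_resonances} and the Inductive Lemma~\ref{lemma:inductive} in the paper), the iteration you describe simply does not close up at the critical hypersurface.
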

}

Also, we obtain a way to associate a standard symplectic integrable system or a folded integrable system to a $b^m$-integrable system, depending on the parity of $m$. This is done in such a way that the dynamics of the desingularized system are the same as the dynamics of the original one. So it defines a \emph{honest} desingularization of the integrable system.

\begin{theoremC}
The desingularization transforms a $b^m$-integrable system into an integrable system  on a symplectic manifold for even $m$. For $m$ odd, the desingularization associates to it  a folded integrable system. The integrable systems satisfy:
$$X_{f_j}^\omega = X_{f_{j\epsilon}}^{\omega_\epsilon}.$$
\end{theoremC}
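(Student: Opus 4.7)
The plan is to apply the Guillemin--Miranda--Weitsman desingularization simultaneously to the $b^m$-symplectic form $\omega$ and to the singular component of the moment map $F$, and then verify that the resulting structure is an integrable system whose Hamiltonian vector fields coincide with the originals. The action-angle normal form provided by Theorem A reduces the problem to a one-dimensional computation transverse to the critical hypersurface $Z$: in a neighbourhood of a Liouville torus the only singular data are the coordinate $\sigma_1$, the component $f_1 = a_0' \log(\sigma_1) + \sum_{j=1}^{m-1} a_j'/\sigma_1^j$, and the summand $\sum_{j=1}^m c_j' \frac{c}{\sigma_1^j}\,d\sigma_1\wedge d\theta_n$ of $\omega$; the remaining components $(f_2,\ldots,f_n)$ and the summands $d\sigma_i \wedge d\theta_i$ with $i\geq 2$ are already smooth and are carried over unchanged.

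I would then define $f_{j\epsilon} := f_j$ for $j\geq 2$, and obtain $f_{1\epsilon}$ from $f_1$ by replacing each singular monomial $\log\sigma_1$ and $\sigma_1^{-j}$ by the same $\epsilon$-family of smooth primitives used to desingularize the coefficients of $\omega_\epsilon$. The relation $c_j' = -(j-1)a_{j-1}'$ from Theorem A is precisely the bookkeeping needed so that $\iota_X \omega_\epsilon = df_{1\epsilon}$ reproduces, on $M\setminus Z$, the identity $\iota_X \omega = df_1$ with the same vector field $X$ (a $\sigma_1$-dependent multiple of $\partial/\partial\theta_n$). The parity of $m$ then dictates the global outcome, as prescribed by GMW: for $m$ even the desingularized $\omega_\epsilon$ is genuinely symplectic, whereas for $m$ odd it acquires a transverse fold along $Z$ and is folded symplectic.

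With this choice, the identity $X_{f_j}^\omega = X_{f_{j\epsilon}}^{\omega_\epsilon}$ holds on $M\setminus Z$ by construction. The Poisson bracket $\{f_{i\epsilon}, f_{j\epsilon}\}_{\omega_\epsilon}$ agrees off $Z$ with $\{f_i,f_j\}_\omega = 0$ and extends by continuity across $Z$, so the new data is in involution; functional independence on a dense open set is inherited from $F$. Hence for $m$ even one obtains a bona fide integrable system on a symplectic manifold, and for $m$ odd a folded integrable system, both with the same Hamiltonian dynamics as the original.

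The main technical obstacle is the odd case. There $\omega_\epsilon$ has a non-trivial kernel along the fold hypersurface, so $X_{f_{1\epsilon}}^{\omega_\epsilon}$ is a priori only defined off the fold. One must check that $df_{1\epsilon}$ annihilates $\ker \omega_\epsilon$ along $Z$, so that a smooth folded Hamiltonian vector field extends the construction across the fold. This reduces to a parity-driven verification on the leading monomial of $f_{1\epsilon}$: the relation $c_j' = -(j-1)a_{j-1}'$ ensures that the critical locus of the desingularized $f_{1\epsilon}$ along $\sigma_1=0$ matches the locus where $\omega_\epsilon$ degenerates, so the vector-field identity extends continuously across $Z$. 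Once this compatibility is in place, the equality $X_{f_j}^\omega = X_{f_{j\epsilon}}^{\omega_\epsilon}$ propagates globally, yielding the honest desingularization of the integrable system asserted in the statement.
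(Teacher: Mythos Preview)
Your treatment of the singular component $f_1$ is essentially correct and matches the paper: replacing each primitive $\log\sigma_1$, $\sigma_1^{-j}$ by the smooth primitive $G_\epsilon^j$ of the same coefficient that appears in $\omega_\epsilon$ forces $df_{1\epsilon}$ and $\omega_\epsilon$ to share the same singular factor, so $X_{f_1}^\omega = X_{f_{1\epsilon}}^{\omega_\epsilon}$ drops out of the relation $c_j' = -(j-1)a_{j-1}'$ exactly as you say. The paper phrases this a little more geometrically (the vector field lies in $\ker dx \cap \ker\beta$, so the equation reduces to $\alpha_i(X)=-\hat c_i$ on both sides), but the content is the same.

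The gap is in the smooth components. Setting $f_{j\epsilon}:=f_j$ for $j\geq 2$ does \emph{not} give $X_{f_j}^\omega = X_{f_{j\epsilon}}^{\omega_\epsilon}$. In action-angle coordinates the $\partial_{\phi_1}$-component of $X_{f_j}^\omega$ is $\dfrac{\partial f_j/\partial\sigma_1}{\sum_k c_k\sigma_1^{-k}}$, whereas for $\omega_\epsilon$ it is $\dfrac{\partial f_j/\partial\sigma_1}{\sum_k c_k F_\epsilon^k(\sigma_1)}$; these differ on the whole $\epsilon$-neighbourhood of $Z$ whenever $f_j$ depends on $\sigma_1$. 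Your appeal to ``holds on $M\setminus Z$ by construction and extends by continuity'' fails because the desingularization modifies $\omega$ on an \emph{open} neighbourhood of $Z$, not just on $Z$ itself, so there is a genuine discrepancy to kill, not a measure-zero one. The paper's remedy is to precompose the smooth $f_j$ with the change $(\sigma_1,\phi_1)\mapsto(\tilde\sigma_1,\tilde\phi_1)$ where $d\tilde\sigma_1 = \dfrac{\sum c_kF_\epsilon^k(\sigma_1)}{\sum c_k\sigma_1^{-k}}\,d\sigma_1$; this multiplies $\partial f_j/\partial\sigma_1$ by exactly the ratio needed to cancel the change in the $d\sigma_1\wedge d\phi_1$ coefficient. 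Without this reparametrisation (or an equivalent device) the desingularized system does not reproduce the original Hamiltonian dynamics for $j\geq 2$.
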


By employing this desingularization technique in conjunction with the previous $b^m$-KAM theorem, we are able to derive two novel KAM theorems. The first of these theorems is a KAM theorem for standard symplectic manifolds, where the perturbation has a particular expression. This result is more restrictive than the standard KAM theorem but allows us to guarantee that the perturbations leave a given hypersurface invariant. This means that the tori belonging to that hypersurface remain on the hypersurface after the perturbation. There are various scenarios and contexts where this can be advantageous, such as in Celestial Mechanics problems where it is desirable to monitor a specific hypersurface, such as the line at infinity. The higher-order singularities allow us to consider perturbations that are tangent to the hypersurface up to a certain order.

\begin{theoremD}
Consider a neighborhood of a Liouville torus of an integrable system $F_\varepsilon$ as in \ref{eq:desingularized_even} of a symplectic manifold $(M, \omega_\varepsilon)$ semilocally endowed with coordinates $(I,\phi)$, where $\phi$ are the angular coordinates of the torus, with $\omega_\varepsilon = c' dI_1 \wedge d\phi_i + \sum_{j= 1}^n dI_j\wedge d\phi_j$. Let $H=(m-1)c_{m-1}c' I_1 + h(\tilde I) + R(\tilde I,\tilde \phi)$ be a nearly integrable system where
$$
\left\{
\begin{array}{rcl}
\tilde I_1 & = & c'\frac{I_1^{m+1}}{m+1},\\
\tilde \phi_1 & = & c' I_1^m \phi_1 ,
\end{array}
\right.
$$
and
$$
\left\{
\begin{array}{rcl}
\tilde I & = & (\tilde I_1, I_2, \ldots, I_n),\\
\tilde \phi & = & (\tilde \phi_1, \phi_2, \ldots, \phi_n).
\end{array}
\right.
$$
Then the results for the $b^m$-KAM theorem \ref{th:bm_kam} applied to $H_{\text{sing}} = \frac{1}{I_1^{2k-1}} + h(I) + R(I,\phi)$ hold also for this desingularized system.
\end{theoremD}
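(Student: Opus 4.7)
The plan is to derive Theorem~D by applying the $b^m$-KAM theorem (Theorem~B) to the singular Hamiltonian $H_{\text{sing}} = 1/I_1^{2k-1} + h(I) + R(I,\phi)$ on the $b^m$-symplectic side and then transporting the conclusions to the desingularized symplectic side through the coordinate change prescribed in the statement. The change of variables $(I_1,\phi_1)\mapsto(\tilde I_1,\tilde\phi_1)$ with $\tilde I_1 = c' I_1^{m+1}/(m+1)$ and $\tilde\phi_1 = c' I_1^m\phi_1$ is exactly the desingularization relating a standard symplectic structure on one side to a $b^m$-symplectic structure with a pole of order $m$ along $\{I_1=0\}$ on the other; under this change the linear term $(m-1)c_{m-1}c' I_1$ in $H$ corresponds to the singular term $1/I_1^{2k-1}$ in $H_{\text{sing}}$, while the remaining coordinates $(I_2,\ldots,I_n,\phi_2,\ldots,\phi_n)$ are unaffected. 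This correspondence is exactly the content of the desingularization recipe of Theorem~C.

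The first step is to verify that the hypotheses of Theorem~B hold for $H_{\text{sing}}$. The integrable part $\hat h(I) = 1/I_1^{2k-1} + h(I)$ is a $b^m$-function of the form required there, and the conditions on non-degeneracy, on the bounds of $u=\partial h/\partial I$ and $\partial u/\partial I$, on the injectivity of $u$, and on the smallness of $\varepsilon = \|R\|$ descend from the symplectic side to the $b^m$-side via the (semi)conjugacy of the two systems on a neighborhood of the Liouville torus. Once these are in place Theorem~B produces the continuous, fibrewise-analytic family $\mathcal{T}:\mathcal{W}_{\rho_1/4}(\mathbb{T}^n)\times \hat G\to\mathcal{D}_\rho(\mathcal{G})$ of invariant Liouville tori carrying the prescribed Diophantine frequencies $u(I)$, together with the estimates \eqref{kam:point2} and the measure bound \eqref{kam:point3}.

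The second step is to push the output of Theorem~B forward through the desingularizing diffeomorphism. Theorem~C supplies the identity $X_{f_j}^\omega = X_{f_{j\varepsilon}}^{\omega_\varepsilon}$ on the regular stratum, so any invariant torus for $H_{\text{sing}}$ with respect to the $b^m$-form is also invariant for $H$ with respect to $\omega_\varepsilon$, with identical frequency vector. The displacement bounds on $\mathcal{T}_\phi$ and $\mathcal{T}_I$ and the measure estimate \eqref{kam:point3} then transfer up to the Jacobian of $(I_1,\phi_1)\mapsto(\tilde I_1,\tilde\phi_1)$, which is bounded on any compact neighborhood of the Liouville torus contained in the regular stratum.

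The main technical obstacle is tracking analyticity domains under this change of coordinates. The map $I_1\mapsto c' I_1^{m+1}/(m+1)$ has a zero of order $m+1$ along $\{I_1=0\}$, so it severely compresses the complex strips $|\mathrm{Im}\,I_1|<\rho_1$ near that hypersurface and distorts the polydiscs $\mathcal{D}_\rho(\mathcal{G})$ and $\mathcal{W}_{\rho_1/4}(\mathbb{T}^n)$ appearing in Theorem~B. Since Theorem~D only concerns a neighborhood of a single Liouville torus, one restricts to a regime bounded away from the critical hypersurface, where the coordinate change is bi-analytic with bounded distortion; the Diophantine set $\hat G$ then pulls back to an analogous set on the symplectic side of the same measure up to the Jacobian factor, and the estimates from Theorem~B translate directly to the conclusion of Theorem~D.
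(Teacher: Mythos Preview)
Your overall strategy matches the paper's: apply the $b^m$-KAM theorem (Theorem~B) to $H_{\text{sing}}$ on the $b^m$-symplectic side, invoke Theorem~C to identify the Hamiltonian vector fields of the singular and desingularized systems, and conclude that the invariant tori, frequencies, and measure estimates transfer. The paper gives no separate proof of Theorem~D beyond this; it is presented as an immediate consequence of the computation preceding the statement together with Theorems~B and~C.

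Where your proposal diverges is in framing the passage between the two systems as a \emph{coordinate change} $(I_1,\phi_1)\mapsto(\tilde I_1,\tilde\phi_1)$ and then worrying about how this distorts analyticity domains. In the paper's setup there is no coordinate change between two manifolds: both $(F,\omega)$ and $(F_\varepsilon,\omega_\varepsilon)$ live on the \emph{same} manifold with the \emph{same} coordinates $(I,\phi)$. The symbols $\tilde I_1,\tilde\phi_1$ are simply the expressions that tell you how the desingularized component functions depend on $(I,\phi)$. Theorem~C then asserts the pointwise equality of vector fields $X_{f_j}^{\omega}=X_{f_{j\varepsilon}}^{\omega_\varepsilon}$, and the same computation applies verbatim to the perturbation term, so $X_{H_{\text{sing}}}^{\omega}=X_{H}^{\omega_\varepsilon}$ as vector fields on $M$. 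Since the conclusions of Theorem~B (invariant tori, their frequency vectors, the map $\mathcal{T}$, the measure bound) are statements about the flow of this common vector field, they hold for the desingularized presentation without any domain tracking. Your ``main technical obstacle'' therefore does not arise, and the restriction to a regime bounded away from $Z$ that you propose would in fact discard the most interesting part of the conclusion---the remark following Theorem~D in the paper emphasizes precisely that tori \emph{inside} $Z$ are preserved.
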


The second one is a KAM theorem for folded-symplectic manifolds, where KAM theory has not been considered to date.

\begin{theoremE}
Consider a neighborhood of a Liouville torus of an integrable system $F_\varepsilon$ as in \ref{eq:desingularized_odd} of a folded symplectic manifold $(M, \omega_\varepsilon)$ semilocally endowed with coordinates $(I,\phi)$, where $\phi$ are the angular coordinates of the Torus, with $\omega_\varepsilon = 2cI_1 dI_1 \wedge d\phi_1 + \sum_{j=2}^m dI_j \wedge d\phi_j$.
Let $H = (m-1)c_{m-1} cI_1^2 + h(\tilde I) + R(\tilde I, \tilde \phi)$ a nearly integrable system with
$$
\left\{
\begin{array}{rcl}
\tilde I_1 & = &  2c\frac{I_1^{m+2}}{m+2},\\
\tilde \phi_1 & = &  2c I_1^{m+1} \phi_1 ,
\end{array}
\right.
$$
and
$$
\left\{
\begin{array}{rcl}
\tilde I & = & (\tilde I_1, I_2, \ldots, I_n),\\
\tilde \phi & = & (\tilde \phi_1, \phi_2, \ldots, \phi_n).
\end{array}
\right.
$$
Then the results for the $b^m$-KAM theorem \ref{th:bm_kam} applied to $H_{\text{sing}} = \frac{1}{I_1^{2k}} + h(I) + R(I,\phi)$ also hold for this desingularized system.
\end{theoremE}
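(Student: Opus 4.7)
The plan is to reduce Theorem E to the $b^m$-KAM theorem (Theorem B) via the desingularization correspondence of Theorem C. For odd $m$, Theorem C gives the vector-field identity $X_{f_j}^\omega = X_{f_{j\varepsilon}}^{\omega_\varepsilon}$ between a $b^m$-integrable system and its folded-symplectic desingularization. The substitution $\tilde I_1 = 2c\,I_1^{m+2}/(m+2)$, $\tilde \phi_1 = 2c\,I_1^{m+1}\phi_1$ is precisely the coordinate identification realising this identity on a neighbourhood of a Liouville torus in $\{I_1\neq 0\}$: it conjugates the Hamiltonian flow of $H$ on the folded manifold $(M,\omega_\varepsilon)$ with the flow of
\[
H_{\text{sing}} \;=\; \tfrac{1}{I_1^{2k}} + h(I) + R(I,\phi), \qquad k = \tfrac{m-1}{2},
\]
on the associated $b^m$-symplectic manifold.

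The first step is to verify this conjugation: compute the Hamiltonian vector field of $H_{\text{sing}}$ with respect to the $b^m$-form of Theorem A in $(\tilde I,\tilde \phi)$ coordinates, transport it via the change of variables, and check that it equals $X_H^{\omega_\varepsilon}$. The most singular term $1/I_1^{m-1}$ of $H_{\text{sing}}$ produces the leading term $(m-1)c_{m-1}cI_1^2$ of $H$, with the modular coefficient $(m-1)c_{m-1}$ dictated by Theorem A. Once the conjugation is established, the second step is to apply Theorem B to $H_{\text{sing}}$. Since $h$ and $R$ are the same functions in both pictures (with arguments simply relabelled), the smallness conditions \eqref{eq:int_kam1}--\eqref{eq:int_kam3}, the non-degeneracy of $u=\partial h/\partial I$, and the Diophantine set $\hat G$ can be read directly from the folded data. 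Theorem B then produces the map $\mathcal T$ parametrising the Cantor family of invariant Diophantine tori for $X_{H_{\text{sing}}}^{\omega^{b^m}}$, along with the estimates of items (\ref{kam:point1})--(\ref{kam:point3}).

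The third step is to transport these conclusions back to the folded picture. By the equivariance from Theorem C, every invariant torus of $X_{H_{\text{sing}}}^{\omega^{b^m}}$ is an invariant torus of $X_H^{\omega_\varepsilon}$ with identical flow and hence the same frequency vector. The coordinate estimates on $\mathcal T_\phi,\mathcal T_I$ transfer because $\mathcal T$ parametrises the same embedded torus in both pictures, and the measure bound on the complement transfers because the Jacobian of the change of variables is bounded on any relatively compact subset of $\{I_1\neq 0\}$. The main technical obstacle will be the first step: verifying the conjugation with the correct modular constant. A secondary subtlety is that the change of variables degenerates along $\{I_1=0\}$ (its Jacobian is $4c^2 I_1^{2m+2}$), so the Liouville torus must lie in the regular part of the folded system, which is the semilocal hypothesis built into the statement of the theorem.
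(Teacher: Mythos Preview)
Your overall strategy---reduce to the $b^m$-picture via Theorem~C, apply Theorem~B to $H_{\text{sing}}$, and transport the conclusions back---is exactly what the paper does. The paper's argument is in fact even more terse: it simply sets up the desingularization formulae (the odd-case computation preceding the statement), invokes Theorem~C, and says ``as we proceeded in the even case''.

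There is, however, one point where your framing diverges from the paper and creates an unnecessary difficulty. You present the link between the two systems as a \emph{conjugation by the change of variables} $(\tilde I_1,\tilde\phi_1)\leftrightarrow(I_1,\phi_1)$, and then correctly observe that this map has Jacobian $4c^2 I_1^{2m+2}$ and hence degenerates along $\{I_1=0\}$. From this you conclude that the Liouville torus must lie in the regular part of the folded system. But that is precisely the region the theorem is designed to cover: the $b^m$-KAM theorem (and the action-angle Theorem~A) are formulated for Liouville tori \emph{on} the critical set $Z=\{I_1=0\}$. The paper avoids this trap because Theorem~C is not a conjugation statement at all: it asserts that the Hamiltonian vector fields $X_{f_j}^{\omega}$ and $X_{f_{j\varepsilon}}^{\omega_\varepsilon}$ are \emph{literally the same smooth vector field on the same manifold}, including over $Z$. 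The substitutions $\tilde I_1,\tilde\phi_1$ enter only in the definition of the desingularized functions $f_{j\varepsilon}=f_j(\tilde I,\tilde\phi)$, which are perfectly smooth at $I_1=0$; no coordinate change on $M$ is performed. Once you use Theorem~C in this stronger form, the invariant tori, frequencies, and measure estimates produced by Theorem~B for $H_{\text{sing}}$ are automatically invariant tori, frequencies, and measure estimates for $H$ on $(M,\omega_\varepsilon)$, with no Jacobian to worry about and no restriction away from $Z$.
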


Last but not least, we illustrate the connection between $b^m$-symplectic structures and classical mechanics by providing several examples. Several potential applications to celestial mechanics and fluid dynamics are discussed.

\chapter{A primer on singular symplectic manifolds}
\label{ch:preliminaries}

In this first chapter of the booklet we introduce basic notions
 on singular symplectic structures, as well as some concepts on standard KAM theory. Those are the two main pillars of this monograph.

{Let $M$ be a smooth manifold, a \textbf{Poisson structure} on $M$ is a bilinear map
$\{\cdot, \cdot\}:C^\infty(M)\times C^\infty(M) \rightarrow C^\infty(M)$
which is skew-symmetric and satisfies both the Jacobi identity and the Leibniz rule. It is possible to express $\{f,g\}$ in terms of a bivector field via the following equality $\{f,g\}=\Pi(df\wedge dg)$ with $\Pi$ a section of $\Lambda^2(TM)$. $\Pi$ is the associated \textbf{Poisson bivector}. We will use indistinctively the {terminology} of Poisson structure when referring to the bracket or the Poisson bivector. }

A {\emph{$b$-Poisson {bivector field}}} on a manifold $M^{2n}$ is a Poisson bivector such that the map
\begin{equation}\label{eq:transverse}
F: M \rightarrow \bigwedge^{2n} TM: p \mapsto (\Pi(p))^n
\end{equation}
is transverse to the zero section. Then, a pair $(M,\Pi)$ is called a \textbf{$b$-Poisson manifold} and the vanishing set $Z$ of $F$ is called the \textbf{critical hypersurface}. {Observe that $Z$ is an embedded hypersurface}.

This class of Poisson structures was studied by Radko \cite{Radko02} in dimension two and considered in numerous papers in the last years: \cite{GMP10}, \cite{GMP14}, \cite{GMP15}, \cite{GMW17}, \cite{MO2} and  \cite{GUAL14}  among others.

\section{$b$-Poisson manifolds}

Next, we recall classification theorem of $b$-Poisson surfaces as presented by Olga Radko and  the cohomological re-statement and proof given by Guillemin, Miranda and Pires in \cite{GMP14}.

{ In what follows, $(M,\Pi)$  will be a closed smooth surface with a $b$-Poisson structure on it, and  $Z$  its critical hypersurface.}

{Let $h$ be the distance function to $Z$ as in \cite{MO2}}\footnote{{Notice the difference with \cite{Radko02} where $h$ is assumed to be a global defining function.}}.

\begin{definition}\label{lvol} The {\textbf{Liouville volume of $(M, \Pi)$}}  is the following limit: $ V(\Pi )\coloneqq \lim _{\epsilon \to 0}\int _{|h|>\epsilon }\omega^n $\footnote{{For surfaces $n = 1$.}}.
\end{definition}

{
The previous limit exists and it is independent of the choice of the defining function $h$ of $Z$ (see \cite{Radko02} for the proof).}

\begin{definition}
{For any $(M,{\Pi})$ oriented Poisson manifold, let $\Omega$} {be} a volume form on it, and let $u_f$ denote the Hamiltonian vector field of a smooth function $f:M\rightarrow\mathbb{R}$. The \textbf{modular vector field} {$X^{\Omega}$} is the derivation defined as follows:
	$$f\mapsto \frac{\mathcal{L}_{u_f}\Omega}{\Omega}.$$
\end{definition}	
	
	\begin{definition} Given $\gamma$ a connected component of {  the critical set $Z(\Pi )$ of a closed $b$-Poisson manifold $(M,\Pi)$},   the \textbf{modular period} of $\Pi$ around $\gamma$ is defined as:

	$$T_{\gamma}(\Pi )\coloneqq \textrm{period of }\, {X^{\Omega}}|_{\gamma }. $$
	
\end{definition}

{
\begin{remark}
The modular vector field $X^\Omega$ of the $b$-Poisson manifold $(M,Z)$ does not depend at $Z$ on the choice of $\Omega$ because for different choices for volume form the difference of modular vector fields  is a Hamiltonian vector field. Observe that this Hamiltonian vector field vanishes on the critical set as $\Pi$ vanishes there too.
\end{remark}
}

\begin{definition}
{
Let $\mathcal{M}_n(M)=\mathcal{C}_n(M)/\sim$ where $\mathcal{C}_n(M)$ is the space of disjoint oriented curves and $\sim$ identifies two sets of curves if there is an orientation-preserving diffeomorphism mapping the first one to the  second one and preserving the orientations of the curves.
}
\end{definition}

The following theorem classifies $b$-symplectic structures on surfaces using these invariants:
\begin{theorem}[\textbf{Radko} \cite{Radko02}]\label{Radko}
{Consider two $b$-Poisson structures $\Pi$, $\Pi'$ on {a closed} {orientable} surface $M$. Denote its critical {hypersurfaces} by $Z$ and $Z'$. These two $b$-Poisson structures are globally equivalent (there exists a global {orientation preserving} diffeomorphism sending $\Pi$ to $\Pi'$) if and only if the following coincide}:
	\begin{itemize}
	    \item the equivalence classes of $[Z]$ and $[Z']\in\mathcal{M}_n(M)$,
	    \item their modular periods around the connected components of $Z$ and $Z'$,
	    \item their  Liouville volume.
	\end{itemize}
\end{theorem}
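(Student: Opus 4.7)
The necessity direction is immediate: each of the three invariants is defined intrinsically from $\Pi$ (as noted in the remark, the modular vector field on $Z$ does not depend on the auxiliary volume form, so its period is intrinsic; similarly, the Liouville volume is shown to be independent of the defining function in \cite{Radko02}). Thus an orientation-preserving diffeomorphism intertwining $\Pi$ and $\Pi'$ transports each invariant. The substantive content is sufficiency, which I plan to handle in three steps.

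First, I would align the critical sets. The hypothesis $[Z] = [Z'] \in \M_n(M)$ produces an orientation-preserving diffeomorphism $\psi_0 \colon M \to M$ with $\psi_0(Z) = Z'$ matching orientations of corresponding components. Replacing $\Pi'$ by $(\psi_0^{-1})_\ast \Pi'$, I may assume $Z = Z'$ as oriented multicurves, and the three numerical invariants still agree. I would then recast everything in the language of $b$-symplectic forms, following \cite{GMP14}: set $\omega := \omega_\Pi$ and $\omega' := \omega_{\Pi'}$, viewed as closed non-degenerate sections of $\Lambda^2({}^b T^\ast M)$.

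Second, I would apply a Moser-type argument in $b$-cohomology. By the Mazzeo--Melrose-type decomposition used in \cite{GMP14}, there is an isomorphism $H^2_b(M) \cong H^2(M;\R) \oplus H^1(Z;\R)$. Under this splitting, the first component of $[\omega]_b$ records the Liouville volume $V(\Pi)$, and the second component records the collection of modular periods $T_\gamma(\Pi)$ along the connected components $\gamma$ of $Z$ (one real number per circle component, thought of as the residue of $\omega$ along $\gamma$). Coincidence of the three Radko invariants therefore yields $[\omega]_b = [\omega']_b$, so $\omega' - \omega = d\beta$ for some $b$-one-form $\beta$. Along the path $\omega_t = (1-t)\omega + t\omega'$, non-degeneracy holds near $Z$ because both forms have the same local $b$-normal form there, and on $M\setminus Z$ both are area forms of the same total area and can be linearly interpolated after a preliminary rescaling. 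Solving $\iota_{X_t}\omega_t = -\beta$ gives a time-dependent $b$-vector field $X_t$, automatically tangent to $Z$ and hence integrable on the closed surface; its time-$1$ flow $\phi_1$ satisfies $\phi_1^\ast \omega' = \omega$. The composition $\psi_0 \circ \phi_1$ is the sought equivalence.

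The main obstacle will be verifying the cohomological correspondence of step two: namely, unwinding the Mazzeo--Melrose isomorphism on the local $b$-normal form $\omega = c\,\frac{dh}{h}\wedge d\theta$ near each circle component to show that the $H^1(Z;\R)$-projection of $[\omega]_b$ indeed coincides with the modular periods, and that the $H^2(M;\R)$-projection equals $V(\Pi)$. A secondary subtlety is the non-degeneracy of $\omega_t$ on the symplectic complement $M\setminus Z$: one may need to combine the $b$-Moser step near $Z$ with a standard Moser argument for volume forms supported away from a tubular neighborhood of $Z$, using the matched Liouville volumes to handle each connected component of $M\setminus Z$ separately.
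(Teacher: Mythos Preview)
The paper does not supply its own proof of this theorem: it is quoted as a preliminary result from \cite{Radko02}, and the paper immediately follows it with the $b$-cohomological reformulation attributed to \cite{GMP14}, again without proof. Your proposal is precisely the strategy behind that cohomological reformulation---align the critical curves, translate equality of the three Radko invariants into $[\omega]_b=[\omega']_b$ via the Mazzeo--Melrose splitting ${}^bH^2(M)\cong H^2(M)\oplus H^1(Z)$, and run a $b$-Moser path argument---so it is consistent with what the paper cites.

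One point deserves more care than you give it. Non-degeneracy of the linear path $\omega_t=(1-t)\omega+t\omega'$ is not automatic from ``same local $b$-normal form'': you need that $\omega$ and $\omega'$ are everywhere positive multiples of each other as sections of the real line bundle $\Lambda^2({}^bT^*M)$. Near $Z$ this follows because the orientation of each component $\gamma\subset Z$ (induced by the modular vector field) is part of the datum $[Z]\in\mathcal{M}_n(M)$, and matching oriented curves forces the residues of $\omega$ and $\omega'$ along $\gamma$ to have the same sign; this in turn fixes the sign of $\omega$ on each side of $\gamma$, hence on every connected component of $M\setminus Z$. You should make this explicit rather than appealing to a ``preliminary rescaling''. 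With that clarified, the $b$-vector field $X_t$ solving $\iota_{X_t}\omega_t=-\beta$ is smooth, tangent to $Z$, and integrates on the closed surface, and the argument goes through.
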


An appropriate formalism to deal with these structures was introduced in \cite{GMP10}.

\begin{definition}
A \textbf{$b$-manifold}\footnote{The `$b$' of $b$-manifolds stands for `boundary', as initially considered by Melrose {(Chapter 2 of \cite{Melrose93})} for the study of pseudo-differential operators on manifolds with boundary.} is a pair $(M,Z)$ of a manifold and an {embedded} hypersurface.
\end{definition}

 In this way, the concept of $b$-manifold previously introduced by Melrose is generalized to consider additional geometric structures on the manifold.

\begin{definition}
A \textbf{$b$-vector field} on a $b$-manifold $(M,Z)$ is a vector field tangent to the hypersurface $Z$ at every point $p\in Z$.
\end{definition}

{
\begin{definition}
A \textbf{$b$-map} from $(M,Z)$ to $(M',Z')$ is a {smooth} map $\phi:M \rightarrow M'$ such that $\phi^{-1}(Z') = Z$ and $\phi$ is transverse to $Z'$.
\end{definition}
}

Observe that if $x$ is a local defining function for $Z$ {and $(x, x_1, \ldots, x_{n-1})$ are local coordinates in a neighborhood of $p \in Z$} then {the $C^\infty(M)$-module of $b$-vector fields has the following local basis}
\begin{equation}\label{eq:generatebvectors}
\{x \frac{\partial}{\partial{x}}, \frac{\partial}{\partial{x_1}},\ldots, \frac{\partial}{\partial{x_{n-1}}}\}.
\end{equation}

In contrast to \cite{GMP10}, in this monograph we are not requiring the existence of a global defining function for $Z$ and orientability of $M$. However,  we require the existence of a defining  function in a neighborhood of each {point} of $Z$. {By relaxing this condition, the normal bundle of $Z$  need not  be trivial.}

{Given $(M,Z)$ a $b$-manifold, \cite{GMP10} shows that there exists a vector bundle,
denoted by $^b TM$  whose smooth sections are $b$-vector fields. This bundle
is called the \textbf{$b$-tangent bundle} of $(M,Z)$.}

The \textbf{$b$-cotangent bundle} $^b T^*M$ is defined using duality. A \textbf{$b$-form} is a  section of the $b$-cotangent bundle. {Around a point $p\in Z$ the $C^\infty(M)$-module of {these} sections has the following local basis:}
\begin{equation}\label{eq:generatebforms}
\{\frac{1}{x} dx, d x_1,\ldots, d x_{n-1}\}.
\end{equation}
In the same way we define a \textbf{$b$-form of degree $k$} to be a section of the bundle $\bigwedge^k(^b T^*M)$, the set of these forms is denoted $^b\Omega^k(M)$. Denoting by $f$ the distance function\footnote{Originally in \cite{GMP10} $f$ stands for a global function, but for non-orientable manifolds we may  use the distance function instead.} to the {critical hypersurface} $Z$, we may write the following decomposition as in \cite{GMP10} {for any $\omega \in ^b\Omega^k(M)$} :

\begin{equation}\label{eq:decomposition}
\omega=\alpha\wedge\frac{df}{f}+\beta, \text{ with } \alpha\in\Omega^{k-1}(M) \text{ and } \beta\in\Omega^k(M).
\end{equation}

This decomposition allows to extend the differential of the {de} Rham complex $d$ to $^b\Omega(M)$ by setting $d\omega=d\alpha\wedge\frac{df}{f}+d\beta.$ 

Degree $0$ functions are called $b$-functions and and near $Z$ can be written as
$$ c \log |x| + g, $$
where $c\in \R, g\in C^\infty,$ and $x$ is a local defining function.

\begin{figure}
\centering

\begin{tikzpicture}

\pgfmathsetmacro{\ellbasex}{7}
\pgfmathsetmacro{\ellbasey}{3.5}

\pgfmathsetmacro{\majoraxis}{2}
\pgfmathsetmacro{\minoraxis}{.5}

\pgfmathsetmacro{\rlineymid}{4.25}
\def\R{1.6}
\pgfmathsetmacro{\circlex}{1.5}

\draw[dashed, very thick, color = red] (\circlex + \R, \rlineymid) arc (0:180:{\R} and {\R * .2});
\draw[very thick, fill = magenta, opacity = .6] (\circlex, \rlineymid) circle (\R);
\draw[very thick] (\circlex, \rlineymid) circle (\R);
\draw[rotate around={-45:(\circlex, \rlineymid - .3)},dblue, fill = dblue] (\circlex, \rlineymid - .3) ellipse (.4 and .2);
\draw[very thick, color = red] (\circlex + \R, \rlineymid) arc (0:-180:{\R} and {\R * .2});

\node (circaround) at (\circlex, \rlineymid - .3) [circle,draw=none,thick, minimum size=1cm] {};
												
\draw[ultra thick, ->] (circaround)  to [bend right = 25] (\ellbasex - 1.2*\majoraxis, \ellbasey - .3);

\draw[very thick, fill = dblue] (\ellbasex - \majoraxis, \ellbasey) arc (-180:180:{\majoraxis} and {\minoraxis});
\node (baseone) at (\ellbasex - \majoraxis * .27, \ellbasey - \minoraxis * 1.25) {};
\node (basetwo) at (\ellbasex + \majoraxis * .27, \ellbasey + \minoraxis * 1.25) {};
\draw[very thick, red] (baseone) -- (basetwo);

\draw[draw = none, fill = orange] (\ellbasex - \majoraxis, \ellbasey + 1) arc (180:230:{\majoraxis} and {\minoraxis}) arc (-90:-47.4:.75) node (cylh) {} -- ++(0.95, 0.875) arc(-47.4:-90:.75) arc(100:180:{\majoraxis} and {\minoraxis}) -- cycle;
\draw[draw = none, fill = purple] (cylh)  arc (-47.4:0:.75) -- ++(0.95, 0.875) arc(0:-47.4:.75) -- cycle;

\draw[very thick] (\ellbasex - \majoraxis, \ellbasey + 1) arc (180:230:{\majoraxis} and {\minoraxis}) arc (-90:0:.75);
\draw[very thick] (\ellbasex - \majoraxis, \ellbasey + 1) arc (180:100:{\majoraxis} and {\minoraxis}) arc (-90:0:.75);

\draw[draw = none, fill = green] (\ellbasex + \majoraxis, \ellbasey + 1.1) arc (0:50:{\majoraxis} and {\minoraxis}) arc (270:180:.75) -- ++(-0.95, -0.875) arc(180:270:.75) arc (-80:0:{\majoraxis} and {\minoraxis}) -- cycle;

\draw[very thick] (\ellbasex +\majoraxis, \ellbasey + 1.1) arc (0:50:{\majoraxis} and {\minoraxis}) arc (270:180:.75);
\draw[very thick] (\ellbasex +\majoraxis, \ellbasey + 1.1) arc (0:-80:{\majoraxis} and {\minoraxis}) arc (270:180:.75);

\end{tikzpicture}
\caption{Artistic representation of a $b$-function on a $b$-manifold  near the critical hypersurface.} \label{fig:L1}
\end{figure}
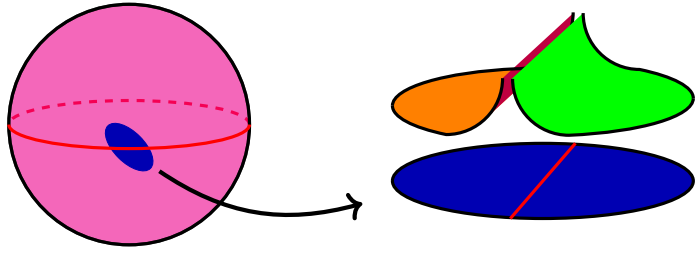

The associated cohomology is called \textbf{$b$-cohomology} and it is denoted by \textbf{$^b H^*(M)$}.

\begin{definition}
A \textbf{$b$-symplectic} form on a $b$-manifold $(M^{2n},Z)$ is defined as a non-degenerate closed $b$-form of degree $2$ (i.e., $\omega_p$ is of maximal rank as an element of $\Lambda^2(\,^b T_p^* M)$ for all $p\in M$).
\end{definition}

The notion {of} $b$-symplectic forms is dual to {the} notion of $b$-Poisson structures. The advantage of using forms rather than bivector fields is that symplectic tools can be `easily' exported.

Radko's classification theorem \cite{Radko02} can be translated  into this language. {This translation was already formulated in} \cite{GMP10}:

 \begin{theorem}[\textbf{Radko's theorem in $b$-cohomological language, \cite{GMP14}}] Let $S$ be a {closed} orientable surface and  let  $\omega_0$ and $\omega_1$ be two $b$-symplectic forms on {$(S,Z)$}  defining the same $b$-cohomology class (i.e.,$[\omega_0]= [\omega_1]$).  Then there exists a diffeomorphism {$\phi:S\rightarrow S$} such that $\phi^*\omega_1 = \omega_0$.
\end{theorem}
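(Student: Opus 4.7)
The plan is to run a Moser-type argument adapted to the $b$-setting. Set $\omega_t := (1-t)\omega_0 + t\omega_1$ for $t\in[0,1]$. Because $[\omega_0]=[\omega_1]$ in $^bH^2(S)$, there exists $\alpha\in \,^b\Omega^1(S)$ with $\omega_1-\omega_0=d\alpha$, so that $\tfrac{d}{dt}\omega_t = d\alpha$. The goal will then be to build a time-dependent flow $\phi_t$ of $b$-diffeomorphisms with $\phi_t^*\omega_t=\omega_0$ and take $\phi=\phi_1$.

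Next I would verify that $\omega_t$ is $b$-symplectic for every $t\in[0,1]$, i.e.\ nowhere-vanishing as a section of $\Lambda^2(^bT^*S)$. Since $\dim S = 2$, a $b$-symplectic form is nothing but a $b$-volume form; writing both forms locally in the basis $\tfrac{dx}{x}\wedge dy$ near a point of $Z$ (and in the ordinary basis $dx\wedge dy$ away from $Z$), non-degeneracy of the convex combination reduces to showing that $\omega_0$ and $\omega_1$ induce the same orientation of the $b$-tangent bundle. This is where the cohomological hypothesis bites: the class $[\omega_0]\in \,^bH^2(S)$ records both the Liouville volumes on each component of $S\setminus Z$ and the modular residues along each component of $Z$, so the identification $[\omega_0]=[\omega_1]$ forces orientation compatibility on every connected piece.

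With $\omega_t$ non-degenerate, the bundle map $\omega_t^\flat \colon {}^bTS \to {}^bT^*S$ is an isomorphism, so the Moser equation
\[
\iota_{X_t}\omega_t = -\alpha
\]
has a unique solution $X_t$, and $X_t$ is automatically a $b$-vector field because $\alpha$ is a $b$-one-form; in particular $X_t$ is tangent to $Z$. Since $S$ is closed, the flow $\phi_t$ of $X_t$ is defined on all of $[0,1]$, preserves $Z$, and restricts to a diffeomorphism of $S$. A standard computation, using that $d\omega_t = 0$,
\[
\tfrac{d}{dt}(\phi_t^*\omega_t) = \phi_t^*\bigl(\mathcal{L}_{X_t}\omega_t + d\alpha\bigr) = \phi_t^*\bigl(d\iota_{X_t}\omega_t + d\alpha\bigr) = \phi_t^*(-d\alpha + d\alpha) = 0,
\]
then yields $\phi_1^*\omega_1=\omega_0$, which is the desired equivalence.

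The main obstacle is the non-degeneracy check for $\omega_t$: it is the only step where the cohomological hypothesis is really used, and it requires unpacking what $^bH^2(S)$ encodes on a surface (Liouville volume and modular residues) and translating that information into pointwise compatibility of the local coefficient functions of $\omega_0$ and $\omega_1$. Once this is secured, the rest of the argument is the standard Moser deformation trick transcribed verbatim into the $b$-category, using that the differential, interior product and Lie derivative all extend naturally to $^b\Omega^*(S)$.
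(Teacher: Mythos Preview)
Your proposal is correct and is precisely the Moser path method that the paper invokes (it states the general $b^m$-Moser theorem immediately after this statement and defers the proof to \cite{GMP14}). One small inaccuracy worth noting: $^bH^2(S)\cong H^2(S)\oplus H^1(Z)$ records the \emph{total} Liouville volume together with the modular periods, not a separate volume on each component of $S\setminus Z$; however, your non-degeneracy argument only needs the modular periods to pin down the sign of the residue along each component of $Z$, and this then propagates by continuity to all of $S\setminus Z$, so the argument goes through unchanged.
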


\section{On $b^m$-Symplectic manifolds}
\subsection{Basic definitions}
By relaxing the transversality condition allowing higher order singularities (\cite{ARNOLD85} and \cite{ARNOLD75}) we may consider other symplectic structures with singularities as done by
Scott \cite{Scott16} with $b^m$-symplectic structures.
{
 Let $m$ be a positive integer a \textbf{$b^m$-manifold} is a \textbf{$b$-manifold} $(M,Z)$ together with
 a $b^m$-tangent bundle attached to it. The $b^m$-tangent bundle is (by Serre-Swan theorem \cite{SWAN}) a vector bundle, $^{b^m} TM $ whose sections are given by, $$\Gamma(^{b^m}TM)=\{v \in \Gamma(TM): v(x) \quad \text{is tangent to order } m \text{ to } Z \},$$
where $x$ is a defining function for the critical set $Z$  in a neighborhood of each connected component of $Z$ and can be defined as
$x:M\setminus Z \rightarrow (0,\infty), x \in C^\infty(M)$ such that:
\begin{itemize}
\item $x(p) = d(p)$ a distance function from $p$ to $Z$ for $p: d(p) \leq 1/2$
\item $x(p) = 1$ on $M\setminus\{p\in M \text{ such that }d(p) < 1\}$.\footnote{{Then a $b^m$-manifold will be a triple $(M, Z, x)$, but for the sake of simplicity we refer to it as a pair $(M,Z)$ and we tacitly assume that the function $x$ is fixed.}}
\end{itemize}
(This definition of $x$ allows us to extend the construction in  \cite{Scott16}  to the non-orientable case as in \cite{MO2}.)
 We may define the notion of a $b^m$-map as a map in this category (see \cite{Scott16}).

The sections of this bundle are referred to as \textbf{$b^m$-vector fields} and their flows define $b^m$-maps.
In local coordinates, the sections of the $b^m$-tangent bundle  are generated by:
\begin{equation}\label{eq:generatebmvectors}
\{x^m \frac{\partial}{\partial{x}}, \frac{\partial}{\partial{x_1}},\ldots, \frac{\partial}{\partial{x_{n-1}}}\}.
\end{equation}
\noindent
}

Proceeding \emph{mutatis mutandis} as in the $b$-case one defines  the  $b^m$-cotangent bundle ($^{b^m} T^*M $), the $b^m$-{de} Rham complex and the $b^m$-symplectic structures.

A {\bf Laurent Series} of a closed $b^m$-form $\omega$ is a decomposition of $\omega$ in a tubular neighborhood $U$ of $Z$
of the form
\begin{equation}\label{eqn:laurent0}
\omega = \frac{dx}{x^m} \wedge (\sum_{i = 0}^{m-1}\pi^*(\alpha_{i})x^i) + \beta
\end{equation}

\noindent with $\pi: U \to Z$ the projection of the tubular neighborhood onto $Z$, $\alpha_{i}$ a closed smooth {de} Rham form on $Z$ and $\beta$  a {de} Rham form on $M$.

In \cite{Scott16} it is proved that in  a  neighborhood of $Z$, every   closed $b^m$-form  $\omega$ can be written in a  Laurent form of type (\ref{eqn:laurent0}) having fixed a (semi)local defining function.

 $b^m$-Cohomology is related to de Rham cohomology via the following theorem:
\begin{theorem}[\textbf{$b^m$-Mazzeo-Melrose}, \cite{Scott16}]\label{thm:Mazzeo-Melrose}
{Let $(M,Z)$ be a $b^m$-manifold, then:}
\begin{equation}\label{eqn:Mazzeo-Melrose}
 ^{b^m}H^p(M) \cong H^p(M)\oplus(H^{p-1}(Z))^m.
\end{equation}
\end{theorem}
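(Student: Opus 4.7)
The plan is to prove the isomorphism by induction on $m$, using a short exact sequence of cochain complexes that peels off one order of singularity at a time. The base case $m=1$ is the classical $b$-Mazzeo-Melrose theorem (as in \cite{GMP14}), so only the inductive step requires real work.

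For the inductive step, I define a \emph{top residue} map $R_m : {}^{b^m}\Omega^p(M) \to \Omega^{p-1}(Z)$ using the Laurent decomposition (\ref{eqn:laurent0}): if $\omega = \frac{dx}{x^m}\wedge \sum_{i=0}^{m-1} x^i\,\pi^*(\alpha_i) + \beta$ in a tubular neighborhood of $Z$, set $R_m(\omega) := \alpha_0$. Since the defining function $x$ is fixed as part of the $b^m$-structure, this is unambiguous, and a direct check shows $d(R_m\omega) = R_m(d\omega)$, so $R_m$ is a chain map. Its kernel is precisely the subcomplex ${}^{b^{m-1}}\Omega^p(M) \hookrightarrow {}^{b^m}\Omega^p(M)$ consisting of forms whose $x^{-m}$-coefficient vanishes. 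Surjectivity of $R_m$ is obtained by taking any $\alpha \in \Omega^{p-1}(Z)$, pulling it back to the tubular neighborhood via $\pi$, multiplying by a cutoff $\chi(x)$ that is $1$ near $Z$ and vanishes outside the tubular neighborhood, and wedging with $\frac{dx}{x^m}$ to obtain a $b^m$-form extended by zero. This yields the short exact sequence
\begin{equation*}
0 \to {}^{b^{m-1}}\Omega^p(M) \hookrightarrow {}^{b^m}\Omega^p(M) \xrightarrow{R_m} \Omega^{p-1}(Z) \to 0.
\end{equation*}

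The long exact sequence in cohomology reads
\begin{equation*}
\cdots \to {}^{b^{m-1}}H^p(M) \to {}^{b^m}H^p(M) \xrightarrow{R_m} H^{p-1}(Z) \xrightarrow{\delta} {}^{b^{m-1}}H^{p+1}(M) \to \cdots,
\end{equation*}
and the crux is to show the connecting homomorphism $\delta$ vanishes. I would do this by constructing an explicit cohomological section $s$ of $R_m$. For a closed $\alpha \in \Omega^{p-1}(Z)$, extend it to a closed form $\tilde\alpha$ on the tubular neighborhood (for instance by $\pi^*\alpha$), and define $s([\alpha]) := [\chi(x)\,\pi^*(\tilde\alpha)\wedge \tfrac{dx}{x^m}]$. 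This form is closed because $d\chi(x)\wedge \tfrac{dx}{x^m} = \chi'(x)\,dx\wedge\tfrac{dx}{x^m}=0$ and $d(\pi^*\tilde\alpha)=0$; it is globally defined thanks to the cutoff, it lies in ${}^{b^m}\Omega^p(M)$, and by construction $R_m\circ s = \mathrm{id}$. Hence the sequence splits and ${}^{b^m}H^p(M) \cong {}^{b^{m-1}}H^p(M) \oplus H^{p-1}(Z)$. Feeding in the inductive hypothesis ${}^{b^{m-1}}H^p(M) \cong H^p(M)\oplus (H^{p-1}(Z))^{m-1}$ gives the desired conclusion.

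The main obstacle is verifying that $s$ descends to a well-defined map on cohomology, i.e., that $[s(\alpha)]$ is independent of the choices of cutoff $\chi$ and of closed extension $\tilde\alpha$. Two admissible cutoffs differ by a function supported away from $Z$, producing a smooth form after multiplication by $\tfrac{dx}{x^m}$ which can be absorbed into the smooth piece $\beta$ and shown to be exact via a standard Poincaré-lemma-style argument on the tubular neighborhood; two closed extensions of the same de Rham class on $Z$ differ by an exact form, which again yields an exact $b^m$-form after cutoff and wedging. These compatibility checks are the technical heart of the argument, but they are tractable precisely because the global defining function $x$ is fixed in the definition of a $b^m$-manifold, which rigidifies the Laurent decomposition and removes the ambiguities that would otherwise appear if one rescaled $x$ by a non-vanishing factor.
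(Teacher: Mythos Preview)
The paper does not actually prove this theorem; it is stated with a citation to \cite{Scott16} and only the remark that ``the isomorphism constructed in the proof of the theorem above is non-canonical'' is added. So there is no in-paper proof to compare against.

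That said, your inductive argument via the top-residue short exact sequence
\[
0 \to {}^{b^{m-1}}\Omega^\bullet(M) \hookrightarrow {}^{b^m}\Omega^\bullet(M) \xrightarrow{R_m} \Omega^{\bullet-1}(Z) \to 0
\]
is precisely the standard route (and is the one Scott uses). Two small comments. First, the Laurent decomposition (\ref{eqn:laurent0}) as stated in the paper applies to \emph{closed} $b^m$-forms; to define $R_m$ on the full complex you should instead use the elementary local splitting $\omega = \alpha\wedge\tfrac{dx}{x^m}+\beta$ with smooth $\alpha,\beta$ and set $R_m(\omega)=i_Z^*\alpha$, which is well-defined and a chain map without any closedness assumption. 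Second, your concern about the well-definedness of the section $s$ is slightly overcautious: to conclude $\delta=0$ it suffices to exhibit, for each closed $\alpha$ on $Z$, \emph{some} closed $b^m$-lift, which your $\chi(x)\,\pi^*\alpha\wedge\tfrac{dx}{x^m}$ provides; the resulting splitting of the short exact sequence of vector spaces is then automatic (and non-canonical, exactly as the paper remarks). Independence of $\chi$ on cohomology is not needed for the isomorphism statement itself.
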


{The isomorphism constructed in the proof of the theorem above is non-canonical (see \cite{Scott16}).}

The Moser path method can be generalized to $b^m$-symplectic structures (see \cite{evageoff} for the generalization from surfaces in \cite{Scott16} to general manifolds):

{\begin{theorem}[\textbf{Moser path method}]\label{mpm} Let $\omega_t$ be a path of $b^m$-symplectic forms defining the same $b^m$-cohomology class  $[\omega_t]$ on $(M^{2n}, Z)$ with $M^{2n}$ closed and orientable  then there exist  a $b^m$-symplectomorphism $\varphi: (M^{2n}, Z)\longrightarrow (M^{2n}, Z)$ such that $\varphi^*(\omega_1) = \omega_0$. \end{theorem}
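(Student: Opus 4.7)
The plan is to follow the classical Moser trick, adapted to the $b^m$-category. First, I would show that because $[\omega_t]$ is constant in $^{b^m}H^2(M)$, the derivative $\tfrac{d\omega_t}{dt}$ is exact: there is a smooth family of $b^m$-1-forms $\beta_t$ with $\tfrac{d\omega_t}{dt} = d\beta_t$. The subtle point here is smoothness in $t$. I would obtain $\beta_t$ by applying a homotopy operator for the $b^m$-de Rham complex to $\tfrac{d\omega_t}{dt}$; in a tubular neighborhood of $Z$ I would use the Laurent decomposition in \eqref{eqn:laurent0} to split $\tfrac{d\omega_t}{dt}$ into a piece of the form $\tfrac{dx}{x^m}\wedge \pi^*(\sum x^i \alpha_{i,t}) + \beta_t^{\text{sm}}$, handle the smooth part by the ordinary de Rham homotopy operator, and handle each $\alpha_{i,t}$ piece by a primitive on $Z$ extended to a tubular neighborhood. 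Gluing with a partition of unity then yields the desired family $\beta_t$ depending smoothly on $t$.

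Next, since $\omega_t$ is non-degenerate as a section of $\Lambda^2({}^{b^m}T^*M)$, the bundle map $X \mapsto \iota_X \omega_t$ from ${}^{b^m}TM$ to ${}^{b^m}T^*M$ is an isomorphism. I define the time-dependent $b^m$-vector field $X_t$ by
\[
\iota_{X_t} \omega_t = -\beta_t.
\]
This is a genuine $b^m$-vector field, so in local coordinates near $Z$ its component along $\partial_x$ vanishes to order $m$, ensuring tangency to $Z$.

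Then I would integrate $X_t$. Since $M$ is closed, the (non-autonomous) flow $\varphi_t$ of $X_t$ is defined for all $t \in [0,1]$. Because $X_t$ is a $b^m$-vector field, $\varphi_t$ preserves $Z$ and is a $b^m$-diffeomorphism of $(M,Z)$. The standard Moser computation now gives
\[
\frac{d}{dt}\bigl(\varphi_t^* \omega_t\bigr) = \varphi_t^*\bigl( \mathcal{L}_{X_t}\omega_t + \tfrac{d\omega_t}{dt}\bigr) = \varphi_t^*\bigl( d\iota_{X_t}\omega_t + d\beta_t\bigr) = 0,
\]
where all Cartan-calculus identities are used in their $b^m$-version, which is formally identical to the smooth one because $d$ and $\mathcal{L}$ extend to the $b^m$-de Rham complex. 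Setting $\varphi := \varphi_1$ yields $\varphi^*\omega_1 = \omega_0$.

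The main obstacle is the first step: producing a \emph{smooth in $t$} primitive $\beta_t$ in the $b^m$-de Rham complex. The $b^m$-Mazzeo--Melrose isomorphism in Theorem \ref{thm:Mazzeo-Melrose} is non-canonical, so one must either choose a path-independent splitting of the complex and apply the classical parametric de Rham argument componentwise, or use a tubular-neighborhood Laurent decomposition and patch with a partition of unity. Once this is carried out, the remainder of the argument is a routine translation of Moser's classical proof into the $b^m$-setting, using only that ${}^{b^m}T^*M$ pairs nondegenerately with ${}^{b^m}TM$ and that $b^m$-vector fields integrate to $b^m$-diffeomorphisms on a closed manifold.
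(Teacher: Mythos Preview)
The paper does not actually prove this theorem: it is stated without proof and attributed to \cite{evageoff} (for general dimension) and \cite{Scott16} (for surfaces). Your proposal is correct and follows exactly the standard route taken in those references---the Moser trick run in the $b^m$-de Rham complex, using nondegeneracy of $\omega_t$ as a bundle map ${}^{b^m}TM \to {}^{b^m}T^*M$ to solve for $X_t$, and compactness of $M$ to integrate the flow. You have also correctly flagged the only nontrivial point, namely obtaining a primitive $\beta_t$ depending smoothly on $t$; your suggested fix via a homotopy operator built from the Laurent decomposition and a partition of unity is the standard way this is handled.
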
}

An outstanding consequence of Moser path method is a global classification of {closed} orientable $b^m$-symplectic surfaces \`{a} la Radko in terms of $b^m$-cohomology classes.

\begin{theorem}[\textbf{Classification of {closed} orientable $b^m$-{surfaces}, \cite{Scott16}}]\label{thm:scott}
Let $\omega_0$ and $\omega_1$ be two $b^m$-symplectic forms on a {closed orientable} connected $b^m$-surface $(S,Z)$.
Then, the following conditions are equivalent:
\begin{itemize}

  \item their $b^m$-cohomology classes coincide $[\omega_0] = [\omega_1]$,
  \item the surfaces are globally $b^m$-symplectomorphic,
  \item the Liouville volumes of $\omega_0$ and $\omega_1$ and the numbers $$\int_{\gamma} \alpha_{i}$$ for all connected
  components $\gamma \subseteq Z$ and all $1 \leq i \leq m$ coincide (where
  $\alpha_{i}$ are the one-forms appearing in the Laurent decomposition
  of the two $b^m$-forms {of degree 2}, $\omega_0$ and $\omega_1$).
\end{itemize}
\end{theorem}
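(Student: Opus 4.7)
The plan is to close the loop $(2)\Rightarrow(3)\Rightarrow(1)\Rightarrow(2)$, drawing on the two tools already established in the excerpt: the $b^m$-Mazzeo--Melrose isomorphism (Theorem \ref{thm:Mazzeo-Melrose}) to identify the cohomology class with the listed numerical invariants, and the Moser path method (Theorem \ref{mpm}) to upgrade a cohomological equality to a global $b^m$-symplectomorphism.

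The direction $(2)\Rightarrow(3)$ is the easy end. A $b^m$-symplectomorphism $\varphi$ satisfies $\varphi^*\omega_1=\omega_0$ and sends $Z$ to itself, so the principal-value defining the Liouville volume and the periods $\int_\gamma\alpha_i$ along components of $Z$ are both preserved by change of variables. The same pullback argument simultaneously shows $(2)\Rightarrow(1)$.

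For $(3)\Rightarrow(1)$, I would unpack both sides of the $b^m$-Mazzeo--Melrose isomorphism
$$^{b^m}H^2(S)\;\cong\; H^2(S)\oplus \bigl(H^1(Z)\bigr)^{m}.$$
Since $S$ is a closed orientable surface, $H^2(S)\cong\mathbb{R}$ is detected by ordinary integration; under the isomorphism the smooth $\beta$-piece of the Laurent decomposition (\ref{eqn:laurent0}) represents this class, and its integral is exactly the Liouville volume (the singular tail $\frac{dx}{x^m}\wedge\sum\alpha_i x^i$ contributes zero to the symmetric principal-value limit). Since $Z$ is a disjoint union of circles, $H^1(Z)\cong\mathbb{R}^{\#\text{components}}$ is detected by the periods $\int_\gamma \alpha_i$, one per component per index $1\le i\le m$. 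Hence the tuple in condition (3) is precisely the image of $[\omega]$ under the Mazzeo--Melrose isomorphism, and equality of the tuples forces $[\omega_0]=[\omega_1]$.

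For $(1)\Rightarrow(2)$ I would apply the Moser path method to the straight-line interpolation $\omega_t:=(1-t)\omega_0+t\omega_1$, which is closed and sits in the common class $[\omega_0]$ for every $t$. The main obstacle, and the only place where something nontrivial must be checked, is that each $\omega_t$ must be non-degenerate as a $b^m$-form. On a surface this reduces to the leading Laurent coefficient being nowhere zero along $Z$ and the smooth part being nowhere zero off $Z$. Because the top-order Laurent coefficients $\alpha_{m-1}$ of $\omega_0$ and $\omega_1$ are closed $1$-forms on circles whose periods coincide (by (3) for $i=m$), they differ by an exact form, and I would first move $\omega_0$ inside its $b^m$-cohomology class by a preparatory $b^m$-symplectomorphism matching its leading Laurent coefficient to that of $\omega_1$ (iterating down the Laurent tail if needed, and then matching the smooth parts away from $Z$). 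After this preparation the convex combination preserves non-degeneracy, and Theorem \ref{mpm} yields the desired global $b^m$-symplectomorphism $\varphi$ with $\varphi^*\omega_1=\omega_0$.
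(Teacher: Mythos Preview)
The paper does not supply its own proof of this statement; it is quoted from \cite{Scott16} and presented as ``an outstanding consequence of Moser path method'' (Theorem~\ref{mpm}). Your cycle $(2)\Rightarrow(3)\Rightarrow(1)\Rightarrow(2)$, using the $b^m$-Mazzeo--Melrose isomorphism to identify the numerical invariants with the $b^m$-cohomology class and then invoking Moser to pass from cohomological equality to a global $b^m$-symplectomorphism, is exactly the argument the paper is pointing to and is the one carried out in \cite{Scott16}; the preparation step you flag (arranging non-degeneracy of the linear path by first matching Laurent data near $Z$) is the standard way to handle the only nontrivial point.
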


\begin{definition}
The numbers $[\alpha_i] = \int_\gamma \alpha_i$ are called \emph{modular weights} for the connected components $\gamma \subset Z$.
\end{definition}

{A relative version of Moser's path method is proved in \cite{GMW17}. As a corollary we obtain the following local description of a $b^m$-symplectic manifold:}

\begin{theorem}[\textbf{$b^m$-Darboux theorem, {\cite{GMW17}}}]\label{theorem:Darbouxbn}
Let $\omega$ be a $b^m$-symplectic form on $(M,Z)$ and $p\in Z$. Then we can find a coordinate chart $(U,x_1,y_1,\ldots,x_n,y_n)$ centered at
$p$ such that on $U$ the hypersurface $Z$ is locally defined by $x_1=0$ and
$$\omega=\frac{d x_1}{x_1^m}\wedge {d y_1}+\sum_{i=2}^n d x_i\wedge d y_i.$$
\end{theorem}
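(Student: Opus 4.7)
The plan is to put $\omega$ in a manageable form via the $b^m$-Laurent expansion along $Z$, normalize it to match the candidate model pointwise at $p$, and then conclude with the relative version of the Moser path method already recorded as Theorem \ref{mpm} and proved in \cite{GMW17}. Choose a local defining function $x$ for $Z$ near $p$ and a tubular neighborhood $U$ of $Z\cap U$ with projection $\pi:U\to Z$. By the $b^m$-Laurent decomposition we may write
$$
\omega = \frac{dx}{x^m}\wedge\Bigl(\sum_{i=0}^{m-1}\pi^*\alpha_i\,x^i\Bigr) + \beta,
$$
with the $\alpha_i$ closed one-forms on $Z$ and $\beta$ a smooth closed two-form on $M$.

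Next I set up coordinates. The $b^m$-non-degeneracy of $\omega$ at $p$ forces $\alpha_0(p)\neq 0$ and shows that $(\alpha_0,\beta|_Z)$ is a cosymplectic structure on $Z$ near $p$: $\alpha_0$ is closed and non-vanishing, $\beta|_Z$ is closed, and $\alpha_0\wedge(\beta|_Z)^{n-1}$ is a volume form on $Z$. By the Darboux theorem for cosymplectic manifolds, I may choose coordinates $(y_1,x_2,y_2,\ldots,x_n,y_n)$ on $Z$ centered at $p$ with $\alpha_0=dy_1$ and $\beta|_Z=\sum_{i=2}^n dx_i\wedge dy_i$. Together with the defining function $x_1:=x$, these give coordinates $(x_1,y_1,x_2,y_2,\ldots,x_n,y_n)$ on a neighborhood of $p$ in $M$ in which the candidate model
$$
\omega_0 := \frac{dx_1}{x_1^m}\wedge dy_1 + \sum_{i=2}^n dx_i\wedge dy_i
$$
is $b^m$-symplectic and agrees with $\omega$ at $p$ modulo contributions that either vanish along $Z$ or come from the higher Laurent coefficients $\alpha_i$ with $i\geq 1$.

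To finish, I apply the Moser path method to $\omega_t:=(1-t)\omega_0+t\omega$. Since $\omega$ and $\omega_0$ coincide at $p$ in the $b^m$-sense and non-degeneracy is an open condition, on a sufficiently small neighborhood $V$ of $p$ each $\omega_t$ is $b^m$-symplectic. I can shrink $V$ so that both $V$ and $V\cap Z$ are contractible; then Theorem \ref{thm:Mazzeo-Melrose} gives $\,^{b^m}H^2(V)=0$, hence $\omega-\omega_0=d\sigma$ for some $b^m$-one-form $\sigma$, which may be taken to vanish at $p$ by absorbing a constant into $\omega_0-\omega$. Solving the Moser equation $\iota_{X_t}\omega_t=-\sigma$ in the $b^m$-category produces a $b^m$-vector field $X_t$ tangent to $Z$, whose time-one flow $\varphi_1$ is a smooth diffeomorphism preserving $Z$ and satisfying $\varphi_1^*\omega=\omega_0$. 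The main obstacle is the standard Moser-flow obstacle in the singular setting: arranging that $\sigma$ vanishes at $p$ to a sufficient order so that $X_t$ is a genuine (smooth) $b^m$-vector field integrable for all $t\in[0,1]$ on a fixed neighborhood of $p$. This is controlled by the pointwise matching of $\omega$ and $\omega_0$ at $p$ achieved in the previous step, combined with the flatness properties of $b^m$-vector fields along $Z$.
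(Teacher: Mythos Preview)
Your proposal is correct and follows exactly the route the paper indicates: the paper does not spell out a proof but presents the theorem as a corollary of the relative $b^m$-Moser path method proved in \cite{GMW17}, and your argument (Laurent expansion, pointwise matching with the model via cosymplectic Darboux on $Z$, then the Moser trick with a primitive vanishing at $p$) is precisely how that relative Moser argument runs. One minor labeling issue: Theorem~\ref{mpm} in the paper is the \emph{global} Moser statement on closed manifolds, not the relative one you need; the relative version is only alluded to as being in \cite{GMW17}. This is harmless here since you in fact carry out the relative argument yourself (contractible $V$, $^{b^m}H^2(V)=0$ via Mazzeo--Melrose, primitive $\sigma$ adjusted to vanish at $p$, flow of the resulting $b^m$-vector field fixing $p$ and preserving $Z$), so the mis-citation does not affect the validity of your proof.
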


{
\begin{remark}
 For the sake of simplicity sometimes we will omit any explicit reference to the critical set  $Z$ and we will talk directly about $b^m$-symplectic structures on manifolds $M$ implicitly assuming that $Z$ is the vanishing locus of  $\Pi^n$  where $\Pi$ is the Poisson vector field dual to the $b^m$-symplectic form.
\end{remark}
}

Next, we present two lemmas that allow us to talk about $b^m$-symplectic structures and $b^m$-Poisson as two different presentations of the same geometrical structure on a $b$-manifold. The lemma below shows that they are dual to each other and, thus, in one-to-one correspondence.

\begin{lemma}
Let $\omega$ be a $b^m$-symplectic and $\Pi$ its dual vector field, then $\Pi$ is a $b^m$-Poisson structure.
\end{lemma}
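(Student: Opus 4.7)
The plan is to verify that the bivector $\Pi$ obtained by inverting $\omega^\flat: {}^{b^m}TM \to {}^{b^m}T^*M$ is (a) smooth as a section of $\Lambda^2 TM$ with the correct vanishing behaviour along $Z$, and (b) satisfies the Jacobi identity $[\Pi,\Pi] = 0$. The work is essentially local, so the first move is to invoke the $b^m$-Darboux theorem (Theorem \ref{theorem:Darbouxbn}) to reduce to a model computation, and then to upgrade the classical symplectic/Poisson duality to the Lie algebroid setting provided by ${}^{b^m}TM$.

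For (a), pick $p \in Z$ and apply Theorem \ref{theorem:Darbouxbn} to obtain coordinates $(x_1,y_1,\ldots,x_n,y_n)$ in which $\omega = \frac{dx_1}{x_1^m}\wedge dy_1 + \sum_{i=2}^n dx_i\wedge dy_i$. The local basis $\{\tfrac{dx_1}{x_1^m}, dy_1, dx_2, dy_2, \ldots\}$ of ${}^{b^m}T^*M$ is dual to the basis $\{x_1^m\partial_{x_1},\partial_{y_1},\partial_{x_2},\partial_{y_2},\ldots\}$ of ${}^{b^m}TM$ appearing in (\ref{eq:generatebmvectors}), so inverting $\omega$ in these bases gives
$$\Pi = x_1^m\, \partial_{x_1}\wedge \partial_{y_1} + \sum_{i=2}^n \partial_{x_i}\wedge \partial_{y_i}.$$
Since $x_1^m\partial_{x_1}$ is a smooth (in fact $b^m$) vector field on $M$, this is a smooth section of $\Lambda^2 TM$, and $\Pi^n$ vanishes to order exactly $m$ along $Z$, giving the required $b^m$-transversality. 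Away from $Z$, $\omega$ is an honest symplectic form and $\Pi$ is its classical Poisson dual; the two descriptions agree on overlaps, so the resulting $\Pi$ is globally well defined and smooth on $M$.

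For (b), the key structural observation is that the Lie bracket of two $b^m$-vector fields is again a $b^m$-vector field: one checks this on the generators in (\ref{eq:generatebmvectors}), where the only nontrivial bracket is $[x_1^m\partial_{x_1},\partial_{x_i}] = 0$ (for $i\geq 2$), so $({}^{b^m}TM,[\cdot,\cdot],\text{anchor})$ is a Lie algebroid. Consequently the usual Cartan formula
$$d\omega(X,Y,W) = \sum_{\text{cyc}} X\,\omega(Y,W) - \sum_{\text{cyc}} \omega([X,Y],W),$$
holds for triples of $b^m$-vector fields, and the classical argument translating $d\omega=0$ into $[\Pi,\Pi]=0$ via the isomorphism $\omega^\flat$ applies verbatim. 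Alternatively, one can simply check $[\Pi,\Pi]=0$ directly in the Darboux normal form above, where it reduces to the standard flat computation.

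The main (mild) obstacle is ensuring that the inversion $\omega \mapsto \Pi$ produces a smooth bivector \emph{on all of $M$}, including along $Z$; this is precisely what the $b^m$-Darboux normal form delivers, by trading the singular coefficient $1/x_1^m$ in $\omega$ for the smooth factor $x_1^m$ in $\Pi$. Once this is clear, the Jacobi identity and the transversality condition are immediate consequences of the model computation.
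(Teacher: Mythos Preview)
Your proof is correct and follows essentially the same approach as the paper: invoke the $b^m$-Darboux theorem to reduce to the local model, invert $\omega$ there to obtain $\Pi = x_1^m\,\partial_{x_1}\wedge\partial_{y_1} + \sum_{i\geq 2}\partial_{x_i}\wedge\partial_{y_i}$, and use $d\omega=0 \Rightarrow [\Pi,\Pi]=0$ together with the explicit vanishing order of $\Pi^n$ along $Z$. Your discussion of the Lie algebroid structure and of smoothness/gluing is more detailed than the paper's, but the underlying argument is the same.
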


\begin{proof}
The quickest way to do this is to take the inverse, which is a bivector field, and observe that it is a Poisson structure (because $d\omega=0$ implies $[\Pi, \Pi]=0$). To see that it is $b^m$-Poisson
it is enough to check it locally for any point along the critical set. Take a point $p$ on the critical set $Z$ and apply the $b^m$-Darboux theorem  to get $\omega= dx_1/x_1^m\wedge dy_1+ \sum_{i>1} dx_i\wedge dy_i$
This means that in the new coordinate system $$\Pi=x_1^m\frac{\partial}{\partial x_1}\wedge \frac{\partial}{\partial y_1}+\sum_{i>1} \frac{\partial}{\partial x_i}\wedge \frac{\partial}{\partial y_i}$$
and thus $\Pi$ is a $b^m$-Poisson structure.

\end{proof}

Conversely,
\begin{lemma}
Let $\Pi$ be $b^m$-Poisson and $\omega$ its dual vector field, then $\omega$ is a $b^m$-symplectic structure.
\end{lemma}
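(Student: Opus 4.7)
The plan is to mirror the previous lemma, but in reverse: starting from the $b^m$-Poisson bivector $\Pi$, I need to verify that its dual $2$-form $\omega$ (i) is a well-defined smooth section of $\Lambda^2(\,{}^{b^m}T^*M)$, (ii) is non-degenerate as such a section, and (iii) is closed as a $b^m$-form. Off the critical hypersurface $Z$ all three properties are immediate, since there $\Pi$ is an ordinary non-degenerate Poisson bivector and inverts to an ordinary symplectic form. The content of the statement is therefore entirely about what happens along $Z$, and the natural tool is a local normal form.

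First I would work locally around an arbitrary point $p\in Z$. The transversality condition built into the definition of a $b^m$-Poisson structure (that the map $p\mapsto \Pi^n(p)$ vanishes along $Z$ to order exactly $m$ transverse to the zero section) together with Weinstein's splitting theorem for Poisson manifolds allows one to choose local coordinates $(x_1,y_1,\ldots,x_n,y_n)$ centered at $p$, with $Z=\{x_1=0\}$, in which
\[
\Pi \;=\; x_1^{m}\,\frac{\partial}{\partial x_1}\wedge\frac{\partial}{\partial y_1}+\sum_{i=2}^{n}\frac{\partial}{\partial x_i}\wedge\frac{\partial}{\partial y_i}.
\]
This is exactly the dual of the $b^m$-Darboux model of Theorem~\ref{theorem:Darbouxbn}, and it is the counterpart for $b^m$-Poisson structures of Scott's normal form. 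In these coordinates, directly inverting (or equivalently reading off the dual basis of sections in the frames \eqref{eq:generatebmvectors} and \eqref{eq:generatebforms}) yields
\[
\omega \;=\; \frac{dx_1}{x_1^{m}}\wedge dy_1+\sum_{i=2}^{n}dx_i\wedge dy_i,
\]
which is manifestly a smooth section of $\Lambda^2(\,{}^{b^m}T^*M)$ and of maximal rank in the $b^m$-sense. This settles (i) and (ii) in a neighborhood of every point of $Z$, hence globally.

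For closedness (iii), the argument is functorial: the Schouten bracket $[\Pi,\Pi]$ and the de Rham differential $d$ extend to the $b^m$-setting, and the $b^m$-musical isomorphism $\sharp:{}^{b^m}T^*M\to{}^{b^m}TM$ intertwines them, so $[\Pi,\Pi]=0$ translates into $d\omega=0$. Concretely, $d\omega=0$ can be verified on $M\setminus Z$ from Jacobi for $\Pi$, and then extended across $Z$ by continuity using that $\omega$ is already shown to be a smooth $b^m$-form, so $d\omega$ is a smooth $b^m$-form that vanishes on the open dense set $M\setminus Z$, hence vanishes everywhere.

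The main obstacle is really step two: producing the local Poisson normal form. One can either invoke a $b^m$-Poisson Darboux theorem directly (dual to Theorem~\ref{theorem:Darbouxbn}, which in the $b$-case is essentially in \cite{GMP14} and is extended to $b^m$ via Scott's work and the Moser argument of Theorem~\ref{mpm}), or argue more invariantly that $\sharp_\Pi:{}^{b^m}T^*M\to TM$ actually takes values in ${}^{b^m}TM$ because of the order-$m$ vanishing of $\Pi$ along $Z$, and that the resulting map ${}^{b^m}T^*M\to{}^{b^m}TM$ is a bundle isomorphism precisely when $\Pi^n$ is transverse to the zero section of $\Lambda^{2n}TM$. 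Its inverse is then, by definition, a nowhere-degenerate element of $\Gamma(\Lambda^2\,{}^{b^m}T^*M)$, i.e.\ the $b^m$-form $\omega$, and closedness follows as above.
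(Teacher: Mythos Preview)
Your proposal is correct and follows essentially the same route as the paper: invoke Weinstein's splitting theorem together with the order-$m$ transversality to put $\Pi$ into the local normal form $x_1^m\,\partial_{x_1}\wedge\partial_{y_1}+\sum_{i\ge 2}\partial_{x_i}\wedge\partial_{y_i}$, then invert to get the $b^m$-Darboux form for $\omega$. The paper's proof is a terse two-line version of exactly this; your additional discussion of closedness via $[\Pi,\Pi]=0$ and the alternative invariant argument through the $b^m$-musical isomorphism are more detailed than what the paper records but not a different approach.
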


\begin{proof}
 If $\Pi$ transverse à la Thom on $Z$ with singularity of order m then because of Weinstein's splitting theorem we can locally write

$$\Pi=x_1^m\frac{\partial}{\partial x_1}\wedge \frac{\partial}{\partial y_1}+\sum_{i>1} \frac{\partial}{\partial x_i}\wedge \frac{\partial}{\partial y_i}$$

now its inverse is $\omega= dx_1/x_1^m\wedge dy_1+ \sum_{i>1} dx_i\wedge dy_i$  which is a $b^m$-symplectic form.
\end{proof}

Hence we have a correspondence from $b^m$-symplectic structures to $b^m$-Poisson structures.

\section{{Desingularizing $b^m$-Poisson manifolds}}\label{sec:deblogging}

{In \cite{GMW17} Guillemin, Miranda and Weitsman presented a desingularization procedure for $b^m$-symplectic manifolds proving that we may associate a family of folded symplectic or symplectic forms to a given $b^m$-symplectic structure depending on the parity of $m$. Namely,}

\begin{theorem}[\textbf{Guillemin-Miranda-Weitsman}, \cite{GMW17}]\label{thm:deblogging}
 {Let $\omega$ be a $b^m$-symplectic structure  on a {closed orientable manifold} $M$  and let $Z$ be its critical hypersurface.
\begin{itemize}
\item If $m=2k$, there exists  a family of symplectic forms ${\omega_{\epsilon}}$ which coincide with  the $b^{m}$-symplectic form
    $\omega$ outside an $\epsilon$-neighborhood of $Z$ and for which  the family of bivector fields $(\omega_{\epsilon})^{-1}$ converges in
    the $C^{2k-1}$-topology to the Poisson structure $\omega^{-1}$ as $\epsilon\to 0$ .
\item If $m=2k+1$, there exists  a family of folded symplectic forms ${\omega_{\epsilon}}$ which coincide with  the $b^{m}$-symplectic form
    $\omega$ outside an $\epsilon$-neighborhood of $Z$.
\end{itemize}}

\end{theorem}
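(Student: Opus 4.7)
My plan is to reduce the problem to the semi-local Laurent model of $\omega$ in a tubular neighborhood of $Z$, replace the singular factor $1/x^m$ by a smooth one-parameter family of functions $f_\epsilon$, extend by $\omega$ outside the $\epsilon$-tube to obtain a global $2$-form $\omega_\epsilon$, and then verify the claimed structural and analytic properties separately for the two parities of $m$.

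Concretely, let $U$ be a tubular neighborhood of a component of $Z$ with projection $\pi:U\to Z$ and local defining function $x$. By the Laurent decomposition (\ref{eqn:laurent0}),
\[
\omega = \frac{dx}{x^m}\wedge \sum_{i=0}^{m-1} x^i\,\pi^*\alpha_i + \beta,
\]
with each $\alpha_i$ closed on $Z$ and $\beta$ a smooth de Rham form on $M$. I choose a smooth family $f_\epsilon \in C^\infty(\mathbb{R})$ satisfying $f_\epsilon(x)=1/x^m$ for $|x|\geq \epsilon$. For $m=2k$ I take $f_\epsilon>0$ everywhere (compatible with $1/x^m>0$); for $m=2k+1$ any smooth extension of the sign-changing function $1/x^m$ must pass through zero, and I arrange for $f_\epsilon$ to vanish transversely at $x=0$. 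Define
\[
\omega_\epsilon := f_\epsilon(x)\,dx \wedge \sum_{i=0}^{m-1} x^i\,\pi^*\alpha_i + \beta
\]
on $U$, extended by $\omega$ outside the tube; the two expressions agree on the overlap. Closedness of $\omega_\epsilon$ follows since $df_\epsilon\wedge dx = 0$, each $\alpha_i$ is closed, and $d\omega=0$ forces $d\beta=0$.

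In the Darboux coordinates provided by Theorem \ref{theorem:Darbouxbn} (where effectively $\alpha_0 = dy_1$ and the other $\alpha_i$ vanish), one computes $\omega_\epsilon^n = n!\, f_\epsilon(x)\,dx\wedge dy_1\wedge \prod_{i=2}^n dx_i\wedge dy_i$. When $m$ is even this is nowhere zero, so $\omega_\epsilon$ is symplectic; when $m$ is odd $\omega_\epsilon^n$ vanishes transversely along $\{x=0\}=Z$ and the kernel of $\omega_\epsilon|_{\{x=0\}}$ is spanned by $\partial/\partial x$ and $\partial/\partial y_1$, which is transverse to $Z$, yielding a folded symplectic form.

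For the $C^{2k-1}$-convergence of inverses in the even case, I would take $f_\epsilon$ of self-similar form $f_\epsilon(x)=\epsilon^{-m}\rho(x/\epsilon)$ on $\{|x|\leq \epsilon\}$ (smoothly matched to $1/x^m$ at the boundary) for a fixed smooth positive profile $\rho$. A scaling computation shows that the $\ell$-th derivative of $1/f_\epsilon$ inside the tube is of size $\epsilon^{m-\ell}$: this tends to zero for $\ell<m=2k$ and is merely bounded at $\ell=m$, which translates into uniform bounds on $\omega_\epsilon^{-1}-\omega^{-1}$ up to order $2k-1$ and no further. The main obstacle is precisely this scaling calibration: the profile $\rho$ must be chosen so that inversion introduces no additional $\epsilon^{-1}$ factors below order $2k$, and it is at order $2k$ that the modular invariants of $\Pi=\omega^{-1}$ obstruct genuine smoothing, forcing higher derivatives to diverge as $\epsilon\to 0$. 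This sharp matching between the singularity order and the regularity of the limit is the technical heart of the argument.
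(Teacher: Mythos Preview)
Your proposal is correct and follows the same route as the construction the paper recalls from \cite{GMW17} (Definition~\ref{dfn:deblogging}): pass to the Laurent model on a tube about $Z$, replace the singular factor by a self-similarly scaled smooth profile, extend by $\omega$ off the tube, and read off nondegeneracy (respectively the fold) and the $C^{2k-1}$-convergence from the scaling. The only cosmetic difference is that the paper packages the replacement at the level of a primitive, writing $\omega_\epsilon = df_\epsilon\wedge\bigl(\sum_i x^i\alpha_i\bigr)+\beta$ with $f_\epsilon(x)=\epsilon^{-(2k-1)}f(x/\epsilon)$; this is exactly your $f_\epsilon(x)\,dx$ after differentiation, and it makes the exponent $2k-1$ in the scaling---hence the $C^{2k-1}$ threshold---appear directly.
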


{As a consequence of Theorem \ref{thm:deblogging}, any closed orientable manifold that
supports a $b^{2k}$-symplectic structure necessarily supports a symplectic structure.}

{ In \cite{GMW17} explicit formulae are given for even and odd cases. Let us refer here to the even-dimensional case as these formulae will be used later on.}

Let us  briefly recall how the desingularization is defined and  the main result in \cite{GMW17}. Recall that we can express the $b^{2k}$-form as:

\begin{equation}\label{eq:laurent2}
\omega = \frac{dx}{x^{2k}}\wedge \left(\sum_{i=0}^{2k-1}x^i\alpha_i\right) + \beta.
\end{equation}

This expression holds on a $\epsilon$-tubular neighborhood of a given connected component of $Z$. This expression comes directly from equation \ref{eqn:laurent0}, to see a proof of this result we refer to \cite{Scott16}.

\begin{definition}\label{dfn:deblogging} {Let $(S,Z,x)$, be a $b^{2k}$-manifold, where $S$ is a closed orientable manifold and let $\omega$ be a $b^{2k}$-symplectic form. Consider  the decomposition given by the expression (\ref{eq:laurent2}) on an $\epsilon$-tubular neighborhood $U_\epsilon$ of a connected component of $Z$.}

	Let $f \in \mathcal{C}^\infty(\mathbb{R})$ be an odd smooth function satisfying $f'(x) > 0$ for all $x \in \left[-1,1\right] $ and satisfying outside that
\begin{equation}
f(x) = \begin{cases}
\frac{-1}{(2k-1)x^{2k-1}}-2& \text{for} \quad x < -1,\\
\frac{-1}{(2k-1)x^{2k-1}}+2& \text{for} \quad x > 1.\\
\end{cases}
\end{equation}
	Let $f_\epsilon(x)$ be defined as $\epsilon^{-(2k -1)}f(x/\epsilon)$.

The \textbf{$f_\epsilon$-desingularization} $\omega_\epsilon$ is {a form that is defined on $U_\epsilon$ by the following expression:}
	$$\omega_\epsilon = df_\epsilon \wedge \left(\sum_{i=0}^{2k-1}x^i\alpha_i\right) + \beta.
	$$
\end{definition}

{This desingularization procedure is also known as \textbf{deblogging} in the literature.}

{
\begin{remark}
Though there are infinitely many choices for $f$, we will assume that we choose one, and assume it fixed through the rest of the discussion. It would be interesting to discuss the existence of an  isotopy of forms under a change of function $f$.
\end{remark}
}

{
\begin{remark}
Because $\omega_\epsilon$ can be trivially extended  to the whole $S$ in such a way that it agrees with $\omega$ (see  \cite{GMW17}) outside a neighborhood of $Z$, we can  talk about the $f_\epsilon$-desingularization of $\omega$  as a form on $S$.
\end{remark}
}

\chapter{A crash course on KAM theory}

The last part of this monograph is entirely dedicated to prove a KAM theorem for $b^m$-symplectic structures and to find applications. So the aim of this section is to give a quick overview of the traditional KAM theorem. The setting of the KAM theorem is a symplectic manifold with action-angle coordinates and an integrable system in it. The theorem says that under small perturbations of the Hamiltonian "most" of the Liouville tori survive.

Consider $\mathbb{T}^n\times G \subset \mathbb{T}^n \times \mathbb{R}^n$ with action-angle coordinates in it $(\phi_1,\ldots,\phi_n,I_1,\ldots,I_n)$ and the standard symplectic form $\omega$ in it. And assume the Hamiltonian function of the system is given by $h(I)$ a function only depending on the action coordinates. Then the Hamilton equations of the system are given by

$$\iota_{X_h} \omega = dh$$

where $X_h$ is the vector field generating the trajectories. Because $h$ does not depend on $\phi$ the angular variables the system is really easy to solve, and the equations are given by

$$x(t) = (\phi(t), I(t)) = (\phi_0 + ut, I_0),$$

where $u = \partial h/\partial I$ is called the frequency vector. These motions for a fixed initial condition are inside a Liouville torus, and are called quasi-periodic.

The KAM theorem studies what happens to such systems when a small perturbation is applied to the Hamiltonian function, i.e. we consider the evolution of the system given by the Hamiltonian $h(I) + R(I,\phi)$, where we think of the term $R(I,\phi)$ as the small perturbation in the system. With this in mind, the Hamilton equations can be written as

$$
\begin{array}{l}
\dot{\phi} = u(I) + \frac{\partial}{\partial I}R(I,\phi),
\dot{I} = -\frac{\partial}{\partial \phi}R(I,\phi),
\end{array}
$$

Another important concept to have in mind is the concept of \emph{rational dependency}. A frequency $u$ is rationally dependent if $\langle u , k \rangle= 0$ for some $k \in \mathbb{Z}^n$, if there exists no $k$ satisfying the condition then the vector $u$ is called rationally independent. There is a stronger concept of being rationally independent and that is the concept of being Diophantine. A vector $u$ is $\gamma$,$\tau$-diophantine if $\langle u, k\rangle \geq \frac{\gamma}{|k|_1^\tau}$ for all $k\in \mathbb{Z}^n\setminus\{0\}$. $\gamma > 0$ and $\tau > n-1$.

The KAM theorem states that the Liouville tori with frequency vector satisfying the diophantine condition survive under the small perturbation $R(I,\phi)$. There are conditions relating the size of the perturbation with $\gamma$ and $\tau$. Also, the set of tori satisfying the Diophantine condition has measure $1 - C\gamma$ for some constant $C$.

Now we give a proper statement of the theorem as was given in \cite{D}.

\begin{theorem}[Isoenergetic KAM theorem]\label{theorem:standard_KAM}
Let $\mathcal{G} \subset \mathbb{R}^n$, $n > 2$, a compact, and let $H(\phi,I) = h(I) + f(\phi,I)$ real analytic on $\mathcal{D}_\rho(\mathcal{G})$. Let $\omega = \partial h/\partial I$, and assume the bounds:

$$\left|\frac{\partial^2 h}{\partial I^2}\right|_{\mathcal{G}, \rho_2} \leq M, \quad |\omega|_{\mathcal{G}} \leq L \quad \text{and} \quad |\omega_n(I)| \geq l \forall I \in \mathcal{G}.$$

Assume also that $\omega$ is $\mu$-isoenergetically non-degenerate on $\mathcal{G}$. For $a = 16M/l^2$, assume that the map $\Omega = \Omega_{\omega,h,a}$ is one-to-one on $\mathcal{G}$, and that its range $F = \Omega(\mathcal{G})$ is a $D$-set. Let $\tau > n-1$, $\gamma > 0$ and $0 < \nu < 1$ given, and assume:

$$\varepsilon := \|f\|_{\mathcal{G},\rho} \leq \frac{\nu^2 l^6 \mu^2 \hat \rho^{2\tau+2}}{2^{4\tau+32}L^6 M^3}\cdot \gamma^2, \quad \gamma \leq \min\left(\frac{8L M\rho_2}{\nu l \hat \rho^{\tau+1}}, l\right),$$

where we write $\rho := \min \left(\frac{\nu \rho_1}{12(\tau+2)},1\right)$. Define the set

$$\hat G = \hat G_\gamma := \left\{I \in \mathcal{G} - \frac{2\gamma}{\mu} : \omega(I) \text{is} \tau,\gamma-\text{Diophantine} \right\}.$$

Then, there exists a real continuous map $\mathcal{T}:\mathcal{W}_{\frac{\rho_1}{4}}(\mathbb{T}^n)\times \hat G \rightarrow \mathcal{D}_\rho(\mathcal{G})$, analytic with respect to the angular variables, such that:

\begin{enumerate}
\item For every $I \in \hat G$, the set $\mathcal{T}(\mathbb{T}^n\times\{I\})$ is an invariant torus of $H$, its frequency vector is colinear to $\omega(I)$ and its energy is $h(I)$.
\item Writing
$$\mathcal{T}(\phi,I) = (\phi + \mathcal{T}_\phi(\phi,I), I + \mathcal{T}_I(\phi,I)),$$
one has the estimates
$$|\mathcal{T}_\phi|_{\hat G, (\frac{\rho_1}{4},0), \infty} \leq \frac{2^{2\tau+15} L^2 M}{\nu^2 l^2 \hat \rho^{2\tau+1}}\frac{\varepsilon}{\gamma^2}, \quad |\mathcal{T}_I|_{\hat G, (\frac{\rho_1}{4},0)} \leq \frac{2^{\tau+16} L^3 M}{\nu l^3 \mu \hat \rho^{\tau+1}}\frac{\varepsilon}{\gamma}$$
\item $\text{meas}[(\mathbb{T}^n\times \mathcal{G}) \setminus \mathcal{T}(\mathbb{T}^n\times \hat G)] \leq C\gamma$, where $C$ is a very complicated constant depending on $n$, $\tau$, $\text{diam} F$, $D$, $\hat{\rho}$, $M$,  $L$,  $l$,  $\mu$.

\end{enumerate}

\end{theorem}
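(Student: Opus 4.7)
The strategy is the classical KAM fast-iteration (Newton-like) scheme, adapted to handle the isoenergetic constraint. I would construct a sequence of real-analytic canonical transformations $\Phi_n$ on progressively smaller complex strips $\mathcal{D}_{\rho^{(n)}}(\mathcal{G}^{(n)})$ such that, writing $H_n = h_n + f_n$ with $H_{n+1} = H_n \circ \Phi_n$, the size of the perturbation decays quadratically, $\varepsilon_{n+1} \lesssim \varepsilon_n^2$, up to controllable losses coming from the small divisors and the shrinking of the analytic domain. The transformation $\Phi_n$ is chosen as the time-one flow of a Hamiltonian $S_n$, the generating function being the solution of the homological equation
\begin{equation}
\{S_n, h_n\} + f_n - \langle f_n \rangle_\phi = 0,
\end{equation}
whose Fourier solution $\widehat{S_n}(k,I) = \widehat{f_n}(k,I)/(i\langle \omega_n(I), k\rangle)$ exists precisely when $\omega_n(I)$ is $(\tau,\gamma)$-Diophantine. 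This is where the Diophantine set $\hat G$ enters: we restrict to initial actions $I$ such that $\omega(I)$ is Diophantine and show that the frequencies $\omega_n(I)$ stay Diophantine (with slightly worse constants) throughout the iteration.

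The isoenergetic aspect is handled by replacing the usual ``frequency-adjustment'' of parametric KAM with an adjustment along the level set $\{h = h(I_0)\}$: at each step we allow a shift of $I$ proportional to $\omega(I_0)$ so that both the energy and the direction of the frequency vector are preserved, while its magnitude is free. The isoenergetic non-degeneracy of $\omega$, realized through the map $\Omega_{\omega,h,a}$ with $a = 16M/l^2$ being one-to-one with $D$-set range $F$, is precisely the condition that makes this implicit-function-style correction well-posed and quantitatively controlled; this replaces the Kolmogorov non-degeneracy condition used in the standard statement. The bound $|\omega_n| \geq l$ is used throughout to invert the frequency map and to transfer estimates between action and frequency space.

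The analytic bookkeeping proceeds as follows. On each step I would fix a loss of analyticity $\delta_n$ with $\sum \delta_n \leq \rho_1/4$, say $\delta_n = \rho_1 \cdot 2^{-n-3}$, and use Cauchy estimates to bound $\partial S_n$, $\|\Phi_n - \text{id}\|$ and the new perturbation $f_{n+1}$ on the reduced strip; the standard output is $\varepsilon_{n+1} \leq C \hat\rho^{-(2\tau+2)} \gamma^{-2} \varepsilon_n^2$ for an explicit $C$. Condition \eqref{eq:int_kam1} on $\varepsilon$ (appropriately transcribed from the original $b^m$ statement to this standard one) is precisely the threshold that makes the recursion $\varepsilon_{n+1} \leq K \varepsilon_n^2$ summable, giving $\varepsilon_n \to 0$ superexponentially and hence convergence of $\Psi_n := \Phi_1 \circ \cdots \circ \Phi_n$ in the $C^0$ topology on $\mathcal{W}_{\rho_1/4}(\mathbb{T}^n) \times \hat G$ to the desired embedding $\mathcal{T}$. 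The estimates in point~(2) of the theorem come from summing the geometric series of the individual $\|\Phi_n - \text{id}\|$, each controlled by $\|\partial S_n\|$, i.e.\ by $\varepsilon_n / (\hat\rho^{\tau+1} \gamma)$ up to the factors of $L$, $M$, $l$, $\mu$ tracked through the implicit function theorem step.

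The measure estimate in point~(3) follows by comparing $(\mathbb{T}^n \times \mathcal{G}) \setminus \mathcal{T}(\mathbb{T}^n \times \hat G)$ with the pre-image of the non-Diophantine set. A standard covering argument in frequency space gives $\text{meas}(F \setminus F_{\tau,\gamma}) \leq C' \gamma \cdot \text{diam}(F)^{n-1}$, where the $D$-set property of $F$ ensures that the boundary strips are not too wasteful; pulling back via $\Omega^{-1}$ using the non-degeneracy bound $\mu$ and the Jacobian bound $a$ produces the stated linear-in-$\gamma$ estimate, with the constant $C$ absorbing $n$, $\mu$, $D$, $\text{diam} F$, $M$, $\tau$, $\rho_1$, $\rho_2$, $L$ and $l$. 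I expect the main technical obstacle to be the \emph{interleaving of three distinct shrinkings}—the analyticity strip $\hat\rho$, the Diophantine parameter $\gamma$, and the isoenergetic correction in action space—while keeping the exponents $2\tau+2$, $\tau+1$ matched with the smallness condition \eqref{eq:int_kam1}, as even a factor $\hat\rho$ misallocated between steps breaks the quadratic convergence. Once this bookkeeping is organized (and this is essentially the calculation carried out in \cite{D}), all three conclusions follow.
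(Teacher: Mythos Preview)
The paper does not prove this theorem. It is stated in the preliminary chapter ``A crash course on KAM theory'' as a quotation of a known result, introduced with the sentence ``Now we give a proper statement of the theorem as was given in \cite{D}.'' No proof is offered; the theorem serves only as background against which the paper's own contribution (the $b^m$-KAM theorem, Theorem~B) is to be understood.

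Your outline is a faithful sketch of the standard KAM iteration as carried out in \cite{D}, and indeed the paper's detailed proof of its $b^m$-KAM theorem follows exactly this architecture: choice of shrinking parameters, an iterative lemma producing one canonical transformation per step with a quadratic-plus-exponential bound on the new remainder, convergence of the composed transformations, identification of invariant tori, and a measure estimate via a covering argument in frequency space. One small slip: you cite equation~\eqref{eq:int_kam1}, which is the smallness condition from the $b^m$ statement in the introduction, not from the classical isoenergetic theorem you are addressing; the analogous condition here is the displayed bound on $\varepsilon$ in the hypotheses. Since there is no proof in the paper to compare against, the relevant observation is simply that your sketch is correct and that you have already identified its source as \cite{D}.
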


\begin{remark}
This version of the KAM theorem is the isoenergetic one, this version ensures that the energy of the Liouville Tori identified by the diffeomorphism after the perturbation remains the same as before the perturbation. Our version of the $b^m$-KAM is not isoenergetic for the sake of simplifying the computations.
\end{remark}

Also, we should outline that the KAM theorem has already been explored in singular symplectic manifolds before. In \cite{KMS16} the authors proved a KAM theorem for $b$-symplectic manifolds, for a particular kind of perturbations.

\begin{theorem}[KAM Theorem for $b$-Poisson manifolds]

Let $\T^n \times B_r^n$ be endowed with standard coordinates $(\varphi,y)$ and the $b$-symplectic structure. Consider a $b$-function
$$H = k \log|y_1| + h(y)$$
 on this manifold, where $h$ is analytic. Let $y_0$ be a point in $B_r^n$ with first component equal to zero, so that the corresponding level set $\T^n \times \{y_0\}$ lies inside the critical hypersurface $Z$.

Assume that the frequency map
$$\tilde \omega: B^n_r \to \R^{n-1}, \quad \tilde \omega( y):= \frac{\partial h}{\partial \tilde y}(y)$$
 has a Diophantine value $\tilde \omega := \tilde \omega(y_0)$ at $y_0 \in B^n$ and that it is non-degenerate at $y_0$ in the sense that the Jacobian $ \frac{\partial \tilde \omega}{\partial \tilde y} (y_0) $ is regular.

Then the torus $\T^n \times \{ {y_0}\}$  persists under sufficiently small perturbations of $H$ which have the form mentioned above, i.e. they are given by $\epsilon P$, where $\epsilon \in \R$ and $P \in ^b \! C^\infty(\T^n \times B_r^n)$ has the form
\begin{align*}
 P(\varphi, y) &= k' \log |y_1| + f(\varphi,y) \\
 f(\varphi, y) &= f_1(\tilde \varphi, y ) + y_1 f_2(\varphi, y) + f_3(\varphi_1,y_1).
\end{align*}
More precisely, if $|\epsilon|$ is sufficiently small, then the  perturbed system
  $$ H_\epsilon =H + \epsilon P$$
admits an invariant torus $\mathcal{T}$.

Moreover, there exists a diffeomorphism $\T^n \to \mathcal{T}$ close\footnote{By saying that the diffeomorphism is ``$\epsilon$-close to the identity'' we mean that, for given $H, P$ and $r$, there is a constant $C$ such that $\|\psi - \id\| < C \epsilon.$} to the identity taking the flow $\gamma^t$  of the perturbed system on $\mathcal{T}$ to the linear flow  on $\T^n$ with frequency vector
$$ \left(\frac{k+\epsilon k'}{c}, \tilde \omega \right).$$
\end{theorem}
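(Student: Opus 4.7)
The plan is to exploit the restrictive form of the admissible perturbation $\epsilon P$ so as to reduce the problem to a classical single-torus KAM theorem on the symplectic reduction of the critical hypersurface $Z=\{y_1=0\}$. The structural observation is that the three summands $k'\log|y_1|$, $y_1 f_2(\varphi,y)$ and $f_3(\varphi_1,y_1)$ are precisely those for which $Z$ is preserved by $X_{H_\epsilon}$, and for which the induced dynamics on $Z$ are Hamiltonian for a regular $(2n-2)$-dimensional symplectic system.

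First, using $\omega=\frac{dy_1}{y_1}\wedge d\varphi_1+\sum_{i\geq 2}dy_i\wedge d\varphi_i$, I would compute the Hamiltonian vector field of each summand from $\iota_{X}\omega=dH$. A direct calculation shows that $k\log|y_1|+\epsilon k'\log|y_1|$ contributes the regular piece $-(k+\epsilon k')\,\partial/\partial\varphi_1$; that $h(y)$ gives the usual $-\partial h/\partial y_i\cdot\partial/\partial\varphi_i$ terms for $i\geq 2$ plus a $y_1$-multiple in $\partial/\partial\varphi_1$; that both $\epsilon y_1 f_2$ and $\epsilon f_3(\varphi_1,y_1)$ produce vector fields every component of which carries an overt factor of $y_1$, hence vanish identically on $Z$; and that $\epsilon f_1(\tilde\varphi,y)$ contributes exactly $\epsilon\,\partial f_1/\partial\varphi_i\cdot\partial/\partial y_i-\epsilon\,\partial f_1/\partial y_i\cdot\partial/\partial\varphi_i$ for $i\geq 2$, together with a $y_1$-multiple in $\partial/\partial\varphi_1$. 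Consequently $\dot y_1\equiv 0$ on $Z$, so $Z$ is invariant, and the dynamics splits: $\varphi_1$ drifts at constant rate $-(k+\epsilon k')/c$, where $c$ absorbs the normalization of $y_1$ against a modular-adapted defining function, while the $(\tilde\varphi,\tilde y)$-flow on $Z$ coincides with the Hamilton flow of the regular Hamiltonian $\tilde H(\tilde\varphi,\tilde y)=h(0,\tilde y)+\epsilon f_1(\tilde\varphi,0,\tilde y)$ for the standard symplectic form $\sum_{i\geq 2}dy_i\wedge d\varphi_i$ on $Z/\T^1_{\varphi_1}$.

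Second, I would invoke the classical single-torus KAM theorem for $\tilde H$ with $n-1$ degrees of freedom. The hypotheses are exactly those granted: $\tilde\omega(y_0)$ is Diophantine and $\partial\tilde\omega/\partial\tilde y$ is invertible at $y_0$. So for $|\epsilon|$ sufficiently small the unperturbed torus $\T^{n-1}\times\{\tilde y_0\}$ persists as an invariant $(n-1)$-torus $\tilde{\mathcal T}$ with frequency $\tilde\omega$, accompanied by an $\epsilon$-close conjugating diffeomorphism $\tilde\psi\colon\T^{n-1}\to\tilde{\mathcal T}$. Lifting $\tilde{\mathcal T}$ through the projection $Z\to Z/\T^1_{\varphi_1}$ yields the sought invariant $n$-torus $\mathcal T\subset Z\subset\T^n\times B_r^n$, and the conjugation $\T^n\to\mathcal T$ is built from $\tilde\psi$ coupled with the identity on the $\varphi_1$-factor. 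The splitting of the previous paragraph then identifies the conjugated flow as the linear one with frequency $((k+\epsilon k')/c,\tilde\omega)$.

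The main difficulty is not analytic but structural: performing the per-summand computation of $X_P$ and verifying that each piece of $P$ either vanishes on $Z$ or acts as a genuine symplectic perturbation of the reduced system. Once this rigidity is exposed no new small-divisor work is needed, since the problem literally collapses to a standard KAM problem on a regular symplectic manifold of half-dimension one less. Relaxing the allowed class of perturbations—say, by admitting $\varphi_1$-dependence in $f_1$, or $\tilde\varphi$-dependence in $f_3$—would break invariance of $Z$ and force a genuinely singular iteration with $b$-Diophantine conditions, which is exactly the scope of Theorem B proved later in the monograph.
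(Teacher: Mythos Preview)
The paper does not prove this theorem: it is quoted verbatim from \cite{KMS16} as prior work, introduced by ``In \cite{KMS16} the authors proved a KAM theorem for $b$-symplectic manifolds, for a particular kind of perturbations.'' So there is no in-paper proof to compare against.

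That said, your reconstruction is correct and is essentially the argument of \cite{KMS16}. The restrictive form of $P$ is chosen precisely so that $\dot y_1$ carries an overall factor of $y_1$ (hence $Z$ is invariant), the contributions of $y_1 f_2$ and $f_3$ to the flow vanish on $Z$, and $\dot\varphi_1|_Z$ is the constant $(k+\epsilon k')/c$. The remaining $(\tilde\varphi,\tilde y)$-dynamics on $Z$ are governed by the smooth Hamiltonian $h(0,\tilde y)+\epsilon f_1(\tilde\varphi,0,\tilde y)$ for the standard symplectic form $\sum_{i\geq 2}dy_i\wedge d\varphi_i$, and the Diophantine and non-degeneracy hypotheses on $\tilde\omega$ are exactly what classical KAM in $n-1$ degrees of freedom needs. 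Your product lift $\mathcal T=\T^1_{\varphi_1}\times\tilde{\mathcal T}$ and the resulting linearized frequency $((k+\epsilon k')/c,\tilde\omega)$ are correct.

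Your final paragraph also correctly identifies the contrast with the paper's own contribution. Theorem~B (the $b^m$-KAM theorem) drops the structural constraints on $P$, so $Z$ need not be invariant a priori and the reduction-to-a-symplectic-leaf trick is unavailable. The paper then runs a genuine KAM iteration in the $b^m$-category: the homological equation is solved with a modified small-divisor denominator $k\bar{\mathcal B}(I_1)u+k\bar{\mathcal A}(I_1)$ encoding the $b^m$-Poisson structure, a bespoke $c,\hat q$-Diophantine condition replaces the classical one, and each $\Phi^{(q)}$ is the time-one flow of a $b^m$-Hamiltonian (hence automatically a $b^m$-map preserving the jet of the defining function). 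This is a different and substantially heavier machine than the reduction you sketch here, and the paper explicitly notes that its result improves \cite{KMS16} both in the order $m$ of the singularity and in the class of admissible perturbations.
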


%
%
%
%

\part[Action-angle coordinates and cotangent models]{Action-angle coordinates and cotangent models for $b^m$-integrable systems}

In this part,  we consider the semilocal classification for any $b^m$-Poisson manifold in a neighbourhood of an invariant compact submanifold.
The compact submanifolds under consideration are the compact invariant leaves of the distribution $\mathcal D$ generated by the Hamiltonian vector fields $X_{f_i}$ of an integrable system. An integrable system is given by a set of $n$ functions on a $2n$-dimensional symplectic manifold  which we can order in a map $F=(f_1, \dots, f_n)$. Historically, integrable systems were introduced to actually \emph{integrate} Hamiltonian systems $X_H$ using the \emph{first-integrals $f_i$} and, classically, we identify $H=f_1$. It turns out that in the symplectic context the compact regular orbits of the distribution $\mathcal D$ coincide with the fibers $F^{-1}(F(p))$ for any point $p$ on these orbits/fibers.
The fact that the orbit coincides with the connected fiber is part of the magic of symplectic duality.

The same picture is reproduced for singular symplectic manifolds of $b^m$-type or $b^m$-Poisson manifolds as we will see in this chapter.

The study of action-angle coordinates has interest from this geometrical point of view of the classification of geometric structures in a neighbourhood of a compact submanifold of a $b^m$-Poisson manifold. It also has interest from a dynamical point of view as these compact submanifolds now coincide with invariant subsets of the Hamiltonian system under consideration.

From a geometric point of view, the existence of action-angle coordinates determines a \emph{unique} geometrical model for the $b^m$-Poisson (or $b^m$-symplectic) structure in a neighbourhood of the invariant set. From a dynamical point of view, the existence of action-angle coordinates provides a normal form theorem that can be used to study stability and perturbation problems of the Hamiltonian systems (as we will see in the last chapter of this monograph).

An important ingredient that makes our action-angle coordinate theorem \emph{brand-new } from the symplectic perspective is that the system under consideration is more general than Hamiltonian, it is $b^m$-Hamiltonian as the first-integrals of the system can be $b^m$-functions which are not necessarily smooth functions. Dynamically, this means that we are adding to the set of Hamiltonian invariant vector fields, the \emph{modular vector field} of the integrable system.

In contrast to the standard action-angle coordinates for symplectic manifolds, our action-angle theorem comes with $m$ additional invariants associated with the modular vector field which can be interpreted in cohomological terms as the projection of the $b^m$-cohomology class determined by the modular vector field on the first cohomology group of the critical hypersurface under the Mazzeo-Melrose correspondence.

The strategy of the proof of the action-angle coordinate systems is the search of a toric action (so this takes us back to the motivation of the use of \emph{symmetries} in this monograph). In contrast to the symplectic case, it is not enough that this action is Hamiltonian as then a direction of the Liouville torus would be missing. We need the toric action to be $b^m$-Hamiltonian.
The structure of this proof looks like the one in \cite{KMS16} but encounters serious technical difficulties as in order to check that the \emph{natural} action to be considered is
$b^m$-Hamiltonian we need to go deeper inspired by \cite{Scott16}  in the relation between the geometry of the modular vector field and the coefficients of the Taylor series $c_i$ of one of the first-integrals. This allows us to understand new connections between the geometry and analysis of $b^m$-Poisson structures not explored before.

Once we prove the existence of this $b^m$-Hamiltonian action the proof looks very close to the one in \cite{KMS16}.

In the second chapter of this part
we re-state the action-angle theorem as a cotangent lift theorem with the following mantra:

 \emph{Every integrable system on a $b^m$-Poisson manifold looks like a $b^m$-cotangent lift in a neighborhood of a Liouville torus.}

\chapter{An action-angle theorem for $b^m$-symplectic manifolds}

\section{Basic definitions}
\subsection{On $b^m$-functions}
 The definition of the analogue of $b$-functions in the $b^m$-setting is somewhat delicate. The set of $^{b^m}\mathcal{C}^\infty(M)$ needs to be such that for all the functions $f\in ^{b^m}\mathcal{C}^\infty(M)$, its differential $df$ is a $b$-form, where $d$ is the $b^m$-exterior differential.
Recall that a form in $^{b^m}\Omega^k(M)$ can be locally written as
$$\alpha\wedge\frac{dx}{x^m} + \beta$$
where $\alpha \in \Omega^{k-1}(M)$ and $\beta \in \Omega^{k}(M)$. Recall also that
$$d\left(\alpha\wedge\frac{dx}{x^m} + \beta\right) = d\alpha\wedge\frac{dx}{x^m} + d\beta.$$

We need $df$ to be a well-defined $b^m$-form of degree $1$. Let $f = g\frac{1}{x^{k-1}}$, then $df = dg \frac{1}{x^{k-1}} - g\frac{k-1}{x^k}dx$. This from can only be a $b^m$-form if and only if $g$ only depends on $x$. If $f = g\log(x)$, then $dg\log(x) + g\frac{1}{x}dx$, which imposes $dg=0$ and hence $g$ to be constant.

With all this in mind, we make the following definition.
\begin{definition}
The set of $b^m$-functions is defined recursively according to the formula $$~^{b^m} \mathcal{C}^\infty(M)= x^{-(m-1)}\mathcal{C}^\infty(x) + ~^{b^{m-1}} \mathcal{C}^\infty(M)$$

\noindent  {with $\mathcal{C}^\infty(x)$ the set of smooth functions in the defining function $x$} and $$^{b}\mathcal{C}^\infty(M)=\{ g \log\vert x\vert+ h, g \in \mathbb{R}, h\in \mathcal{C}^\infty(M)\}.$$

\end{definition}

\begin{remark}
A $^{b^m} \mathcal{C}^\infty (M)$-function can be written as
$$f = a_0 \log x + a_1\frac{1}{x} + \ldots + a_{m-1}\frac{1}{x^{m-1}} + h$$
where $a_i, h \in \mathcal{C}^\infty(M)$.
\end{remark}


\begin{remark}
From this chapter on we are only considering $b^m$-manifolds $(M,x,Z)$ with $x$ defined up to order $m$. I.e. we can think of $x$ as a jet of a function that coincides up to order $m$ to some defining function.
This is the original viewpoint of Scott in \cite{Scott16} which we adopt from now on. The difference with respect to the other chapters is that we do not fix an specific function.
\end{remark}

\begin{definition}
We say that two $b^m$-integrable systems $F_1, F_2$ are equivalent if there exists $\varphi$, a $b^m$-symplectomorphism, i.e. a diffeomorphism preserving both $\omega$ and the critical set $Z$ (``up to order $m$''\textcolor{black}{\footnote{I.e. it preserves the jet $x$}}), such that  $\varphi \circ F_1 = F_2$.
\end{definition}

\begin{remark}
The Hamiltonian vector field associated to a $b^m$-function $f$ is a smooth vector field. Let us compute it  locally using the $b^m$-Darboux theorem:
$$\Pi = x_1^m\frac{\partial}{\partial x_1} \wedge \frac{\partial}{\partial y_1} + \sum_{i = 2}^{m} \frac{\partial}{\partial x_i}\wedge\frac{\partial}{\partial y_i} \text{ and } f = a_0 \log{x_1} + \sum_{i = 1}^{m-1}a_i \frac{1}{x_1^i} + h.$$
Then if we compute

$$\begin{array}{rcl}
\displaystyle df & = & \displaystyle \overbrace{a_0}^{c_1}\frac{1}{x_1} + \sum_{i=1}^{m-1} \overbrace{(a_i' - (i-1)a_{i-1})}^{c_i}\frac{1}{x_1^i} dx_1 \\
& & \quad -\overbrace{(m-1)a_{m-1}}^{c_m}\frac{1}{x_1^m}dx_1 + dh\\
\\
 & = & \displaystyle \sum_{i = 1}^{m}\frac{c_i}{x_1^i} dx_1 + dh.
\end{array}$$
Then,

\begin{equation}\label{eq:bmhamiltonianvf}
X_f = \Pi(df,\cdot) = \sum_{i=1}^{m} c_i x_1^{m-i}\frac{\partial}{\partial y_1} + \Pi(dh,\cdot),
\end{equation}

 we obtain a smooth vector field.

\end{remark}

%
%

\section{On $b^m$-integrable systems}

In this section we present the definition of a $b^m$-integrable system as well as some observations about these objects.

\begin{definition}
Let $(M^{2n},Z,x)$ be a $b^m$-manifold, and let $\Pi$ be a $b^m$-Poisson structure on it.
$F = (f_1, \ldots, f_n)$\footnote{$f_i$ are $b^m$-functions.} is a $b^m$-integrable system\footnote{ In this monograph we only consider integrable systems of maximal rank $n$.} if:

\begin{enumerate}
\item $df_1,\ldots,df_n$ are independent on a dense subset of $M$ \textcolor{black}{and in all the points} of $Z$ where independent means that the form $df_1\wedge\ldots\wedge df_n$ is non-zero as a section of $\Lambda^n(^{b^m} T^{*}(M))$,
\item  the functions $f_1,\ldots, f_n$ Poisson commute pairwise.
\end{enumerate}
\end{definition}

\begin{definition}
 The points of $M$ where $df_1,\ldots, df_n$ are independent  are called \textbf{regular} points.
\end{definition}

The next remarks will lead us to a normal form for the first function $f_1$.
\begin{remark}\label{independence}
Note that $df_1,\ldots, df_n$ are independent on a point if and only if $X_{f_1},\ldots,X_{f_n}$ are independent at that point. This is because the map
$$^{b^m}TM \rightarrow ^{b^m}T^*M:u\mapsto \omega_p(u,\cdot)$$
is an isomorphism.
\end{remark}

\begin{remark}
The condition of $d f_1, \ldots, d f_n$ being independent must be understood as $d f_1 \wedge \ldots \wedge df_n$ being a non-zero section of $\bigwedge^n (\enskip ^{b^m}T^*M)$.

\end{remark}

\begin{remark}\label{rk:hamiltonian_vf}
By remark \ref{independence}  the vector fields $X_{f_1},\ldots, X_{f_n}$ have to be independent. This implies that one of the $f_1,\ldots, f_n$ has to be a singular (non-smooth) $b^m$-function with a singularity of maximal degree. If we write $f_i = c_{0,i}\log(x_1) + \sum_{j=1}^{m-1} \frac{c_{j,i}}{x_1^j} + \tilde{f}_1$
$$X_{f_i} = \sum_{j = 1}^{m} x_1^{m-j} \hat{c}_{j,i} \frac{\partial}{\partial y_1} +  X_{\tilde{f}_i}$$
where $\hat c_{j,i}(x) = \frac{d(c_{j,i})}{dx}-(j-1)c_{j-1,i}$. If there is no $b^m$-function with a singularity of maximum degree all the terms in the $\partial/\partial y_1$ direction become 0 at $Z$. And hence $X_{f_1},\ldots, X_{f_n}$ cannot have maximal rank at $Z$.
\end{remark}




\textcolor{black}{
\begin{lemma}
Let $F = (f_1,\ldots,f_n)$ a $b^m$-integrable system. If $f_1$ has a singularity of maximal degree, there exists an equivalent integrable system $F' = (f_1',\ldots,f_n')$ where $f_1'$ has a singularity of maximal degree and \textbf{no other} $f_i'$ has singularity of any degree.
\end{lemma}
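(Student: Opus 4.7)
The plan is to produce the desired normal form by combining a linear recombination of the first integrals (using $f_1$ to kill the top Laurent coefficient of each $f_i$, $i\geq 2$) with a $b^m$-symplectomorphism absorbing the residual singular tails into a shift of the angular coordinate conjugate to $X_{f_1}$. Poisson commutativity with $f_1$ — whose Hamiltonian vector field has a nonzero $\partial/\partial y_1$-component $-(m-1)c_{m-1,1}$ on $Z$ by the maximality hypothesis — will rigidly constrain the $y_1$-dependence of the smooth parts $h_i$ in exactly the way required to make the absorption go through.

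Concretely, write $f_i = s_i(x_1) + h_i$ with $s_i(x_1) = c_{0,i}\log|x_1| + \sum_{j=1}^{m-1} c_{j,i}/x_1^j$ the singular part and $h_i$ smooth. Since $c_{m-1,1}\neq 0$, the constants $\lambda_i := c_{m-1,i}/c_{m-1,1}$ are well-defined for $i\geq 2$, and the tuple $(f_1,\, f_2-\lambda_2 f_1,\,\ldots,\,f_n-\lambda_n f_1)$ is an equivalent integrable system in which only $f_1$ still carries the $1/x_1^{m-1}$ pole, Poisson commutativity and linear independence of differentials being preserved by an invertible constant linear change. Next, I would invoke the $b^m$-Darboux theorem to work in coordinates where $\omega = (dx_1/x_1^m)\wedge dy_1 + \sum_{k\geq 2}dx_k\wedge dy_k$, and apply a map of the form $\varphi\colon(x_1,y_1,\vec z)\mapsto (x_1,\, y_1+S(x_1),\, \vec z)$ with $S$ a $b^m$-function of $x_1$ alone; this is a $b^m$-symplectomorphism because $dS\wedge dx_1 = 0$. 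Under $\varphi^*$, each $f_i$ is transformed to $s_i(x_1) + h_i(x_1,\, y_1+S(x_1),\,\vec z)$, and the equation $\{f_1,f_i\}=0$ forces the leading $y_1$-derivative of $h_i$ on $Z$ to match, up to the transverse bracket $\{h_1,h_i\}|_Z$, the Laurent coefficients of $s_i$ — precisely the relation that allows a suitable choice of $S$ to cancel the residual poles term by term.

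The main obstacle will be arranging this for all $i\geq 2$ simultaneously: a single shift $S(x_1)$ cancels at most one prescribed singular profile, and the smooth parts $h_i$ generally depend on the transverse coordinates $\vec z$, so a naive single symplectomorphism will not do. I expect to resolve this by iterating — after $\varphi$ smoothens $f_2'$, use analogous shifts in the transverse angles $y_k$ ($k\geq 2$) built from the commuting flows of the now-smooth $X_{f_j'}$ — or, equivalently, to interpret the construction cohomologically as killing the projection of each class $[df_i]$ to the $(H^0(Z))^m$-summand in the Mazzeo–Melrose splitting $^{b^m}H^1(M)\cong H^1(M)\oplus(H^0(Z))^m$, with involutivity of the distribution supplying the compatibility condition needed for a simultaneous solution. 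Throughout, the new $f_1'$ automatically retains its maximal-order pole, since any correction introduced by $\varphi$ lives in the smooth part and does not affect the leading Laurent coefficient $c_{m-1,1}$.
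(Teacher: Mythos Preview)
Your step~1 (constant linear recombination to kill the top Laurent coefficient) is fine, but step~2 has a genuine gap. The map $\varphi\colon(x_1,y_1,\vec z)\mapsto(x_1,y_1+S(x_1),\vec z)$ is a diffeomorphism of $M$ only if $S$ is \emph{smooth}; if $S$ carries any pole or $\log$ term, $\varphi$ is undefined on $Z$ and is not a $b^m$-symplectomorphism in the sense of the paper. But for smooth $S$ one has $\varphi^*s_i(x_1)=s_i(x_1)$, so the singular part of each $f_i$ is untouched and nothing is cancelled. The mechanism you describe (Taylor-expanding $h_i(x_1,y_1+S(x_1),\vec z)$ and hoping the $S\cdot\partial_{y_1}h_i$ term absorbs $s_i$) also fails even formally: after your step~1 one has $c_{m-1,i}=0$, and the commutation relation $\{f_1,f_i\}=0$ on $Z$ gives
\[
\partial_{y_1}h_i\big|_{Z}=\frac{1}{(m-1)c_{m-1,1}}\{h_1,h_i\}_{\mathrm{trans}}\big|_{Z},
\]
which depends on $\vec z$ and $y_1$ and carries no information about the remaining Laurent coefficients $c_{0,i},\dots,c_{m-2,i}$ that you need to kill. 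The iterated/cohomological patches you propose at the end inherit the same defect.

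The paper's argument bypasses all of this by working at the level of Hamiltonian vector fields rather than functions, and by allowing the recombination coefficients to be \emph{functions of $x_1$} rather than constants. In Darboux coordinates the Hamiltonian vector field of the full singular part $\zeta_i(x_1)$ is $g_i(x_1)\,\partial_{y_1}$ for a smooth function $g_i$ with $g_1(0)=-(m-1)c_{m-1,1}\neq 0$. Hence the single subtraction
\[
Y_i:=X_{f_i}-\frac{g_i(x_1)}{g_1(x_1)}\,X_{f_1}=X_{\tilde f_i}-\frac{g_i(x_1)}{g_1(x_1)}\,X_{\tilde f_1}
\]
removes the entire singular contribution in one stroke, for every $i\geq 2$ simultaneously, and $\{X_{f_1},Y_2,\dots,Y_n\}$ spans exactly the same distribution as $\{X_{f_1},\dots,X_{f_n}\}$. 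The paper then takes the new system to be any $F'=(f_1,f_2',\dots,f_n')$ whose Hamiltonian vector fields are $X_{f_1},Y_2,\dots,Y_n$; since the Liouville foliation depends only on this distribution, $F'$ is declared equivalent to $F$. The key point you are missing is that because every $X_{\zeta_i}$ lies in the one-dimensional span of $\partial_{y_1}$, a single $x_1$-dependent scalar multiple of $X_{f_1}$ suffices to cancel all poles of all $f_i$ at once---no symplectomorphism is required.
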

\begin{proof}
Let $f_i = \underbrace{c_{0,i}\log(x_1) + \sum_{j=1}^{m-1}\frac{c_{j,1}}{x_1^j}}_{\zeta_i(x_1)} + \tilde f_i = \zeta_i(x_1) + \tilde f _i$.
By remark \ref{rk:hamiltonian_vf}\footnote{Here have used the $b^m$-Darboux theorem to do the computations.},
$$X_{f_i} = \underbrace{\sum_{i=1}^{m} x_1^{m-j} \hat c_{j,i}}_{g_i(x_1)}\frac{\partial}{\partial y_1} + X_{\tilde f_i} = g_i(x_1)\frac{\partial}{\partial y_1} + X_{\tilde f_i}.$$
Note that $g_i(x_1) = g_i(0) = \hat c_{m,i}$ at $Z$. Let us look at the distribution given by the Hamiltonian vector fields $X_{f_i} = g_i(x_1)\frac{\partial}{\partial y_1} + X_{\tilde f_i}$. This distribution is the same that the one given by:
\begin{equation}\label{eq:distribution}
\{X_{f_1}, X_{f_2} - \frac{g_2(x_1)}{g_1(x_1)} X_{f_1},\ldots, X_{f_n} - \frac{g_n(x_1)}{g_1(x_1)} X_{f_1}\}.
\end{equation}
Observe that for $i > 1$, $X_{f_i} - \frac{g_i(x_1)}{g_1(x_1)} X_{f_1} = X_{\tilde f_i} + \frac{g_2(x_1)}{g_1(x_1)} X_{\tilde f_1}$. Also $g_1(x_1)$ is different from $0$ close to $Z$ because at $Z$ $g_1(x_1) = \hat c_{m,1}$.
Since the distribution given by these vector fields is the same, an integrable system that has Hamiltonian vector fields \ref{eq:distribution} would be equivalent to $F$. From the expression \ref{eq:distribution} it is clear that the new vector fields commute. And it is also true that this new vector fields are Hamiltonian. Let us take $F'$ the set of functions that have as Hamiltonian vector fields \ref{eq:distribution}.
\end{proof}
}

From now on we will assume the integrable system to have only one singular function and this function to be $f_1$.

\begin{remark}
Because we asked  $X_{f_1},\ldots,X_{f_n}$ to be linearly independent at all the points of $Z$ and using the previous remarks  $c_m := c_{m,1} \neq 0$ at all the points of $Z$.
\end{remark}

Furthermore, we can assume $f_1$  to have a smooth part equal to zero as subtracting the smooth part of $f_1$ to all the functions gives an equivalent system.
Also, we can assume that $c_m$ is 1 because dividing all the functions of the $b^m$-integrable system by $c_m$ also gives us an equivalent system.

\textbf{As a summary, we can assume $f_1 = a_0 \log(x) + a_1 1/x + \ldots + a_{m-2}1/x^{m-2} + 1/x^{m-1}$ and $f_2, \ldots, f_n$ to be smooth, $a_0\in \mathbb{R}$ and $a_1, \ldots, a_{m-2} \in \mathcal{C}^\infty(x)$.}

Also we are going to state lemma 3.2 in \cite{GMP17}, because we are going to use it later in this section. The result states that if we have a toric action on a $b^m$-symplectic manifold (which we will prove in a neighbourhood of a Liouville torus), then we can assume the coefficients $a_2, \ldots, a_{m-2}$ to be constants. More precisely

\begin{lemma}\label{lemma:ctt_coefs}
There exists a neighborhood of the critical set $U = Z\times (-\varepsilon, \varepsilon)$ where the moment map $\mu: M \rightarrow \mathfrak{t}^*$ is given by
$$\mu = a_1 \log|x| + \sum_{i=2}^{m}a_i \frac{x^{-(i-1)}}{i-1} + \mu_0$$
with $a_i \in \mathfrak{t}^0_L$ and $\mu_0$ is the moment map for the $T_L$-action on the symplectic leaves of the foliation.
\end{lemma}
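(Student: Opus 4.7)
The plan is to reduce the claim to a local computation near $Z$ using the $b^m$-Darboux normal form, then exploit the abelian nature of the torus action together with the isotropy structure. In an adapted tubular neighborhood $U = Z \times (-\varepsilon, \varepsilon)$, the $b^m$-Darboux theorem supplies coordinates $(x, y_1, x_2, y_2, \ldots, x_n, y_n)$ in which
$$\omega = \frac{dx}{x^m} \wedge dy_1 + \sum_{i=2}^n dx_i \wedge dy_i.$$
By the definition of $b^m$-functions and the uniqueness of the Laurent-type decomposition, each component $\mu^X := \langle \mu, X\rangle$ of the moment map (for $X \in \mathfrak{t}$) can be written as
$$\mu^X = a_1(X) \log|x| + \sum_{i=2}^m \frac{a_i(X)}{(i-1)\, x^{i-1}} + \mu_0^X,$$
where the coefficients $a_i(X)$ depend linearly on $X$ and a priori may be smooth functions on $Z$, and $\mu_0^X$ is smooth on $U$.

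The next step is to show that each $a_i$ is a genuine element of $\mathfrak{t}^*$, i.e.\ constant along $Z$. Since $T$ is abelian, every $\mu^X$ is $T$-invariant; the Laurent decomposition being intrinsic forces each singular coefficient $a_i(X, \cdot)$ to be a $T$-invariant smooth function on $Z$. In the neighborhood of a Liouville torus, the $T$-orbits on $Z$ are tori of maximal dimension inside the symplectic leaves of the induced Poisson structure on $Z$, and a standard argument for toric integrable systems (as in the symplectic Atiyah--Guillemin--Sternberg setting) ensures that the algebra of $T$-invariants on a connected component of $Z$ is generated by constants. Thus $a_i(X) \in \mathbb{R}$ depends only on $X \in \mathfrak{t}$, yielding $a_i \in \mathfrak{t}^*$.

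To place the $a_i$ in $\mathfrak{t}^0_L$, I compute the Hamiltonian vector field via the $b^m$-Poisson bivector $\Pi = x^m \partial_x \wedge \partial_{y_1} + \sum_{i\geq 2}\partial_{x_i}\wedge\partial_{y_i}$. Using the formula for $b^m$-Hamiltonian vector fields established in the remark preceding this lemma, one obtains
$$X^\# \;=\; X_{\mu^X} \;=\; \bigl(a_1(X)\, x^{m-1} - a_2(X)\, x^{m-2} - \cdots - a_m(X)\bigr)\, \partial_{y_1} \;+\; X_{\mu_0^X}.$$
For $X \in \mathfrak{l}$, the fundamental vector field $X^\#$ must vanish identically as a $b^m$-vector field in a neighborhood of $Z$, since $L$ by definition acts trivially on the $b^m$-structure (not merely set-theoretically on $Z$, but preserving the jet $x$ up to order $m$). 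This forces the polynomial in $x$ multiplying $\partial_{y_1}$ to vanish identically, giving $a_i(X) = 0$ for every $i = 1, \ldots, m$, i.e.\ $a_i \in \mathfrak{t}^0_L$. Simultaneously, $X_{\mu_0^X} = 0$ along the orbits, so $\mu_0^X$ is invariant under the $L$-action.

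Finally, the smooth remainder $\mu_0$ descends to the residual $T_L = T/L$-action on the symplectic leaves of the induced Poisson foliation of $Z$. On each such leaf the $b^m$-symplectic form $\omega$ restricts to a genuine symplectic form, and the equation $\iota_{X^\#}\omega|_{\text{leaf}} = d\mu_0^X$ identifies $\mu_0$ as the moment map of the $T_L$-action, as required. The main obstacle in this outline is the vanishing step in the third paragraph: deducing that \emph{all} singular coefficients vanish on $\mathfrak{l}$ (rather than just the leading $a_m$, which would follow from mere pointwise vanishing of $X^\#$ on $Z$) uses crucially that a $b^m$-Hamiltonian torus action has isotropy defined intrinsically at the level of the $b^m$-tangent bundle, so that $X \in \mathfrak{l}$ forces $X^\# = 0$ in $\Gamma({}^{b^m}TM)$ and not only in $\Gamma(TM)|_Z$.
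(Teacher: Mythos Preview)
The paper does not prove this lemma; it is explicitly stated as ``lemma 3.2 in \cite{GMP17}'' and quoted without proof. So there is no in-paper argument to compare against, and your proposal must be judged on its own.

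There is a genuine gap in your second paragraph. You argue that $T$-invariance of the Laurent coefficients $a_i(X,\cdot)$ on $Z$, combined with ``a standard argument for toric integrable systems,'' forces them to be constant. This inference is false: $Z$ has dimension $2n-1$ while the $T$-orbits have dimension $n$, so the algebra of $T$-invariant smooth functions on a connected component of $Z$ is $(n-1)$-dimensional, not reduced to constants. (Even in the Atiyah--Guillemin--Sternberg picture, $T$-invariants are functions of the moment map, not scalars.) The actual mechanism in \cite{GMP17} is different: one first observes, from the very definition of ${}^{b^m}\mathcal{C}^\infty(M)$, that the singular coefficients in the decomposition of $\mu^X$ are functions of the defining coordinate $x$ alone; the constancy in $x$ then comes from the closedness and equivariance constraints on $\omega$ in its Laurent decomposition (the $\alpha_i$ are closed forms on $Z$, and the $a_i$ are identified with their periods / pairings with the fundamental vector fields), not from a transitivity argument on $Z$.

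Your third paragraph also rests on an unjustified premise. You assert that for $X\in\mathfrak{l}$ the fundamental vector field $X^\#$ vanishes identically as a section of ${}^{b^m}TM$, ``since $L$ by definition acts trivially on the $b^m$-structure.'' But in the framework of \cite{GMP17}, $T_L\subset T$ is the subtorus whose action is \emph{tangent to the symplectic leaves} of $Z$ (and hence has smooth moment map $\mu_0$); it is not an isotropy subgroup with vanishing fundamental fields. The statement $a_i\in\mathfrak{t}^0_L$ then encodes that the singular part of $\mu$ pairs trivially with the leaf-tangent directions, which follows because a smooth Hamiltonian on a $b^m$-manifold produces a Hamiltonian vector field with no $\partial_{y_1}$-component at $Z$ at order $x^0$---the converse of your computation. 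As written, your argument assumes the conclusion by building the desired vanishing into the definition of $L$.

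Finally, a smaller point: the $b^m$-Darboux theorem you invoke in the first paragraph is local at a point $p\in Z$, not valid on an entire tubular neighbourhood $Z\times(-\varepsilon,\varepsilon)$. The correct global input is the Laurent decomposition \eqref{eqn:laurent0} of $\omega$ on $U$, which is what \cite{GMP17} uses.
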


\section{Examples of $b^m$-integrable systems}

The following example illustrates why it is necessary to use the definition of $b^m$-function as considered above. There are natural examples of changes of coordinates in standard integrable systems on symplectic manifolds that yield  $b^m$-symplectic manifolds but do not give well-defined $b^m$-integrable systems.

\begin{example}
Consider a time change in the two body problem, to obtain a $b^2$-integrable system. In the classical approach to solve the $2$-body problem  the following two conserved quantities are obtained:
$$\begin{array}{rcl}
f_1 & = & \frac{\mu y^2}{2} + \frac{l^2}{2\mu r^2} - \frac{k}{r},\\
f_2 & = & l,
\end{array}
$$
with symplectic form $\omega = dr\wedge dy + dl \wedge d\alpha$, where $r$ is the distance between the two masses and $l$ is the angular momentum.
We also know that $l$ is constant along the trajectories. Because $l$ is a constant of the movement, we can do a symplectic reduction on its level sets. The form on the symplectic reduction becomes $dr \wedge dy$. To simplify the notation, we will use $x$ instead of $r$.
Then $\omega = dx\wedge dy$. With hamiltonian function given by $f = \frac{\mu}{2}y^2 + \frac{l}{2\mu}\frac{1}{x^2} - k \frac{1}{x}$. Hence, the equations are:
$$
\begin{array}{rcl}
 \dot{x} & = & \frac{\partial f}{\partial y},\\
 \dot{y} & = & -\frac{\partial f}{\partial x}.
\end{array}
$$
Doing a time change $\tau = x^3 t$ then $\frac{d x}{d\tau} = \frac{1}{x^3} \frac{dx}{dt}$. With this time coordinate, the equations become:
$$
\begin{array}{rcl}
 \dot{x} & = & \frac{1}{x^3}\frac{\partial f}{\partial y},\\
 \dot{y} & = & -\frac{1}{x^3}\frac{\partial f}{\partial x}.
\end{array}
$$
These equations can be viewed as the motion equations given by a $b^3$-symplectic form $\omega = \frac{1}{x^3} dx\wedge dy$.

Let us check that this is actually a $b^m$-integrable system.
\begin{itemize}
\item All the functions Poisson commute is immediate because we only have one.
\item $df = \mu y dy + (\frac{k}{x^2} - \frac{l}{\mu}\frac{1}{x^3})dx$ is a $b^3$-form because the term with $dx$ does not depend on $y$.
\item All the functions are independent, this is true because $df$ does not vanish as a $b^3$-form.
\end{itemize}
\end{example}

\begin{example}
In the paper \cite{Marle} the author builds an action of $SL(2,\mathbb{R})$ over $(P,\omega_P)$ where
$P = \{\xi\in \mathbb{C} | i(\bar{\xi} - \xi)>0\}$ is the complex semi-plane, with moment map $J_P(\xi) = \frac{R}{\xi_{im}}((|\xi|^2 + 1),2\xi_r,\pm (|\xi|^2 + 1))$, where the $\pm$ sign depends on the choice of the hemisphere projected by the stereographic projection. From now on we will take the sign $+$. Also the symplectic form $\omega_P$ has the following expression:

$$\omega_P = \pm \frac{R}{\xi_{im}^2} d\xi_r\wedge d\xi_{im}$$

In order to simplify the notation we identify $P$ with the real half-plane $P = \{x,y\in\mathbb{R}^2|y>0\}$. With this identification, the moment map becomes $J_p(x,y)=\frac{R}{y}(x^2 + y^2 + 1, 2x, x^2 + y^2 + 1)$. Obviously, this moment map does not give an integrable system. The symplectic form writes as:

$$\omega_P = \frac{R}{y^2} dy\wedge dx.$$

This form can be viewed as a $b^2$-form if we extend $P$ including the line $\{y=0\}$ as its singular set.
Let us consider only one of the components of $J_P$ as $b^m$-function and let us see if it gives a $b^m$-integrable system. First we will try with $f_1 = \frac{R}{y}(x^2 + y^2 + 1) $ and then $f_2 = \frac{R}{y}(2x)$.
\begin{enumerate}
\item $f_1 = \frac{R}{y}(x^2 + y^2 + 1)$
We have to check three things to see if this gives a $b^2$-integrable system.
\begin{enumerate}
\item All the functions Poisson commute is immediate because we only have one.
\item All the functions are $b^m$-functions. This point does not hold because $d f_1= \frac{R}{y^2}(2xydx + (y^2-x^2 - 1)dy)$ and the first component makes no sense as a section of $\Lambda^1 (^{b^2}T^*M)$.
\item All the functions are independent. In this case, we need to check that $df_1$ does not vanish, but since it is not a $b^m$-form it makes no sense  to be a non-zero section of $\Lambda^1 (^{b^2}T^*M)$.
\end{enumerate}
\item $f_2 = \frac{R}{y}(2x)$
\begin{enumerate}

\item Same as before.
\item All the functions are $b^m$-functions. This point does not hold because $d f_2= \frac{2R}{y}dx - \frac{2Rx}{y^2}dy$ and the first component makes no sense as a section of $\Lambda^1 (^{b^2}T^*M)$.
\item Same as before.
\end{enumerate}
\end{enumerate}

\end{example}

\begin{example} Toric actions give natural examples of integrable systems where the component functions are given by the moment map. In the case of surfaces:
    $S^1$-actions on surfaces give natural examples of $b^m$-integrable systems. Only torus and spheres admit circle actions. 
    
    In the picture below two integrable systems on the $2$-sphere depending on the degree $m$. On the right the image of the moment map that defines the integrable system. The action is by rotations along the central axis.

    Namely consider the sphere $S^2$ as a  $b^m$-symplectic manifold having as critical set the equator:
        
        \[(S^2, Z = \{h = 0\}, \omega=\frac{d h}{h^m}\wedge d\theta),\] with $h\in\left[-1,1\right]$ and $\theta\in\left[0,2\pi\right)$.

For $m=1$: The computation $\iota_{\frac{\partial}{\partial \theta}}\omega=- \frac{d h}{h}=-d( \log |h|),$ tells us that the function $\mu(h,\theta)= \log |h|$ is the moment map and defines a $b$-integrable system.

For higher values of $m$: $\iota_{\frac{\partial}{\partial \theta}}\omega=- \frac{d h}{h^m}=-d(-\frac{1}{(m-1)h^{m-1}}),$ and the  moment map is $\mu(h,\theta)= -\frac{1}{(m-1)h^{m-1}}$ which defines a $b^m$-integrable system.
    
 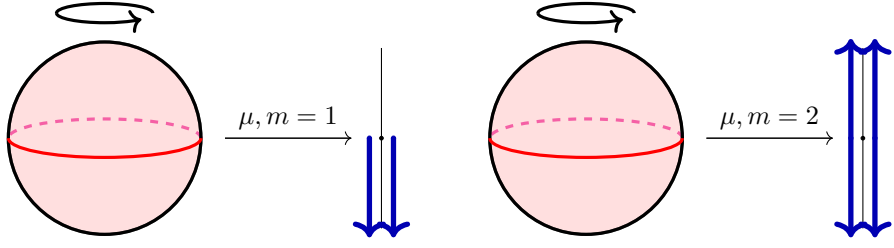
\begin{figure}[h!]
            \begin{tikzpicture}[scale=0.8]
                \pgfmathsetmacro{\rlinex}{6}
                \pgfmathsetmacro{\baseptd}{8}
                \pgfmathsetmacro{\rlineybottom}{2.75}
                \pgfmathsetmacro{\rlineymid}{4.25}
                \pgfmathsetmacro{\rlineytop}{5.75}
                \pgfmathsetmacro{\vertstretch}{1.9}	
                \pgfmathsetmacro{\yshift}{4.25}	

                \def\R{1.6}
                \pgfmathsetmacro{\circlex}{1.4}
                \draw[dashed, very thick, color = magenta] (\circlex + \R, \rlineymid) arc (0:180:{\R} and {\R * .2});
                \draw[very thick, fill = pink, opacity = .5] (\circlex, \rlineymid) circle (\R);
                \draw[very thick] (\circlex, \rlineymid) circle (\R);
                \draw[very thick, color = red] (\circlex + \R, \rlineymid) arc (0:-180:{\R} and {\R * .2});
                
                \draw (\circlex + 2, \rlineymid) edge node[above] {$\mu, m = 1$} (\rlinex - 0.5, \rlineymid);
                \draw[->] (\circlex + 2, \rlineymid) -- (\rlinex - 0.5, \rlineymid);

                \draw (\rlinex, \rlineybottom) -- (\rlinex, \rlineytop) node[right] {};

                \draw[black, fill = black] (\rlinex, \rlineymid) circle(.3mm);

                \draw[dblue, fill = dblue] (\rlinex - 0.2, \rlineymid) circle (1pt);
                \draw[dblue, fill = dblue] (\rlinex + 0.2, \rlineymid) circle (1pt);

                \draw[line width = 2pt, join = round, dblue, <-] (\rlinex - 0.2, \rlineybottom - 0.15) -- (\rlinex - 0.2, \rlineymid );
                \draw[line width = 2pt, join = round, dblue, <-] (\rlinex + 0.2, \rlineybottom - 0.15) -- (\rlinex + 0.2, \rlineymid);
                \draw [very thick, ->] (\circlex, \rlineymid + 1.3*\R) ++(\R * -.5, 0) arc (180:320: {\R * .5} and {\R * .1});
                \draw [very thick] (\circlex, \rlineymid + 1.3*\R) ++(\R * -.5, 0) arc (180:0: {\R * .5} and {\R * .1});

                \coordinate (shift) at (8,0);
                \begin{scope}[shift=(shift)]
                \pgfmathsetmacro{\rlinex}{6}
                \pgfmathsetmacro{\baseptd}{8}
                \pgfmathsetmacro{\rlineybottom}{2.75}
                \pgfmathsetmacro{\rlineymid}{4.25}
                \pgfmathsetmacro{\rlineytop}{5.75}
                \pgfmathsetmacro{\vertstretch}{1.9}	
                \pgfmathsetmacro{\yshift}{4.25}	

                \def\R{1.6}
                \pgfmathsetmacro{\circlex}{1.4}
                \draw[dashed, very thick, color = magenta] (\circlex + \R, \rlineymid) arc (0:180:{\R} and {\R * .2});
                \draw[very thick, fill = pink, opacity = .5] (\circlex, \rlineymid) circle (\R);
                \draw[very thick] (\circlex, \rlineymid) circle (\R);
                \draw[very thick, color = red] (\circlex + \R, \rlineymid) arc (0:-180:{\R} and {\R * .2});
                
                \draw (\circlex + 2, \rlineymid) edge node[above] {$\mu, m = 2$} (\rlinex - 0.5, \rlineymid);
                \draw[->] (\circlex + 2, \rlineymid) -- (\rlinex - 0.5, \rlineymid);

                \draw (\rlinex, \rlineybottom) -- (\rlinex, \rlineytop) node[right] {};

                \draw[black, fill = black] (\rlinex, \rlineymid) circle(.3mm);

                \draw[dblue, fill = dblue] (\rlinex - 0.2, \rlineymid) circle (1pt);
                \draw[dblue, fill = dblue] (\rlinex + 0.2, \rlineymid) circle (1pt);

                \draw[line width = 2pt, join = round, dblue, <-] (\rlinex - 0.2, \rlineybottom - 0.15) -- (\rlinex - 0.2, \rlineymid );
                \draw[line width = 2pt, join = round, dblue, ->] (\rlinex - 0.2, \rlineymid) -- (\rlinex - 0.2, \rlineytop + 0.15);
                
                \draw[line width = 2pt, join = round, dblue, <-] (\rlinex + 0.2, \rlineybottom - 0.15) -- (\rlinex + 0.2, \rlineymid );
                \draw[line width = 2pt, join = round, dblue, ->] (\rlinex + 0.2, \rlineymid) -- (\rlinex + 0.2, \rlineytop + 0.15);
                
                \draw [very thick, ->] (\circlex, \rlineymid + 1.3*\R) ++(\R * -.5, 0) arc (180:320: {\R * .5} and {\R * .1});
                \draw [very thick] (\circlex, \rlineymid + 1.3*\R) ++(\R * -.5, 0) arc (180:0: {\R * .5} and {\R * .1});
                \end{scope}
            \end{tikzpicture}
            \caption{Integrable systems associated to the moment map of an $S^1$-action by rotations  on a $b^m$-symplectic $2$-sphere $S^2$.}
            \label{fig:S2}
        \end{figure}
    \end{example}

\begin{example}
    Consider now as $b^2$-symplectic manifold the $2$-torus
\[
(\mathbb{T}^2, Z = \{\theta_1 \in \{0, \pi\}\}, \omega=  \frac{d\theta_1}{\sin^2\theta_1}\wedge d\theta_2)
\]
with standard coordinates: $\theta_1, \theta_2 \in \left[0, 2\pi \right)$.  Observe that the critical  hypersurface $Z$ in this example is not connected. It is the union of two disjoint circles. Consider  the circle action of rotation on the $\theta_2$-coordinate with fundamental vector field $\frac{\partial}{\partial\theta_2}$. As the following computation holds,
$$\iota_{\frac{\partial}{\partial\theta_2}}\omega = - \frac{d \theta_1}{\sin^2 \theta_1} = d\left(\frac{\cos\theta_1}{\sin\theta_1}\right).$$
The fundamental vector field of the $S^1$-action defines $^{b^2}C^{\infty}$-integrable system given by the function $-\frac{\cos\theta_1}{\sin\theta_1}$.

\end{example}

\begin{figure}[ht]
\begin{center}

\begin{tikzpicture}[scale=0.8]

\pgfmathsetmacro{\rlinex}{6}
\pgfmathsetmacro{\rlineybottom}{2.75}
\pgfmathsetmacro{\rlineymid}{4.25}
\pgfmathsetmacro{\rlineytop}{5.75}

\def\R{1.6}
\pgfmathsetmacro{\donutx}{1.5}
\pgfmathsetmacro{\sizer}{1.8}

\draw [very thick] (\donutx - 0.3*\sizer, \rlineymid) arc (180:0: {\sizer * .4} and {\sizer * .1});

\draw [red, very thick, dashed] (\donutx - 0.5, \rlineymid - .55*\sizer) arc (90:270:{.16*\sizer} and {0.32*\sizer});
\draw [red, very thick, dashed] (\donutx - 0.5, \rlineymid + .55*\sizer) arc (270:90:{.16*\sizer} and {0.32*\sizer});

\DrawFilledDonutops{\donutx - 0.5}{\rlineymid}{.6*\sizer}{1.2*\sizer}{-90}{yellow!30}{very thick}{white}
\DrawDonut{\donutx - 0.5}{\rlineymid}{.6*\sizer}{1.2*\sizer}{-90}{black}{very thick}

\draw [red, very thick] (\donutx - 0.5, \rlineymid - .55*\sizer) arc (90:-90:{.16*\sizer} and {0.32*\sizer});
\draw [red, very thick] (\donutx - 0.5, \rlineymid + .55*\sizer) arc (-90:90:{.16*\sizer} and {0.32*\sizer});

\draw [very thick] (\donutx - .3*\sizer, \rlineymid) arc (180:132: {\sizer * .4} and {\sizer * .1});
\draw [very thick, ->] (\donutx - .3*\sizer, \rlineymid) arc (180:325: {\sizer * .4} and {\sizer * .1});

\draw     (\donutx + 1, \rlineymid) edge node[above] {$\mu$} (\rlinex - 0.5, \rlineymid);
\draw[->] (\donutx + 1, \rlineymid) -- (\rlinex - 0.5, \rlineymid);

\draw (\rlinex, \rlineybottom) -- (\rlinex, \rlineytop) node[right] {};

\draw[black, fill = black] (\rlinex, \rlineymid) circle(.3mm);

\draw[line width = 2pt, join = round, dblue, <->] (\rlinex - 0.2, \rlineybottom - 0.15) -- (\rlinex - 0.2, \rlineytop + 0.15);
\draw[line width = 2pt, join = round, dblue, <->] (\rlinex + 0.2, \rlineybottom - 0.15) -- (\rlinex + 0.2, \rlineytop + 0.15);
 
\end{tikzpicture}

\end{center}
 \caption{Integrable system given by an $S^1$-action on a $b^2$-torus $\mathbb{T}^2$ and its associated moment map.}
 \label{fig:torus}
 \end{figure}
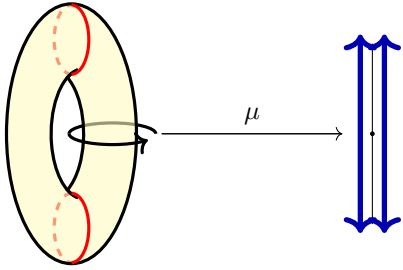
\begin{example} \label{ex:bmtorus}
The former example can be made general to produce examples of $b^m$-integrable systems on a $b^m$-symplectic manifold for any integer $m$
\[
(\mathbb{T}^2, Z = \{\theta_1 \in \{0, \pi\}\}, \omega=  \frac{d\theta_1}{\sin^m\theta_1}\wedge d\theta_2).
\]
Then 
$$\iota_{\frac{\partial}{\partial\theta_2}}\omega = - \frac{d \theta_1}{\sin^m \theta_1} = d\left(\frac{|\cos\theta_1|}{\cos\theta_1} \frac{_2F_1 \left ( \frac{1}{2}, \frac{1 - m}{2}; \frac{3 - m}{2}; \sin^2(\theta_1) \right )}{(1-m) \sin^{m - 1} \theta_1}\right),$$
with $_2F_1$  the hypergeometric function. 

Thus, the associated $S^1$-action has as  $^{b^m}C^{\infty}$-Hamiltonian the function 
$$- \frac{|\cos\theta_1|}{\cos\theta_1} \frac{_2F_1 \left ( \frac{1}{2}, \frac{1 - m}{2}; \frac{3 - m}{2}; \sin^2(\theta_1) \right )}{(1-m) \sin^{m - 1} \theta_1}$$ \noindent which defines a $b^m$-integrable system.
\end{example}
Now we give a couple  of examples of $b^m$-integrable systems.

\begin{example}
This example uses the product of $b^m$-integrable systems on a $b^m$-symplectic manifold with an integrable system on a symplectic manifold. Given $(M_1^{2n_1}, Z,x,\omega_1)$ a $b^m$-symplectic manifold with $f_1,\ldots,f_{n_1}$ a $b^m$-integrable system and $(M_2^{2n_2},\omega_2)$ a symplectic manifold with $g_1,\ldots,g_{n_2}$ an integrable system. Then $(M_1\times M_2, Z\times M_2, x, \omega_1 + \omega_2)$ is a $b^m$-symplectic manifold and $(f_1,\ldots,f_{n_1},g_1,\ldots,g_{n_2})$ is a $b^m$-integrable system on the higher dimensional manifold.

In particular by combining the former examples of $b^m$-integrable systems on surfaces and arbitrary integrable systems on symplectic manifolds we obtain examples of $b^m$-integrable systems in any dimension.
\end{example}

\begin{example}\textbf{(From integrable systems on cosymplectic manifolds to $b^m$-integrable systems:)}

Using the extension theorem (Theorem 50) of \cite{GMP14} we can extend any integrable system $(f_2,\dots, f_n)$ to an integrable system in a neighbourhood of a cosymplectic manifold $(Z, \alpha, \omega) $ by just adding a $b^m$-function $f_1$ to the integrable system so that the new integrable system is $(f_1, f_2,\dots, f_n)$  and considering the associated $b^m$-symplectic form:

\begin{equation}\label{eq:normalform}\tilde{\omega}=p^*\alpha\wedge\frac{dt}{t^m}+p^*\omega. \end{equation}

(t is the defining function of $Z$).

\end{example}

\section{Looking for a toric action}

In this section we pursue the proof of action-angle coordinates for $b^m$-integrable systems by recovering a torus group action. This action is associated to the Hamiltonian vector fields associated to $X_{f_i}$.

This is the same strategy used for $b$-integrable systems in \cite{KMS16}- One of the main difficulties is  to prove that the coefficients $a_1,\ldots, a_n$ can be considered  as constant functions. This makes it more difficult to prove the existence of a $\mathbb{T}^n$-action in the general $b^m$-case than in the $b$-case, but once we have it we can use the results in \cite{GMW17} to assume that the coefficients $a_1,\ldots, a_n$ are constant functions.



In this section we provide some preliminary material that will be needed later:
\begin{proposition}\label{prop:mod_period}
Let $(M,Z,x,\omega)$ be a $b^m$-symplectic manifold such that $Z$ is connected with modular period $k$.
Let $\pi:Z\rightarrow S^1 \simeq \mathbb{R}/k\mathbb{Z}$ be the projection to the base of the corresponding mapping torus.
Let $\gamma: S^1 = \mathbb{R}/k\mathbb{Z} \rightarrow Z$ be any loop such that $\pi\circ\gamma$ is positively oriented and has constant velocity 1. Then the following are equal:
\begin{enumerate}
\item The modular period of $Z$,

\item $\int_\gamma \iota_{\mathbb{L}} \omega$,

\item The value $a_{m-1}$ for any $^{b^m}\mathcal{C}^\infty(M)$-function
$$f = a_0 \log(x) + \sum_{j= 1}^{m-1} a_j\frac{1}{x^j} + h$$
such that the hamiltonian vector field $X_f$ has 1-periodic orbits homotopic in $Z$ to some $\gamma$.
\end{enumerate}
\end{proposition}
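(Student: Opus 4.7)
The strategy is to reduce to a tubular neighborhood of $Z$ via the $b^m$-Darboux theorem (Theorem \ref{theorem:Darbouxbn}), compute the three quantities in explicit coordinates, and then globalize using the mapping-torus structure of $Z$. First I would pick a Darboux chart $(x, y_1, x_2, y_2, \dots)$ in which $\omega = \frac{dx}{x^m}\wedge dy_1 + \sum_{i\geq 2} dx_i \wedge dy_i$ and $Z = \{x=0\}$. The key geometric observation that sits at the heart of the statement is that the transverse direction $\partial/\partial y_1$ along $Z$ is precisely the one generating the mapping-torus fibration $\pi \colon Z \to S^1 = \mathbb{R}/k\mathbb{Z}$: by the very construction of the mapping torus, one full circuit of the base corresponds to a displacement in $y_1$ equal to the modular period $k$.

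Step 2 establishes $(3)=(1)$. Using the formula for $X_f$ derived in the Remark just above the proposition, the restriction to $Z$ reads $X_f|_Z = c_m\,\partial_{y_1} + X_h|_Z$ with $c_m = -(m-1)\,a_{m-1}|_Z$; the term $X_h|_Z$ is tangent to the symplectic leaves of $Z$ and therefore does not contribute in the transverse direction. A 1-periodic orbit of $X_f$ on $Z$ homotopic to $\gamma$ projects under $\pi$ to a degree-one loop on $\mathbb{R}/k\mathbb{Z}$, so comparing transverse speeds forces the $y_1$-displacement in unit time to equal one base-period. After absorbing the factor $-(m-1)$ into the normalization convention used earlier in the monograph to define the Liouville volume and the modular weights in the Laurent decomposition, this reads as $a_{m-1} = k$. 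Note that $a_{m-1}|_Z$ must then be constant along $\gamma$, which is \emph{a posteriori} automatic from the periodicity of the orbit.

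Step 3 establishes $(2) = (1)$. A direct computation in the Darboux chart shows that $\iota_\mathbb{L}\omega$ is a closed 1-form whose restriction to $Z$ pulls back via $\pi$ to $dt$ on the base circle $\mathbb{R}/k\mathbb{Z}$, up to exact and smooth correction terms that do not contribute to the period integral along a loop. Since $\pi\circ\gamma$ is parametrized with constant velocity one and degree one, the line integral collapses to $\int_\gamma \iota_\mathbb{L}\omega = \int_0^k dt = k$. Independence from the choice of representative $f$ (any two choices with the same singular coefficients differ by a smooth Hamiltonian vector field vanishing on $Z$) and from the volume form used to define $\mathbb{L}$ (by the Remark in Chapter \ref{ch:preliminaries}) assembles these local computations into a global identity for every connected component of $Z$.

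The main obstacle I anticipate is the consistent bookkeeping of normalizations: the factor $-(m-1)$ linking $c_m$ with $a_{m-1}$, the sign and scaling of $\mathbb{L}$, and the orientation of $\gamma$. Each is fixed unambiguously by conventions adopted earlier in the book, but the proof must be careful to use them coherently so that the three formulations of the modular period agree as real numbers rather than only up to sign or scalar multiple. Once this bookkeeping is pinned down, the coincidence of (1), (2) and (3) becomes essentially a tautology packaged by the $b^m$-Darboux theorem together with the mapping-torus description of $Z$.
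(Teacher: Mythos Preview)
Your proposal is essentially correct and lands on the same geometric content, but the organization differs from the paper's. The paper proves the chain $(1)=(2)$ and then $(2)=(3)$, whereas you prove $(3)=(1)$ and $(2)=(1)$ separately. More substantively, the paper's argument for $(1)=(2)$ does not pass through a Darboux chart at all: it uses the modular vector field $\mathcal{V}_{\mathrm{mod}}$ directly, together with the key identity $(\iota_{\mathbb{L}}\omega)(\mathcal{V}_{\mathrm{mod}})\equiv 1$, to build a loop out of the time-$k$ trajectory of $\mathcal{V}_{\mathrm{mod}}$ closed up along a symplectic leaf, and then reads off $\int_\gamma \iota_{\mathbb{L}}\omega = k$ in one line. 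Your version of this step (arguing that $\iota_{\mathbb{L}}\omega|_Z$ pulls back to $dt$ on the base circle up to exact terms) is the same statement in cohomological dress, but requires you to globalize from a pointwise Darboux chart; the paper's modular vector field is already global, so it bypasses that patching. For $(2)=(3)$ the paper again avoids coordinates: it simply contracts with the explicit generator $x^m\partial_x$ of $\mathbb{L}$ along the $1$-periodic orbit of $X_f$ and evaluates $\int_0^1 -df(x^m\partial_x)\,dt = -c_m$. Your dual computation via the $y_1$-displacement of $X_f|_Z$ is equivalent. Finally, you are right to flag the normalization bookkeeping as the delicate point: the paper's proof ends with $k=-c_m$ and then identifies this with $a_{m-1}$ via the conventions in equation~(\ref{eq:bmhamiltonianvf}); your remark about absorbing the factor $-(m-1)$ is somewhat vague and should be replaced by a direct appeal to that formula, as the paper does.
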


\begin{proof}

Let us first prove that (1)=(2) and then that (2)=(3).
\begin{itemize}
\item[(1)=(2)] Let us denote by $\mathcal{V}_{mod}$ the modular vector field. Recall from \cite{GMW17} that $\iota_{\mathbb{L}}(\mathcal{V}_{mod})$ is the constant function 1. Let $s:[0,k]\rightarrow Z$ be the trajectory of the modular vector field. Because the modular period is $k$, $s(0)$ and $s(k)$ are in the same leaf $\mathcal{L}$. Let $\hat s :[0,k+1]\rightarrow Z$ a smooth extension of $s$ such that $s|_{[k,k+1]}$ is a path in $\mathcal{L}$ joining $\hat s (k) = s (k)$ to $\hat s (k+1) = s (0)$. This way $\hat s$ becomes a loop. Then,

$$k=\int_0^k 1 dt = \int_S \iota_{\mathbb{L}} \omega = \int_{\hat s } \iota_{\mathbb{L}}\omega=\int_\gamma\iota_{\mathbb{L}}\omega$$

\item[(2)=(3)] Let $r:[0,1] \mapsto Z$ be the trajectory of $X_f$ the hamiltonian vector field of $f$. Recall that $X_f$ satisfies
$$\iota_{X_f}\omega = \sum_{j=1}^m c_j \frac{dx}{x^i} + dh.$$
Let $x^m\frac{\partial}{\partial x}$ be a generator of the linear normal bundle $\mathbb{L}$.
We know that $X_f$ is 1-periodic and its trajectory is homotopic to $\gamma$. Hence,
$$
\begin{array}{rcl}
 k = \int_r \iota_{\mathbb{L}}\omega & = & \displaystyle \int_0^1 \iota_{x^m\frac{\partial}{\partial x}} \omega(X_f|_{r(t)})dt\\
 \\
 & = & \displaystyle \int_0^1 -(\sum_{j=1}^{m}c_i\frac{dx}{x^i} + dh)\cdot(x^m\frac{\partial}{\partial x})|_{r(t)}dt\\
 \\
 & = & -c_m = -a_{m-1}\\

\end{array}
$$
\end{itemize}

\end{proof}

We will also need a Darboux-Carathéodory theorem for $b^m$-symplectic manifolds:
\begin{theorem}[Darboux-Carath\'{e}odory ($b^m$-version)]\label{bmdarbouxcaratheodory}
Let
$$(M^{2n},x,Z, \omega)$$
 be a $b^m$-symplectic manifold and $m$ be a point on $Z$. Let $f_1,\ldots,f_n$ be a $b^m$-integrable system. Then there exist \textcolor{black}{$b^m$}-functions $(q_1,\ldots,q_n)$ around $m$ such that
$$\omega = \sum_{i=1}^n df_i\wedge dq_i$$
and the vector fields $\{X_{f_i},X_{q_j}\}_{i,j}$ commute.
If $f_1$ is not smooth (recall that $f_1 = a_0\log(x) + \sum_{j=1}^{m-1}a_j\frac{1}{x^i}$ with $a_n \neq 0$ on $Z$ and $a_0 \in \mathbb{R}$) the $q_i$ can be chosen to be smooth functions, and $(x,f_2,\ldots,f_n,q_1,\ldots,q_n)$ is a system of local coordinates.
\end{theorem}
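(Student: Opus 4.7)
The plan is to adapt the classical Darboux--Carath\'eodory argument, exploiting the fact (established earlier via the local formula for $X_f$ in $b^m$-Darboux coordinates) that the Hamiltonian vector field of any $b^m$-function is smooth. The fields $X_{f_1}, \ldots, X_{f_n}$ are therefore smooth, they commute pairwise since $\{f_i, f_j\} = 0$ forces $[X_{f_i}, X_{f_j}] = X_{\{f_i, f_j\}} = 0$, and they are linearly independent at $m$ by the integrable system hypothesis. Moreover, because $f_1$ carries the singularity of maximal degree with leading coefficient $c_m \neq 0$ on $Z$, in the $b^m$-Darboux model $X_{f_1}$ has no $\partial/\partial x$-component, so the $X_{f_i}$ are transverse to $\partial/\partial x$ at $m$.

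Next, I would pick a smooth codimension-$n$ submanifold $N$ through $m$, transverse to the distribution $\mathrm{span}(X_{f_1}, \ldots, X_{f_n})$, and arranged so that $x, f_2, \ldots, f_n$ form part of a coordinate system on $N$. This is possible because these functions have linearly independent differentials at $m$ and because $x$ and each smooth $f_j$ are preserved (at least to leading order on $Z$) by the flows of the $X_{f_i}$. Completing with auxiliary smooth functions $\tilde q_1, \ldots, \tilde q_n$ vanishing at $m$, I would define
$$\Phi(t_1, \ldots, t_n; p) := \phi^{X_{f_1}}_{t_1} \circ \cdots \circ \phi^{X_{f_n}}_{t_n}(p), \qquad q_i := t_i \circ \Phi^{-1}.$$
Since the flows commute and $N$ is transverse, $\Phi$ is a local diffeomorphism near $(0,m)$, and the smooth tuple $(x, f_2, \ldots, f_n, q_1, \ldots, q_n)$ is a coordinate chart on a neighborhood $U$ of $m$. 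By construction $X_{f_i}(q_j) = \delta_{ij}$, i.e.\ $\{f_i, q_j\} = \delta_{ij}$.

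The remaining task is to arrange $\{q_i, q_j\} = 0$. The Jacobi identity together with the constancy of $\{f_k, q_i\} = \delta_{ki}$ gives
$$X_{f_k}(\{q_i, q_j\}) = \{\{q_i, q_j\}, f_k\} = -\{\{q_j, f_k\}, q_i\} - \{\{f_k, q_i\}, q_j\} = 0,$$
so each bracket $\{q_i, q_j\}$ is constant along every $X_{f_k}$-orbit and is therefore determined by its restriction to $N$. I would then run the standard inductive modification of $\tilde q_1, \ldots, \tilde q_n$, replacing $\tilde q_j$ by $\tilde q_j + P_j(x, f_2, \ldots, f_n, \tilde q_1, \ldots, \tilde q_{j-1})$ with $P_j$ smooth, to ensure $\{q_i, q_j\}|_N = 0$ for all $i < j$. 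This is the delicate step; it runs as in the classical smooth symplectic case precisely because all functions involved remain smooth, and is where the assertion that the $q_i$ can be chosen smooth despite the singularity of $f_1$ is crucial.

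Finally, with $\{f_i, q_j\} = \delta_{ij}$, $\{f_i, f_j\} = 0$, and $\{q_i, q_j\} = 0$, dualizing the Poisson bivector yields $\omega = \sum_{i=1}^{n} df_i \wedge dq_i$ on $U$. The single $b^m$-form $df_1$ carries all the singular behavior, while the remaining $df_i$ and the $dq_j$ are smooth, in agreement with $(x, f_2, \ldots, f_n, q_1, \ldots, q_n)$ being a smooth chart. The main obstacle is the inductive adjustment in the preceding paragraph: verifying that each $P_j$ exists smoothly near $m$ is a Poincar\'e-lemma-style integrability check using that the $X_{f_k}$ commute and annihilate the brackets $\{q_i, q_j\}$, and this is exactly where the smoothness of the Hamiltonian vector fields attached to the singular $b^m$-integrable system is indispensable.
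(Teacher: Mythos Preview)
Your approach differs from the paper's. The paper defers the core construction to the inductive one-form argument of \cite{KMS16}: at step $i$ one chooses a closed $b^m$-one-form $\mu_i$ lying in a certain kernel $K_i$ with $\mu_i(X_{f_i})=1$, and then takes $q_i$ to be a primitive. The new content here is that each $\mu_i$ may be chosen in $T^*M\subset{}^{b^m}T^*M$ (hence smooth, forcing $q_i$ smooth), and for this it suffices that $X_{f_i}$ not vanish as a section of $TM$. The paper deduces this from $0=\{f_1,f_i\}=df_1(X_{f_i})$: since the coefficient of $dx/x^m$ in $df_1$ is nonzero at $Z$, the pairing $(dx/x^m)(X_{f_i})$ must vanish, so $X_{f_i}$ has no component along the singular generator $x^m\partial_x$ and is therefore nonzero in $TM$.

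Your flow-box route is a reasonable alternative, and your non-vanishing argument is essentially the same observation. The gap is in your modification step. The auxiliary $\tilde q_1,\dots,\tilde q_n$ play no role in the definition $q_i=t_i\circ\Phi^{-1}$, so ``modifying $\tilde q_j$'' has no effect on the $q_i$ as written. If instead you mean to replace $q_j$ by $q_j+P_j$ with $P_j$ depending on $q_1,\dots,q_{j-1}$, this destroys $\{f_i,q_j\}=\delta_{ij}$: one has $\{f_i,P_j\}=X_{f_i}(P_j)=\partial P_j/\partial q_i$, which need not vanish. The correction must be by functions constant along every $X_{f_k}$-orbit, i.e.\ functions of $(x,f_2,\dots,f_n)$ alone; you then owe a Poincar\'e-type argument (together with the smoothness of $1/f_1'(x)\sim x^m/c_m$ near $Z$) to produce smooth $P_j$ solving the resulting system. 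The paper's one-form approach sidesteps this entirely by building the vanishing of $\{q_i,q_j\}$ into the inductive choice of $\mu_j$ from the start.
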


\begin{proof}
The first part of this proof is exactly as in \cite{KMS16}.
Assume now $\displaystyle f_1 = a_0\log(x) + \sum_{j=1}^{m-1}a_j\frac{1}{x^i}$.
We modify the induction requiring also that $\mu_i$ (in addition to be in $K_i$) is also in $T^* M \subseteq ^bT^* M$.
We can also ask this extra condition while asking $\mu_i(X_{f_i})= 1$, we only have to check that $X_{f_i}$ does not vanish in $TM$. This is clear because $X_{f_i}$ does not vanish at $^b TM$ and
$$0 = \{f_n,f_i\} = \left(\sum_{i=1}^m \tilde{a}_i\frac{dx}{x^i}\right)(X_{f_i}) = \left(\frac{dx}{x^m} \sum_{i=1}^{m}a_i x^i\right)(X_{f_i}).$$

All the terms in the last expression vanish except for the one of degree $m$.

Then $dx/x^m$ is in the kernel of $X_{f_i}$, hence $X_{f_i}$ does not vanish on $TM$ and the $q_i$ can be chosen to be smooth.

$\{X_x,X_{f_2},\ldots,X_{f_{n}},X_{q_1},\ldots X_{q_n}\}$ commute because $\{X_{f_i},X_{q_i}\}_{i,j}$ commute. Then
$$dx\wedge df_2\ldots\wedge df_n\wedge dq_1 \wedge \ldots \wedge d q_n$$
is a non-zero section of $\bigwedge^n(^b TM)$. And hence
$$(x,f_2,\ldots, f_{n-1},q_1,\ldots,q_n)$$
 are local coordinates.

\end{proof}

Before proceeding with the proof of the action-angle coordinates, we need to prove that in a neighbourhood of a Liouville torus the fibration is semilocally trivial:

\begin{lemma}[Topological Lemma]\label{lemma:topological}
Let $m \in Z$ be a regular point of a $b^m$-integrable system $(M,x,Z,\omega,F)$. Assume that the integral manifold $\mathcal{F}_m$ through $m$ is compact. Then there exists a neighborhood $U$ of $\mathcal{F}_m$ and a diffeomorphism
$$\phi:U \simeq \mathbb{T}^n\times B^n$$
which takes the foliation $\mathcal{F}$ to the trivial foliation $\{\mathbb{T}^n\times\{b\}\}_{b\in B^n}$.
\end{lemma}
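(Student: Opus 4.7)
The plan is to mimic the classical topological half of the Liouville--Arnold theorem, taking advantage of the fact that, although $f_1$ is a genuine $b^m$-function, its associated Hamiltonian vector field $X_{f_1}$ is a \emph{smooth} vector field on $M$ by the formula derived earlier (see equation \eqref{eq:bmhamiltonianvf}). Hence $X_{f_1},\dots,X_{f_n}$ are smooth, pairwise commuting (because $\{f_i,f_j\}=0$), and linearly independent at every regular point by Remark \ref{independence}. I will use these vector fields to produce the torus structure on $\mathcal F_m$ and then spread it to a neighbourhood via a transversal.

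First I would restrict to a saturated neighbourhood of $\mathcal F_m$. Because $\mathcal F_m$ is a compact connected integral manifold of the involutive distribution spanned by the $X_{f_i}$, standard properness arguments (shrinking around $\mathcal F_m$) show that $F$ is proper on a saturated neighbourhood and that all nearby integral manifolds are compact. Then the flows $\phi^{X_{f_i}}_{t_i}$ are complete on these fibers and, since the vector fields commute, they assemble into a smooth $\mathbb{R}^n$-action
\begin{equation*}
\Phi\colon \mathbb{R}^n \times U \longrightarrow U,\qquad (t,p)\longmapsto \phi^{X_{f_1}}_{t_1}\circ\cdots\circ\phi^{X_{f_n}}_{t_n}(p),
\end{equation*}
which preserves the foliation $\mathcal F$ and whose orbits are exactly the connected components of the fibers.

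Next I would analyze the isotropy of $\Phi$ at $m$. Because the $X_{f_i}$ are linearly independent on $\mathcal F_m$ and $\mathcal F_m$ is compact, the stabilizer $\Lambda_m:=\{t\in\mathbb{R}^n:\Phi(t,m)=m\}$ is a discrete cocompact subgroup of $\mathbb{R}^n$, hence a full-rank lattice, and $\Phi$ descends to a diffeomorphism $\mathbb{R}^n/\Lambda_m \xrightarrow{\ \sim\ } \mathcal F_m\cong \mathbb{T}^n$. Pick a basis $e_1,\dots,e_n$ of $\Lambda_m$; by the implicit function theorem, one can extend these to smooth functions $p\mapsto e_i(p)$ defined on a neighbourhood of $\mathcal F_m$ such that $\Phi(e_i(p),p)=p$ and such that $e_1(p),\dots,e_n(p)$ form a basis of the stabilizer $\Lambda_p$ for every $p$ close to $\mathcal F_m$; this is the standard continuity-of-period-lattice argument, which works here because the $X_{f_i}$ are smooth and the $\mathcal F_p$ are compact.

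Finally I would build the diffeomorphism. Choose a smooth $n$-dimensional transversal $\Sigma\cong B^n$ through $m$ which meets $\mathcal F_m$ only at $m$; using the Darboux--Carath\'eodory result (Theorem \ref{bmdarbouxcaratheodory}) one can actually produce such a $\Sigma$ as a level set of $n$ smooth conjugate coordinates. Define
\begin{equation*}
\psi\colon \mathbb{R}^n\times \Sigma\longrightarrow U,\qquad (t,\sigma)\longmapsto \Phi(t,\sigma).
\end{equation*}
This map is a local diffeomorphism by the linear independence of the $X_{f_i}$, and it descends to the quotient by the (now trivialized) lattice bundle with fiber $\Lambda_\sigma=\mathrm{span}_{\mathbb Z}(e_1(\sigma),\dots,e_n(\sigma))$, giving a diffeomorphism
\begin{equation*}
\phi\colon \mathbb{T}^n\times B^n\xrightarrow{\ \sim\ } U
\end{equation*}
sending $\{\mathbb{T}^n\times\{b\}\}$ to the leaves of $\mathcal F$. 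The main obstacle is the lattice-trivialization step: one needs to verify that $\Lambda_p$ depends smoothly on $p$ near $\mathcal F_m$, which requires the properness of $F$ on the saturated neighbourhood and continuity of the period map; this is precisely where compactness of $\mathcal F_m$ is used and where the $b^m$-setting poses no new difficulty, since the vector fields involved are smooth.
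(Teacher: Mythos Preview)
Your argument is correct: since the Hamiltonian vector fields $X_{f_1},\dots,X_{f_n}$ are smooth, commute, and are independent near $\mathcal{F}_m$, the classical $\mathbb{R}^n$-action proof of the Arnold--Liouville topological lemma goes through verbatim, and compactness of $\mathcal{F}_m$ gives the full-rank lattice and hence the torus structure.

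The paper takes a different and shorter route. Rather than working with the $\mathbb{R}^n$-action and the period lattice, it observes that the foliation by integral manifolds of $X_{f_1},\dots,X_{f_n}$ coincides with the level-set foliation of the \emph{smooth} map $\tilde F=(x,f_2,\dots,f_n)$, since $f_1=a_0\log x+\sum a_i x^{-i}$ depends only on $x$. Once this replacement is made, one is in the classical smooth setting and can cite \cite{LMV11} directly: a Riemannian metric furnishes a local projection $\psi:U\to\mathcal{F}_m$, and $\phi:=\psi\times\tilde F$ gives the trivialization. What the paper's approach buys is brevity and a clean reduction to a known smooth result; what your approach buys is self-containment. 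Note also that your lattice-trivialization step (choosing smooth $e_i(p)$) is more than this lemma requires---the paper defers the uniformization of periods to the first step of the action-angle proof itself, so here one only needs local triviality of the fibration, not a smoothly varying lattice basis.
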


\begin{proof}
We follow the steps of \cite{LMV11}. In this case, the only extra step that must be checked is that the foliation given by the $b^m$-hamiltonian vector fields of $F = (f_1,f_2, \ldots, f_n)$ is the same as the one given by the level sets of $\tilde F := (x, f_2,\ldots, f_n)$. In our case $f_1 = a_0\log(x) + \sum_{u=1}^{m-1} a_i\frac{1}{x^i}$, where $a_0 \in \mathbb{R}$, $a_i \in \mathcal{C}^\infty(x)$, $a_{m-1} = 1$. Hence the foliations are the same.
Then as in \cite{LMV11}, we take an arbitrary Riemannian metric on $M$ and this defines a canonical projection $\psi:U \rightarrow \mathcal{F}_m$. Let us define $\phi := \psi\times \tilde F$. We obtain the commutative diagram (Figure \ref{fig:diagram_topological}).

\begin{figure}[H]
\centering
\begin{tikzcd}
U \arrow[rr, dashed, "\phi"] \arrow[rrd, "\tilde F"]& & \mathbb{T}^n\times B^n \arrow[d,"p"]\\
 & & B^n
\end{tikzcd}
\caption{Commutative diagram of the construction of the isomorphism of $b^m$-integrable systems.}
\label{fig:diagram_topological}
\end{figure}

which provides the necessary equivalence of $b^m$-integrable systems.
\end{proof}


\section{Action-angle coordinates on $b^m$-symplectic \protect\\ manifolds}

In a neighbourhood of one of our Liouville tori all we can assume about the form of our $b^m$-symplectic structure is  that is given by the Laurent series defined in \cite{Scott16}.

That is to say, we can assume that in a tubular neighborhood $U$ of $Z$
$$\omega = \sum_{j=1}^{m-1}\frac{dx}{x^i}\wedge\pi^*(\alpha_i) + \beta,$$
where $\pi:U\rightarrow Z$ is the projection of the tubular neighborhood onto $Z$, $\alpha_i$ are closed smooth de Rham forms on $Z$ and $\beta$ a de Rham form on $M$ of degree $2$.

In \cite{BKM,anastasiaeva} normal forms are given for group actions in a neighbourhood of the orbit. Below we provide a normal for the integrable system in a neighbourhood of an orbit of the torus action associated to the integrable system. This theorem is finer than the $b^m$-symplectic slice theorem provided in \cite{anastasiaeva} as it also gives information about the first integrals. 

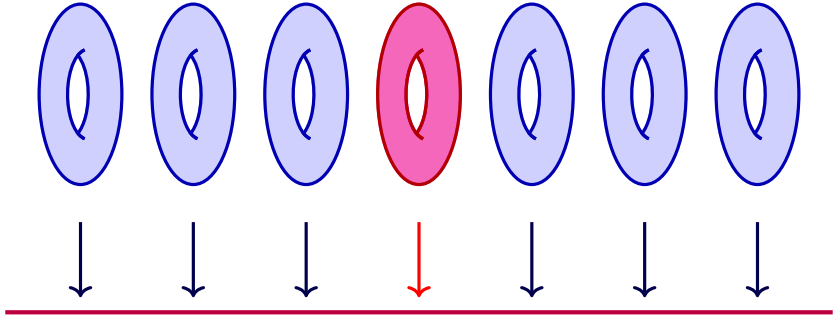
\begin{figure}
\centering
\begin{tikzpicture}

\pgfmathsetmacro{\sizer}{1}
\pgfmathsetmacro{\basept}{5}	

\pgfmathsetmacro{\xone}{0}
\pgfmathsetmacro{\xtwo}{1.5}
\pgfmathsetmacro{\xthree}{3}
\pgfmathsetmacro{\xfour}{4.5}
\pgfmathsetmacro{\xfive}{6}
\pgfmathsetmacro{\xsix}{7.5}
\pgfmathsetmacro{\xseven}{9}

\pgfmathsetmacro{\ymid}{4.25}
\pgfmathsetmacro{\ytop}{5.65}
\pgfmathsetmacro{\ybottom}{2.85}

\DrawFilledDonutops{\xone}{\ymid}{.55*\sizer}{1.2*\sizer}{-90}{vlblue}{very thick}{white}
\DrawFilledDonutops{\xtwo}{\ymid}{.55*\sizer}{1.2*\sizer}{-90}{vlblue}{very thick}{white}
\DrawFilledDonutops{\xthree}{\ymid}{.55*\sizer}{1.2*\sizer}{-90}{vlblue}{very thick}{white}
\DrawFilledDonutops{\xfour}{\ymid}{.55*\sizer}{1.2*\sizer}{-90}{magenta}{very thick}{white}
\DrawFilledDonutops{\xfive}{\ymid}{.55*\sizer}{1.2*\sizer}{-90}{vlblue}{very thick}{white}
\DrawFilledDonutops{\xsix}{\ymid}{.55*\sizer}{1.2*\sizer}{-90}{vlblue}{very thick}{white}
\DrawFilledDonutops{\xseven}{\ymid}{.55*\sizer}{1.2*\sizer}{-90}{vlblue}{very thick}{white}

\DrawDonut{\xone}{\ymid}{.55*\sizer}{1.2*\sizer}{-90}{dblue}{very thick}
\DrawDonut{\xtwo}{\ymid}{.55*\sizer}{1.2*\sizer}{-90}{dblue}{very thick}
\DrawDonut{\xthree}{\ymid}{.55*\sizer}{1.2*\sizer}{-90}{dblue}{very thick}
\DrawDonut{\xfour}{\ymid}{.55*\sizer}{1.2*\sizer}{-90}{dred}{very thick}
\DrawDonut{\xfive}{\ymid}{.55*\sizer}{1.2*\sizer}{-90}{dblue}{very thick}
\DrawDonut{\xsix}{\ymid}{.55*\sizer}{1.2*\sizer}{-90}{dblue}{very thick}
\DrawDonut{\xseven}{\ymid}{.55*\sizer}{1.2*\sizer}{-90}{dblue}{very thick}

\draw[very thick,magenta,vdblue, ->](\xone, \ybottom - 0.3) -- +(0, -1);
\draw[very thick, vdblue,  ->] (\xtwo, \ybottom - 0.3) -- +(0, -1);
\draw[very thick, vdblue, ->] (\xthree, \ybottom - 0.3) -- +(0, -1);
\draw[very thick, red, ->] (\xfour, \ybottom - 0.3) -- +(0, -1);
\draw[very thick,vdblue, ->] (\xfive, \ybottom - 0.3) -- +(0, -1);
\draw[very thick,vdblue, ->] (\xsix, \ybottom - 0.3) -- +(0, -1);
\draw[very thick,vdblue, ->] (\xseven, \ybottom - 0.3) -- +(0, -1);
\draw[ultra thick, purple](\xone - 1, \ybottom - 1.5) -- (\xseven + 1, \ybottom - 1.5);
\end{tikzpicture}
\caption{Fibration by Liouville tori: The middle fiber of the point $p \in Z$ in magenta, the neighbouring Liouville tori in blue.}
\label{fig:tori}
\end{figure}

One of the non-trivial steps of the proof is to associate a toric action to the integrable system. The connection to normal forms of group actions will become even more evident when we discuss the associated cotangent models.

\begin{theoremA}[Action-angle coordinates for $b^m$-symplectic manifolds]
Let $(M,x,\omega, F)$ be a $b^m$-integrable system, where $F = (f_1 = a_0 \log(x) + \sum_{j=1}^{m-1} a_j\frac{1}{x^j}, \ldots,f_n)$ with $a_j$  for $j>1$ functions in $x$. Let $m\in Z$ be a regular point and let us assume that the integral manifold of the distribution generated by the $X_{f_i}$ through $m$ is compact. Let $\mathcal{F}_m$ be the Liouville torus through $m$.
Then, there exists a neighborhood $U$ of $\mathcal{F}_m$ and coordinates $(\theta_1,\ldots,\theta_n,\sigma_1,\ldots,\sigma_n):\mathcal{U}\rightarrow\mathbb{T}^n\times B^n$ such that:

\begin{enumerate}
\item We can find an equivalent integrable system $F = (f_1 = a_0'\log(x) + \sum_{j=1}^{m-1} a_j'\frac{1}{x^j}, \ldots, f_n)$ such that the coefficients $a_0',\ldots, a_{m-1}' $ of $f_1$ are constants $\in \mathbb{R}$,
\item $$\omega|_\mathcal{U} = \left(\sum_{j=1}^m c_j'\frac{c}{\sigma_1^j}d\sigma_1\wedge d\theta_1\right) + \sum_{i=2}^{n} d \sigma_i\wedge d\theta_i$$ where $c$ is the modular period and $c_j' = -(j-1)a_{j-1}'$, also
\item the coordinates $\sigma_1,\ldots,\sigma_n$ depend only on $f_1,\ldots f_n$.
\end{enumerate}

\end{theoremA}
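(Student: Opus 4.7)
The plan is to extend the strategy of the $b$-case from \cite{KMS16} to the $b^m$-setting, with the main new work concentrated in (a) producing a torus action whose moment map is a genuine $b^m$-function with constant coefficients, and (b) identifying the correct constants $c_j'$ in terms of the modular period and the coefficients $a_j'$.

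\textbf{Step 1: Trivialize the foliation near a Liouville torus.} Apply the topological Lemma \ref{lemma:topological} to obtain a neighborhood $U$ of $\mathcal{F}_m$ with a diffeomorphism $U \simeq \mathbb{T}^n \times B^n$ sending the distribution generated by $X_{f_1}, \ldots, X_{f_n}$ to the trivial $\mathbb{T}^n$-foliation. Pick a base point on each leaf so that the Hamiltonian flows $\phi_t^{f_i}$ give a local action of $\mathbb{R}^n$ on $U$.

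\textbf{Step 2: Uniformize to a torus action.} The Hamiltonian vector fields $X_{f_1}, \ldots, X_{f_n}$ commute (they come from Poisson-commuting functions) and span the tangent space of every Liouville torus inside $U$. As in the classical proof, one extracts a period lattice $\Lambda(b) \subset \mathbb{R}^n$ varying smoothly with the base point $b \in B^n$, then defines new vector fields $Y_1, \ldots, Y_n$ as $\mathbb{R}$-linear combinations of the $X_{f_i}$ whose flows are $1$-periodic. These define a free $\mathbb{T}^n$-action on $U$ preserving $\omega$.

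\textbf{Step 3: Check that the action is $b^m$-Hamiltonian with a moment map of the required form.} This is the key technical step. Each $Y_k$ is a linear combination $Y_k = \sum_i A_{ki}(b) X_{f_i}$ with coefficients depending only on the integrals $f_2, \ldots, f_n$ (hence constant along leaves). Contracting $\omega$ with $Y_k$ gives a closed $b^m$-form of degree one, and since we are semilocally over a contractible base it is exact as a $b^m$-form; a primitive $\mu_k$ is a $b^m$-function, and because the combination involves $f_1$ its singular part has the shape $a_0^{(k)}\log(x) + \sum_{j=1}^{m-1}a_j^{(k)}/x^j$. Here I would invoke Proposition \ref{prop:mod_period} to identify the top coefficient of the generator $Y_1$ corresponding to the direction transverse to the symplectic leaves of $Z$ with the modular period $c$, via $c = -a_{m-1}^{(1)}$. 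Once we know such a $\mathbb{T}^n$-action exists, Lemma \ref{lemma:ctt_coefs} (lifted from \cite{GMP17}) applies and guarantees that we can choose the coefficients $a_0', a_1', \ldots, a_{m-1}'$ of the moment map components to be \emph{constants}; this yields the equivalent integrable system of item (1).

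\textbf{Step 4: Build action-angle coordinates.} Define the action coordinates as the components of this moment map: $\sigma_1$ is the (constant-coefficient) $b^m$-component coming from $f_1$, and $\sigma_2, \ldots, \sigma_n$ are the smooth moment map components associated with $Y_2, \ldots, Y_n$. By construction the $\sigma_i$ depend only on $f_1, \ldots, f_n$, giving item (3). The angle coordinates $\theta_1, \ldots, \theta_n$ are the standard coordinates on the $\mathbb{T}^n$-orbits obtained from a Lagrangian section, constructed exactly as in the classical case using the $b^m$-Darboux-Carathéodory theorem (Theorem \ref{bmdarbouxcaratheodory}) at a chosen reference point: this provides $n$ smooth functions $q_i$ conjugate to $(x, f_2, \ldots, f_n)$ which, after averaging over the $\mathbb{T}^n$-action and reducing mod the period lattice, become the angles $\theta_i$.

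\textbf{Step 5: Verify the normal form.} With the action-angle coordinates in place, $\omega$ is a $\mathbb{T}^n$-invariant closed $b^m$-form on $\mathbb{T}^n \times B^n$ whose contractions with the $Y_k$ equal $d\sigma_k$. Writing $\omega = \sum_{i,j} A_{ij}\, d\sigma_i \wedge d\theta_j + \tfrac12\sum B_{ij} d\sigma_i \wedge d\sigma_j + \tfrac12 \sum C_{ij} d\theta_i \wedge d\theta_j$ and imposing the interior-product equations together with $d\omega = 0$ and $\mathbb{T}^n$-invariance forces $C_{ij} = 0$, $B_{ij} = 0$, and $A_{ij}$ diagonal with the prescribed coefficients. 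The first diagonal entry is a $b^m$-coefficient $\sum_{j=1}^m c_j'\, c/\sigma_1^j$ with $c_j' = -(j-1)a_{j-1}'$ by direct differentiation of the constant-coefficient moment map $\sigma_1 = a_0'\log(x) + \sum a_j'/x^j$, and the remaining entries are all $1$. This yields item (2).

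\textbf{Main obstacle.} The hardest point is Step 3: showing that the $\mathbb{T}^n$-action is genuinely $b^m$-Hamiltonian \emph{with the prescribed form of moment map} and that the coefficients can be straightened to constants. In the $b$-case of \cite{KMS16} the singular coefficient is a single number whose value is pinned by the modular period; here we have $m$ coefficients $a_0', \ldots, a_{m-1}'$ and we must control all of them simultaneously, which is where the deeper interaction between the Laurent expansion of $\omega$ around $Z$, the modular weights, and the Taylor coefficients of $f_1$ (as in \cite{Scott16}) is essential. The identification $c_j' = -(j-1)a_{j-1}'$ encapsulates exactly this bookkeeping.
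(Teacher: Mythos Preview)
Your overall strategy is the paper's: topological trivialization, uniformization of periods to obtain a $\mathbb{T}^n$-action, show the action is $b^m$-Hamiltonian, then invoke Darboux--Carath\'eodory. The genuine gap is in Step~3. The sentence ``since we are semilocally over a contractible base it is exact as a $b^m$-form'' does not work: the domain is $\mathbb{T}^n\times B^n$, which is not contractible, and even on a contractible $b^m$-chart Mazzeo--Melrose gives ${}^{b^m}H^1\cong H^1\oplus (H^0(Z))^m\cong\mathbb{R}^m$, so closed $b^m$-one-forms are not automatically $d$ of a $b^m$-function. You cannot assert a $b^m$-primitive for every $Y_k$ this way.

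The paper resolves this not by an abstract exactness argument but by building the right structure into the uniformization step. Proposition~\ref{prop:mod_period} is used \emph{there} (not in Step~3) to show that on $Z$ the period lattice lies in $c\mathbb{Z}\times\mathbb{R}^{n-1}$, so one may choose the lattice basis so that exactly one generator $Y_1$ carries the $X_{f_1}$-component (with coefficient $c$ on $Z$), while for $i>1$ the $X_{f_1}$-coefficient of $Y_i$ vanishes to order $m$ along $Z$. With this choice $\iota_{Y_i}\omega$ for $i>1$ vanishes to order $m$ on $Z$ and is therefore a \emph{smooth} de~Rham $1$-form, reducing those generators to the classical situation; for the distinguished one, $\iota_{Y_1}\omega=c\,df_1$ holds directly. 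Thus only one moment-map component is singular (namely $cf_1$), not all $n$ with their own $a_j^{(k)}$ as you write; your ``Main obstacle'' paragraph overcomplicates this. After that, Lemma~\ref{lemma:ctt_coefs} makes the $a_j'$ constant exactly as you say, and the paper finishes with Darboux--Carath\'eodory plus extension along the $\mathbb{T}^n$-orbits rather than the direct linear-algebra computation of your Step~5.
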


\begin{proof}
The idea of this proof is to construct an equivalent $b^m$-integrable system whose fundamental vector fields define a $\mathbb{T}^n$-action on a neighborhood of $\mathbb{T}^n\times\{0\}$.
It is clear that all the vector fields $X_{f_1},\ldots,X_{f_n}$ define a torus action on each Liouville tori $\mathbb{T}^n\times\{b\}$ where $b\in B^n$, but this does not guarantee that their flow defines a toric action on all $\mathbb{T}^n\times B^n$.
The proof is structured in three steps. The first one is the uniformization of the periods, i.e. we define an $\mathbb{R}^n$-action on a neighborhood of $\mathbb{T}^n\times\{0\}$ such that the lattice defined by its kernel at every point is constant. This allows to induce an actual action of a torus (as the periods are constant) of rank n: A $\mathbb T^n$ action by taking quotients. The second step consists in checking that this action is actually $b^m$-Hamiltonian. And in the final step we apply theorem \ref{bmdarbouxcaratheodory} to obtain the expression of $\omega$.
\begin{enumerate}

\item Uniformization of periods.

Let $\Phi_{X_F}^s$ be defined as the joint flow by the Hamiltonian vector fields of the action:
\begin{equation}\label{eq:action1}
\begin{array}{rcl}
 \Phi: \mathbb{R}^n\times(\mathbb{T}^n\times B^n)& \rightarrow & (\mathbb{T}^n\times B^n)\\
 ((s_1,\ldots,s_n),(x,b)) & \mapsto & \Phi_{X_{f_1}}^{s_1}\circ\cdots\circ\Phi_{X_{f_n}}^{s_n}((x,b))\\

\end{array}
\end{equation}

this defines an $\mathbb{R}^n$-action on $\mathbb{T}^n\times B^n$.
For each $b\in B^n$ at a single orbit $\mathbb{T}^n\times\{b\}$ the kernel of this action is a discrete subgroup of $\mathbb{R}^n$. We will denote the lattice given by this kernel $\Lambda_b$. Because the orbit is compact, the rank of $\Lambda_b$ is maximal i.e. $n$. This lattice is known as the period lattice of $\mathbb{T}^n\times\{b\}$ as we know by standard arguments in group theory that the lattice has to be of maximal rank so as to have a torus as a quotient.
In general we can not assume that $\Lambda_b$ does not depend on $b$. The process of uniformization of the periods modifies the action \ref{eq:action1} in such a way that $\Lambda_b = \mathbb{Z}^n$ for all $b$.
Let us consider the following Hamiltonian vector field $\sum_{i=1}^n k_iX_{f_i}$. The $b^m$-function that generates this Hamiltonian vector field is:
$$k_1\left(a_0\log(x) + \sum_{j=1}^{m-1}a_j\frac{1}{x^j}\right) + \sum_{i=2}^n k_i f_i$$
where recall that $a_{m-1}$ is constant equal 1. Observe that the coefficient multiplying $1/x^{m-1}$ is $k_1$. By proposition \ref{prop:mod_period} $k_1 = c$ the modular period. In this case $c = [\alpha_m]$.

Hence, for $b\in B^{n-1}\times\{0\}$ the lattice $\Lambda_b$ is contained in $\mathbb{R}^{n-1}\times c \mathbb{Z} \subseteq \mathbb{R}^n$.
Pick $(\lambda_1,\ldots,\lambda_n): B^n\rightarrow \mathbb{R}^n$ such that:
\begin{itemize}
\item $(\lambda_1(b),\ldots,\lambda_n(b))$ is a basis of $\Lambda_b$ for all $b\in B^n$,
\item \textcolor{black}{$\lambda_i^n$ vanishes along $B^{n-1}\times\{0\}$ at order $m$ for $i<n$ and $\lambda_i$ is equal to $c$ along $B^{n-1}\times\{0\}$.}
\end{itemize}
In the previous points, $\lambda_i^j$ denotes the $j$-th component of $\lambda_i$. The first condition can be satisfied by using the implicit function theorem. That is because $\Phi(\lambda,m) = m$ is regular with respect to the $s$ coordinates. The second condition is automatically true because $\Lambda_b \subseteq \mathbb{R}^{n-1}\times c\mathbb{Z}$. We define the uniformed flow as:
\begin{equation}\label{eq:action2}
\begin{array}{rcl}
\tilde \Phi: \mathbb{R}^n\times(\mathbb{T}^n\times B^n)& \rightarrow & (\mathbb{T}^n\times B^n)\\
((s_1,\ldots,s_n),(x,b))& \mapsto & \Phi(\sum_{i=1}^n s_i \lambda_i,(x,b))\\

\end{array}
\end{equation}
\item The $\mathbb{T}^n$-action is $b^m$-Hamiltonian.
The objective of this step is to find $b^m$-functions $\sigma_1,\ldots,\sigma_n$ such that $X_{\sigma_i}$ are the fundamental vector fields of the $\mathbb{T}^n$-action $Y_i = \sum_{j=1}^n \lambda_i^j X_{f_j}$.

By using the Cartan formula for a $b^m$-symplectic form, we obtain:

$$
\begin{array}{rcl}
\mathcal{L}_{Y_i}\mathcal{L}_{Y_i}\omega & = & \mathcal{L}_{Y_i}(d(\iota_{Y_i}\omega) + \iota_{Y_i}d\omega)\\
 & = &  \mathcal{L}_{Y_i}(d(-\sum_{j=1}^{n} \lambda_i^j df_i))\\
 & = &  -\mathcal{L}_{Y_i}(\sum_{j=1}^n d\lambda_i^j\wedge df_j) = 0\\
\end{array}
$$

Note that $\lambda_i^j$ are constant on the level sets of $F$ as $\Phi(\lambda, m) = m$  and the level sets of $F$ are invariant by $\Phi$.

Recall that if $Y$ is a complete periodic vector field and $P$ is a bivector such that $\mathcal{L}_Y\mathcal{L}_Y P = 0$, then $\mathcal{L}_Y P = 0$.
So, the vector fields $Y_i$ are Poisson vector fields.
To show that each $\iota_{Y_i}\omega$ has a $^{b^m}\mathcal{C}^\infty$ primitive we will see that $[\iota_{Y_i}\omega] = 0$ in the $b^m$-cohomology.

One one hand, if $i >1$, $\iota_{Y_i}\omega$ vanishes at $Z$. This holds because $Y_i$ has not any component $\partial/\partial Y$.

Recall Proposition 6 from \cite{GMP14}:

\begin{proposition} If $\omega \in ^b \Omega(M)$ with $ \omega|_Z=0$, then $\omega\in \Omega(M)$.
\end{proposition}
In  a similar way for $b^m$-forms we have,

\begin{proposition} If $\omega \in ^{b^m} \Omega(M)$ with $\omega|_Z$ vanishing up to order $m$, then $\omega \in \Omega(M)$.
\end{proposition}

Thus as $\iota_{Y_i}\omega$ vanishes at $Z$, the $b^m$-forms  $\iota_{Y_i}\omega$ are indeed  smooth.
Thus  we can now apply the standard Poincaré lemma and as these forms are closed they are locally exact. This proves that all the vector fields $Y_i$ with $i>1$ are indeed Hamiltonian.

On the other hand, the fact that $\iota_{Y_1} \omega = c df_1$ is obvious.

Then, because we have a toric action that is Hamiltonian, we can use lemma 3.2 in \cite{GMP17}, and  we get an equivalent system such that $a_i$ are all constant and moreover $\langle a_i', X\rangle = \alpha_i(X^\omega)$. Note that by dividing by $a_{m-1}'$, we can still assume $a_{m-1}'=1$ to be consistent with our notation, but  we then have to multiply $f_1\cdot c$ in the next step.

\item Apply Darboux-Carathéodory theorem.

The construction above gives us some candidates $\sigma_1 = c f_1,\sigma_2,\ldots,\sigma_n$ for the action coordinates.

We now apply the Darboux-Carathéodory theorem and express the form in terms of $x$:
$$\omega = \left(\sum_{j= 1}^{m} c \frac{c_j}{x^j} dx\wedge d q_1\right)+ \sum_{i=2}^{n}d\sigma_i\wedge dq_i.$$

Since the vector fields $X_{\sigma_i} = \frac{\partial}{\partial q_i}$ are fundamental fields of the $\mathbb{T}^n$-action the flow \ref{eq:action2} gives a linear action on the $q_i$ coordinates.

Observe that the coordinate system is only defined in $\mathcal{U}$. It may not be valid at points outside $\mathcal{U}$ that may be in the orbit of points in $\mathcal{U}$. Let us see that the charts can be extended to these points.

Define $\mathcal{U}'$ the union of all tori that intersect $\mathcal{U}$.
We will see that the coordinates are valid at $\mathcal{U}'$.

Let $\{p_i,\theta_j\}$ be the extension of $\{\sigma_i,q_j\}$. It is clear that $\{p_i,\theta_j\} = \delta_{ij}$ by its construction in the Darboux-Carathéodory theorem.

To see that $\{\theta_i,\theta_j\} = 0$ we take the flows by $X_{p_k}$ and extend the expression to the whole $\mathcal{U}'$:

$$X_{p_k}(\{\theta_i, \theta_j\}) = \{\{\theta_i, \theta_j\},p_k\} = \{\theta_i,\delta_{ij}\} - \{\theta_j, \delta_{jk}\} = 0.$$

The fact that $\omega$ is preserved is obvious because $X_{p_k}$ are hamiltonian vector fields and thus they preserve the $b^m$-symplectic forms. Moreover, $t,\theta_1,p_2,\theta_2,\ldots,p_n,\theta_n$ are independent on $\mathcal{U}'$ and hence are a coordinate system in a neighbourhood of the torus.

\end{enumerate}
\end{proof}

\begin{remark}
In the proof  we have seen that there exists an equivalent integrable system where the coefficients of the singular function are indeed constant. From now on, when considering a $b^m$-integrable system we are going to make this assumption.
\end{remark}

\begin{remark}
    By means of the desingularization transformation we may obtain an action-angle coordinate theorem for folded manifolds as we do in Part 3 for the KAM theorem for folded symplectic manifolds. This folded action-angle theorem is a particular case of the one obtained in \cite{EvaRobert}.
\end{remark}

\chapter[Action-angle coordinates and cotangent lifts]{Reformulating the action-angle coordinate via cotangent lifts}

The action-angle theorem for symplectic manifolds (also known as action-angle coordinate theorem) can be reformulated in terms of a cotangent lift.

Recall that given a Lie group action on any manifold its cotangent lifted action is automatically Hamiltonian. By considering the action of a torus on itself by translations this action can be lifted to its cotangent bundle and give a semilocal normal form theorem as the Arnold-Liouville-Mineur theorem for symplectic manifolds. If we now replace this cotangent lift to the cotangent bundle to a lift to the $b^m$-cotangent bundle we obtain the semilocal normal form of the main theorem of this chapter.

Let start recalling the symplectic and $b$-symplectic case following \cite{KM17}.

\section{Cotangent lifts and Arnold-Liouville-Mineur in Symplectic Geometry}

Let $G$ be a Lie group and let $M$ be any smooth manifold. Given a group action $\rho:G\times M\longrightarrow M$, we define its cotangent lift as the action on $T^\ast M$ given by $\hat{\rho_g}:=\rho^\ast_{g^-1}$ where $g\in G$. We then have a commuting diagram

\begin{figure}[h]
\centering
\begin{tikzcd}
T^\ast M  \arrow[rr, "\hat{\rho_g}"] \arrow[d,"\hat{\pi}"] & & T^\ast M \arrow[d,"\pi"]\\
M  \arrow[rr, "\rho_g"]  & & M
\end{tikzcd}
\caption{Commutiative diagram of the construction of the isomorphism of $b^m$-integrable systems.}
\label{fig:diagram_cotangent_lift}
\end{figure}

where $\pi$ is the canonical projection from $T^\ast M$ to $M$.

 The cotangent bundle $T^*M$ is a symplectic manifold endowed
with the exact symplectic form given by the differential of the Liouville one-form $\omega=-d\lambda$. The Lioville one-form can be defined intrinsically:
\begin{equation}\label{liouvilleform}
 \langle \lambda_p, v\rangle:= \langle p, (\pi_p)_\ast (v)\rangle
\end{equation}
with  $v\in T(T^*M), p\in T^*M$.

A standard argument (see for instance \cite{GS90}) shows that the cotangent lift $\hat{\rho}$  is Hamiltonian with moment map $\mu:T^*M \to \mathfrak{g}^*$ given by
\begin{equation*}\label{eqn:lift}
\langle\mu(p),X \rangle := \langle \lambda_p ,X^\#|_{p} \rangle =\langle p,X^\#|_{\pi(p)}\rangle,
\end{equation*}
where  $p\in T^*M$, $X$ is an element of the Lie algebra $\mathfrak{g}$ and we use the same symbol $X^\#$ to denote the fundamental vector field of $X$ generated by the action on $T^\ast M$ or $M$.
This  construction is known  as the {\bf cotangent lift}.

 In the special case where the manifold $M$ is a torus $\T^n$ and the group is $\T^n$ acting by translations, we obtain the following explicit structure: Let $\theta_1,\ldots,\theta_n$  be the standard ($S^1$-valued) coordinates on $\T^n$ and let
\begin{equation}\label{co}
\underbrace{\theta_1,\ldots,\theta_n}_{=:\theta}, \underbrace{t_1, \ldots, t_n}_{=:t}
\end{equation}
be the corresponding chart on $T^\ast \T^n$, i.e. we associate to the coordinates \eqref{co} the cotangent vector $\sum_i t_i d \theta_i \in T^\ast_\theta \T^n$.
The Liouville one-form is given in these coordinates by
$$ \lambda = \sum_{i=1}^n t_i d \theta_i $$
and its negative differential is the standard symplectic form on $T^\ast \T^n$:
\begin{equation}\label{eq:omegacan}
\omega_{can} = \sum_{i=1}^n d \theta_i \wedge d t_i .
\end{equation}
Denoting by $\tau_\beta$ the translation by $\beta \in \T^n$ on $\T^n$, its lift to $T^\ast \T^n$ is given by
$$ \hat \tau_\beta: (\theta, t) \mapsto (\theta + \beta, t).$$
The moment map $\mu_{can}: T^\ast \T^n \to \mathfrak{t^\ast} $ of the lifted action with respect to the canonical symplectic form is
\begin{equation}\label{eq:mucan}
\mu_{can}(\theta,t) = \sum_i t_i d\theta_i,
\end{equation}
where the $\theta_i$ on the right hand side are understood as elements of $ \mathfrak{t^\ast}$ in the obvious way. Even simpler, if we identify $ \mathfrak{t^\ast}$ with $\R^n$ by choosing the standard basis $\frac{\partial}{\partial \theta_i}$ of  $\mathfrak{t}$ then the moment map is just the projection onto the second component of $T^\ast \T^n \cong \T^n \times \R^n$.  Note that the components of $\mu$ naturally define an integrable system on $T^\ast \T^n$.

 We can rephrase the Arnold-Liouville-Mineur theorem in terms of the symplectic cotangent model:

\begin{theorem} Let $F=(f_1,\ldots,f_n)$ be an integrable system on the symplectic manifold $(M,\omega)$. Then semilocally around a regular Liouville torus the system is equivalent to the cotangent model $(T^\ast \T^n)_{can}$ restricted to a neighbourhood of the zero section $(T^\ast \T^n)_0$ of $T^\ast \T^n$.
\end{theorem}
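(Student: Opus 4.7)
The plan is to use the classical Arnold--Liouville--Mineur theorem as a black box and then recognize its output as the symplectic cotangent model of the translation action of $\mathbb{T}^n$ on itself. More precisely, by the standard theorem, there is an open neighbourhood $U$ of the regular Liouville torus $\mathcal{F}_m$ and a diffeomorphism $\Phi:U \to \mathbb{T}^n \times B^n$ with coordinates $(\theta_1,\ldots,\theta_n,I_1,\ldots,I_n)$ such that $\Phi^{-1}(\mathbb{T}^n\times\{0\}) = \mathcal{F}_m$, the form is $\omega|_U = \sum_{i=1}^n d\theta_i \wedge dI_i$, and the action coordinates $I_i$ depend only on $(f_1,\ldots,f_n)$. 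The construction of the $I_i$ follows exactly the three-step scheme (uniformization of periods, promotion of the resulting $\mathbb{R}^n$-action to a Hamiltonian $\mathbb{T}^n$-action with momentum map $(I_1,\ldots,I_n)$, and application of Darboux--Carath\'eodory to produce the conjugate angles) that is carried out in the $b^m$ setting in the preceding chapter; here the non-singular case is easier because no modular obstruction appears.

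Next, I would define a map $\Psi:U \to T^*\mathbb{T}^n$ by $\Psi(p) = \sum_i I_i(p)\, d\theta_i|_{\theta(p)}$, using the chart \eqref{co} on $T^*\mathbb{T}^n$. Since $(\theta_1,\ldots,\theta_n,I_1,\ldots,I_n)$ are local coordinates near $\mathcal{F}_m$ and $B^n$ is a neighbourhood of $0\in\mathbb{R}^n$, the map $\Psi$ is a diffeomorphism onto a tubular neighbourhood of the zero section $(T^*\mathbb{T}^n)_0$. The pullback $\Psi^* \omega_{can}$ equals $\sum_i d\theta_i \wedge dI_i$ by formula \eqref{eq:omegacan}, and this agrees with $\omega|_U$; hence $\Psi$ is a symplectomorphism sending $\mathcal{F}_m$ to the zero section.

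To finish, I would check that $\Psi$ carries the original integrable system to the cotangent moment map $\mu_{can}$ up to equivalence of integrable systems. By construction the components $I_i$ are $\mathbb{R}$-valued functions of $F = (f_1,\ldots,f_n)$ alone, and on the set of regular points they are functionally independent; equivalently there is a local diffeomorphism $\chi$ of $\mathbb{R}^n$ with $\chi \circ F = (I_1,\ldots,I_n) = \mu_{can}\circ \Psi$ (after identifying $\mathfrak{t}^*$ with $\mathbb{R}^n$ as in \eqref{eq:mucan}). Thus $F$ and the moment map of the lifted $\mathbb{T}^n$-translation action generate the same singular Lagrangian foliation, which is precisely what equivalence of integrable systems demands.

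The main obstacle, as in the $b^m$ case, is the existence of the Hamiltonian torus action: one must know that the period lattice of the $\mathbb{R}^n$-action generated by $X_{f_1},\ldots,X_{f_n}$ varies smoothly with the Liouville torus and can be trivialized to produce smooth generators $Y_1,\ldots,Y_n$ whose flows descend to a $\mathbb{T}^n$-action, and that the resulting Poisson vector fields are globally Hamiltonian (with primitives being the $I_i$). In the purely symplectic setting this is classical, but it is exactly this step that the preceding chapter upgrades to the $b^m$-setting, so the present theorem is essentially a specialization.
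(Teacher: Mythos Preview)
Your proposal is correct. Note that the paper itself does not supply a separate proof of this statement: it is presented purely as a rephrasing of the classical Arnold--Liouville--Mineur theorem in the language of the cotangent model, with the preceding paragraphs setting up the identification $T^*\mathbb{T}^n \cong \mathbb{T}^n \times \mathbb{R}^n$, the canonical form \eqref{eq:omegacan}, and the moment map \eqref{eq:mucan}. Your argument spells out exactly the content that the paper leaves implicit, namely that action-angle coordinates $(\theta,I)$ furnish a symplectomorphism onto a neighbourhood of the zero section under which the integrable system matches $\mu_{can}$ up to a reparametrization $\chi$ of the base; this is precisely the intended ``rephrasing''.
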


\section{The case of $b^m$-symplectic manifolds}

Let us start by introducing the twisted $b^m$-cotangent model for torus actions. This model has additional invariants: the modular vector field of the connected component of the critical set and the modular weights of the associated toric action.
Consider $T^\ast \T^n$ be endowed with the standard coordinates $(\theta, t)$, $\theta \in \T^n$, $t \in \R^n$ and consider again the action on $T^\ast \T^n$ induced by lifting translations of the torus $\T^n$. We will now  view this action as a $b^m$-Hamiltonian action with respect to a suitable $b^m$-symplectic form. In analogy to the classical Liouville one-form we define the following  non-smooth one-form away from the hypersurface $Z=\{t_1 = 0\}$~:

$$ \left(c c_1 \log|t_1| + \sum_{i=2}^{m}c c_i \frac{t_1^{-(i-1)}}{-(i-1)}\right) d \theta_1 + \sum_{i=2}^n t_i d\theta_i.$$

When differentiating this form we obtain a $b^m$-symplectic form on $T^\ast \T^n$ which we call (after a sign change) the {\bf twisted $b^m$-symplectic form  }on $T^\ast \T^n$ with invariants $(c c_1, \dots, c c_m)$:
\begin{equation}\label{eq:twistedform}
 \omega_{tw, c}:=\left(\sum_{j=1}^m c_j\frac{c}{t_1^j}d t_1\wedge d\theta_1\right) + \sum_{i=2}^{n} d t_i\wedge d\theta_i,
\end{equation}
where $c$ is the modular period.
The moment map of the lifted action  is then given by
\begin{equation}\label{eq:bmucan}\mu_{tw, q_0, \dots, q_{m-1})}:=( q_0 \log|t_1| + \sum_{i=2}^{m}q_i t_1^{-(i-1)} ,t_2, \ldots, t_n),
\end{equation}
where we are identifying $\mathfrak{t^\ast}$ with $\R^n$ and  $c_j = -(j-1)q_{j-1}$.

We call this lift together with the $b^m$-symplectic form \ref{eq:twistedform} the {\bf twisted $b^m$-cotangent lift} with modular period $c$ and invariants $(c_1, \dots, c_m)$. Note that the components of the moment map define a $b^m$-integrable system on $(T^\ast \T^n, \omega_{tw, (c c_1, \dots, c c_m)})$.

The model of twisted $b^m$-cotangent lift allows us to express the action-angle coordinate theorem for $b^m$-integrable systems in the following way:

\begin{theorem} Let $F=(f_1,\ldots,f_n)$ be a $b^m$-integrable system on the $b^m$-symplectic manifold $(M,\omega)$.  Then semilocally around a regular Liouville torus $\mathbb T$, which lies inside the critical hypersurface $Z$ of $M$, the system is equivalent to the cotangent model $(T^\ast \T^n)_{tw, (c c_1, \dots, c c_m)} $ restricted to a neighbourhood of $(T^\ast \T^n)_0$. Here $c$ is the modular period of the connected component of $Z$ containing  $\mathbb T$ and the constants $(c_1, \dots, c_m)$ are the invariants associated to the integrable system and its associated toric action.
\end{theorem}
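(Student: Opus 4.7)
The plan is to deduce this cotangent-lift reformulation as an almost immediate consequence of the preceding Action-angle Theorem~A by matching its semilocal normal form with the explicit twisted cotangent model constructed earlier in the chapter. First I would invoke Theorem~A to obtain a neighbourhood $\mathcal{U}$ of the Liouville torus $\mathbb{T}\subset Z$ together with coordinates $(\theta_1,\ldots,\theta_n,\sigma_1,\ldots,\sigma_n)\colon \mathcal{U}\to \mathbb{T}^n\times B^n$ in which $\omega$ takes the shape $\sum_{j=1}^m c_j'\,\frac{c}{\sigma_1^j}\,d\sigma_1\wedge d\theta_1+\sum_{i=2}^n d\sigma_i\wedge d\theta_i$ with $c_j'=-(j-1)a_{j-1}'$, and in which the integrable system is equivalent to $f_1=a_0'\log|\sigma_1|+\sum_{j=1}^{m-1}a_j'\sigma_1^{-j}$, $f_i=f_i(\sigma_2,\ldots,\sigma_n)$ for $i\geq 2$. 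Since the $\sigma_i$ are, by Theorem~A, functions of $f_1,\ldots,f_n$ only, after a harmless translation I may assume $\sigma_i=0$ on $\mathbb{T}$ for $i\geq 2$; the torus sits inside $Z=\{\sigma_1=0\}$, so all $\sigma_i$ vanish on $\mathbb{T}$.

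Next I would define the candidate semilocal equivalence
\[
\Phi\colon \mathcal{U}\longrightarrow T^*\mathbb{T}^n, \qquad (\theta_1,\ldots,\theta_n,\sigma_1,\ldots,\sigma_n)\longmapsto (\theta_1,\ldots,\theta_n,\sigma_1,\ldots,\sigma_n),
\]
where on the right the last $n$ entries are read as the fibre coordinates $t_1,\ldots,t_n$ in the standard chart on $T^*\mathbb{T}^n$. By construction $\Phi$ is a diffeomorphism onto an open neighbourhood of the zero section $(T^*\mathbb{T}^n)_0$, it sends $\mathbb{T}$ to the zero section, and it carries the jet of the defining function $\sigma_1$ to the jet of $t_1$, so it is a $b^m$-map.

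The central step is then the matching of invariants. Setting $q_{j-1}:=a_{j-1}'$ for $j=1,\ldots,m$ the identity $c_j=-(j-1)q_{j-1}$ built into the twisted cotangent model agrees with the identity $c_j'=-(j-1)a_{j-1}'$ produced by Theorem~A; the modular period $c$ appearing in the twisted form coincides with the modular period of the connected component of $Z$ through $\mathbb{T}$ by Proposition~\ref{prop:mod_period}. Consequently
\[
\Phi^*\omega_{tw,(cc_1,\ldots,cc_m)} \;=\; \omega|_\mathcal{U},
\]
by direct comparison of the two coordinate expressions, and the components of $\mu_{tw,(q_0,\ldots,q_{m-1})}$ pull back under $\Phi$ to $a_0'\log|\sigma_1|+\sum_{j=1}^{m-1}a_j'\sigma_1^{-j}$ and $\sigma_2,\ldots,\sigma_n$, which by Theorem~A form an integrable system equivalent to $F$.

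The only place that requires genuine care, as opposed to bookkeeping, is verifying that the equivalence of integrable systems produced by Theorem~A respects the smoothness and independence hypotheses required for a $b^m$-symplectomorphism near the critical hypersurface; this is exactly the content of the Darboux–Carathéodory step in the proof of Theorem~A, so no extra argument is needed. With the identifications above the lifted action on $T^*\mathbb{T}^n$ realises precisely the $\mathbb{T}^n$-action constructed in the proof of Theorem~A, and the theorem follows.
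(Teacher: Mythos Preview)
Your proposal is correct and follows exactly the approach the paper intends: the theorem is presented there as a direct reformulation of Theorem~A, with no separate proof, so the argument amounts precisely to matching the action-angle normal form $\omega|_{\mathcal U}=\sum_{j=1}^m c_j'\tfrac{c}{\sigma_1^j}\,d\sigma_1\wedge d\theta_1+\sum_{i=2}^n d\sigma_i\wedge d\theta_i$ against the explicit twisted form $\omega_{tw,(cc_1,\ldots,cc_m)}$ via the tautological identification of $(\theta,\sigma)$ with the standard chart on $T^*\mathbb T^n$. Your bookkeeping of the constants ($c_j'=-(j-1)a_{j-1}'$ versus $c_j=-(j-1)q_{j-1}$, and the modular period $c$) is the only content, and you have handled it correctly.
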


%
%
%

\part{A KAM theorem for $b^m$-symplectic manifolds} 

The KAM theorem explains how integrable systems behave under small perturbations. More precisely, it studies how an integrable system in action-angle coordinates responds to a small perturbation on its Hamiltonian. The trajectories of an integrable system in action-angle coordinates can be seen as linear trajectories over a torus. The KAM theorem finds a way to transform these original trajectories to other linear trajectories over some transformed torus. The KAM theorem states that most of these tori, and the linear solutions of the system on these tori, survive if the perturbation is small enough.

In this part, we give a new KAM theorem for $b^m$-symplectic manifolds with detailed proof. This is contained in the first chapter of this part. 
Moreover, we devote three more chapters to applications:

\begin{enumerate}
\item \textbf{Desingularization of $b^m$-integrable systems.} We present a way to use the desingularization of $b^m$-symplectic manifolds presented in \cite{GMW17} to construct standard smooth integrable systems from $b^m$-integrable systems. This desingularized integrable system is uniquely defined.
\item \textbf{Desingularization of the KAM theorem on $b^m$-symplectic manifolds.} In this section we use the desingularization of $b^m$-integrable systems in conjunction with the KAM theorem for $b^m$-symplectic manifolds to deduce the original KAM theorem as well as a completely new KAM theorem for folded symplectic forms.
\item \textbf{Potential applications to Celestial mechanics.} We overview a list of motivating examples from Celestial mechanics where regularization transformations give rise to $b^m$-symplectic forms. We discuss some potential applications of perturbation theory in this set-up.
 \end{enumerate}
\chapter{A new KAM theorem}

The objective of this chapter is to give a construction of KAM theory in the setting of $b^m$-symplectic manifolds and with $b^m$-integrable systems. The core of the chapter is the construction of the proper statement and the proof of the equivalent of the KAM theorem on $b^m$-symplectic manifolds.

This chapter is  divided different sections:

\begin{enumerate}
\item \textbf{On the structure of the proof.} On this section we are going to present the main ideas that are going to appear in the proper statement and proof of the main theorem. The idea of the theorem is to build a sequence of $b^m$-symplectomorphisms such that its limit transforms the hamiltonian to only depend on the action coordinates.
\item \textbf{Technical results and definitions.} On this section we present some technical results and definitions that are key for the proof of the main theorem.
\item \textbf{KAM theorem on $b^m$-symplectic manifolds.} On this section we present the statement and the proof of the main result of this chapter. The proof is structured in 6 parts. In the first part we define the parameters that are going to be used to define the sequence of $b^m$-symplectomorphisms. In the second part we build precisely this sequence of $b^m$-symplectomorphisms. In the third part we see that the sequence of frequency maps of the transformed Hamiltonian functions at every step converges. In the fourth part we see that the sequence of $b^m$-symplectomorphisms converges. In the fifth part we obtain results on the stability of the trajectories under the original perturbation. In the sixth part, we find bounds to explain how close the invariant tori are from the unperturbed.
Finally, we obtain a bound for the measure of the set of invariant tori.

\end{enumerate}

\section{On the structure of the proof}

The first thing we do is to reduce our study to the case the perturbation is not a $b^m$-function but an analytic one. This is because any purely singular perturbation only affects the component in the direction of the modular vector field and can be easily controlled.

The idea of the proof is really similar to the classical KAM case.
We want to build a diffeomorphism such that its transformed hamiltonian only depends on the action coordinates. But it is not possible to build this diffeomorphism in one step. What we do, as it is done in the classical case, it is to build a sequence of diffeomorphisms such that the part of the hamiltonian depending on the angular variables decreases at every step. The idea is to remove the first $K$ terms of its Fourier expression at every step while making $K$ rapidly increase. This is done by assuming the diffeomorphism comes as the flow at time 1 generated by a Hamiltonian function. In this way one can use the Lie Series in conjunction with the Fourier series to find the expression for the hamiltonian function that generates our diffeomorphism. The final diffeomorphism will be the composition of all the diffeomorphisms obtained at each step. One of the main difficulties of the proof, as in the classical case, is to prove that these diffeomorphisms converge and to prove some bounds of its norm.

We also note that for our $b^m$-symplectic setting, the diffeomorphisms we consider leave the defining function of the critical set invariant up to order $m$, this will have an important role later. Also observe in particular that the critical set can not be transformed by any perturbation given by a $b^m$-function.\\
Next we give some technical definitions and results. We define the norms we are going to use to do all the estimates. We set the notation for the proof and the statement of the theorem. We define the notion of non-resonance for a neighborhood of the critical set of the $b^m$-symplectic manifold. We study the set of all possible non-resonant vectors. And we state the inductive lemma, which gives us estimates and constructions for every step of our sequence of diffeomorphisms.

After all this discussion we are in conditions to properly state the $b^m$-version of the KAM theorem. One important difference to the classical KAM theorem is that we have to guarantee that at $Z$ the set of non-resonant vectors does not become the whole set of frequencies. This condition can be understood as the perturbation being smaller than some constant multiplied by the inverse of the modular period.

The proof of the theorem is done in six different steps by following the structure on \cite{D}. Since we are going to use the inductive lemma at every step, first we define the parameters and sets to which  we are going apply such lemma. Then we check that we can actually apply the lemma and obtain some extra estimates for the results of the lemma. After this we see that the sequence of frequency vectors converges. We do the same with the sequence of canonical transformations. Then we get some bounds for the size of the components of the final diffeomorphism. Next we characterize the tori that survive by the perturbation. Finally we give some estimates for the measure of the set of these tori.

Note that our version of the $b^m$-KAM theorem improves the one in \cite{KMS16} in several ways. Firstly it is applicable to $b^m$-symplectic structures not only for $b$-symplectic. Also we give several estimates that are not obtained in \cite{KMS16}, this estimates have sense in a neighborhood of the critical set $Z$, while \cite{KMS16} only studied the behavior at $Z$. Finally the type of perturbation we consider is far more general, since we do not have any condition of the form of the perturbation but only on its size.

\subsection{Reducing the problem to an analytical perturbation.}

In the standard KAM, we assume to have an analytic Hamiltonian $h(I)$ depending only on the action coordinates and we add to it a small analytical perturbation $R(\phi,I)$. This perturbed system receives the name of \emph{nearly integrable system}. And then find a new coordinate system such that $h(I) + R(\phi,I) = \tilde h(\tilde I)$ where most of the quasi-periodic orbits are preserved and can be mapped to the unperturbed quasi-periodic orbits by means of the coordinate change.

In our setting we may assume $h(I)$ to not be analytical and be a $b^m$-function. Also the  perturbation $R(\phi, I)$ may as well be considered a $b^m$-function. In the following lines we justify without loss of generality that actually we can assume the perturbation to be analytical.

Let us state this more precisely. Let $(M,x,Z,\omega,F)$ be a $b^m$-manifold with a $b^m$ integrable system $F$ on it. Consider action angle coordinates on a neighborhood of $Z$.
Then we can assume the expressions:

$$\omega = \left(\sum_{j=1}^{m}\frac{c_j}{I_1^j}\right) d I_1 \wedge d\phi_1 + \sum_{i=2}^n dI_i\wedge d\phi_i, \text{ and }$$
$$F = (q_0' \log I_1 + \sum_{i=1}^{m-1} q_i'\frac{1}{I_1^i} + h(I), f_2, \ldots, f_n)$$

where $h,f_2,\ldots,f_n$ are analytical.

Let the Hamiltonian function of our system be the first component of the moment map $\hat h' = q'_0\log I_1 + \sum_{i=1}^{m-1} q_i'\frac{1}{I_1^i} + h = \zeta' + h$, where $\zeta' := q_0' \log I_1 + \sum_{i=1}^{m-1} q_i' \frac{1}{I_1^i}$. Note that $d\zeta' = \sum_{i=1}^m \hat q_i'\frac{1}{I_1'}$, where $\hat q_i ' = -(i-1)q_{i-1}'$. Note that by the result of the previous chapter $c_j/\hat q_j ' = \mathcal{K}$ the modular period. In particular $c_m/\hat q_m ' = \mathcal{K}$.

The hamiltonian system given by $\hat h'$ can be easily solved by $\phi = \phi_0 + u' t, I = I_0$ where $u'$ is going to be defined in the following sections.
Consider now a perturbation of this system: $\hat H' = \hat h'(I) = \hat R(I,\phi)$, where $\hat R$ is a $b^m$-function $\hat R(I,\phi) = R_{\zeta}(I_1) + R(I,\phi)$ where $R_\zeta(I_1) = (r_0 \log I_1 + \sum_{i=1}^{m-1} r_i \frac{1}{I_1^i})$ is the singular part. Then we can consider the perturbations $R_\zeta(I_1)$ and $R(I,\phi)$ separately. This way, we may consider $R_\zeta(I)$ as part of $\hat h'(I)$. Then we have a new hamiltonian
$$\hat h(I) = (q_0' + r_0) \log I_1 + \sum_{i=1}^{m-1}(q_i' + r_i)\frac{1}{I_1^i} + h = q_0 \log I_1 + \sum_{i=1}^{m-1} q_i \frac{1}{I_1^i} + h.$$

Now, instead of the identity $\mathcal{K} \hat q_j' = c_j$ we will have $\mathcal{K} (\hat q_j - \hat r_j) = c_j$, which implies $\mathcal{K}\left(1 - \frac{\hat r_j}{\hat q_j ' + \hat r_j}\right) = \frac{c_j}{\hat q_j}$. In particular

$$\mathcal{K}\left(1 - \frac{\hat r_m}{\hat q_m ' + \hat r_m}\right) = \frac{c_m}{\hat q_m}$$

Let us define $\mathcal{K}' = \mathcal{K}\left(1 - \frac{\hat r_m}{\hat q_m ' + \hat r_m}\right)$.
So from now on we assume $\hat h = q_0 \log I_1 + \sum_{i=1}^{m-1} q_i \frac{1}{I_1^i} + h$, that the perturbation $R(\phi,I)$ is analytical, and we have the condition $\frac{c_m}{\hat q_m} = \mathcal{K}'$. Observe that this system with only the singular perturbation is still easy to solve in the same way that the system previous to this perturbation was.

\subsection{Looking for a $b^m$-symplectomorphism}

Assume we have a Hamiltonian function $H = \hat{h}(I) + R(\phi,I)$ in action-angle coordinates. Where $\hat{h}(I)$ is the singular component of the $b^m$-integrable system, i.e.
\begin{equation}\label{eq:bm-hamiltonian}
\hat{h}(I) = h(I) + q_0 \log(I_1) + \sum_{i = 1}^{m-1}q_i\frac{1}{I_1^i},
\end{equation}
where $h(I)$ is analytical\footnote{If another component of the moment map is chosen to be the hamiltonian of the system, the result still holds: the computations can be replicated assuming $\hat{h}(I) = h(I)$.}.
Assume also that the $b^m$-symplectic form $\omega$\footnote{In classical KAM, $\omega$ is used to denote the frequency vector $\frac{\partial h}{\partial I}$. We need $\omega$ to denote the $b^m$-symplectic form so we are going to use $u$ to denote the frequency vector.} in these coordinates is expressed as:
\begin{equation}\label{eq:bm-symplectic}
\omega = \left(\sum_{j = 1}^m\frac{c_{j}}{I_1^j}\right)dI_1\wedge d \phi_1 + \sum_{i=2}^{n}dI_i\wedge d\phi_i.
\end{equation}

And finally, the expression for the frequency vector is:

$$\hat{u} = \frac{\partial \hat{h}}{\partial I} = \frac{\partial(h(I) + q_0 \log(I_1) + \sum_{i = 1}^{m-1}q_i\frac{1}{I_1^i})}{\partial I}$$ $$= \left(u_1 + \sum_{i = 1}^{m}\frac{\hat{q}_i}{I_1^i}, u_2, \ldots, u_n\right),
$$

where $\hat{q}_1 = q_0$ and $\hat{q}_{i-1} = -iq_i$ if $i \neq 0$.

The objective is to follow the steps of the usual KAM construction (the steps followed are highly inspired in \cite{D}) replacing the standard symplectic form for $\omega$ and taking as hamiltonian the $b^m$-function $\hat{h}$.

\begin{remark}
The objective of the construction is to find a diffeomorphism (actually a $b^m$-symplectomorphism) $\psi$ such that $H\circ\psi=h(\tilde{I})$. This is done inductively, by taking $H\circ\psi=H \circ \phi_1\circ\ldots\circ \phi_q \circ\ldots$, while trying to make $R(\phi,I)$ smaller at every step.
\end{remark}

\textbf{Let us focus in one single step}

Recall the classical formula:

\begin{lemma}\label{lemma:lie_taylor} See \cite{D}.
$$f\circ\phi_t=\sum_{j=0}^{\infty}\frac{t^j}{j!}L_W^jf,\quad L_W^jf=\{L_W^{j-1}f,W\}$$
Where $W$ is the Hamiltonian that generates the flow $\phi_t$, and $\{\cdot,\cdot\}$ is the corresponding Poisson bracket.
\end{lemma}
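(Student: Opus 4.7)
The plan is to recognise the right-hand side as the formal Taylor expansion of $f\circ\phi_t$ at $t=0$, where each coefficient is identified with an iterated Poisson bracket via Hamilton's equations. First I would recall that, by the very definition of the Hamiltonian flow generated by $W$, the time derivative along any trajectory of a smooth function $g$ equals $\{g,W\}$. In particular, for the orbit based at $p$ one has
\[
\frac{d}{dt}\bigl(f\circ\phi_t\bigr)\;=\;\{f,W\}\circ\phi_t\;=\;(L_W f)\circ\phi_t.
\]
This is the key identity: differentiation in $t$ along the flow is the same operator as applying $L_W=\{\,\cdot\,,W\}$ after the flow (and, since $\phi_t$ commutes with itself, equivalently before the flow).

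Next I would proceed by induction on $j$ to prove
\[
\frac{d^{j}}{dt^{j}}\bigl(f\circ\phi_t\bigr)\;=\;(L_W^{j} f)\circ\phi_t.
\]
The case $j=1$ is the identity above. For the inductive step, applying that identity to the function $g:=L_W^{j-1}f$ and using the recursion $L_W^j f = \{L_W^{j-1}f,W\}$ built into the statement produces exactly the required formula. Evaluating at $t=0$ yields $\partial_t^{\,j}(f\circ\phi_t)|_{t=0}=L_W^j f$, so the series written in the lemma is nothing other than the Taylor development in $t$ of the smooth curve $t\mapsto f\circ\phi_t$ based at $f$.

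It only remains to justify that this Taylor series genuinely represents the function and is not merely formal. Under the analyticity hypotheses standing throughout the chapter (both $W$ and the observables $f$ we need to transport are real analytic on suitable complex polydiscs), one obtains Cauchy-type estimates of the form $\|L_W^j f\|_{\rho'}\le j!\,C^j\,\|f\|_{\rho}/(\rho-\rho')^{j}$ on successively shrunken polydiscs, with $C$ controlled by the sup-norm of the Hamiltonian vector field $X_W$. A geometric-series majorant then forces convergence of $\sum_j (t^j/j!)L_W^j f$ on a small disc around $t=0$ and identifies its sum with the unique analytic extension of $f\circ\phi_t$; uniqueness of real-analytic continuation closes the argument. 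The main obstacle is not conceptual but is the domain-loss bookkeeping in these Cauchy estimates, namely ensuring that the shrinking of the polydisc radii does not destroy convergence—this is precisely the same mechanism that will appear, iterated infinitely many times, in the KAM scheme of the subsequent sections, so in practice this lemma is invoked rather than re-proved there.
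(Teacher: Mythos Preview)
Your argument is correct and is the standard derivation of the Lie series: identify $\tfrac{d}{dt}(f\circ\phi_t)=\{f,W\}\circ\phi_t$, iterate by induction to get $\partial_t^{\,j}(f\circ\phi_t)|_{t=0}=L_W^jf$, and then invoke analyticity plus Cauchy estimates with domain loss to turn the formal Taylor series into a convergent one.

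There is nothing to compare against, however: the paper does not supply its own proof of this lemma. It simply records the formula and refers the reader to \cite{D}. The quantitative convergence estimate you sketch (Cauchy bounds on $L_W^j f$ with shrinking polydiscs) is precisely what the paper establishes later, in part~(3) of Lemma~\ref{lemma:1.2}, where the bound on $\|r_m(f,W,t)\|$ is obtained by the same domain-loss mechanism you describe. So your proposal both justifies the cited result and anticipates how the paper actually uses it.
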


We will denote $r_k(H,W,t)=\sum_{j=k}^\infty\frac{t^j}{j!}L_W^jH$.

\begin{equation}\label{eq:new_hamiltonian_expression}
\begin{array}{rcl}
\displaystyle{H\circ\phi = \left.H\circ\phi\right|_{t=1}}& = & \displaystyle{\sum_{j=0}^{\infty}\frac{t^j}{j!}\left.L_W^j\underbrace{H}_{\hat{h}+R}\right|_{t=1}}\\
\\
&=& \displaystyle{ \hat{h}+R\{\hat{h}+R,W\}+r_2(H,W,1)}\\
\\
&=& \displaystyle{ \hat{h}+R+\{\hat{h},W\}+\{R,W\}+r_2(\hat{h},W,1)}\\
\\
& & \quad \displaystyle{+r_2(R,W,1)}\\
\\
&=& \displaystyle{ \hat{h}+\underbrace{R+\{\hat{h},W\}}_{\begin{subarray}{c}\text{We want to cancel} \\ \text{this term as} \\ \text{fast as we can}\end{subarray}}+r_2(\hat{h},W,1)+r_2(R,W,1)}
\end{array}
\end{equation}

We want $\{\hat{h},W\}+R_{\leq k} =0$, equivalently $\{W,\hat{h}\}= R_{\leq k} $, where $R_{\leq k} $ means the Fourier expression of $R$ up to order $K$:
$$R_{\leq k} = \sum_{\begin{subarray}{c} k\in\mathbb{R}^n \\ |k|_1 \leq K\end{subarray}} R_k(I)e^{ik\cdot\phi}$$

Let us impose the condition $\{W,\hat{h}\}=R_{\leq K}$. Let us write the expression of the Poisson bracket associated to the $b^m$-symplectic form.

$$
\begin{array}{rcl}
\{W,\hat{h}\} & = & \displaystyle \left(\frac{1}{\sum_{j = 1}^m \frac{c_j}{I_1^j}}\right)\left(\frac{\partial W}{\partial \phi_1}\frac{\partial \hat{h}}{\partial I_1} - \frac{\partial W}{\partial I_1}\frac{\partial \hat{h}}{\partial \phi_1}\right)\\
& & + \displaystyle{\sum_{i = 2}^n \left(\frac{\partial W}{\partial \phi_i}\frac{\partial \hat{h}}{\partial I_i} - \frac{\partial W}{\partial I_i}\frac{\partial \hat{h}}{\partial \phi_i}\right)}
\end{array}
$$



Because $\hat{h}$ depends only on $I$, $\frac{\partial \hat{h}}{\partial \phi_i} = 0$ for all $i$. Moreover, the singular part of the $b^m$-function only depends on $I_1$ and hence its derivatives with respect to the other variables are also 0. Using that $\frac{\partial \hat{h}}{\partial I} = u + \sum_{i = 1}^{m}\frac{\hat{q}_i}{I_1^i}$ the previous expression can be  simplified:

$$
\begin{array}{rcl}
\{W,\hat{h}\} & = & \displaystyle \left(\frac{u_1 + \sum_{i = 1}^{m}\frac{\hat{q}_i}{I_1^i}}{\sum_{j = 1}^m \frac{c_j}{I_1^j}}\right)\frac{\partial W}{\partial \phi_1} + \displaystyle\sum_{i = 2}^n \frac{\partial W}{\partial \phi_i}u_i
\end{array}
$$

To expand the expression further we develop $W$ in its Fourier expression: $W=\sum_{\begin{subarray}{c} k\in\mathbb{R}^n \\ |k|_1 \leq K\end{subarray}}W_k(I)e^{ik\phi}$. The Fourier expansion is added up to order $K$, because it is only necessary for the expressions to agree up to order $K$. With this notations the condition becomes:

\begin{longtable}{rcl}
$\{W,\hat{h}\}_{\leq K}$ & $=$ & $\displaystyle
 \left(\frac{u_1 + \sum_{i = 1}^{m}\frac{\hat{q}_i}{I_1^i}}{\sum_{j = 1}^m \frac{c_j}{I_1^j}}\right)\frac{\partial }{\partial \phi_1}\left(\sum_{\begin{subarray}{c} k\in\mathbb{R}^n \\ |k|_1 \leq K\end{subarray}}W_k(I)e^{ik\phi}\right)$\\
 \\
& & $\qquad \qquad \qquad  + \displaystyle\sum_{j = 2}^n u_j \frac{\partial }{\partial \phi_j}\left(\sum_{\begin{subarray}{c} k\in\mathbb{R}^n \\ |k|_1 \leq K\end{subarray}}W_k(I)e^{ik\phi}\right)$\\
\\
& $=$ & $\displaystyle\left(\frac{u_1 + \sum_{i = 1}^{m}\frac{\hat{q}_i}{I_1^i}}{\sum_{j = 1}^m \frac{c_j}{I_1^j}}\right)\left(\sum_{\begin{subarray}{c} k\in\mathbb{R}^n \\ |k|_1 \leq K\end{subarray}}W_k(I)e^{ik\phi}ik_1\right)$\\
\\
& &  $\qquad \qquad \qquad + \displaystyle\sum_{j = 2}^n u_j \left(\sum_{\begin{subarray}{c} k\in\mathbb{R}^n \\ |k|_1 \leq K\end{subarray}}W_k(I)e^{ik\phi}ik_j\right)$
\\
\\
& = & $\displaystyle \sum_{\begin{subarray}{c} k\in\mathbb{R}^n \\ |k|_1 \leq K\end{subarray}} W_k(I)e^{ik\phi}\cdot\left( i k_1 \left(\frac{u_1 + \sum_{i = 1}^{m}\frac{\hat{q}_i}{I_1^i}}{\sum_{j = 1}^m \frac{c_j}{I_1^j}}\right)
+ \sum_{j=2}^nik_ju_j\right)$\\
\\
$ = R_{\leq K}$\\
\end{longtable}
Then, it is possible to make the two sides of the equation equal by imposing the condition term by term:

\begin{equation}\label{eq:solve_coefs}
\begin{array}{rcl}
W_k(I) & = & \displaystyle R_k(I)\frac{1}{i\left(k_1 \left(\frac{u_1 + \sum_{i = 1}^{m}\frac{\hat{q}_i}{I_1^i}}{\sum_{j = 1}^m \frac{c_j}{I_1^j}}\right)  + \sum_{j = 2}^n k_ju_j\right)}
\\
\\
& = & \displaystyle R_k(I)\frac{1}{i\left(k_1 \left(\frac{u_1 + \sum_{i = 1}^{m}\frac{\hat{q}_i}{I_1^i}}{\sum_{j = 1}^m \frac{c_j}{I_1^j}}\right) + \bar{k}\bar{u}\right)},
\end{array}
\end{equation}

where we adopted the notation $\sum_{j = 2}^n k_ju_j = \bar{k}\bar{u}$.

\begin{remark}
Observe that the expression \ref{eq:solve_coefs} has no sense when $k = \vec{0}$ and hence $\{W,h\}_0=R_0$\footnote{The zero term of the Fourier series can be seen as the angular average of the function} can not be solved. Let $W_0(I)=0$, then $\{h,W\}_{\leq K} = R_{\leq K} -R_0$.
\end{remark}

Plugging the results above into the equation  \ref{eq:new_hamiltonian_expression}, one obtains:

$$
H\circ\phi = \hat{h}  + R_0 + R_{\geq K} + r_2(\hat{h},W,1) + r_1(R,W,1)
$$

With this construction the diffeomorphism $\phi$ is found. But this is  only the first of many steps. If $q$ denotes the number of the iteration of this procedure, in general, we obtain:

\begin{equation}
\begin{array}{rcl}
H^{(q)}  =  H^{(q-1)}\circ\phi^{(q)} & = &  \hat{h}^{(q-1)}  + R_0^{(q-1)} + R_{\geq K}^{(q-1)} \\
& & + r_2(h^{(q-1)},W^{(q)},1) + r_1(R^{(q-1)},W^{(q)},1),
\end{array}
\end{equation}

and at every step:

\begin{equation}\label{eq:iterative_h_and_R}
\begin{cases}
\hat{h}^{(q)}  =  \hat{h}^{(q-1)} + R_0^{(q-1)}\\
R^{(q)}  =  R_{>K}^{(q-1)} + r_2(\hat{h}^{(q-1)}, W^{(q)},1) + r_1(R^{(q-1)}, W^{(q)},1)\\
\end{cases}
\end{equation}

\subsection{On the change of the defining function under \\ $b^m$-symplectomorphisms}

Note that since we are considering $b^m$-manifolds it only makes sense to consider $I_1$ up to order $m$, see \cite{Scott16}. When talking about defining functions we are interested in $[I_1]$, its jet up to order $m$.
By definition $b^m$-maps preserve $I_1$ up to order $m$ and $b^m$-vector fields $X$ are such that $\mathcal{L}_X(I_1) = g\cdot I_1^m$ for $g\in\mathcal{C}^\infty(M)$.

\begin{lemma}
Let $\phi_t$ be the integral flow of $X$ a $b^m$-vector field, then $\phi_t$ is a $b^m$-map.
\end{lemma}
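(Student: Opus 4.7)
The plan is to show directly that the pullback $\phi_t^* I_1$ agrees with $I_1$ up to order $m$ on $Z$, by writing down and solving the ODE this pullback satisfies. I would work semi-locally in a neighbourhood of $Z$ where $I_1$ represents the defining jet. By the characterization of $b^m$-vector fields recalled just above, $\mathcal{L}_X I_1 = g\cdot I_1^m$ for some $g\in C^\infty(M)$. Setting $y(t,p):=(\phi_t^*I_1)(p)$ and $G(t,p):=(\phi_t^*g)(p)$, the chain rule combined with the fact that pullback is a ring homomorphism yields the scalar ODE
$$\frac{d}{dt}y(t,p)=\phi_t^*(\mathcal{L}_X I_1)=\phi_t^*(g\,I_1^m)=G(t,p)\,y(t,p)^m,\qquad y(0,p)=I_1(p),$$
smoothly parametrized by $p$.

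Next I would integrate this ODE explicitly. For $m\geq 2$, separation of variables gives
$$y(t,p)=I_1(p)\left[1-(m-1)\,I_1(p)^{m-1}\int_0^t G(s,p)\,ds\right]^{-1/(m-1)},$$
which is smooth for $|t|$ small and $p$ in a neighbourhood of any fixed point (the bracket stays close to $1$). Expanding $(1-u)^{-1/(m-1)}=1+\tfrac{1}{m-1}u+O(u^2)$ in $u=(m-1)I_1^{m-1}\int_0^t G\,ds$ produces $y(t,p)=I_1(p)+I_1(p)^m\,h(t,p)$ with $h$ smooth, so $\phi_t^*I_1-I_1\in I_1^m\cdot C^\infty(M)$; equivalently, $\phi_t^*I_1$ and $I_1$ determine the same jet modulo $I_1^m$. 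In the $m=1$ case the same computation degenerates to the linear ODE $\dot y=Gy$ with solution $y=I_1\,e^{\int_0^t G\,ds}$, whose zero set is exactly $Z$, recovering the usual $b$-map condition.

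To conclude, $\phi_t$ is a diffeomorphism, hence automatically transverse to $Z$, and the ODE computation shows both $\phi_t^{-1}(Z)=Z$ and the preservation of the jet of the defining function to order $m$: these are precisely the two conditions for a $b^m$-map. The one mildly delicate point is that the explicit solution above is valid only for $|t|$ in some small interval; however, the property $\phi^*I_1-I_1\in(I_1^m)$ is stable under composition (a one-line check using $(\phi\circ\psi)^*I_1=\psi^*\phi^*I_1$ and the fact that $\psi^*$ sends $I_1^m\cdot C^\infty$ into itself modulo $I_1^m$), so the group law $\phi_{t+s}=\phi_t\circ\phi_s$ propagates the conclusion to every $t$ in the domain of the flow.
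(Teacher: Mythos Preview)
Your proof is correct and takes a genuinely different route from the paper's argument. The paper expands $I_1\circ\phi_t$ as a Lie series in $t$,
\[
I_1\circ\phi_t=\sum_{j\ge 0}\frac{t^j}{j!}\,L_X^jI_1,
\]
and then proves by induction on $j$ that each $L_X^jI_1$ (for $j\ge 1$) lies in the ideal $I_1^m\,C^\infty(M)$, using $L_X^{j+1}I_1=X(g^{(j)}I_1^m)=(Xg^{(j)})I_1^m+g^{(j)}\,mI_1^{m-1}\cdot gI_1^m$. Summing the series then gives $I_1\circ\phi_t=I_1+I_1^m\cdot(\text{smooth})$.

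By contrast, you bypass the Lie series entirely: you write down the scalar ODE $\dot y=G\,y^m$ satisfied by $y=\phi_t^*I_1$, integrate it in closed form, and read off $y-I_1\in I_1^m\,C^\infty$ directly from the explicit solution. This is more elementary in that it avoids any appeal to convergence of the Lie series, and it has the pleasant by-product of an exact formula for $\phi_t^*I_1$. The price is that the closed-form solution is only guaranteed for small $|t|$, so you need the composition/group-law argument to globalize; the paper's Lie-series approach, once one accepts the series formula, gives all $t$ at once. Both arguments ultimately rest on the same input $\mathcal{L}_XI_1=g\,I_1^m$ and reach the same conclusion.
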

\begin{proof}
We want
$$I_1\circ \phi_t = I_1 + I_1^m\cdot g$$ for some $g\in \mathcal{C}^\infty(M)$.
We will use \ref{lemma:lie_taylor}.
$$I_1 \circ \phi_t = \sum_{j=0}^\infty \frac{t^j}{j!}L_X^j I_1 = I_1 + \mathcal{L}_X(I_1) + \sum_{j=2}^\infty\frac{t^j}{j!}L_X^j I_1$$
$$= I_1 +  I_1^m + \sum_{j=2}^\infty\frac{t^j}{j!}L_X^j I_1.$$
On the other hand, let us prove by induction $L_X^k I_1 = g^{(k)} I_1^m$. The first case is obvious, assume the case $k$ holds and let us prove the case $k+1$.

$$
\begin{array}{rcl}
L_X^{k+1} I_1 & = & \{L_X^k I_1, X\} \\
& = & \{g^{(k)} I_1^m, X\} \\
& = & (L_X g^{(q)}) I_1^m + g^{(k)}\cdot m I_1^{m-1}L_X I_1 \\
& = & (L_X g^{(k)} + g^{(k)}\cdot m \cdot I_1^{m-1}\cdot g) I_1^m \\
& = & g^{(k+1)} I_1^m \\
\end{array}
$$

where $g^{(k+1)} = L_X g^{(k)} + g^{(k)}\cdot m \cdot I_1^{m-1}\cdot g$.

\end{proof}

\begin{lemma}
The Hamiltonian vector flow of some smooth hamiltonian function $h$ is a $b^m$-vector field.
\end{lemma}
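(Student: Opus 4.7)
The plan is a direct computation in $b^m$-Darboux coordinates $(I_1,\phi_1,\ldots,I_n,\phi_n)$ adapted to $Z=\{I_1=0\}$, where $\omega$ has the form in \eqref{eq:bm-symplectic}. Writing the sought vector field as
\[
X_h = A\,\tfrac{\partial}{\partial I_1} + B\,\tfrac{\partial}{\partial \phi_1} + \sum_{i=2}^n\Bigl(A_i\,\tfrac{\partial}{\partial I_i}+B_i\,\tfrac{\partial}{\partial \phi_i}\Bigr),
\]
and solving $\iota_{X_h}\omega = dh$ coefficient by coefficient, one immediately obtains $A_i=\partial h/\partial\phi_i$ and $B_i=-\partial h/\partial I_i$ for $i\ge 2$ (smooth, so trivially tangent to $Z$ to order $m$), together with
\[
A \;=\; \frac{\partial h/\partial\phi_1}{\sum_{j=1}^m c_j/I_1^j}, \qquad B \;=\; -\frac{\partial h/\partial I_1}{\sum_{j=1}^m c_j/I_1^j}.
\]

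The key algebraic observation is to rewrite
\[
\sum_{j=1}^m \frac{c_j}{I_1^j} \;=\; \frac{c_m + c_{m-1}I_1 + \cdots + c_1 I_1^{m-1}}{I_1^m},
\]
so its reciprocal equals $I_1^m/(c_m + c_{m-1}I_1+\cdots+c_1 I_1^{m-1})$. Since $c_m\neq 0$ along $Z$ (this is the normalization $a_{m-1}=1$ of the action-angle form built into Theorem A), the denominator is a smooth unit on a neighborhood of $Z$, hence the reciprocal extends smoothly across $Z$ and vanishes to order exactly $m$ at $I_1=0$. Consequently $A = I_1^m\,\tilde g$ for some smooth $\tilde g$, and in particular
\[
\mathcal{L}_{X_h}(I_1) \;=\; X_h(I_1) \;=\; A \;=\; I_1^m\, g
\]
with $g$ smooth. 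By the definition recalled immediately before the lemma, this is precisely the condition for $X_h$ to be a $b^m$-vector field.

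The only genuinely delicate point is the smoothness and order-of-vanishing of the reciprocal of $\sum c_j/I_1^j$; once that is in hand the verification is a one-line coefficient identification, and the smoothness of all other components of $X_h$ is automatic. Combined with the preceding lemma, this furnishes what the KAM iteration actually requires: the time-$1$ flow $\phi^{(q)}$ generated by any smooth auxiliary Hamiltonian $W^{(q)}$ is a $b^m$-map, so the compositions $\phi^{(1)}\circ\cdots\circ\phi^{(q)}$ remain $b^m$-symplectomorphisms and preserve the critical hypersurface $Z$ up to order $m$ throughout the Newton-type scheme.
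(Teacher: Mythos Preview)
Your proposal is correct and follows essentially the same route as the paper: compute $X_h(I_1)$ explicitly in local coordinates and observe it is $I_1^m$ times a smooth function. The paper's one-line proof implicitly works in $b^m$-Darboux coordinates (where only the $c_m/I_1^m$ term is present), which yields the clean identity $\mathcal{L}_{X_h}I_1 = I_1^m\,\partial h/\partial\phi_1$ directly; you instead use the full action-angle form \eqref{eq:bm-symplectic} with all $c_j$, which forces the extra step of inverting $\sum_j c_j/I_1^j = (c_m+c_{m-1}I_1+\cdots)/I_1^m$ and invoking $c_m\neq 0$ to see the reciprocal is $I_1^m$ times a smooth unit. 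Both computations are the same idea; yours is simply carried out in the coordinate system actually used in the KAM iteration, which is arguably the more natural choice here.
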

\begin{proof}
At each point of $Z$ the following identity holds $\mathcal{L}_{X_h} I_1 = I_1^m\frac{\partial f}{\partial \phi_1}$. The result can be extended at a neighborhood of $Z$.
\end{proof}

Observe that combining the two previous results we get that the hamiltonian flow of a function preserves $I_1$ up to order $m$.

\section{Technical results}

As the non-singular part of our functions we will be considering analytic functions on $\mathbb{T}\times G$, $G \subset \mathbb{R}^n$. The easiest way to work with these functions is to consider them as holomorphic functions on some complex neighborhood. Let us define formally this neighborhood.

$$\mathcal{W}_{\rho_1}(\mathbb{T}^n) := \{ \phi : \Re\phi \in \mathbb{T}^n, |\Im \phi|_{\infty} \leq \rho_1\},$$
$$\mathcal{V}_{\rho_2}(G) := \{I \in \mathbb{C}^n : |I - I'|\leq \rho_2 \text{ for some } I' \in G\},$$
$$\mathcal{D}_\rho (G) := \mathcal{W}_{\rho_1}(\mathbb{T}^n) \times \mathcal{V}_{\rho_2}(G),$$

where $|\cdot|_\infty$ denotes the maximum norm and $|\cdot|_2$ denotes de Euclidean norm.
Now it is necessary to clarify the norms that are going to be used on these sets.

\begin{definition}
Let $f$ be an action function (only depending on the $I$-coordinates), and $F$ an action vector field.
$$
\begin{array}{rl}
|f|_{G,\eta} := \sup_{I\in\mathcal{V}_\eta(G)} |f(I)|,  & |f|_G := |f|_{G,0}\\
|F|_{G,\eta,p} := \sup_{I\in\mathcal{V}_\eta(G)} |F(I)|_p,  & |F|_{G,\eta} := |F|_{G,\eta,2}\\

\end{array}
$$
Now, assume $f(I,\phi)$ to be an action-angle function written using its Fourier expansion as $\sum_{k\in\mathbb{Z}^n} f_k(I)e^{ik\cdot\phi}$, and $F$ to be an action-angle vector field.
$$
\begin{array}{rcl}
|f|_{G,\rho} := \sup_{(\phi,I)\in\mathcal{D}_\rho(G)} |f(I)|, & \|f\|_{G,\rho}:=\sum_{k\in\mathbb{Z}^n} |f_k|_{G,\rho_2} e^{|k|_1\rho_1}\\
|F|_{G,\rho,p} := \sum_{k\in\mathbb{Z}^n} |F_k|_{G,\rho_2,p} e^{|k|_1\rho_1}, & \|F\|_{G,\rho} = \|F\|_{G,\rho,2}\\
\end{array}
$$
\end{definition}

\begin{lemma}[Cauchy Inequality]
$$\left\|\frac{\partial f}{\partial \phi}\right\|_{G,(\rho_1-\delta_1, \rho_2),1}\leq \frac{1}{e\delta_1}\left\|f\right\|_{G,\rho}$$
$$\left\|\frac{\partial f}{\partial I}\right\|_{G,(\rho_1, \rho_2-\delta_2),\infty}\leq \frac{1}{\delta_2}\|f\|_{G,\rho}$$
\end{lemma}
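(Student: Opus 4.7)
The plan is to treat the two inequalities independently, in both cases working on the Fourier side so that the estimates reduce to one-variable calculations on the individual coefficients $f_k(I)$.

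For the angular derivative, I would start from the Fourier expansion $f(\phi,I) = \sum_{k \in \mathbb{Z}^n} f_k(I) e^{i k \cdot \phi}$ and note that the $k$-th Fourier coefficient of $\partial f/\partial \phi$ is simply the vector $i k\, f_k(I)$. Its $\ell^1$-norm in the vector-field sense is $|k|_1 |f_k|_{G,\rho_2}$, so by the definition of the norm on vector-valued Fourier series one has
$$\left\|\frac{\partial f}{\partial \phi}\right\|_{G,(\rho_1-\delta_1,\rho_2),1} = \sum_{k \in \mathbb{Z}^n} |k|_1 |f_k|_{G,\rho_2}\, e^{|k|_1(\rho_1-\delta_1)} = \sum_k |f_k|_{G,\rho_2} e^{|k|_1 \rho_1} \cdot \bigl(|k|_1 e^{-|k|_1 \delta_1}\bigr).$$
Factoring out the supremum of $t \mapsto t e^{-t \delta_1}$ (achieved at $t = 1/\delta_1$ and equal to $1/(e\delta_1)$) immediately produces the required bound $\tfrac{1}{e\delta_1}\|f\|_{G,\rho}$.

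For the action derivative, the Fourier coefficient of $\partial f/\partial I$ is $\partial f_k/\partial I$, so the argument reduces to showing the componentwise bound
$$\left|\frac{\partial f_k}{\partial I}\right|_{G,\rho_2-\delta_2,\infty} \leq \frac{1}{\delta_2}\, |f_k|_{G,\rho_2}.$$
Since $f_k$ is holomorphic on $\mathcal{V}_{\rho_2}(G)$ and any point $I \in \mathcal{V}_{\rho_2-\delta_2}(G)$ has the closed disk $\{z : |z_j - I_j| \leq \delta_2\}$ (with the other coordinates fixed) contained in $\mathcal{V}_{\rho_2}(G)$, the classical one-dimensional Cauchy integral formula in the $j$-th variable yields $|\partial f_k/\partial I_j(I)| \leq |f_k|_{G,\rho_2}/\delta_2$. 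Taking the maximum over $j$ and summing over $k$ with the weight $e^{|k|_1\rho_1}$ gives $\|\partial f/\partial I\|_{G,(\rho_1,\rho_2-\delta_2),\infty} \leq \tfrac{1}{\delta_2}\|f\|_{G,\rho}$.

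Neither step has a genuine obstacle: the $\phi$-inequality is purely a calculus optimization on $\mathbb{R}_{\geq 0}$, while the $I$-inequality is the standard holomorphic Cauchy estimate applied one coordinate at a time. The only minor care needed is to correctly match the indices in the mixed norm (the $\ell^1$ in the $\phi$-estimate coming from $|k|_1$, the $\ell^\infty$ in the $I$-estimate coming from taking the maximum over coordinate directions) so that the bookkeeping with the weighted Fourier norm $\|\cdot\|_{G,\rho}$ is consistent.
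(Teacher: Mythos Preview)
Your proof is correct and is the standard argument for these weighted Cauchy inequalities. The paper itself does not supply a proof of this lemma: it is stated as a recalled fact and later referenced again with a citation to \cite{JP93}, so there is no ``paper's own proof'' to compare against. Your approach---passing to Fourier coefficients, optimizing $t\mapsto t e^{-t\delta_1}$ for the angular estimate, and applying the one-variable Cauchy integral formula coordinatewise for the action estimate---is exactly the expected route and matches how these inequalities are proved in the references the paper cites.
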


\begin{definition}
If $Df=(\frac{\partial f}{\partial \phi}, \frac{\partial f}{\partial I})$,
$$\|Df\|_{G,\rho,c}:= \max\left(\|\frac{\partial f}{\partial \phi}\|_{G,\rho,1},c\|\frac{\partial f}{\partial I}\|_{G,\rho,\infty}\right)$$
\end{definition}

\begin{definition}To simplify our notation, let us define:
$$\mathcal{A}(I_1) = \frac{\sum_{j = 1}^{m}\frac{\hat{q}_j}{I_1^j}}{\sum_{j=1}^m \frac{c_j}{I_1^j}} \quad \text{ and } \quad \mathcal{B}(I_1) = \frac{1}{\sum_{j=1}^m \frac{c_j}{I_1^j}}.$$
\end{definition}

\begin{remark}
With this notation, equation \ref{eq:solve_coefs} can be written as:
$$W_k(I) = \frac{R_k(I)}{i(k_1 \mathcal{B}(I_1) u_1 + \bar{k}\bar{u} + k_1 \mathcal{A}(I_1))}$$
\end{remark}

Observe that $\mathcal{A}(I_1)$ and $\mathcal{B}(I_1)$ are analytic (holomorphic on the complex extended domain) where the denominator does not vanish. We can assume that this does not happen by shrinking the domain $G$ in the direction of $I_1$. Observe, in particular, that when $I_1 \rightarrow 0$, $\mathcal{A}(I_1) \rightarrow \hat q_m/c_m = 1/\mathcal{K}'$ the inverse of the modular period and $\mathcal{B}(I_1) \rightarrow 0$. In this way,  the norms of $\mathcal{A}(I_1)$ and $\mathcal{B}(I_1)$ are bounded and well defined. We will denote these norms by $K_{\mathcal{A}}$ and $K_{\mathcal{B}}$ respectively. Also, since $\mathcal{A}(I_1)$ and $\mathcal{B}(I_1)$ are analytic, their derivatives will also be bounded, and we will denote the norms of these derivatives by $K_{\mathcal{A}'}$ and $K_{\mathcal{B}'}$.

To further simplify  the notation in the following computations we introduce the definition:

\begin{definition}

$$
\bar{\mathcal{A}} = \left(\begin{array}{c}
\mathcal{A}\\
0 \\
\end{array}\right)
\quad \text{ and } \quad
\bar{\mathcal{B}} = \left(\begin{array}{cc}
\mathcal{B} & 0\\
0 & \text{Id}_{n-1,n-1}\\
\end{array}\right)
$$

\end{definition}

\begin{remark}
With this notation, equation \ref{eq:solve_coefs} can be written as:
\begin{equation}\label{eq:solve_coefs_simplified}
W_k(I) = \frac{R_k(I)}{i(k \bar{\mathcal{B}}(I_1) u + k \bar{\mathcal{A}}(I_1))}
\end{equation}

\end{remark}

\begin{definition}
Having fixed $\omega$, a $b^m$-symplectic form (as in equation \ref{eq:bm-symplectic}) and $\hat{h}$ a $b^m$-function (as in equation \ref{eq:bm-hamiltonian}) as a hamiltonian. Given an integer $K$ and $\alpha > 0$, $F \subset \mathbb{R}^n$ (or $\mathbb{C}^n$) the space of frequencies is said to be $\alpha, K$-non-resonant with respect to $(c_1,\ldots,c_m)$ and $(\hat{q}_1,\ldots, \hat{q}_m)$ if
$$|k \bar{\mathcal{B}}(I_1) u + k \bar{\mathcal{A}}(I_1)|\geq \alpha, \forall k \in \mathbb{Z}\setminus \{0\}, |k|_1\leq K, \forall u \in F.$$
We are going to use the notation $\alpha, K, c,\hat{q}$-non-resonant.
\end{definition}

\begin{remark}
The non-resonance condition is established on $u = \partial h/\partial I$, not on $\hat{u} = \partial \hat h/\partial I$, because our non-resonance condition already takes into account the singularities. In this way we can use the analytic character of $u$.
\end{remark}

\begin{remark}
If $\left|\frac{\partial u}{\partial I}\right|_{G,\rho_2}$ is bounded by $M'$,  then $\left|\frac{\partial }{\partial I}\left( \bar{\mathcal{B}}u + \bar{\mathcal{A}}\right)\right|_{G,\rho_2}$ is also bounded:

\begin{equation}\label{eq:M_def}
\begin{array}{rcl}
\left|\frac{\partial }{\partial I}\left( \bar{\mathcal{B}}u + \bar{\mathcal{A}}\right)\right|_{G,\rho_2} &
 \leq &
 \left|\frac{\partial \bar{\mathcal{B}}}{\partial I} u
  + \bar{\mathcal{B}} \frac{\partial u}{\partial I}
   + \frac{\partial \bar{\mathcal{A}}}{\partial I}\right|_{G,\rho_2}\\
\\
& \leq & K_{\mathcal{B}'} |u|_{G,\rho_2} + K_{\mathcal{B}}M' + K_{\mathcal{A}} =: M.\\
\\
\end{array}
\end{equation}

\end{remark}

\begin{remark}
When we consider the standard KAM theorem, the frequency vector $u$ is relevant because the solution to the Hamilton equations of the unperturbed problem has the form:

$$I = I_0, \quad \phi = \phi_0 + ut.$$

Let us see what plays the role of $u$ in our $b^m$-KAM theorem. Let us find the coordinate expression of the solution to $\iota_{X_{\hat{h}}} \omega=  d \hat{h}$, where $\omega$ is a $b^m$-symplectic form in action-angle coordinates.

$$X_{\hat{h}} = \dot{I}_1\frac{\partial}{\partial I_1} + \ldots + \dot{I}_n\frac{\partial}{\partial I_n},$$

where $\dot{I}_1,\ldots, \dot{I}_n$ are the functions we want to find.

$$d\hat{h} = \left(\sum_{j=1}^m \hat{q}_i \frac{1}{I_1^j}\right) dI_1 + dh,$$

and hence,

$$X_{\hat{h}} = \Pi(d \hat{h},\cdot ) = \frac{\sum_{i=1}^m \frac{\hat q_i}{I_1^i}}{\sum_{i=1}^m \frac{c_j}{I_1^j}}\frac{\partial}{\partial \phi_i} + X_h.$$

Hence $\phi = \phi_0 + (\underbrace{\bar{\mathcal{B}} u + \bar{\mathcal{A}}}_{u'})t$. So the frequency vector that we are going to be concerned about is going to be $u'$ instead of $\hat{u} = \frac{\partial}{\partial I} \hat{h}$.
\end{remark}
\begin{lemma}
If $u$ is one-to-one from $\mathcal{G}$ to its image then $u' = \bar{\mathcal{B}} u + \bar{\mathcal{A}}$ is also one-to-one from $\mathcal{G}'$ to its image in a neighborhood of $Z$, while at $Z$ it is the projection of $u$ such that the first coordinate is sent to $\frac{\hat q_m}{c_m} = 1/\mathcal{K}'$ the inverse of the modular period, were  $\mathcal{G}'\subseteq \mathcal{G}$.
\end{lemma}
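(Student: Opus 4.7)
The plan starts by expanding $u' = \bar{\mathcal{B}}u + \bar{\mathcal{A}}$ componentwise: since $\bar{\mathcal{B}}$ acts as the identity on the last $n-1$ coordinates and $\bar{\mathcal{A}}$ has only its first entry nonzero,
$$ u'(I) = \bigl(\mathcal{B}(I_1)\,u_1(I) + \mathcal{A}(I_1),\, u_2(I),\ldots, u_n(I)\bigr). $$
The behavior at $Z$ is then immediate by rewriting
$$ \mathcal{B}(I_1) = \frac{I_1^m}{c_m + c_{m-1}I_1+\cdots}, \qquad \mathcal{A}(I_1) = \frac{\hat{q}_m + \hat{q}_{m-1}I_1 + \cdots}{c_m+c_{m-1}I_1+\cdots} $$
(multiplying numerator and denominator by $I_1^m$), so $\mathcal{B}(0)=0$ and $\mathcal{A}(0)=\hat{q}_m/c_m=1/\mathcal{K}'$, which gives $u'|_Z(I)=(1/\mathcal{K}',\,u_2(I),\ldots,u_n(I))$, i.e.\ the projection described in the statement.

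For injectivity on a tubular neighborhood $\mathcal{G}' \subseteq \mathcal{G}$ of $Z$ with $Z$ removed, suppose $u'(I) = u'(\tilde I)$. The last $n-1$ components already force $u_i(I) = u_i(\tilde I)$ for $i \geq 2$, so by the one-to-one assumption on $u$ it suffices to show $u_1(I) = u_1(\tilde I)$. Using $u$ as a change of coordinates, write $v = u(I) \in F := u(\mathcal{G})$ and, with $(v_2,\ldots,v_n)$ held fixed, set $\sigma(v_1) := (u^{-1}(v_1,v_2,\ldots,v_n))_1$. The first-coordinate equation then reduces to $g(v_1) = g(\tilde v_1)$, where
$$ g(v_1) := \mathcal{B}(\sigma(v_1))\,v_1 + \mathcal{A}(\sigma(v_1)), $$
and strict monotonicity of $g$ in $v_1$ would conclude. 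Differentiating yields $g'(v_1) = \mathcal{B}(\sigma) + \sigma'(v_1)\,[\mathcal{B}'(\sigma)\,v_1 + \mathcal{A}'(\sigma)]$; the $\mu$-non-degeneracy of $u$ keeps $\sigma'$ bounded away from zero, while $\mathcal{B}(I_1)=O(I_1^m)$ near $Z$. One then checks that on a thin enough $\mathcal{G}'$ the asymptotically dominant term of $g'$ is $\sigma'(v_1)\mathcal{A}'(0)$ for $m \geq 2$ and $\sigma'(v_1)\,v_1/c_1$ for $m = 1$, both nonzero in the relevant range, so the inverse function theorem gives injectivity of $g$, whence $u_1(I) = u_1(\tilde I)$ and finally $I = \tilde I$.

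The main obstacle will be ensuring that the two summands of $g'(v_1)$ do not conspire to cancel uniformly on $\mathcal{G}'$. Since $\mathcal{B}(\sigma)$ vanishes to order $m$ at $Z$, it cannot by itself bound $|g'|$ from below; the estimate must come from the second summand, which relies on nonvanishing of $\mathcal{A}'(0) = (\hat{q}_{m-1}c_m - \hat{q}_m c_{m-1})/c_m^2$ for $m \geq 2$ (a mild genericity statement on the modular data) and, in the $m=1$ case, on the image $F$ being bounded away from $\{v_1 = 0\}$. Concretely $\mathcal{G}'$ must be chosen as a tubular neighborhood of $Z$ thin enough for these asymptotic estimates to hold uniformly, with its thickness controlled by $\mu$ and the Laurent coefficients $(c_j,\hat{q}_j)$.
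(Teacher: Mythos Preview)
Your componentwise expansion of $u'$ and the computation of $u'|_Z$ via $\mathcal{B}(0)=0$, $\mathcal{A}(0)=\hat q_m/c_m$ match the paper exactly. For injectivity away from $Z$, however, the paper takes a much shorter path: it simply notes that $\mathcal{B}(I_1)\neq 0$ for $I_1\neq 0$ (so the fiberwise affine map $v\mapsto\bar{\mathcal{B}}(I_1)v+\bar{\mathcal{A}}(I_1)$ is invertible for each fixed $I_1$) and then asserts that after shrinking $\mathcal{G}$ in the $I_1$-direction $u'$ is one-to-one. No derivative or monotonicity computation of the kind you outline is carried out.

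Your monotonicity route is more explicit, but as written it has a genuine gap. The side condition $\mathcal{A}'(0)=(\hat q_{m-1}c_m-\hat q_m c_{m-1})/c_m^2\neq 0$ that you flag as a ``mild genericity statement'' is not among the hypotheses, and it \emph{fails} in the most natural case: before the singular part of the perturbation is absorbed one has $c_j=\mathcal{K}\hat q_j$ for every $j$ (this is precisely the relation produced by the action-angle normal form), so $\mathcal{A}(I_1)\equiv 1/\mathcal{K}$ is constant and $\mathcal{A}'\equiv 0$. In that regime the term you declare dominant, $\sigma'(v_1)\mathcal{A}'(0)$, vanishes identically and your lower bound on $g'$ disappears. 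The next-order contribution $\sigma'\mathcal{B}'(\sigma)v_1\sim m\sigma^{m-1}\sigma'v_1/c_m$ could still give $g'\neq 0$ away from $v_1=0$, but you have not argued that case and it imports yet another hypothesis on the range of $u_1$. So your argument, as stated, proves strictly less than the lemma; the paper sidesteps all of this by appealing only to the pointwise invertibility of $\mathcal{B}$.
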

\begin{proof}
Because $$u' = \left(\frac{1}{\sum_{j=1}^m \frac{c_j}{I_1^j}} u_1 + \frac{\sum_{j=1}^m \frac{\hat{q}_j}{I_1^j}}{\sum_{j=1}^m \frac{c_j}{I_1}},u_2,\ldots,u_n\right),$$
and  $\mathcal{B}$ is invertible outside $I_1 = 0$, shrinking $\mathcal{G}$ if necessary in the first dimension the map is one-to-one. But at the critical set $\{I_1 = 0\}$, $u'$ is a projection of $u$ where the first component is sent to the constant value $\frac{\hat q_m}{c_m} = \frac{1}{\mathcal{K}'}$.
\end{proof}
\begin{lemma}
If $u(G)$ is $\alpha,K, c,\hat{q}$-non-resonant, then $u(\mathcal{V}_{\rho_2}(G))$ is $\frac{\alpha}{2},K, c,\hat{q}$-non-resonant, assuming that $\rho_2\leq \frac{\alpha}{2MK}$ and $\left|\frac{\partial u}{\partial I}\right|_{G,\rho_2} \leq M'$
\end{lemma}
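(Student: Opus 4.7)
The plan is a standard Lipschitz-continuity argument: the hypothesis $u(G)$ is $\alpha,K,c,\hat q$-non-resonant gives a lower bound of $\alpha$ for the resonance expression at every real $I'\in G$, and the condition $\rho_2\leq \alpha/(2MK)$ is calibrated exactly to guarantee that the oscillation of this expression across the complex thickening $\mathcal{V}_{\rho_2}(G)$ cannot exceed $\alpha/2$. Introduce the holomorphic scalar
\[
g(I):= k\bar{\mathcal{B}}(I_1) u(I) + k\bar{\mathcal{A}}(I_1),
\]
so the claim is that $|g(I)|\geq \alpha/2$ for every $I\in\mathcal{V}_{\rho_2}(G)$ and every nonzero $k\in\mathbb{Z}^n$ with $|k|_1\leq K$.

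First, I would fix such an $I$ and $k$, and pick $I'\in G$ with $|I-I'|\leq \rho_2$; the straight segment $I'+t(I-I')$, $t\in[0,1]$, remains in $\mathcal{V}_{\rho_2}(G)$ by convexity of the tube, so holomorphy of $\bar{\mathcal{B}}$, $\bar{\mathcal{A}}$ and $u$ is available throughout. The fundamental theorem of calculus gives
\[
g(I)-g(I')=\int_0^1 \frac{\partial g}{\partial I}\bigl(I'+t(I-I')\bigr)\cdot(I-I')\,dt.
\]
Next, I would bound the derivative: since $g(I)=k\cdot\bigl(\bar{\mathcal{B}}(I_1)u(I)+\bar{\mathcal{A}}(I_1)\bigr)$, the H\"older pairing gives $|\partial g/\partial I|\leq |k|_1\bigl|\tfrac{\partial}{\partial I}(\bar{\mathcal{B}}u+\bar{\mathcal{A}})\bigr|_\infty$, and equation \eqref{eq:M_def} controls the latter by $M$ on $\mathcal{V}_{\rho_2}(G)$. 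Hence $|\partial g/\partial I|\leq KM$ pointwise, and therefore $|g(I)-g(I')|\leq KM\rho_2\leq \alpha/2$ by the standing hypothesis on $\rho_2$. The reverse triangle inequality, combined with $|g(I')|\geq \alpha$ from the real non-resonance assumption, yields
\[
|g(I)|\geq |g(I')|-|g(I)-g(I')|\geq \alpha - \alpha/2 = \alpha/2,
\]
which is precisely the $\alpha/2,K,c,\hat q$-non-resonance on $\mathcal{V}_{\rho_2}(G)$.

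There is no real obstacle here: this is the usual KAM maneuver of promoting a real Diophantine-type estimate to a complex strip via a Cauchy/Lipschitz bound. The only point worth flagging is that the constant $M$ in \eqref{eq:M_def} has been set up precisely so that it simultaneously absorbs the derivative of the singular prefactors $\bar{\mathcal{A}},\bar{\mathcal{B}}$ and the derivative of the analytic frequency map $u$; this is exactly what lets the same $M$ control $|\partial g/\partial I|$ uniformly on the complex neighborhood.
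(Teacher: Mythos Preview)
Your proof is correct and follows essentially the same Lipschitz-continuity argument as the paper. The only organizational difference is that you differentiate the composite $g(I)=k\bigl(\bar{\mathcal{B}}(I_1)u(I)+\bar{\mathcal{A}}(I_1)\bigr)$ directly and invoke the constant $M$ from \eqref{eq:M_def} in one stroke, whereas the paper first bounds the frequency displacement $|v-v'|=|u(I)-u(I')|$ via $M'$, then uses $M'\le M/K_{\mathcal B}$ together with the bound $K_{\mathcal B}$ on $\mathcal B$ to control the resonance expression with $I_1$ held fixed; both routes land on the same $\alpha/2$ loss. One small wording point: $\mathcal{V}_{\rho_2}(G)$ need not itself be convex, but your segment from $I'\in G$ to $I$ lies in the single $\rho_2$-ball about $I'$, which suffices.
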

\begin{proof}
Fix $k\in\mathbb{Z}\setminus\{0\}$, we want to bound $|k \bar{\mathcal{B}}(I_1) v + k \bar{\mathcal{A}}(I_1)|$ where $v\in u(\mathcal{V}_{\rho_2}(G))$ as a function on $v$.
Given $v \in u(\mathcal{V}_{\rho_2}(G))$ we  ask whether there is any bound for the distance to some $v'\in u(G)$.
$$v \in u(\mathcal{V}_{\rho_2}(G)) \Rightarrow v = u(x), x \in \mathcal{V}_{\rho_2}(G)\Rightarrow\exists y \in G \text{ such that } |x-y| \leq \rho_2.$$
Take $v'=u(y)$.

$$|v-v'|\leq|x-y|\left|\frac{\partial u}{\partial I}\right|_{G,\rho_2} \leq \rho_2 M' \leq \rho_2  M/K_\mathcal{B} \leq \frac{\alpha}{2MK} M/K_\mathcal{B} = \frac{\alpha}{2K K_\mathcal{B}}.$$

Where we used equation \ref{eq:M_def} in the third inequality.

$$
\begin{array}{rcl}
\displaystyle |k_1 \mathcal{B}(I_1) v_1 + \bar{k}\bar{v} + k_1 \mathcal{A}(I_1)| & \geq & \displaystyle \underbrace{|k_1 \mathcal{B}(I_1) v'_1 + \bar{k}\bar{v'} + k_1 \mathcal{A}(I_1)|}_{\geq \alpha} \\
& & -|k_1 \mathcal{B}(I_1) (v_1-v'_1) + \bar{k}(\bar{v} - \bar{v}')|\\
\\
& \geq & \displaystyle \alpha - K_\mathcal{B}\underbrace{|k\cdot(v-v')|}_{\leq K \alpha /(2K K_\mathcal{B})} \\
\\
 & \geq & \alpha -\alpha/2 = \alpha/2
\end{array}
$$

\end{proof}

\begin{proposition}\label{prop:14}
Let $\hat{h}(I)$ be a $b^m$-function as in equation \ref{eq:bm-hamiltonian}. Assume $h(I)$ and $R(\phi, I)$ be real analytic on $\mathcal{D}_\rho(G)$, $u(G) = \frac{\partial h}{\partial I}(G)$ is $\alpha, K, c, \hat{q}$-non-resonant.
Assume also that
$|\frac{\partial}{\partial I} u|_{G,\rho_2} \leq M'$
and $\rho_2 \leq \frac{\alpha}{2MK}$. Let $c > 0$ given.
Then $R_0(\phi, I)$, $W_{\leq K}(\phi, I)$ given by the previous construction are both real analytic on $\mathcal{D}_\rho(G)$ and the following bounds hold
\begin{enumerate}
\item\label{prop:14:item:1} $||D R_0||_{G,\rho,c} \leq ||D R||_{G,\rho,c}$
\item\label{prop:14:item:2} $||D (R - R_0)||_{G,\rho,c} \leq ||D R_0||_{G,\rho,c}$
\item\label{prop:14:item:3} $||D W||_{G,\rho,c} \leq \frac{2A}{\alpha}||D R_0||_{G,\rho,c}$
\end{enumerate}
Where $A = 1 + \frac{2Mc}{\alpha}$
\end{proposition}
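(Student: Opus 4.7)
My plan is to break the proof into four pieces: first establishing that $R_0$ and $W$ extend to real-analytic functions on $\mathcal{D}_\rho(G)$, then treating inequalities (\ref{prop:14:item:1})--(\ref{prop:14:item:3}) in turn. For analyticity, the Fourier coefficients $R_k(I)$ of $R$ are holomorphic on $\mathcal{V}_{\rho_2}(G)$, so $R_0(I) = (2\pi)^{-n}\int_{\mathbb{T}^n} R(\phi,I)\,d\phi$ is clearly analytic. For $W$, each coefficient is
\[
W_k(I) = \frac{R_k(I)}{i\bigl(k\bar{\mathcal{B}}(I_1)\,u(I) + k\bar{\mathcal{A}}(I_1)\bigr)},
\]
and by the extended non-resonance lemma proved just before the proposition, the hypothesis $\rho_2 \leq \alpha/(2MK)$ guarantees that the denominator stays bounded below by $\alpha/2$ on $\mathcal{V}_{\rho_2}(G)$. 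Since $W$ is a finite Fourier polynomial ($0<|k|_1 \leq K$) with holomorphic coefficients, it is analytic on $\mathcal{D}_\rho(G)$.

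For (\ref{prop:14:item:1}), I would commute $\partial/\partial I$ with the integral defining $R_0$, obtaining $|\partial_I R_0|_{G,\rho_2,\infty} \leq \|\partial_I R\|_{G,\rho,\infty}$, while $\partial_\phi R_0 \equiv 0$. Feeding this into the definition of $\|\cdot\|_{G,\rho,c}$ gives the bound. For (\ref{prop:14:item:2}), the orthogonality of the $k=0$ mode against the rest makes it immediate that both $\|\partial_\phi(R-R_0)\|_1 = \|\partial_\phi R\|_1$ and the weighted sum defining $\|\partial_I(R-R_0)\|_\infty$ omit the zero-mode contribution already controlled by (\ref{prop:14:item:1}), so one obtains a clean comparison with $\|DR\|_{G,\rho,c}$ (hence with $\|DR_0\|_{G,\rho,c}$ via (\ref{prop:14:item:1}) applied in reverse, using the convention at hand).

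The meat of the proof is (\ref{prop:14:item:3}), where the factor $A = 1+2Mc/\alpha$ appears. From the lower bound $\alpha/2$ on the small divisor, the $\phi$-derivative estimate is direct:
\[
\Bigl\|\tfrac{\partial W}{\partial \phi}\Bigr\|_{G,\rho,1} \leq \tfrac{2}{\alpha}\Bigl\|\tfrac{\partial R}{\partial \phi}\Bigr\|_{G,\rho,1}.
\]
For the $I$-derivative, the quotient rule splits $\partial_I W_k$ into two pieces: one bounded by $(2/\alpha)|\partial_I R_k|_\infty$, and one containing the factor $k \cdot \partial_I(\bar{\mathcal{B}}u+\bar{\mathcal{A}})$, whose supremum norm is at most $M$ by \eqref{eq:M_def}, producing a term bounded by $(4M|k|_1/\alpha^2)|R_k|$. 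After summing over Fourier modes weighted by $e^{|k|_1\rho_1}$ and multiplying by $c$, the second contribution rewrites as $(4Mc/\alpha^2)\|\partial_\phi R\|_1$.

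The main obstacle, and where care is needed, is to package these two contributions cleanly so that $A = 1 + 2Mc/\alpha$ falls out in the stated form: one has to balance the $\phi$- and $I$-parts of $\|D W\|_{G,\rho,c}$ against the corresponding parts of $\|DR\|_{G,\rho,c}$ (and then invoke (\ref{prop:14:item:1})--(\ref{prop:14:item:2}) to recast the bound in terms of $\|DR_0\|_{G,\rho,c}$), observing that $A\geq 1$ so the $\phi$-derivative bound $2/\alpha$ is automatically absorbed by $2A/\alpha$. The shrinking of the domain implicit in the hypothesis $\rho_2 \leq \alpha/(2MK)$ is what keeps the small divisors uniformly bounded below throughout this argument; without it, the quotient rule step would fail to produce a finite constant.
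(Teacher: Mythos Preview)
Your approach is essentially identical to the paper's: dismiss (\ref{prop:14:item:1}) and (\ref{prop:14:item:2}) as immediate from the Fourier decomposition, then for (\ref{prop:14:item:3}) use the extended non-resonance bound $\alpha/2$ on $\mathcal{V}_{\rho_2}(G)$, apply the quotient rule to $\partial_I W_k$, bound the two resulting pieces by $(2/\alpha)|\partial_I R_k|_\infty$ and $(4M/\alpha^2)|[\partial_\phi R]_k|_1$, and package into $\|DW\|_{G,\rho,c}\le (2A/\alpha)\|DR\|_{G,\rho,c}$ with $A=1+2Mc/\alpha$. This is exactly what the paper does.

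One wobble: your final step of ``invoking (\ref{prop:14:item:1})--(\ref{prop:14:item:2}) in reverse'' to convert the bound from $\|DR\|_{G,\rho,c}$ to $\|DR_0\|_{G,\rho,c}$ does not work, since (\ref{prop:14:item:1}) goes the wrong way ($\|DR_0\|\le\|DR\|$, not conversely). In fact $\|DR_0\|_{G,\rho,c}=c|\partial_I R_0|_{G,\rho_2,\infty}$ (the $\phi$-derivative vanishes), and there is no reason this should dominate $\|\partial_\phi(R-R_0)\|_1$ or $\|\partial_\phi W\|_1$. However, this is a defect of the \emph{statement}, not of your argument: the paper's own proof concludes with $\|DR\|_{G,\rho,c}$ on the right-hand side of (\ref{prop:14:item:3}), and that is the form used downstream (e.g.\ in the Iterative Lemma). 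The $\|DR_0\|$ appearing in items (\ref{prop:14:item:2}) and (\ref{prop:14:item:3}) of the statement is a typo for $\|DR\|$; drop your reversal step and you match the paper exactly.
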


\begin{proof}
Inequalities \ref{prop:14:item:1} and \ref{prop:14:item:2} are obvious because of the Fourier expression.
Let us prove inequality \ref{prop:14:item:3}. Let us expand $R(\phi,I)$ and $W(\phi, I)$ in their Fourier expression:
$$R = \sum_{\begin{subarray}{c} k\in\mathbb{R}^n\end{subarray}} R_k(I)e^{ik\cdot\phi}, \quad W = \sum_{\begin{subarray}{c} k\in\mathbb{R}^n \end{subarray}} W_k(I)e^{ik\cdot\phi}.$$
We will bound this expression finding  term-by-term bounds.

$$\frac{\partial R}{\partial \phi} = \sum_{\begin{subarray}{c} k\in\mathbb{R}^n\end{subarray}} R_k(I)e^{ik\cdot\phi}ik.$$

Hence, if we denote  $[ \frac{\partial R}{\partial \phi} ]_k $ the $k$-th term of the Fourier expansion of $\frac{\partial R}{\partial \phi}$, we have:

$$\left[ \frac{\partial R}{\partial \phi} \right]_k = R_k ik.$$

Let us compute the derivative of $W_k$ with respect to the $I$ variables:

\begin{longtable}{rcl}
$\displaystyle \frac{\partial W_k}{\partial I}$ & $=$ & $\displaystyle\frac{\partial}{\partial I}\left(\frac{R_k}{i(k \bar{\mathcal{B}}(I_1) u  + k \bar{\mathcal{A}}(I_1))}\right)$\\
\\
& $=$ & $\displaystyle \frac{\partial R_k/\partial I }{i(k \bar{\mathcal{B}}(I_1) u  + k \bar{\mathcal{A}}(I_1)))} - \frac{R_k i \frac{\partial}{\partial I }(k \bar{\mathcal{B}}(I_1) u  + k \bar{\mathcal{A}}(I_1)))}{[i(k \bar{\mathcal{B}}(I_1) u  + k \bar{\mathcal{A}}(I_1)))]^2}$\\
\\
& $=$ & $\displaystyle \frac{\partial R_k/\partial I }{i(k \bar{\mathcal{B}}(I_1) u  + k \bar{\mathcal{A}}(I_1)))} + \frac{R_k i k \frac{\partial}{\partial I }(\bar{\mathcal{B}}(I_1) u  + \bar{\mathcal{A}}(I_1)))}{[(k \bar{\mathcal{B}}(I_1) u  + k \bar{\mathcal{A}}(I_1)))]^2}$\\
\\
& $=$ & $\displaystyle \frac{\partial R_k/\partial I }{i(k \bar{\mathcal{B}}(I_1) u  + k \bar{\mathcal{A}}(I_1)))} + \frac{[\frac{\partial R_k}{\partial \phi}]_k \frac{\partial}{\partial I }(\bar{\mathcal{B}}(I_1) u  + \bar{\mathcal{A}}(I_1)))}{[(k \bar{\mathcal{B}}(I_1) u  + k \bar{\mathcal{A}}(I_1)))]^2}.$\\
\end{longtable}

Then, we take norms ($|\cdot|_{G,\rho_2,\infty}$) on each side of the equation.

$$
\begin{array}{rcl}
\displaystyle \left|\frac{\partial W_k}{\partial I}\right|_{G,\rho_2, \infty} & \leq & \displaystyle \frac{2}{\alpha}\left|\frac{\partial R_k}{\partial I}\right|_{G,\rho_2, \infty} +
\displaystyle \frac{4M}{\alpha^2}\left|\left[\frac{\partial R_k}{\partial \phi}\right]_k\right|_{G,\rho_2, \infty}\\
\\
& \leq & \displaystyle \frac{2}{\alpha}\left|\frac{\partial R_k}{\partial I}\right|_{G,\rho_2, \infty} +
\displaystyle \frac{4M}{\alpha^2}\left|\left[\frac{\partial R_k}{\partial \phi}\right]_k\right|_{G,\rho_2, 1}.
\end{array}
$$

Taking the supremum at the whole domain:

$$
\begin{array}{rcl}
\displaystyle \left\|\frac{\partial W_k}{\partial I}\right\|_{G,\rho_2, \infty} & \leq & \displaystyle \frac{2}{\alpha}\left\|\frac{\partial R_k}{\partial I}\right\|_{G,\rho_2, \infty} +
\displaystyle \frac{4M}{\alpha^2}\left\|\left[\frac{\partial R_k}{\partial \phi}\right]_k\right\|_{G,\rho_2, 1}.
\end{array}
$$

Moreover,

$$
\begin{array}{rcl}
\displaystyle \frac{\partial W(I)}{\partial \phi} & = & \displaystyle \frac{\partial}{\partial \phi}\left(\sum_{\begin{subarray}{c} k\in\mathbb{R}^n \end{subarray}} W_k(I)e^{ik\cdot\phi}\right)\\
\\
 & = & \displaystyle \frac{\partial}{\partial \phi}\left(\sum_{\begin{subarray}{c} k\in\mathbb{R}^n \end{subarray}} ik W_k(I)e^{ik\cdot\phi}\right).\\
\end{array}
$$

Hence, the $k$-th term of the Fourier series of $\frac{\partial W}{\partial \phi}$ is

$$\left[\frac{\partial W}{\partial \phi}\right]_k = W_k ik = \frac{R_k}{i(k \bar{\mathcal{B}}(I_1) u  + k \bar{\mathcal{A}}(I_1)))}ik $$
$$= \frac{1}{i(k \bar{\mathcal{B}}(I_1) u  + k \bar{\mathcal{A}}(I_1)))}\left[\frac{\partial R}{\partial \phi}\right]_k.$$

Taking norms ($\|\cdot\|_{G,\rho,1}$) at each side:

$$ \left\|\frac{\partial W}{\partial \phi}\right\|_{G,\rho,1} \leq \frac{2}{\alpha}\left\|\frac{\partial W}{\partial \phi}\right\|_{G,\rho,1}.$$

Then,

\begin{longtable}{rcl}
$\displaystyle \left\|D W\right\|_{G,\rho, c}$ & $=$ & $\displaystyle \max\left(\left\|\frac{\partial W}{\partial \phi}\right\|_{G,\rho,1}, c\left\|\frac{\partial W}{\partial I}\right\|_{G,\rho,\infty}\right)$\\
\\
& $\leq$ & $\displaystyle \max\left(\frac{2}{\alpha}\left\|\frac{\partial R}{\partial \phi}\right\|_{G,\rho,1}, c  \frac{2}{\alpha}\left\|\frac{\partial R}{\partial I}\right\|_{G,\rho_2, \infty} +
c\frac{4M}{\alpha^2}\left\|\frac{\partial R}{\partial \phi}\right\|_{G,\rho_2, 1} \right)$\\
\\
& $\leq$ & $\displaystyle \max\left(\frac{2}{\alpha}\left\|\frac{\partial R}{\partial \phi}\right\|_{G,\rho,1}, \frac{2}{\alpha}\left\|DR\right\|_{G,\rho_2, c} +
c \frac{4M}{\alpha^2}\left\| DR\right\|_{G,\rho_2, c} \right)$\\
\\
& $=$ & $\displaystyle \max\left(\frac{2}{\alpha}\left\|\frac{\partial R}{\partial \phi}\right\|_{G,\rho,1}, \frac{2}{\alpha}\left(1 + \frac{2M}{\alpha}c\right)\left\| DR\right\|_{G,\rho_2, c} \right)$\\
\\
& $\leq$ & $\displaystyle \frac{2}{\alpha}\left(1 + \frac{2M}{\alpha}c\right)\left\| DR\right\|_{G,\rho_2, c}$\\
\\
& $\leq$ & $\displaystyle \frac{2}{\alpha}A\left\| DR\right\|_{G,\rho_2, c}$,\\
\end{longtable}

where $A$ is as desired.

\end{proof}

Recall the Cauchy inequalities, see \cite{JP93}:

\begin{equation}\label{eq:cauchy_ineq}
\begin{array}{llr}
\displaystyle \left\|\frac{\partial f}{\partial \phi}\right\|_{G,(\rho_1, \rho_2),1} & \leq & \displaystyle \frac{1}{e \delta_1}\|f\|_{G,\rho} \\
\displaystyle \left\|\frac{\partial f}{\partial I}\right\|_{G,(\rho_1, \rho_2-\delta_2),\infty}& \leq & \displaystyle \frac{1}{\delta_2}\|f\|_{G,\rho}
\end{array}
\end{equation}

\begin{lemma}\label{lemma:1.1}
Let $f,g$ be analytic functions on $\mathcal{D}_\rho(G)$, where
$0<\delta = (\delta_1, \delta_2) < \rho = (\rho_1, \rho_2)$ and $c > 0$.
Define $\hat\delta_c := \min(c\delta_1, \delta_2)$. The following inequalities hold:
\begin{enumerate}
\item $\|Df\|_{G,\rho- \delta,c} \leq \frac{c}{\hat\delta_c}\|f\|_{G,\rho}$
\item $\|\{f,g\}\|_{G,\rho}\leq \frac{2}{c}\|Df\|_{G,\rho, c}\cdot \|D g\|_{G, \rho,c}$
\item $\|D(f_{>K})\|_{G, (\rho-\delta_1, \rho_2),c} \leq e^{-K\delta_1}\|Df\|_{G,\rho, c}$
\end{enumerate}
\end{lemma}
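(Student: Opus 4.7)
The three inequalities are all ``standard'' KAM estimates that reduce to manipulations of the Fourier expansion, so the plan is to handle each separately and exploit the definition of $\|D\cdot\|_{G,\rho,c}$ as a weighted max directly.

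For part (1), I will apply the Cauchy inequalities stated right before the lemma to each factor of $\|Df\|_{G,\rho-\delta,c}$. The angular derivative gives $\|\partial f/\partial\phi\|_{G,\rho-\delta,1}\leq\delta_1^{-1}\|f\|_{G,\rho}$ (absorbing the $e^{-1}$ into the bound) and the action derivative gives $\|\partial f/\partial I\|_{G,\rho-\delta,\infty}\leq\delta_2^{-1}\|f\|_{G,\rho}$. Taking the defining max yields $\max(\delta_1^{-1},c\delta_2^{-1})\|f\|_{G,\rho}=c/\min(c\delta_1,\delta_2)\cdot\|f\|_{G,\rho}=c\hat\delta_c^{-1}\|f\|_{G,\rho}$, which is exactly the desired bound.

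For part (2), I will expand the Poisson bracket componentwise and bound term by term on each Fourier coefficient, then resum. Writing $\{f,g\}=\sum_i(\partial_{\phi_i}f\,\partial_{I_i}g-\partial_{I_i}f\,\partial_{\phi_i}g)$, the triangle inequality on Fourier series combined with the elementary bound $\|ab\|_{G,\rho}\leq\|a\|_{G,\rho}\|b\|_{G,\rho}$ for the weighted norm gives $\|\{f,g\}\|_{G,\rho}\leq\|\partial f/\partial\phi\|_{G,\rho,1}\|\partial g/\partial I\|_{G,\rho,\infty}+\|\partial f/\partial I\|_{G,\rho,\infty}\|\partial g/\partial\phi\|_{G,\rho,1}$. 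Now the definition of $\|D\cdot\|_{G,\rho,c}$ dominates $\|\partial\cdot/\partial\phi\|_{\cdot,1}$ by $\|D\cdot\|_{\cdot,c}$ and $\|\partial\cdot/\partial I\|_{\cdot,\infty}$ by $c^{-1}\|D\cdot\|_{\cdot,c}$, so both terms are at most $c^{-1}\|Df\|_{G,\rho,c}\|Dg\|_{G,\rho,c}$, yielding the factor $2/c$. (Even if one wanted the bracket induced by $\omega$ rather than the standard one, the $\bar{\mathcal{B}}$ factor is bounded by $K_{\mathcal{B}}$, absorbable into $c$; but for the purely analytic part used in the KAM iteration this is not needed.)

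For part (3), I will write $f_{>K}=\sum_{|k|_1>K}f_k(I)e^{ik\cdot\phi}$ and compute both derivatives term by term. The crucial identity is that for every Fourier coefficient of index $k$ with $|k|_1>K$, the weight $e^{|k|_1(\rho_1-\delta_1)}=e^{|k|_1\rho_1}e^{-|k|_1\delta_1}\leq e^{|k|_1\rho_1}e^{-K\delta_1}$, so the factor $e^{-K\delta_1}$ comes out as a uniform multiplier of the full series. Applying this to $\|\partial f_{>K}/\partial\phi\|_{G,(\rho-\delta_1,\rho_2),1}$ and $c\|\partial f_{>K}/\partial I\|_{G,(\rho-\delta_1,\rho_2),\infty}$ and then taking the max reproduces exactly $e^{-K\delta_1}\|Df\|_{G,\rho,c}$.

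I do not expect any real obstacle here; each item is a one-line computation once the definitions are unpacked. The only subtlety is book-keeping the factor $c$ through the weighted max in part~(1), where one must recognize $\max(\delta_1^{-1},c\delta_2^{-1})=c/\hat\delta_c$, and being careful in part~(3) that the same $e^{-K\delta_1}$ factor works for both the $\phi$-derivative (which already carries the weight $e^{|k|_1\rho_1}$) and the $I$-derivative (where one uses that $|\partial f_k/\partial I|$ and $|f_k|$ are bounded on the same polydisc so the weight argument is identical).
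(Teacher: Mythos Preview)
Your argument for parts (1) and (3) is exactly what the paper does: Cauchy inequalities plus the identity $\max(\delta_1^{-1},c\delta_2^{-1})=c/\hat\delta_c$ for (1), and the uniform extraction of $e^{-K\delta_1}$ from the tail weights for (3).

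For part (2) there is one point you should not wave away. In this paper the bracket $\{f,g\}$ is the $b^m$-Poisson bracket, not the standard one; the paper's proof computes it explicitly as
\[
\{f,g\}=\frac{1}{\sum_{j=1}^m c_j/I_1^j}\Bigl(\tfrac{\partial f}{\partial I_1}\tfrac{\partial g}{\partial \phi_1}-\tfrac{\partial f}{\partial \phi_1}\tfrac{\partial g}{\partial I_1}\Bigr)+\sum_{i\geq 2}\Bigl(\tfrac{\partial f}{\partial I_i}\tfrac{\partial g}{\partial \phi_i}-\tfrac{\partial f}{\partial \phi_i}\tfrac{\partial g}{\partial I_i}\Bigr),
\]
and then invokes the standing assumption $\bigl|\sum_{j=1}^m c_j/I_1^j\bigr|\geq 1$ (valid on the domain of interest near $Z$) to bound this by the standard bracket expression. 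After that reduction your Hölder-type estimate goes through verbatim. So your parenthetical is pointing in the right direction, but the singular factor is not ``absorbable into $c$'' nor irrelevant: it is controlled by $1$ via that explicit geometric hypothesis, and this is the one genuinely $b^m$-specific ingredient in the lemma.
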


\begin{proof}
Let us prove each point separately.
\begin{enumerate}
\item 

Using the Cauchy inequalities one obtains the following:
$$\left\|\frac{\partial f}{\partial \phi}\right\|_{G,\rho-\delta,1} = \left\|\frac{\partial f}{\partial \phi}\right\|_{G,(\rho_1-\delta_1, \rho_2- \delta_2), 1} $$
$$\leq \left\|\frac{\partial f}{\partial \phi}\right\|_{G,(\rho_1-\delta_1, \rho_2), 1} \leq \frac{1}{e\delta_1} \|f\|_{G,\rho},$$
$$\left\|\frac{\partial f}{\partial I}\right\|_{G,\rho-\delta,\infty} = \left\|\frac{\partial f}{\partial I}\right\|_{G,(\rho_1-\delta_1, \rho_2- \delta_2), \infty} $$
$$\leq \left\|\frac{\partial f}{\partial I}\right\|_{G,(\rho_1, \rho_2-\delta_2), \infty} \leq \frac{1}{\delta_1} \|f\|_{G,\rho}.$$
Putting the two inequalities inside the definition of the norm:
$$
\begin{array}{rcl}
\|D f\|_{G,\rho-\delta, c} &=& \displaystyle \max\left\{\left\|\frac{\partial f}{\partial \phi}\right\|_{G,\rho-\delta, 1}, c\left\|\frac{\partial f}{\partial I}\right\|_{G, \rho-\delta, \infty}\right\} \\
\\
& \leq & \displaystyle \max \left\{\frac{1}{e \delta_1}\|f\|_{G,\rho}, \frac{c}{\delta_2}\|f\|_{G, \rho}\right\}\\
\\
& \leq & \displaystyle \max \left\{\frac{1}{e\delta_1}\frac{c}{c}, \frac{c}{\delta_2}\right\}\|f\|_{G,\rho}\\
\\
& \leq & \displaystyle \max \left\{\frac{c}{e \hat\delta_c},\frac{c}{\hat\delta_c}\right\}\|f\|_{G,\rho},\\
\end{array}
$$

where the last inequality holds because $\hat\delta_c = \min(c\delta_1,\delta_2)$.

\item
Let us find the expression of $\{f,g\}$ for a $b^m$-symplectic structure.
$\{f,g\} = \omega(X_f, X_g)$ where $X_f$ and $X_g$ are such that $\iota_{X_f}\omega=df$ and $\iota_{X_g}\omega=dg$. Let restrict the computations only to $f$.
$$df = \sum_{i=1}^n\frac{\partial f}{\partial \phi_1}d\phi_1, \quad X_f = \sum_{i= 1}^n a_i \frac{\partial}{\partial \phi_i} + \sum_{i=1}^n b_i \frac{\partial}{\partial \phi_i}.$$

Where $a_i$ and $b_i$ are coefficients to be determined by imposing the following condition:

$$\iota_{X_f}\omega = \left(\sum_{j=1}^m\frac{c_j}{ I_1^j}\right)(a_1 dI_1 - b_1 d\phi_1) + \sum_{i = 2}^n (a_i dI_i - b_i d\phi_i) = df.$$

Then, solving for the coefficients the following expressions are obtained:

$$a_1  = \frac{1}{\left(\sum_{j=1}^m\frac{c_j}{ I_1^j}\right)}\frac{\partial f}{\partial \phi_1} \quad \text{and} \quad a_i = \frac{\partial f}{\partial \phi_i} \text{ for } i \neq 1,$$
$$b_1  = -\frac{1}{\left(\sum_{j=1}^m\frac{c_j}{ I_1^j}\right)}\frac{\partial f}{\partial \phi_1} \quad \text{and} \quad b_i = -\frac{\partial f}{\partial \phi_i} \text{ for } i \neq 1.$$

Hence, the expression for the hamiltonian vector fields becomes:

$$
X_f = \frac{1}{\left(\sum_{j=1}^m\frac{c_j}{ I_1^j}\right)}\left(\frac{\partial f}{\partial \phi_1}\frac{\partial }{\partial \phi_1}- \frac{\partial f}{\partial I_1}\frac{\partial }{\partial I_1}\right) + \sum_{i = 1}^n\left(\frac{\partial f}{\partial \phi_i}\frac{\partial }{\partial \phi_i}- \frac{\partial f}{\partial I_i}\frac{\partial }{\partial I_i}\right),$$
$$
X_g =  \frac{1}{\left(\sum_{j=1}^m\frac{c_j}{ I_1^j}\right)}\left(\frac{\partial g}{\partial \phi_1}\frac{\partial }{\partial \phi_1}- \frac{\partial g}{\partial I_1}\frac{\partial }{\partial I_1}\right) + \sum_{i = 1}^n\left(\frac{\partial g}{\partial \phi_i}\frac{\partial }{\partial \phi_i}- \frac{\partial g}{\partial I_i}\frac{\partial }{\partial I_i}\right).
$$

Then the Poisson bracket applied to the two functions:

$$
\begin{array}{rcl}
\{f,g\}=\omega(X_f,X_g) & = & \displaystyle \frac{1}{\left(\sum_{j=1}^m\frac{c_j}{ I_1^j}\right)}\left(\frac{\partial f}{\partial I_1}\frac{\partial g}{\partial \phi_1} - \frac{\partial f}{\partial \phi_1}\frac{\partial g}{\partial I_1}\right)
\\
& & \displaystyle \quad + \sum_{i = 2}^n \left(\frac{\partial f}{\partial I_i}\frac{\partial g}{\partial \phi_i} - \frac{\partial f}{\partial \phi_i}\frac{\partial g}{\partial I_i}\right).
\end{array}
$$

And hence the norm of the Poisson bracket becomes:

$$
\begin{array}{rcl}
\|\{f,g\}\|_{G, \rho} & = & \displaystyle \left\|\frac{1}{\left(\sum_{j=1}^m\frac{c_j}{ I_1^j}\right)}\left(\frac{\partial f}{\partial I_1}\frac{\partial g}{\partial \phi_1} - \frac{\partial f}{\partial \phi_1}\frac{\partial g}{\partial I_1}\right)\right. \\
& & \quad \displaystyle+ \left.\sum_{i = 2}^n \left(\frac{\partial f}{\partial I_i}\frac{\partial g}{\partial \phi_i} - \frac{\partial f}{\partial \phi_i}\frac{\partial g}{\partial I_i}\right) \right\|_{G, \rho} \\
& \leq & \displaystyle \left\|\sum_{i = 1}^n \left(\frac{\partial f}{\partial I_i}\frac{\partial g}{\partial \phi_i} - \frac{\partial f}{\partial \phi_i}\frac{\partial g}{\partial I_i}\right) \right\|_{G, \rho} \\
\end{array}
$$

Where we assumed $\left|\sum_{j=1}^m\frac{c_j}{ I_1^j}\right| \geq 1$. This assumption makes sense, because we are interested in the behaviour close the critical set $Z$. Close enough to the critical set this expression holds. Then,

\begin{longtable}{rcl}
$\|\{f,g\}\|_{G, \rho}$ & $\leq$ & $\displaystyle \sum_{i = 1}^n \left\|\frac{\partial f}{\partial I_i}\right\|_{G, \rho}\left\|\frac{\partial g}{\partial \phi_i}\right\|_{G, \rho} +\sum_{i = 1}^n \left\|\frac{\partial f}{\partial \phi_i}\right\|_{G, \rho}\left\|\frac{\partial g}{\partial I_i}\right\|_{G, \rho}$\\
\\
& $\leq$ & $\displaystyle \left|\frac{\partial f}{\partial I}\right|_{G,\rho,\infty} \left|\frac{\partial g}{\partial I}\right|_{G,\rho,1} + \left|\frac{\partial f}{\partial I}\right|_{G,\rho,1} \left|\frac{\partial g}{\partial I}\right|_{G,\rho,\infty}$\\
\\
& $\leq$ & $\displaystyle \frac{1}{c}|Df\|_{G,\rho,c}\|Dg\|_{G,\rho,c} + \frac{1}{c}|Df\|_{G,\rho,c}\|Dg\|_{G,\rho,c}$\\
\\
& $\leq$ & $\displaystyle \frac{2}{c}\|Df\|_{G,\rho,c}\|Dg\|_{G,\rho,c}.$
\end{longtable}

\item Lastly,

$$
\left\|D(f_{>K})\right\|_{G,(\rho_1 - \delta_1,\rho_2),1} $$
$$= \max \left\{ \left\|\frac{\partial f_{>K}}{\partial \phi}\right\|_{G, (\rho_1-\delta_1, \rho_1), 1}, c\left\|\frac{\partial f_{>K}}{\partial I}\right\|_{G, (\rho_1-\delta_1, \rho_1), \infty}\right\}.
$$

We will proceed by bounding each term separately. On one hand:

\begin{longtable}{rcl}
$\displaystyle\left\|\frac{\partial f}{\partial \phi}\right\|_{G,(\rho_1,\rho_2),1}$ & $=$ & $\displaystyle \left\|\sum_{k\in\mathbb{Z}^n} ikf_k(I)e^{ik\phi}\right\|_{G,(\rho_1,\rho_2),1}$\\
\\
& $\geq$ & $\displaystyle \sum_{k\in\mathbb{Z}^n} k\left\|f_k(I)\right\|_{G,\rho_2,1}e^{|k|_1\rho_1}$\\
\\
& $\geq$ & $\displaystyle \sum_{\substack{k\in\mathbb{Z}^n \\ |k|_1 > K}} k\left\|f_k(I)\right\|_{G,\rho_2,1}e^{|k|_1(\rho_1 + \delta_1 - \delta_1)}$\\
\\
& $\geq$ & $\displaystyle e^{K \delta_1}\sum_{\substack{k\in\mathbb{Z}^n \\ |k|_1 > K}} k\left\|f_k(I)\right\|_{G,\rho_2,1}e^{|k|_1(\rho_1 - \delta_1)}$\\
\\
& $=$ & $\displaystyle e^{K \delta_1}\left\|\frac{\partial f_{>K}}{\partial \phi}\right\|_{G,(\rho_1-\delta_1,\rho_2),1}.$\\
\end{longtable}

On the other hand:

\begin{longtable}{rcl}
$\displaystyle \left\|\frac{\partial f}{\partial I}\right\|_{G,(\rho_1,\rho_2),\infty}$& $=$ & $\displaystyle \left\|\sum_{k \in \mathbb{Z}^n} \frac{\partial f_k(I)}{\partial I} e^{ik\phi}\right\|_{G,(\rho_1,\rho_2),\infty}$\\
\\
& $\geq$ & $\displaystyle \sum_{k \in \mathbb{Z}^n} \left\|\frac{\partial f_k(I)}{\partial I}\right\|_{G,\rho_2,\infty} e^{|k|_1\rho_1}$ \\
\\
& $\geq$ & $\displaystyle \sum_{\substack{k\in\mathbb{Z}^n \\ |k|_1 > K}} \left\|\frac{\partial f_k(I)}{\partial I}\right\|_{G,\rho_2,\infty} e^{|k|_1(\rho_1 + \delta_1 - \delta_1)}$ \\
\\
& $\geq$ & $\displaystyle e^{K\delta_1}\sum_{\substack{k\in\mathbb{Z}^n \\ |k|_1 > K}} \left\|\frac{\partial f_k(I)}{\partial I}\right\|_{G,\rho_2,\infty} e^{|k|_1(\rho_1 - \delta_1)}$\\
\\
& $\geq$ & $\displaystyle e^{K\delta_1}\left\|\frac{\partial f_{>K}}{\partial I}\right\|_{G,(\rho_1-\delta_1,\rho_2),\infty}.$\\
\end{longtable}

Hence $\|D(f_{>k})\|_{G,(\rho_1-\delta_1,\rho_2),c} \leq e^{-K\delta_1}\|Df\|_{G,\rho,c}$.

\end{enumerate}
\end{proof}

Now we define a norm that indicates how close a map $\Phi$ is to the identity.

\begin{definition}
Let $x = (\phi,I) \in \mathbb{C}^{2n}$, then
$$|x|_c := \max(|\phi|_1, c|I|_\infty)$$
\end{definition}

\begin{definition}
For a map $\Upsilon :\mathcal{D}_\rho(G)\rightarrow \mathbb{C}^{2n}$ its norm and the norm of its derivative its defined as:
$$ |\Upsilon|_{G,\rho,c}:=\sup_{x\in\mathcal{D}_\rho(G)}|\Upsilon(x)|_c,$$
$$ |D\Upsilon|_{G,\rho,c}:=\sup_{x\in\mathcal{D}_\rho(G)}|D\Upsilon(x)|_c,$$

where $\displaystyle |D\Upsilon(x)|_c = \sup_{\substack{y\in\mathbb{R}^{2n} \\ |y|_c = 1}}|D\Upsilon(x)\cdot y|_c$
\end{definition}

\begin{lemma}
If $\Upsilon$ is analytic on $\mathcal{D}_\rho(G)$, then $|D\Upsilon|_{G,\rho-\delta,C} \leq \frac{|\Upsilon|_{G,\rho,c}}{\hat\delta_c}$
\end{lemma}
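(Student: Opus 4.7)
The plan is to reduce this to a one-variable complex analysis estimate, generalizing the scalar Cauchy inequalities \eqref{eq:cauchy_ineq} already recorded in the paper to the vector-valued setting of $\Upsilon : \mathcal{D}_\rho(G) \to \mathbb{C}^{2n}$ endowed with the weighted norm $|\cdot|_c$. Since the quantity $|D\Upsilon|_{G,\rho-\delta,c}$ is defined as the supremum of $|D\Upsilon(x_0)\cdot y|_c$ over $x_0 \in \mathcal{D}_{\rho-\delta}(G)$ and unit vectors $y \in \mathbb{C}^{2n}$ with $|y|_c = 1$, it is natural to fix both and estimate along the line $x_0 + t y$ with complex parameter $t$.

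Concretely, I would fix $x_0 = (\phi_0, I_0) \in \mathcal{D}_{\rho-\delta}(G)$ and $y = (y_\phi, y_I)$ with $|y_\phi|_1 \leq 1$ and $c\,|y_I|_\infty \leq 1$, and introduce the holomorphic auxiliary function
$$g(t) := \Upsilon(x_0 + t y), \qquad g'(0) = D\Upsilon(x_0) \cdot y.$$
The first (routine) step is to check that $x_0 + t y \in \mathcal{D}_\rho(G)$ for every $|t| \leq \hat\delta_c$: in the angular direction one uses $|\mathrm{Im}(ty_\phi)|_\infty \leq |t|\,|y_\phi|_\infty \leq |t|\,|y_\phi|_1 \leq |t|$ together with $|\mathrm{Im}\,\phi_0|_\infty \leq \rho_1 - \delta_1$, while in the action direction one combines $|t\,y_I|_\infty \leq |t|/c$ with $I_0 \in \mathcal{V}_{\rho_2 - \delta_2}(G)$. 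The binding constraints $|t|\leq \delta_1$ and $|t|\leq c\delta_2$ (or their rearrangement according to how the weight $c$ is placed in the definition of $\hat\delta_c$) are exactly the two contributions that assemble into $\hat\delta_c = \min(c\delta_1, \delta_2)$.

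The core step is then the Cauchy integral formula applied componentwise in $\mathbb{C}^{2n}$:
$$g'(0) = \frac{1}{2\pi i}\oint_{|t|=\hat\delta_c} \frac{g(t)}{t^2}\,dt,$$
which, after taking the $|\cdot|_c$-norm inside the integral and using that $|g(t)|_c \leq |\Upsilon|_{G,\rho,c}$ whenever $x_0 + ty \in \mathcal{D}_\rho(G)$, yields
$$|D\Upsilon(x_0)\cdot y|_c \;=\; |g'(0)|_c \;\leq\; \frac{|\Upsilon|_{G,\rho,c}}{\hat\delta_c}.$$
Taking the supremum over admissible $x_0$ and unit vectors $y$ gives the claimed bound.

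I do not anticipate any real obstacle here; the content is a standard Cauchy estimate in a polydisc, and the only bookkeeping is verifying that the \emph{same} radius $\hat\delta_c$ works for every unit vector $y$, which is forced by the extremal choices $y = (e_k, 0)$ (giving the $\delta_1$-direction) and $y = (0, e_k/c)$ (giving the $c\delta_2$-direction). This lemma is the vector-valued analog of part (1) of Lemma \ref{lemma:1.1}, which handled the scalar case, so the proof should essentially be a one-paragraph adaptation of that argument.
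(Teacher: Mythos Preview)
Your approach is genuinely different from the paper's. The paper does not run a one-variable directional Cauchy estimate; instead it decomposes $D\Upsilon(x)$ row by row, bounds the operator norm $|D\Upsilon(x)|_c$ by the $|\cdot|_c$-norm of the vector of row sup-norms $(|a_1|_\infty,\ldots,|a_{2n}|_\infty)$, and then applies the scalar Cauchy inequalities \eqref{eq:cauchy_ineq} componentwise to each $\Upsilon_j$. Your route via $g(t)=\Upsilon(x_0+ty)$ is cleaner conceptually and avoids the matrix-norm bookkeeping.

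However, there is a concrete mismatch in the constant you obtain. With $|y|_c=\max(|y_\phi|_1,\,c|y_I|_\infty)=1$, your own computation gives the angular constraint $|t|\leq\delta_1$ (since $|y_\phi|_\infty\leq|y_\phi|_1\leq1$) and the action constraint $|t|\,|y_I|_\infty\leq |t|/c\leq\delta_2$, i.e.\ $|t|\leq c\delta_2$. The resulting radius is $\min(\delta_1,\,c\delta_2)$, whereas the paper's $\hat\delta_c$ is defined as $\min(c\delta_1,\,\delta_2)$; these are not the same quantity (take $c=2$, $\delta_1=1$, $\delta_2=3$: your radius is $1$, but $\hat\delta_c=2$, so your bound is strictly weaker than the one claimed). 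The parenthetical ``or their rearrangement according to how the weight $c$ is placed'' does not resolve this: $\hat\delta_c$ is fixed by the paper's definition, and your directional argument does not reproduce it. A secondary point is the norm mismatch on the action side: $\mathcal{V}_{\rho_2}(G)$ is defined with the Euclidean norm while you only control $|y_I|_\infty$, so a dimension-dependent factor could also creep in.

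In short: nice idea, different from the paper's componentwise route, but as written it does not deliver the stated constant $\hat\delta_c$; you would need either to rework the radius analysis or to switch to a componentwise argument to land exactly on $\min(c\delta_1,\delta_2)$.
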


\begin{proof}
Observe that if we consider $\|.\|$ any norm on $\mathbb{C}^n$ and a matrix $A$ of size $n\times n$, and $\|A\|$ defines the induced norm of matrices  i.e.
$$\|A\| = \sup_{\substack{y\in\mathbb{C}^{2n} \\ \|y\| = 1}}\|A\cdot y\|$$
then one has that $\|(\|a_1\|',\ldots,\|a_n\|')\| \leq \|A\|$ where $a_j$ denotes the $j$-th row of $A$. Also note that $\|\cdot\|'$ can be a any norm consider the infinity norm. This can be easily proven in the following way:
$$\|A\cdot y\| = \left\|\left(\begin{array}{c}a_1\cdot y \\ \vdots \\ a_n\cdot y\end{array}\right)\right\| \leq \left\|\left(\begin{array}{c}
\|a_1\|'\|y\|' \\ \vdots \\ \|a_n\|'\|y\|'
\end{array}\right)\right\|$$
Where $\forall y \in \mathbb{C}^n \text{ such that } \|y\| = 1$.
Let $a_j$ be the rows of $D\Upsilon(x)$,
$$ a_j = \left(\frac{\partial \Upsilon_j}{\partial \phi},\frac{\partial \Upsilon j}{\partial I}\right),$$
and be $\|a_j\|'$ its norm.
With this property in mind we proceed as follows:
\begin{longtable}{rcl}
$|D\Upsilon|_{G,\rho-\delta,c}$ & $=$ & $\displaystyle\sup_{x\in \mathcal{D}_{\rho-\delta}(G)} |D\Upsilon(x)|_c$ \\
\\
& $\leq$ & $\displaystyle\sup_{x\in \mathcal{D}_{\rho-\delta}(G)} |(|a_1|_\infty,\ldots,|a_n|_\infty)|_c$\\

\\
& $\leq$ & $ \left|\left(\sup_{x\in \mathcal{D}_{\rho-\delta}}\left\|D \Upsilon_1\right\|_{\infty},\ldots,\sup_{x\in \mathcal{D}_{\rho-\delta}} \left\|D \Upsilon_{2n}\right\|_{\infty}\right)\right|_c$\\
\\
& $=$ & $ \left|\left(\left\|D \Upsilon_1\right\|_{G,\rho-\delta,\infty},\ldots,\left\|D \Upsilon_{2n}\right\|_{G,\rho-\delta,\infty}\right)\right|_c$\\
\\
& $\leq$ & $ \left|\left(\frac{1}{\delta_1}\left\| \Upsilon_1\right\|_{G,\rho},\ldots,\frac{1}{\delta_1}\left\| \Upsilon_{2n}\right\|_{G,\rho}\right)\right|_c$\\
\\
& $\leq$ & $\frac{1}{\hat\delta_c}\left|\left\|\Upsilon_1\right\|_{G,\rho},\ldots,\left\|\Upsilon_{2n}\right\|_{G,\rho}\right|_c$\\
\\
& $=$ & $\frac{1}{\hat\delta_c}\sup_{x\in\mathcal{D}_\rho(G)}\left|\Upsilon_1,\ldots,\Upsilon_{2n}\right|_c = \frac{1}{\hat\delta_c}\sup_{x\in\mathcal{D}_\rho(G)}\left|\Upsilon\right|_c$\\
\\
& $=$ & $ \frac{1}{\hat\delta_c}\left|\Upsilon\right|_{G,\rho,c}$\\

\end{longtable}

\end{proof}

\begin{lemma}\label{lemma:1.2} Let $W$ be an analytic function on $\mathcal{D}_\rho(G)$, $\rho > 0$ and let $\Phi_t$ be its Hamiltonian flow at time $t$ ($t>0$). Let $\delta=(\delta_1,\delta_2)>0$ and $c>0$ given. Assume that $\|DW\|_{G,\rho,c}\leq \hat\delta_c$. Then, $\Phi_t$ maps $\mathcal{D}_{\rho-t\delta}(G)$ into $\mathcal{D}_{\rho}(G)$ and one has:
\begin{enumerate}
\item $|\Phi_t-\id|_{G,\rho-t\delta,c}\leq t \|DW\|_{G,\rho,c}$,
\item $\Phi(\mathcal{D}_{\rho}(G)) \supset \mathcal{D}_{\rho-t\delta}(G)$ for $\rho'\leq \rho-t\delta$,
\item Assuming that $\|DW\|_{G,\rho,c} < \hat\delta_c/2e$, for any given function $f$ analytic on $\mathcal{D}_\rho(G)$, and for any integer $m\geq 0$, the following bound holds:
$$
\begin{array}{lcl}
\|r_m(f,W,t)\|_{G,\rho-t\delta} \\
 \leq  \displaystyle \sum_{l= 0}^\infty \left[\frac{1}{\binom{l+m}{m}}\cdot \left(\frac{2e\|DW\|_{G,\rho,c}}{\hat\delta_c}\right)^l\right]\frac{t^m}{m!}\|L_W^m f\|_{G,\rho}\\
\\
 =  \displaystyle\gamma_m\left(\frac{2e\|DW\|_{G,\rho,c}}{\hat\delta_c}\right)\cdot t^m \|L_W^m f\|_{G,\rho},\\
\end{array}
$$

where for $0\leq x \leq 1$ we define

$$\gamma_m (x) \coloneqq \sum_{l=0}^\infty \frac{l!}{(l+m)!} x^l$$

\end{enumerate}
\end{lemma}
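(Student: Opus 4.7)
The plan is to establish the three assertions sequentially, with parts (1) and (2) following from integrating along the Hamiltonian flow and a bootstrap argument, while part (3) requires an iterated application of the Cauchy and Poisson-bracket estimates from Lemma \ref{lemma:1.1}.

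For part (1), I would first verify that the flow $\Phi_s$ does not leave the domain $\mathcal{D}_\rho(G)$ for $s\in[0,t]$ when started in $\mathcal{D}_{\rho-t\delta}(G)$. Writing $\Phi_s(x)-x=\int_0^s X_W(\Phi_\tau(x))\,d\tau$ and using the $b^m$-Hamiltonian vector field computed in the proof of Lemma \ref{lemma:1.1}(2), whose components are $\mathcal{B}\,\partial_I W$ in the angular direction and $-\partial_\phi W$ in the action direction, the factor $\mathcal{B}$ satisfies $|\mathcal{B}|\leq 1$ in our regime (as already used in Lemma \ref{lemma:1.1}). Consequently $|X_W|_c\leq \|DW\|_{G,\rho,c}\leq \hat\delta_c$, so $|\Phi_s(x)-x|_c\leq s\|DW\|_{G,\rho,c}\leq s\hat\delta_c$. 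This keeps the trajectory inside $\mathcal{D}_\rho(G)$ for $s\leq t$ (standard continuity/bootstrap argument), and evaluating at $s=t$ yields the claimed displacement bound.

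Part (2) is a short corollary of part (1). Since $\Phi_t$ is analytic, close to the identity, and $\Phi_{-t}$ is its inverse with the same type of estimate, applying part (1) to $\Phi_{-t}$ shows that $\Phi_{-t}$ maps $\mathcal{D}_{\rho-t\delta}(G)$ into $\mathcal{D}_\rho(G)$; hence every point of $\mathcal{D}_{\rho-t\delta}(G)$ is the $\Phi_t$-image of a point in $\mathcal{D}_\rho(G)$, giving $\Phi_t(\mathcal{D}_\rho(G))\supset \mathcal{D}_{\rho-t\delta}(G)$.

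Part (3) is where the real work lies. Expand $r_m(f,W,t)=\sum_{l\geq 0}\frac{t^{l+m}}{(l+m)!}L_W^{l+m}f$. The strategy is to bound $\|L_W^{l+m}f\|$ on a domain shrunk by $t\delta$ by repeatedly applying the identity $L_W^{j+1}f=\{L_W^jf,W\}$ together with Lemma \ref{lemma:1.1}(1)--(2). Split the total loss of radius $t\delta$ into $l$ equal pieces $\eta=t\delta/l$, apply Cauchy to obtain $\|D L_W^{j}f\|_{\rho',c}\leq c\|L_W^jf\|_{\rho}/\hat\eta_c$ at each stage, and use the Poisson bracket bound to pick up a factor $(2/c)\|DW\|_{G,\rho,c}$ per step. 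Tracking the product carefully leads to $\|L_W^{l+m}f\|_{G,\rho-t\delta}\leq l!\bigl(\frac{2e\|DW\|_{G,\rho,c}}{t\delta_c\cdot \text{(scaling)}}\bigr)^l\|L_W^mf\|_{G,\rho}$; after arranging the factorial factors one obtains the coefficient $\frac{l!}{(l+m)!}\bigl(\frac{2e\|DW\|_{G,\rho,c}}{\hat\delta_c}\bigr)^l\cdot \frac{t^m}{m!}\|L_W^mf\|_{G,\rho}$, which is precisely the $l$-th term of $\gamma_m$. The hypothesis $\|DW\|_{G,\rho,c}<\hat\delta_c/2e$ guarantees that $\gamma_m$ is evaluated in its radius of convergence, so the series converges and yields the stated inequality.

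The main obstacle is the bookkeeping in part (3): one must choose the sequence of nested domains so that the Cauchy losses add up to exactly $t\delta$, keep track of the combinatorial factors produced by the repeated bracketing $\{\cdot,W\}$, and absorb the factor of $e$ coming from $\sup_l l^l e^{-l}/l!$ (Stirling) into the constant $2e$ in the denominator. Beyond this combinatorial care, no genuinely new difficulty arises from the $b^m$-setting: the factor $\mathcal{B}$ bounded by $1$ near $Z$ means the $b^m$-Poisson bracket satisfies the same estimates as the smooth one, so the proof of Lemma \ref{lemma:1.1}(2) carries the singular geometry through this argument without further modification.
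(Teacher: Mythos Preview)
Your proposal is correct and follows essentially the same approach as the paper: for (1)--(2) the paper likewise computes the $b^m$-Hamiltonian vector field explicitly, invokes $|\sum_j c_j/I_1^j|\geq 1$ (your $|\mathcal{B}|\leq 1$) to bound the flow displacement, and reverses time for the surjectivity; for (3) it also splits the analyticity loss into $l-m$ equal pieces $\eta=\delta/(l-m)$, iterates Lemma~\ref{lemma:1.1}(1)--(2), and uses $(l-m)^{l-m}\leq e^{l-m}(l-m)!$ to produce the $2e$ factor. The only minor imprecision is that the factor $\mathcal{B}$ multiplies only the first components of $X_W$ (not all angular ones), but since $|\mathcal{B}|\leq 1$ this does not affect your bound.
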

\begin{proof}

During the proof we are going to denote $\Phi_s(\phi_0,I_0)$ by $(\phi(s),I(s))$.


Let us find the coordinate expression of the hamiltonian flow for the expression \ref{eq:bm-symplectic} of a $b^m$-symplectic form. Recall that the equation for the hamiltonian flow is $\frac{d}{ds}\phi_i(s)=\{\phi_i,W\}$ and $\frac{d}{ds}I_i(s)=\{I_i,W\}$.
$$\{\phi_i,W\} = \frac{1}{\left(\sum_{j=1}^m \frac{c_j}{I_1^j}\right)}\left(\frac{\partial \phi_i}{\partial I_1}\cdot\frac{\partial W}{\partial \phi_1} - \frac{\partial \phi_i}{\partial \phi_1}\cdot\frac{\partial W}{\partial I_1}\right)$$ 
$$+ \sum_{j=2}^n\left(\frac{\partial \phi_i}{\partial I_j}\cdot\frac{\partial W}{\partial \phi_j} - \frac{\partial \phi_i}{\partial \phi_j}\cdot\frac{\partial W}{\partial I_j}\right).$$
Hence,
$$
\displaystyle \frac{d}{ds}\phi_i(s) = -\frac{1}{\left(\sum_{j=1}^m \frac{c_j}{I_1^j}\right)}\frac{ \partial W}{\partial I_1} \text{ if } i = 1 \text{ and }
\displaystyle \frac{d}{ds}\phi_i(s) = -\frac{ \partial W}{\partial I_i} \text{ if } i \neq 1.
$$
On the other side,
$$\{I_i,W\} = \frac{1}{\left(\sum_{j=1}^m \frac{c_j}{I_1^j}\right)}\left(\frac{\partial I_i}{\partial I_1}\cdot\frac{\partial W}{\partial \phi_1} - \frac{\partial I_i}{\partial \phi_1}\cdot\frac{\partial W}{\partial I_1}\right) $$
$$+ \sum_{j=2}^n\left(\frac{\partial I_i}{\partial I_j}\cdot\frac{\partial W}{\partial \phi_j} - \frac{\partial I_i}{\partial \phi_j}\cdot\frac{\partial W}{\partial I_j}\right).$$
Hence,
$$
\frac{d}{ds}I_i(s) = \frac{1}{\left(\sum_{j=1}^m \frac{c_j}{I_1^j}\right)}\frac{ \partial W}{\partial \phi_1} \text{ if } i = 1 \text{ and } \frac{d}{ds}I_i(s) = \frac{ \partial W}{\partial \phi_i} \text{ if } i \neq 1.
$$

\begin{enumerate}
\item

Assume now that $0<s_0 \leq t$. Then,

\begin{longtable}{rcl}
$|\phi(s_0)-\phi_0|_\infty $ & $\leq $ & $s_0\sup_{0<s\leq s_0}|\phi'(s)|_\infty $ \\
& $=$ & $s_0\sup_{0<s\leq s_0}\left(\max(|\phi_1'(s)|,\ldots,|\phi_n'(s)|)\right) $ \\
\\
& $=$ & $\displaystyle s_0\sup_{0<s\leq s_0}\left(\max\left(\left|\frac{1}{\left(\sum_{j=1}^m\frac{c_j}{I_1^j}\right)}\frac{\partial W}{\partial I_1}\right|,\left|\frac{\partial W}{\partial I_2}\right|,\ldots,\left|\frac{\partial W}{\partial I_n}\right|\right)\right)$ \\
\\
& $\leq$ & $s_0\sup_{0<s\leq s_0}\left|\frac{\partial W}{\partial I}\right|_\infty  \leq s_0\left\|\frac{\partial W}{\partial I}\right\|_{G,\rho,\infty}$ \\

\end{longtable}
Where we have used again that on the domain $\mathcal{D}_\rho(G)$ the inequality $\left|\sum_{j=1}^m\frac{c_j}{I_1^j}\right| \geq 1$ holds.
Similarly, $|I(s_0) - I_0| \leq s_0\|\frac{\partial W}{\partial \phi}\|_{G,\rho,1}$, and hence $|\Phi_t - \id|_{G,\rho-t\delta,c} \leq t\|D W\|_{G,\rho,c}$.

Because
\begin{equation}\label{eq:bound_flow}
\begin{array}{lr}
|\phi(s)-\phi_0|_\infty \leq t \|\frac{\partial W}{\partial I}\|_{G,\rho,\infty} \leq t \frac{\hat\delta_c}{c}\leq t\delta_1\frac{c}{c} = t\delta_1 & \forall 0 < s \leq s_0\\
|I(s)-I_0|_1 \leq t \|\frac{\partial W}{\partial \phi}\|_{G,\rho,1} \leq t \hat\delta_c\leq t\delta_2 & \forall 0 < s \leq s_0\\
\end{array}
\end{equation}

hence, $(\phi(s),I(s)) \in \mathcal{D}_{\rho-t\delta + t\delta}(G) = \mathcal{D}_\rho(G)$ for all $0 < s \leq s_0$.
\item

Repeat the same argument as in \ref{eq:bound_flow} with $\phi(-s)$. If $(\phi_0,I_0)\in \mathcal{D}_{\rho'-t\delta}$, then $(\phi(-s),I(-s)) \in \mathcal{D}_{\rho'-t\delta+t\delta}(G) = \mathcal{D}_{\rho'}$.
Hence, $$\mathcal{D}_{\rho'}(G) \supset \Phi^{-1}(\mathcal{D}_{\rho'-t\delta}(G)),$$ then $\Phi(\mathcal{D}_{\rho'}(G)) \supset \mathcal{D}_{\rho'-t\delta}(G)$.

\item
Consider $f$ an analytical function.
By the previous construction $f\circ\Phi_t$ is defined in $\mathcal{D}_{\rho -t\delta}(G)$.
Because $W$ is analytic we also have that $f\circ\Phi_t$ is analytic and we can expand its Lie series.
Let $m \in \mathbb{Z}, l \geq m+1, j = m+1,\ldots,l$ then
$$
\begin{array}{rcl}
\|L_W^jf\|_{G,\rho-(j-m)t\eta} & \leq & \frac{2}{c}\|D(L_W^{j-1} f)\|_{G,\rho-(j-m)t\eta,c}\|DW\|_{G,\rho,c}\\
 & \leq & \frac{2}{t\hat\eta_c}\|L_W^{j-1} f\|_{G,\rho-(j-1-m)t\eta}\|D W\|_{G,\rho,c},\\
\end{array}
$$

where we used lemma \ref{lemma:1.1} and defined $\eta = \frac{\delta}{(l-m)}$ and $\hat\eta_c = \min(c\eta_1,\eta_2)$.

Then,

$$\begin{array}{rcl}
\|L_W^l f\|_{G,\rho-t\delta} & \leq & \left(\frac{2\|DW\|_{G,\rho,c}}{t\hat\eta_c}\right)^{l-m}\\
&\leq& e^{l-m}\cdot(l-m)!\left(\frac{2\|DW\|_{G,\rho,c}}{\hat\delta_c}\right)^{l-m}\|L_W^m f\|_{G,\rho},
\end{array}$$

where we used that $\hat\eta_c = \frac{\hat\delta_c}{l-m}$ and $(l-m)^{(l-m)}\leq e^{l-m}\cdot(l-m)!$
And hence, the bound for $\|r_m(f,W,t)\|_{G,\rho-t\delta}$ is

$$\sum_{l=m}^\infty \frac{t^l}{l!}\|L_W^l f\|_{G,\rho-t\delta} \leq \left[\sum_{l= m}^\infty \frac{(l-m)!}{l!}\left(\frac{2e \|DW\|_{G,\rho,c}}{\hat\delta_c}\right)^{l-m}\right]\cdot t^m \|L_W^m f\|_{G,\rho}$$

and this series converges if $\|DW\|_{G,\rho,c} \leq \frac{\hat\delta_c}{2e}$.

\end{enumerate}
\end{proof}

\begin{theorem}\label{lemma:iterative}[Iterative Lemma] $H(\phi,I) = \hat h(I) + R(\phi,I)$ where $\hat h (I)$ is as in equation \ref{eq:bm-hamiltonian} defined on $\mathcal{D}_\rho(G)$. Let $\hat u = \frac{\partial \hat h }{\partial I}$ and $u = \frac{\partial h }{\partial I}$, and assume $u$ is $\alpha,K,c,\hat q$-non-resonant. Assume that $\left|\frac{\partial}{\partial I } u\right|_{G,\rho_2} \leq M'$. Let $\delta < \rho$ and $c > 0$, $A = 1 + \frac{2Mc}{\alpha}$. Assume that $\rho_2 \leq \frac{\alpha}{2MK}$, $\|DR\|_{G,\rho,c} \leq \frac{\alpha \hat \delta_c}{74A}$.
Then, there exists a real analytic map $\Phi:\mathcal{D}_{\rho-\frac{\delta}{2}}(G) \rightarrow \mathcal{D}_\rho(G)$, such that $H\circ \Phi = \hat h + \tilde{R}$,with:

\begin{enumerate}
\item $\|D\tilde{R}\|_{G,\rho-\delta,c} \leq e^{-K\delta_1}\|DR\|_{G,\rho,c} + \frac{14A}{\alpha\hat\delta_c} \|DR\|^2_{G,\rho,c}$,
\item $|\Phi-\id|_{G,\rho-\frac{\delta}{2},c} \leq \frac{2A}{\alpha}\|DR\|_{G,\rho,c}$,
\item $\Phi(\mathcal{D}_{\rho'}(G)) \supset \mathcal{D}_{\rho'-\frac{\delta}{2}}(G)$ for $\rho'\leq \rho-\frac{\delta}{2}$
\end{enumerate}

\end{theorem}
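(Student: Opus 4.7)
The statement is a single KAM step, so the plan is to build a generating Hamiltonian $W$, define $\Phi$ as its time-$1$ flow, and then read off the transformed Hamiltonian from the Lie-Taylor expansion of Lemma \ref{lemma:lie_taylor}. Concretely, first I would solve the cohomological equation $\{W,\hat h\}=R_{\le K}-R_0$ by the Fourier formula \eqref{eq:solve_coefs}: on $\mathcal{D}_\rho(G)$ the small denominators $k\bar{\mathcal B}(I_1)u+k\bar{\mathcal A}(I_1)$ are bounded below by $\alpha/2$, thanks to the assumption $\rho_2\le\alpha/(2MK)$ together with the non-resonance hypothesis, so $W$ is real analytic on $\mathcal D_\rho(G)$. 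Proposition \ref{prop:14} then gives the key estimate $\|DW\|_{G,\rho,c}\le\tfrac{2A}{\alpha}\|DR\|_{G,\rho,c}$.

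Next I would feed $W$ into Lemma \ref{lemma:1.2} with $t=1$ and domain-shrinkage $\delta/2$. The hypothesis $\|DR\|_{G,\rho,c}\le\tfrac{\alpha\hat\delta_c}{74A}$ gives $\|DW\|_{G,\rho,c}\le\hat\delta_c/37$, which is well below the two thresholds $\hat{(\delta/2)}_c$ and $\hat{(\delta/2)}_c/(2e)$ needed to apply parts (1)-(3) of Lemma \ref{lemma:1.2}. From part (1) I obtain the closeness estimate $|\Phi-\mathrm{Id}|_{G,\rho-\delta/2,c}\le\|DW\|_{G,\rho,c}\le\tfrac{2A}{\alpha}\|DR\|_{G,\rho,c}$, which is conclusion (2), and from part (2) the covering statement (3). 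Because $W$ is smooth and the $b^m$-Hamiltonian vector field of a smooth function is automatically a $b^m$-vector field, $\Phi$ is a genuine $b^m$-symplectomorphism preserving $I_1$ up to order $m$.

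To get the estimate (1) for $\tilde R$, I would use the cohomological equation inside the Lie-Taylor series. Substituting $\{\hat h,W\}=R_0-R_{\le K}$ into the expansion \eqref{eq:new_hamiltonian_expression} one rewrites
\begin{equation*}
H\circ\Phi \;=\; \hat h + R_0 + R_{>K} + r_1(R,W,1) + r_2(\hat h,W,1),
\end{equation*}
after which $R_0$ is absorbed into the new singular part and $\tilde R:=R_{>K}+r_1(R,W,1)+r_2(\hat h,W,1)$. The first summand is handled by Lemma \ref{lemma:1.1}(3), which gives directly $\|DR_{>K}\|_{G,\rho-\delta,c}\le e^{-K\delta_1}\|DR\|_{G,\rho,c}$, i.e.\ the first term in (1). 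For the two remainder terms I would apply Lemma \ref{lemma:1.2}(3) with $m=1$ for $r_1(R,W,1)$ and $m=2$ for $r_2(\hat h,W,1)$, noting that $L_W\hat h=R_0-R_{\le K}$ so that the latter is in fact a Lie series starting with $L_W(R_0-R_{\le K})$. The leading Poisson brackets $\{R,W\}$ and $\{R_0-R_{\le K},W\}$ are controlled by Lemma \ref{lemma:1.1}(2), each contributing a factor $\tfrac{2}{c}\|DR\|\,\|DW\|\le\tfrac{4A}{c\alpha}\|DR\|^2$, which is the source of the quadratic term in (1). A final Cauchy estimate (Lemma \ref{lemma:1.1}(1)) converts the sup-norm bounds coming out of Lemma \ref{lemma:1.2}(3) into the derivative bound on $\mathcal{D}_{\rho-\delta}(G)$, costing an extra factor $1/\hat{(\delta/2)}_c$, so that combining the numerical constants yields the claimed $\tfrac{14A}{\alpha\hat\delta_c}\|DR\|^2_{G,\rho,c}$.

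\textbf{Main obstacle.} The genuinely new difficulty compared with the classical KAM step is the presence of the factors $\bar{\mathcal B}(I_1)$, $\bar{\mathcal A}(I_1)$ in the small-divisor denominators: one must make sure that all estimates survive uniformly up to the critical hypersurface $\{I_1=0\}$, where $\bar{\mathcal B}(I_1)\to 0$ and $\bar{\mathcal A}(I_1)\to 1/\mathcal K'$. This is what forces the new non-resonance notion $\alpha,K,c,\hat q$-non-resonant and is already encoded in Proposition \ref{prop:14} via the constant $M=K_{\mathcal B'}|u|+K_{\mathcal B}M'+K_{\mathcal A}$; checking that the flow of $W$ actually preserves the jet of $I_1$ to order $m$, and that the ensuing domain bookkeeping is compatible with the various shrinkages $\delta\to\delta/2\to 0$, is the most delicate piece of the argument.
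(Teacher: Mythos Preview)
Your proposal is correct and follows essentially the same route as the paper's proof: construct $W$ via the Fourier formula \eqref{eq:solve_coefs_simplified}, invoke Proposition~\ref{prop:14} to get $\|DW\|_{G,\rho,c}\le\tfrac{2A}{\alpha}\|DR\|_{G,\rho,c}\le\hat\delta_c/37$, then apply Lemma~\ref{lemma:1.2} for conclusions (2) and (3), and for (1) decompose $\tilde R=R_{>K}+r_2(\hat h,W,1)+r_1(R,W,1)$, handle the tail by Lemma~\ref{lemma:1.1}(3), the remainders by Lemma~\ref{lemma:1.2}(3) with the explicit $\gamma_1,\gamma_2$ evaluated at $4e/37$ (giving $\gamma_1+\gamma_2<7/4$), the leading brackets by Lemma~\ref{lemma:1.1}(2), and pass to the derivative norm via Lemma~\ref{lemma:1.1}(1) at a cost of $2c/\hat\delta_c$. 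The paper carries out exactly this computation, including the same numerical bookkeeping that produces the constant $14A/(\alpha\hat\delta_c)$.
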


\begin{proof}
Recall that $\left|\frac{\partial}{\partial I } u\right|_{G,\rho_2} \leq M'$  implies $\left|\frac{\partial}{\partial I}( \bar{\mathcal{B}}u + \bar{\mathcal{A}})\right|_{G,\rho_2}\leq M$ by equation \ref{eq:M_def}.
By equation \ref{eq:iterative_h_and_R}

$$R^{(q)} = R^{(q-1)}_{>K} + r_2(\hat h^{(q-1)}, W^{(q)},1) + r_1(R^{(q-1)},W^{(q)},1).$$

To simplify the notation we are going to omit the index of the iteration:

\begin{equation}\label{eq:R_tilde}
\tilde R = R_{>K} + r_2(\hat h^, W,1) + r_1(R,W,1).
\end{equation}

Where $W$ is defined in terms of its Fourier expressions by equation \ref{eq:solve_coefs_simplified}:

$$
W_k(I) = \frac{R_k(I)}{i(k \bar{\mathcal{B}}(I_1) u + k \bar{\mathcal{A}}(I_1))}
$$

By proposition \ref{prop:14}: $\|DW\|_{G,\rho,c}\leq \frac{2A}{\alpha}\|DR\|_{G,\rho,c} \leq \frac{2A}{\alpha}\frac{\alpha\hat\delta_c}{74A} = \frac{\hat\delta_c}{37}$.
And $\Phi$ is defined as in lemma \ref{lemma:1.1}: $\Phi:\mathcal{D}_{\rho-\frac{\delta}{2}}(G) \rightarrow \mathcal{D}_{\rho}(G)$.
\begin{enumerate}
\item

Differentiating equation \ref{eq:R_tilde} we obtain:
$$D \tilde R = D R_{>K} + D r_2(\hat h, W,1) + D r_1(R,W,1).$$
Taking norms at every side of the expression:
\begin{longtable}{rcl}
$\|D\tilde R\|_{G,\rho-\delta,c}$ & $=$ & $\|D R_{>K} + D r_2(\hat h, W,1) + D r_1(R,W,1)\|_{G,\rho-\delta,c}$\\
& $\leq$ & $\|D R_{>K}\|_{G,\rho-\delta,c} + \|D r_2(\hat h, W,1)\|_{G,\rho-\delta,c}$ \\
& & $ + \|D r_1(R,W,1)\|_{G,\rho-\delta,c}$\\
& $\leq$ & $e^{-K\delta_1}\|DR\|_{G,\rho,c} $\\
& & $+ \frac{2c}{\hat\delta_c}\left(\|r_2(\hat h, W,1)\|_{G,\rho-\frac{\delta}{2},c} + \|r_1(R,W,1)\|_{G,\rho-\frac{\delta}{2},c}\right)$
\end{longtable}

Let us further develop the two last terms of the previous expression, by using lemma \ref{lemma:1.2}:

$$
\begin{array}{rcl}
 \|r_2(\hat h, W,1)\|_{G,\rho-\frac{\delta}{2},c} & \leq & \gamma_2\left(\frac{2e\|DW\|_{G,\rho,c}}{\hat\delta_c/2}\right)\|L_W^2 h\|_{G,\rho}\\
 & \leq & \gamma_2\left(\frac{4e\|DW\|_{G,\rho,c}}{\hat\delta_c}\right)\|\{\{h,W\},W\}\|_{G,\rho},\\
\end{array}
$$

$$
\begin{array}{rcl}
 \|r_1(\hat h, W,1)\|_{G,\rho-\frac{\delta}{2},c} & \leq & \gamma_1\left(\frac{2e\|DW\|_{G,\rho,c}}{\hat\delta_c/2}\right)\|L_W^1 R\|_{G,\rho}\\
 & \leq & \gamma_1\left(\frac{4e\|DW\|_{G,\rho,c}}{\hat\delta_c}\right)\|\{R,W\}\|_{G,\rho}.\\
\end{array}
$$

Then, using the second statement of lemma \ref{lemma:1.1} and that $\{W,h\} = R_{\leq K}$:
$$\|\{R,W\}\|_{G,\rho}\leq\frac{2}{c}\|DR\|_{G,\rho,c}\|DW\|_{G,\rho,c}, \text{ and }$$
$$\begin{array}{rcl}
|\{\{h,W\},W\}\|_{G,\rho} & = & \displaystyle\|\{R_{\leq K},W\}\|_{G,\rho} \\
\\
&\leq& \displaystyle\frac{2}{c}\|DR_{\leq K}\|_{G,\rho,c}\|DW\|_{G,\rho,c} \\
\\
&\leq& \displaystyle\frac{2}{c}\|DR\|_{G,\rho,c}\|DW\|_{G,\rho,c}.
\end{array}$$

Moreover, it is easy to see that $\gamma_1(x) = \frac{-\log(1-x)}{x}$ and $\gamma_2(x) = \frac{x + (1-x)\log(1-x)}{x^2}$. Observe that these functions are monotonously increasing in $x$. Recall that $\|DW\|_{G,\rho,c} \leq \frac{2A}{\alpha}\|DR\|_{G,\rho,c}$. Then,

\begin{longtable}{rcl}
$\|r_1(\hat h, W,1)\|_{G,\rho-\frac{\delta}{2},c} $\\
$ \qquad +  \|r_2(\hat h, W,1)\|_{G,\rho-\frac{\delta}{2},c}$ & $\leq$ & $\gamma_1\left(\frac{4e\|DW\|_{G,\rho,c}}{\hat\delta_c}\right)\|\{R,W\}\|_{G,\rho}$\\
& & $+ \gamma_2\left(\frac{4e\|DW\|_{G,\rho,c}}{\hat\delta_c}\right)\|\{\{h,W\},W\}\|_{G,\rho}$\\
& $\leq$ & $\gamma_1\left(\frac{4e\|DW\|_{G,\rho,c}}{\hat\delta_c}\right)\frac{2}{c}\|DR\|_{G,\rho,c}\|DW\|_{G,\rho,c}$\\
& & $+ \gamma_2\left(\frac{4e\|DW\|_{G,\rho,c}}{\hat\delta_c}\right)\frac{2}{c}\|DR\|_{G,\rho,c}\|DW\|_{G,\rho,c}$\\
& $\leq$ & $\gamma_1\left(\frac{4e\|DW\|_{G,\rho,c}}{\hat\delta_c}\right)\frac{2}{c}\frac{2A}{\alpha}\|DR\|^2_{G,\rho,c}$\\
& & $+ \gamma_2\left(\frac{4e\|DW\|_{G,\rho,c}}{\hat\delta_c}\right)\frac{2}{c}\frac{2A}{\alpha}\|DR\|^2_{G,\rho,c}$\\
& $\leq$ & $\frac{2}{c}[\gamma_1(\frac{4e}{37}) + \gamma_2(\frac{4e}{37})]\frac{2A}{\alpha}\|DR\|^2_{G,\rho,c}$\\
& $=$ & $\frac{4A}{\alpha c}[\gamma_1(\frac{4e}{37}) + \gamma_2(\frac{4e}{37})]\|DR\|_{G,\rho,c}^2$.\\
\end{longtable}

Moreover $\gamma_1(\frac{4e}{37}) + \gamma_2(\frac{4e}{37}) \approx 1.741\ldots < \frac{7}{4}$.

Then,

$$\begin{array}{rcl}
\|D\tilde R\|_{G,\rho-\delta,c} & \leq & e^{-K\delta_1}\|DR\|_{G,\rho,c} + \frac{2c}{\hat\delta_c}\frac{4A}{\alpha c}\frac{7}{4}\|\|_{G,\rho,c}^2\\
 & \leq & e^{-K\delta_1}\|DR\|_{G,\rho,c} + \frac{14A}{\hat\delta_c\alpha}\|DR\|^2_{G,\rho,c},\\
\end{array}
$$

as we wanted to prove.

\item Direct from lemma \ref{lemma:1.2}:
$$|\Phi - \id|_{G,\rho.\frac{\delta}{2},c}\leq\|DW\|_{G,\rho,c}\leq\frac{2A}{\alpha}\|DR\|_{G,\rho,c}$$
\item Also direct from lemma \ref{lemma:1.2}:
$$\Phi(\mathcal{D}_\rho(G)) \supset \mathcal{D}_{\rho'-\frac{\delta}{2}}(G), \text{ for } \rho' \leq \rho - \delta/2$$
\end{enumerate}
\end{proof}

\begin{definition} $\Delta_{c,\hat q}(k,\alpha) = \{J \in \mathbb{R}^n \text{ such that } |k \bar{\mathcal{B}}(I_1) J + k \bar{\mathcal{A}}(I_1)| < \alpha\}$
\end{definition}

\begin{lemma}\label{lemma:mesure_resonances}With the previous definitions we have the following bounds.

Outside of $Z$:
$$\text{meas}\left(F \cap \Delta_{c,\hat q}(k,\alpha)\right) \leq (\textnormal{diam} F)^{n-1}\frac{2\alpha}{|k|_{2,\omega}}.$$
At $Z$:
$$\text{meas}\left(F \cap \Delta_{c,\hat q}(k,\alpha)\right)
\left\{
\begin{array}{lcl}
=0 & & \text{if } \alpha \leq \frac{|k_1|}{\mathcal{K}'}\\
\leq (\text{diam} F)^n & & \text{if } \alpha > \frac{|k_1|}{\mathcal{K}'} \\
\end{array}
\right.
$$
\end{lemma}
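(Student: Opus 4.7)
The key observation is that $\Delta_{c,\hat q}(k,\alpha)$ is cut out of $\mathbb{R}^n$ by a single affine inequality. Unfolding the definitions of $\bar{\mathcal{A}}$ and $\bar{\mathcal{B}}$, the defining condition reads
\[
|k_1 \mathcal{B}(I_1)\, J_1 + k_2 J_2 + \cdots + k_n J_n + k_1 \mathcal{A}(I_1)| < \alpha,
\]
so $\Delta_{c,\hat q}(k,\alpha)$ is the open slab between two parallel hyperplanes whose common normal is
\[
v_{k,I_1} := (k_1 \mathcal{B}(I_1), k_2, \ldots, k_n) \in \mathbb{R}^n .
\]
By definition of the weighted norm $|\cdot|_{2,\omega}$ we have $|v_{k,I_1}|_2 = |k|_{2,\omega}$, and the Euclidean thickness of the slab is exactly $2\alpha/|k|_{2,\omega}$.

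For the first inequality (outside $Z$), I would simply apply Fubini's theorem: decompose $\mathbb{R}^n$ as the line spanned by $v_{k,I_1}$ times its orthogonal complement. Along the normal direction the slab has length $2\alpha/|k|_{2,\omega}$, and the projection of $F$ onto the orthogonal hyperplane is contained in a set of $(n-1)$-dimensional diameter at most $\textnormal{diam}\,F$, hence has $(n-1)$-measure at most $(\textnormal{diam}\,F)^{n-1}$. Multiplying the two factors yields the claimed bound.

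For the second inequality (at $Z$, i.e.\ $I_1=0$), the calculation of $\mathcal{A}$ and $\mathcal{B}$ made in the preceding remark gives $\mathcal{B}(0)=0$ and $\mathcal{A}(0)=1/\mathcal{K}'$, so the condition collapses to
\[
\bigl| k_2 J_2 + \cdots + k_n J_n + \tfrac{k_1}{\mathcal{K}'}\bigr| < \alpha .
\]
In the regime relevant for the statement (namely $\bar k := (k_2,\ldots,k_n)=0$, which is the case where resonance at $Z$ is driven purely by the modular frequency) the left-hand side is independent of $J$ and equals $|k_1|/\mathcal{K}'$; the inclusion is therefore either empty or universal depending on the sign of $\alpha - |k_1|/\mathcal{K}'$. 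Intersecting with $F$ gives measure zero in the former case, and measure at most $(\textnormal{diam}\,F)^n$ in the latter. The only mild subtlety is that one must keep track of the degeneration $\mathcal{B}(I_1)\to 0$ and the corresponding loss of the first component of $v_{k,I_1}$; this is precisely why at $Z$ the bound from the slab estimate fails (the direction $J_1$ is no longer constrained) and is replaced by the coarser trivial bound $(\textnormal{diam}\,F)^n$.
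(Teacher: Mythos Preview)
Your proposal is correct and follows essentially the same route as the paper: both arguments identify $\Delta_{c,\hat q}(k,\alpha)$ as an affine slab with normal $(k_1\mathcal{B}(I_1),k_2,\ldots,k_n)$ and Euclidean thickness $2\alpha/|k|_{2,\omega}$, then bound the intersection with $F$ by $(\textnormal{diam}\,F)^{n-1}$ times this thickness; at $Z$ both use $\mathcal{B}(0)=0$, $\mathcal{A}(0)=1/\mathcal{K}'$ and observe that in the degenerate case $\bar k=0$ the defining inequality no longer involves $J$, so the set is either empty or all of $\mathbb{R}^n$. Your remark that the ``At $Z$'' dichotomy is really about the case $\bar k=0$ (with the slab estimate still available when $\bar k\neq 0$) is exactly how the paper reads the statement as well.
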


\begin{proof}
It is important to understand the geometry of the set $\Delta_{c,\hat q}(k,\alpha)$. Recall that $k \bar{\mathcal{B}}(I_1) J = k_1 \mathcal{B}(I_1) J_1 + \bar{k}\bar{J}$, hence this part of the expression can be interpreted as the scalar product of the vector $J$ with the vector $(k_1 \mathcal{B}(I_1),k_2,\ldots,k_n)$. Then the set $\{J \in \mathbb{R}^n \text{ such that } |k \bar{\mathcal{B}}(I_1) J| < \alpha\}$ is the space between two hyperplanes orthogonal to $(k_1 \mathcal{B}(I_1),k_2,\ldots,k_n)$. Adding the term $k \bar{\mathcal{A}}(I_1)$ only applies a transition to the previous set. Let us find what is the separation between the hyperplanes.
Assume $J$ is parallel to $(k_1 \mathcal{B}(I_1),k_2,\ldots,k_n)$ with lengths $a$:
$$J = a\frac{(k_1 \mathcal{B}(I_1),k_2,\ldots,k_n)}{|k|_{2,\omega}},$$
where $|k|_{2,\omega} = \sqrt{B(I_1)^2k_1^2 + k_2^2 + \ldots k_n^2}$.
Then,
$$
\begin{array}{rcl}
 J\cdot (B(I_1),k_1,\ldots,k_n) & = & c(B(I_1)k_1^2 + k_2^2 + \ldots k_n^2)\frac{1}{|k|_{2,\omega}}\\
 & = & a|k|_{2,\omega} \leq \alpha \Leftrightarrow a \leq \frac{\alpha}{|k|_{2,\omega}}.
\end{array}
$$
And finally,
$$\text{meas}\left(F \cap \Delta_{c,\hat q}(k,\alpha)\right) \leq (\textnormal{diam} F)^{n-1}\frac{2\alpha}{|k|_{2,\omega}}.$$
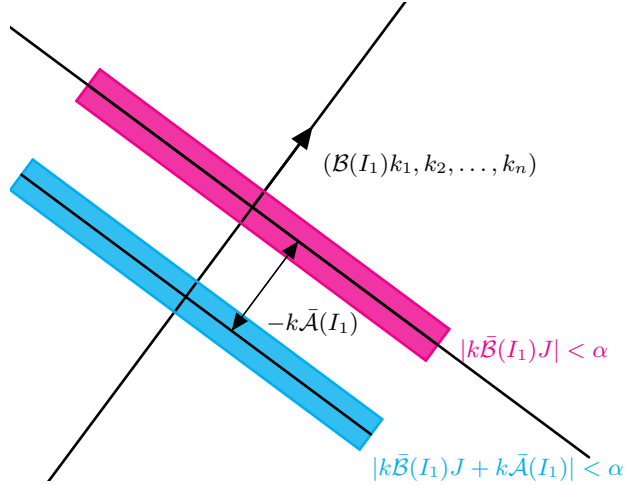
\begin{figure}[h!]
\begin{center}
\begin{tikzpicture}[line cap=round,line join=round,>=triangle 45,x=1.0cm,y=1.0cm, scale = 0.6]
\clip(-6.3,-4.3) rectangle (8,6.3);
\fill[line width=1.pt,color=cyan,fill=cyan,fill opacity=0.8] (-6.305315071273844,2.143755827017319) -- (-5.80474032812592,2.817028856551277) -- (1.9374647098374522,-2.939257417027438) -- (1.4368899666895276,-3.612530446561396) -- cycle;
\fill[line width=1.pt,color=magenta,fill=magenta,fill opacity=0.8] (-4.838555792187489,4.116547057388468) -- (-4.318304103434274,4.8162855787615415) -- (3.4239009345290983,-0.9400006948171737) -- (2.9036492457758833,-1.639739216190248) -- cycle;
\draw [line width=1.pt,domain=-6.7128271268301205:6.518034851701254] plot(\x,{(--5.7156-4.*\x)/5.38});
\draw [line width=1.pt,domain=-6.7128271268301205:6.518034851701254] plot(\x,{(--12.088--5.38*\x)/4.});
\draw [->,line width=1.pt] (-0.9382935360133854,1.7599951940619964) -- (0.4,3.56);
\draw [line width=1.pt,color=cyan] (-6.305315071273844,2.143755827017319)-- (-5.80474032812592,2.817028856551277);
\draw [line width=1.pt,color=cyan] (-5.80474032812592,2.817028856551277)-- (1.9374647098374522,-2.939257417027438);
\draw [line width=1.pt,color=cyan] (1.9374647098374522,-2.939257417027438)-- (1.4368899666895276,-3.612530446561396);
\draw [line width=1.pt,color=cyan] (1.4368899666895276,-3.612530446561396)-- (-6.305315071273844,2.143755827017319);
\draw [line width=1.pt,color=magenta] (-4.838555792187489,4.116547057388468)-- (-4.318304103434274,4.8162855787615415);
\draw [line width=1.pt,color=magenta] (-4.318304103434274,4.8162855787615415)-- (3.4239009345290983,-0.9400006948171737);
\draw [line width=1.pt,color=magenta] (3.4239009345290983,-0.9400006948171737)-- (2.9036492457758833,-1.639739216190248);
\draw [line width=1.pt,color=magenta] (2.9036492457758833,-1.639739216190248)-- (-4.838555792187489,4.116547057388468);
\draw [line width=1.pt] (-6.055027699699883,2.4803923417842997)-- (1.687177338263491,-3.275893931794417);
\draw [line width=1.pt] (-4.578429947810884,4.466416318075007)-- (3.163775090152491,-1.2898699555037108);
\draw [->,line width=.5pt] (-1.3994012200506503,-0.9810399850924533) -- (0.07719653183835051,1.004983991198252);
\draw [->,line width=.5pt] (0.07719653183835062,1.0049839911982523) -- (-1.3994012200506503,-0.9810399850924532);
\begin{small}
\draw[color=black] (3,2.7) node {$(\mathcal{B}(I_1)k_1,k_2,\ldots,k_n)$};
\draw[color=cyan] (4.5,-4) node {$|k \bar{\mathcal{B}}(I_1) J + k \bar{\mathcal{A}}(I_1)| < \alpha$};
\draw[color=magenta] (5.3,-1.4) node {$|k \bar{\mathcal{B}}(I_1) J| < \alpha$};
\draw[color=black] (0.4,-0.8) node {$-k\bar{\mathcal{A}}(I_1)$};
\end{small}
\end{tikzpicture}
\end{center}
\caption{Graphical representation of the set $\Delta_{c,\hat q}(\alpha)$}

\end{figure}
The previous formula can not be applied if when we are at $Z$ and $k = (k_1,0,\ldots,0)$. At $Z$, 
$$\Delta_{c,\hat q}(K,\alpha) = \{J \in \mathbb{R}^n \text{ such that } |\bar K \bar J + k_1\frac{\hat q_m}{c_m}| < \alpha\}.$$
And if $k=(k_1,0,\ldots,0)$ then 
$$\Delta_{c,\hat q}(K,\alpha) = \{J \in \mathbb{R}^n \text{ such that } |k_1 \frac{\hat q_m}{c_m}| < \alpha\}.$$
Then

$$
\Delta_{c,\hat q}(k,\alpha) =
\left\{
\begin{array}{rcl}
\mathbb{R}^n & & \text{ if } |k_1| < \alpha\frac{c_m}{\hat q_m} = \alpha \mathcal{K}',\\
\{\emptyset\} & & \text{ if } |k_1| \geq \alpha\frac{c_m}{\hat q_m} = \alpha \mathcal{K}'.
\end{array}
\right.
$$

Using this last identity, the statement we wanted to prove is immediate.

\end{proof}

\begin{definition}
$G-b := \{I\in G \text{ such that } \mathcal{U}_b(I)\subset G\}$, where $\mathcal{U}_b(I)$ is the ball of radius $b$ centered at $I$.
\end{definition}
\begin{definition}
$F$ is a $D$-set if $\textnormal{meas}[(F-b_1)\setminus (F-b_2)] \leq D(b_2 - b_1)$.
\end{definition}

\begin{lemma}\label{lemma:measures_nonresonant}
Let $F \subset \mathbb{R}^n$ be a $D$-set for $d \geq 0$, $\tau > 0$, $\beta \geq 0$ and $k \geq 0$ an integer. Consider the set
$$F(d,\beta,K) := (F-d)\setminus \bigcup_{\substack{k\in\mathbb{Z}^n\setminus\{0\} \\ |k|_1 \leq K}} \Delta_{c,\hat q}\left(k,\frac{\beta}{|k|_1^\tau}\right).$$
Then, outside of $Z$:
\begin{enumerate}
\item\label{eq:meas_nonr_1} If $d'\geq d$,  $\beta'\geq \beta$, $k' \geq k$, then
$$
\textnormal{meas}[F(d,\beta,k)\setminus F(d',\beta',k')] \leq
 $$
$$ D(d' - d) + 2(\textnormal{diam}F)^{n-1}
\left(\sum_{\substack{k\in\mathbb{Z}^n\setminus\{0\}
\\ |k|_1 \leq K}}\frac{\beta' - \beta}{|k|_1^\tau|k|_{2,\omega}}
 + \sum_{\substack{k\in\mathbb{Z}^n\setminus\{0\} \\ 0 < |k|_1 \leq K}}
\frac{\beta'}{|k|_1^\tau|k|_{2,\omega}}\right)
$$
\item\label{eq:meas_nonr_2} For every $b \geq 0$
$$\textnormal{meas}[F(d,\beta,K)\setminus(F(d,\beta,K)-b)]\leq (D + 2^{n+1}(\dim F)^{n-1}K^n)b$$
\end{enumerate}
And inside of $Z$, if we assume $\beta \leq \frac{1}{\mathcal{K}'}$, the equation \ref{eq:meas_nonr_1} holds adding only the terms $\bar k \neq 0$ and \ref{eq:meas_nonr_2} holds without any change.
\end{lemma}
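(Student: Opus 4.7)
The plan is to reduce both parts to termwise applications of Lemma~\ref{lemma:mesure_resonances} combined with the $D$-set hypothesis on $F$. Write $U(\beta,K):=\bigcup_{0<|k|_1\leq K}\Delta_{c,\hat q}(k,\beta/|k|_1^\tau)$, so that $F(d,\beta,K)=(F-d)\setminus U(\beta,K)$. For (1), I would start from the set inclusion
\begin{equation*}
F(d,\beta,K)\setminus F(d',\beta',K') \subseteq [(F-d)\setminus (F-d')]\;\cup\; [U(\beta',K')\setminus U(\beta,K)];
\end{equation*}
the first piece has measure $\leq D(d'-d)$ directly from the $D$-set hypothesis, and the second I would split further as
\begin{equation*}
U(\beta',K')\setminus U(\beta,K)\subseteq \bigcup_{0<|k|_1\leq K}\bigl[\Delta_{c,\hat q}(k,\tfrac{\beta'}{|k|_1^\tau})\setminus\Delta_{c,\hat q}(k,\tfrac{\beta}{|k|_1^\tau})\bigr] \;\cup\; \bigcup_{K<|k|_1\leq K'}\Delta_{c,\hat q}(k,\tfrac{\beta'}{|k|_1^\tau}).
\end{equation*}

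Geometrically, each $\Delta_{c,\hat q}(k,\alpha)$ is the slab of width $2\alpha/|k|_{2,\omega}$ between two parallel hyperplanes normal to $(\mathcal{B}(I_1)k_1,k_2,\dots,k_n)$; the difference of two concentric such slabs is a pair of shells whose intersection with the ambient box has measure $\leq 2(\mathrm{diam}\,F)^{n-1}(\beta'-\beta)/(|k|_1^\tau|k|_{2,\omega})$, and Lemma~\ref{lemma:mesure_resonances} bounds the full slab by $2(\mathrm{diam}\,F)^{n-1}\beta'/(|k|_1^\tau|k|_{2,\omega})$. Summing both families over $k$ and adding $D(d'-d)$ produces the bound in (1). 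Inside $Z$, for $\bar k=0$ the same lemma shows $\Delta_{c,\hat q}(k,\alpha)$ is empty whenever $\alpha\mathcal{K}'\leq|k_1|$; the hypothesis $\beta\leq 1/\mathcal{K}'$ gives $\alpha\mathcal{K}'=\beta\mathcal{K}'/|k_1|^\tau\leq 1\leq|k_1|$, so the purely vertical resonances contribute nothing and the sums legitimately restrict to $\bar k\neq 0$.

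For (2), I observe that $F(d,\beta,K)\setminus(F(d,\beta,K)-b)$ lies in the $b$-Euclidean neighborhood of $\partial F(d,\beta,K)\subseteq \partial(F-d)\cup\bigcup_{0<|k|_1\leq K}\partial\Delta_{c,\hat q}(k,\beta/|k|_1^\tau)$. The portion coming from $\partial(F-d)$ is exactly $(F-d)\setminus(F-d-b)$, of measure $\leq Db$ by the $D$-set property. Each $\partial\Delta_{c,\hat q}(k,\alpha)$ is a pair of hyperplanes, whose $b$-neighborhood intersected with the box of diameter $\mathrm{diam}\,F$ has measure $\leq 2b(\mathrm{diam}\,F)^{n-1}$ (two parallel slabs of width $b$). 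Since $|\{k\in\mathbb{Z}^n : 0<|k|_1\leq K\}|\leq 2^n K^n$, aggregating gives exactly $(D+2^{n+1}(\mathrm{diam}\,F)^{n-1}K^n)b$; at $Z$ the same argument applies verbatim, because the empty $\bar k=0$ strata do not enlarge the estimate.

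The main technical nuisance will be book-keeping the factor of $2$ that arises because $\Delta_{c,\hat q}(k,\alpha)$ is a \emph{symmetric} slab (the absolute value in $|k\bar{\mathcal{B}}J+k\bar{\mathcal{A}}|$), so each slab has two boundary hyperplanes: the widths appearing in (1) and (2) are $2\alpha/|k|_{2,\omega}$ and $2b$ respectively, not $\alpha/|k|_{2,\omega}$ and $b$. Once this geometric picture is fixed, everything is additive and the two statements fall out of Lemma~\ref{lemma:mesure_resonances} applied one $k$ at a time.
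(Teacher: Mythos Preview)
Your proposal is correct and follows essentially the same route as the paper. Part (1) is identical: the paper uses the set identity $(A\setminus B)\setminus(C\setminus D)\subset(A\setminus C)\cup(A\cap(D\setminus B))$ with $A=F-d$, $B=U(\beta,K)$, $C=F-d'$, $D=U(\beta',K')$, then splits $D\setminus B$ exactly as you do and applies Lemma~\ref{lemma:mesure_resonances} termwise. For part (2) the paper phrases the same geometry algebraically rather than via boundary neighborhoods: it shows
\[
F(d,\beta,K)-b\;\supset\;(F-(d+b))\setminus\bigcup_{0<|k|_1\leq K}\Delta_{c,\hat q}\!\left(k,\tfrac{\beta}{|k|_1^\tau}+b|k|_{2,\omega}\right),
\]
and then bounds the resulting difference by $Db+\sum_k(\textnormal{diam}\,F)^{n-1}\cdot 2b|k|_{2,\omega}/|k|_{2,\omega}$, so the $|k|_{2,\omega}$ cancels and the $2^nK^n$ count of lattice points gives the stated constant. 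Your ``$b$-neighborhood of $\partial\Delta$ has outer width $2b$'' is exactly the same computation in geometric clothing; neither approach gains anything over the other. The inside-$Z$ reduction to $\bar k\neq 0$ is also handled identically.
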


\begin{proof}
Recall that
$$\Delta_{c,\hat q}\left(k,\frac{\beta}{|k|_1^\tau}\right) = \left\{J\in\mathbb{R}^n \text{ such that } \left|k\bar{\mathcal{B}}(I_1)J + k\bar{\mathcal{A}}(I_1)\right| < \frac{\beta}{|k|_1^\tau}\right\}.$$

First we will prove the results outside of $Z$ and then
\begin{enumerate}
\item Let us expand the expression of $\textnormal{meas}[F(d,\beta,k)\setminus F(d',\beta',k')]$:
$$\left[(F-d)\setminus \bigcup_{\substack{k\in\mathbb{Z}^n\setminus\{0\} \\ |k|_1 \leq K}} \Delta_{c,\hat q}\left(k,\frac{\beta}{|k|_1^\tau}\right)\right]\setminus\left[(F-d)\setminus \bigcup_{\substack{k\in\mathbb{Z}^n\setminus\{0\} \\ |k|_1 \leq K}} \Delta_{c,\hat q}\left(k,\frac{\beta}{|k|_1^\tau}\right)\right].$$
Now we use the following property on the previous expression:
$$\begin{array}{rcl}
(A\setminus B)\setminus(C\setminus D) &  = &  [(A\setminus B)\setminus C]\cup [(A\setminus B)\cap D]\\
 & \subset & (A\setminus C)\cup[(A\setminus B)\cap D] \\
 & = & (A\setminus C)\cup(A\cap(D\setminus B)),
\end{array}
$$
where the last equality holds true because $D \supset B$. Using this property we have that $\textnormal{meas}[F(d,\beta,k)\setminus F(d',\beta',k')]$  is included in

$$
\begin{array}{rcl}

[(F-d)\setminus(F-d')]&\cup&\displaystyle\left[(F-d)\cap\left[\left(\bigcup_{\substack{k\in\mathbb{Z}^n\setminus\{0\} \\ |k|_1 \leq K'}} \Delta_{c,\hat q}\left(k,\frac{\beta'}{|k|_1}\right)\right)\right.\right.\\
\\
& & \displaystyle \qquad \qquad \qquad
\setminus\left.\left.\left(\bigcup_{\substack{k\in\mathbb{Z}^n\setminus\{0\} \\ |k|_1 \leq K}} \Delta_{c,\hat q}\left(k,\frac{\beta}{|k|_1}\right) \right)\right]\right].

\end{array}
$$

And this expression is equivalent to:

$$
\begin{array}{rcl}
[(F-d)\setminus (F-d')] & \cup &  \displaystyle \bigcup_{\substack{k\in\mathbb{Z}^n\setminus\{0\} \\ |k|_1 \leq K}} \left((F-d)\cap\left(\Delta_{c,\hat q}\left(k,\frac{\beta'}{|k|_1^\tau}\right)
\right.\right.
\\
& & \displaystyle \qquad \qquad \qquad \qquad \qquad \left.\left.
\setminus\Delta_{c,\hat q}\left(k,\frac{\beta}{|k|_1^\tau}\right)\right)\right)\\
\\
 & \cup & \displaystyle \bigcup_{\substack{k\in\mathbb{Z}^n\setminus\{0\} \\ K < |k|_1 \leq K'}} \left((F-d)\cap\Delta_{c,\hat q}\left(k,\frac{\beta'}{|k|_1^\tau}\right)\right).
\end{array}
$$

Now, using lemma \ref{lemma:mesure_resonances} we obtain:

$$
\textnormal{meas}(F(d,\beta,K)\setminus F(d',\beta',K')) \leq
$$
$$
\leq D(d'-d) + (\textnormal{diam}F)^{n-1}\left(\sum_{\substack{k\in\mathbb{Z}^n\setminus\{0\} \\ |k|_1 \leq K}}\frac{2(\beta'-\beta)}{|k|_1^\tau|k|_{2,\omega}}+ \sum_{\substack{k\in\mathbb{Z}^n\setminus\{0\} \\ K < |k|_1 \leq K'}}\frac{2\beta'}{|k|_1^\tau|k|_{2,\omega}}\right)
$$

\item
Observe that:

\begin{longtable}{rcl}
 $F(d,\beta,K) - b$ & $=$& $\displaystyle\left[(F-d) \setminus \bigcup_{\substack{k\in\mathbb{Z}^n\setminus\{0\} \\ |k|_1 \leq K}} \Delta_{c,\hat q} \left(k,\frac{\beta}{|k|_1^\tau}\right)\right] - b$\\
 & $\supset$ & $\displaystyle (F - (d+b))\setminus \bigcup_{\substack{k\in\mathbb{Z}^n\setminus\{0\} \\ |k|_1 \leq K}} \Delta_{c,\hat q} \left(k,\frac{\beta}{|k|_1^\tau} + b|k|_{2,\omega}\right).$ \\
\end{longtable}

Then,

\begin{longtable}{rcl}
& & $\textnormal{meas}[(F(d,\beta,K))\setminus(F(d,\beta,K)-b)]$ \\
& $\leq$ & $ \textnormal{meas}
\left[\left((F-d)\setminus\bigcup_{\substack{k\in\mathbb{Z}^n\setminus\{0\} \\ |k|_1 \leq K}}\Delta_{c,\hat q} \left(k,\frac{\beta}{|k|_1^\tau}\right)\right)\setminus
\right.$\\
& & \qquad $ \left.
\left((F-(d+b))\setminus\bigcup_{\substack{k\in\mathbb{Z}^n\setminus\{0\} \\ |k|_1 \leq K}}\Delta_{c,\hat q} \left(k,\frac{\beta}{|k|_1^\tau}\right)\right)\right]$ \\
& $\leq$ & $ \textnormal{meas}\left[(F-d)\setminus(F-(d+b))\cup \right.
$\\
& & \qquad $ \left.
\bigcup_{\substack{k\in\mathbb{Z}^n\setminus\{0\} \\ |k|_1 \leq K}}\left((F-d)\cap\left(\Delta_{c,\hat q} \left(k,\frac{\beta}{|k|_1^\tau}+ b|k|_{2,\omega}\right)\right)\right)\setminus\left(\Delta_{c,\hat q} \left(k,\frac{\beta}{|k|_1^\tau}\right)\right)\right]$  \\
& $\leq$ & $ Db + \sum_{\substack{k\in\mathbb{Z}^n\setminus\{0\} \\ |k|_1 \leq K}} (\textnormal{diam} F)^{n-1}\frac{2b|k|_{2,\omega}}{|k|_{2,\omega}}$  \\
& $\leq$ & $ Db + 2^n K^n(\textnormal{diam} F)^{n-1}\cdot 2 = Db + 2^{n+1}K^n(\textnormal{diam} F)^{n-1}$,  \\
\end{longtable}

where in the last inequality we used that the number of vectors $k$ such that $|k|_1\leq K$ is less or equal than $2^n K^n$.

\end{enumerate}

The previous identities worked outside of $Z$. Let us understand the set $F(d,\beta,K)$ when we are ate $Z$.
\begin{longtable}{rcl}
$F(d,\beta,K)$ & $:=$ & $(F-d)\setminus \bigcup_{\substack{k\in\mathbb{Z}^n\setminus\{0\} \\ |k|_1 \leq K}} \Delta_{c\hat q}(k,\frac{\beta}{|k|_1^\tau})$ \\
 & $=$ & $(F-d)\setminus \left[\left(\bigcup_{\substack{k\in\mathbb{Z}^n\setminus\{0\} \\ |k|_1 \leq K \\ \bar k \neq 0}} \Delta_{c\hat q}(k,\frac{\beta}{|k|_1^\tau})\right) \right. $\\
 & & $\quad \cup \left. \left(\bigcup_{\substack{k\in\mathbb{Z}^n\setminus\{0\} \\ |k|_1 \leq K \\ \bar k = 0}} \Delta_{c\hat q}(k,\frac{\beta}{|k|_1^\tau})\right)\right]$ \\
 & $=$ & $(F-d)\setminus \left[\left(\bigcup_{\substack{k\in\mathbb{Z}^n\setminus\{0\} \\ |k|_1 \leq K \\ \bar k \neq 0}} \Delta_{c\hat q}(k,\frac{\beta}{|k|_1^\tau})\right)  \right. $\\
 & & $\quad \cup \left. \left(\bigcup_{\substack{k_1\in\mathbb{Z}\setminus\{0\} \\ |k|_1 \leq \frac{\beta}{|k_1|^\tau}\mathcal{K}'} } \mathbb{R}^n \right)\right]$.\\
\end{longtable}
Note that if for some $k_1 \in \mathbb{Z}\setminus \{0\}$, $|k|_1 \geq \frac{\beta}{|k|_1^\tau}\mathcal{K}'$, we take out all the possible frequencies.
Then seems natural to ask $|k|_1 \geq \frac{\beta}{|k|_1^\tau}\mathcal{K}'$ for all $k_1 \in \mathbb{Z}\setminus\{0\}$, which holds if and only if $|k_1|^{1+\tau} \geq \beta K'$ for all $k_1 \in \mathbb{Z}\setminus\{0\}$ or simply $\beta\leq \frac{1}{\mathcal{K}'}$ which we assumed. Then
$$F(d,\beta,K) := (F-d)\setminus \bigcup_{\substack{k\in\mathbb{Z}^n\setminus\{0\} \\ |k|_1 \leq K \\ \bar k \neq 0}} \Delta_{c\hat q}(k,\frac{\beta}{|k|_1^\tau}).$$
Hence we can replicate the proof of \ref{eq:meas_nonr_1} only with the terms $\bar k \neq 0$. And the bound of \ref{eq:meas_nonr_2} can be slightly improved by using that the number of vectors $k\in \mathbb{Z}^n\setminus\{0\}$ such that $|k|_1 \leq K$ and $|\bar k| \neq 0$ is bounded by $2^n K^n - K$, but since it is not a big improve, for the sake of simplicity we assume the bound \ref{eq:meas_nonr_2} at $Z$.

\end{proof}

\begin{lemma}\label{lemma:2.3}
Let $G \subset \mathbb{R}^n$ be compact. $u,\tilde u: G \rightarrow \mathcal{R}^n$ maps of class $\mathcal{C}^2$. $|\tilde u - u|\leq \varepsilon$. Assume that $u$ is one-to-one on $G$, let $F = u(G)$. Consider the following bounds:
$$\left|\frac{\partial u}{\partial I}\right|_G \leq M, \left|\frac{\partial u}{\partial I}(I)\cdot v\right| \geq \mu|v| \quad \forall v\in\mathbb{R}^n, \forall I \in G,$$
$$\left|\frac{\partial \tilde u }{\partial I}\right|_G\leq\tilde M, \left|\frac{\partial \tilde u}{\partial I^2}\right|_G\leq\tilde M_2, \left|\frac{\partial \tilde u}{\partial I}(I) v\right|\geq\tilde\mu|v| \quad \forall v \in \mathbb{R}^n, \forall I \in G,$$
$\tilde \mu < \mu$ and $\tilde M < M$. Assume $\varepsilon \leq \tilde mu^2/(4\tilde M_2)$.
Then, given a subset $\tilde F \subset F - \frac{4M\varepsilon}{\tilde \mu}$ and writing $\tilde G = (\tilde u)^{-1}(\tilde F)$, the map $\tilde u$ is one-to-one from $\tilde G$ to $\tilde F$ and
$$ \tilde G \subset G - \frac{2\epsilon}{\tilde \mu}, \quad u(\tilde G) \supset \tilde F - \varepsilon.$$
Moreover,
$$|(\tilde u)^{-1} - u^{-1}|_{\tilde F} \leq \frac{\varepsilon}{\mu}$$
\end{lemma}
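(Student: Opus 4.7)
The plan is to follow the pattern of standard quantitative inverse function theorems: establish local injectivity of $\tilde u$ via the second-derivative bound, lift this to global injectivity on the relevant region by using the proximity $|\tilde u - u|_G \leq \varepsilon$ together with the non-degeneracy of $u$, and then verify the three claimed containments and the Lipschitz comparison by direct bookkeeping with the upper and lower derivative bounds. Throughout, $\tilde G$ is by construction the set where $\tilde u$ lands in $\tilde F$, so injectivity and containment are the real content.

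First I would write $g := \tilde u - u$, so $|g|_G \leq \varepsilon$. Taylor-expanding $\tilde u$ around any $I_2 \in G$ and combining the bound on $D^2 \tilde u$ with the lower bound $|D\tilde u(I_2)\cdot v| \geq \tilde\mu\, |v|$ yields, for $I_1$ close enough to $I_2$,
\[
|\tilde u(I_1) - \tilde u(I_2)| \;\geq\; \bigl(\tilde\mu - \tfrac{\tilde M_2}{2}|I_1 - I_2|\bigr)\,|I_1 - I_2|,
\]
so $\tilde u$ is one-to-one on every ball of radius $\tilde\mu/\tilde M_2$ contained in $G$, with an explicit Lipschitz lower bound $\tilde\mu/2$. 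The hypothesis $\varepsilon \leq \tilde\mu^2/(4\tilde M_2)$ is engineered precisely so that the distances of order $\varepsilon/\mu$ and $2\varepsilon/\tilde\mu$ that appear later fit comfortably inside this local injectivity radius.

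For global injectivity and the inverse bound, suppose $I_1, I_2 \in G$ satisfy $\tilde u(I_1) = \tilde u(I_2)$; then $|u(I_1) - u(I_2)| = |g(I_2) - g(I_1)| \leq 2\varepsilon$. Since $u$ is one-to-one on $G$ with $|Du\cdot v|\geq \mu|v|$, its inverse on $F$ is Lipschitz with constant $1/\mu$, so $|I_1 - I_2| \leq 2\varepsilon/\mu$, which sits inside the local injectivity radius and forces $I_1 = I_2$. The same Lipschitz bound on $u^{-1}$, applied to $y \in \tilde F$ with $I := u^{-1}(y)$ and $\tilde I := \tilde u^{-1}(y)$, yields $|u(I) - u(\tilde I)| = |g(\tilde I)| \leq \varepsilon$ and hence $|\tilde I - I| \leq \varepsilon/\mu$, which is exactly the claimed estimate $|\tilde u^{-1} - u^{-1}|_{\tilde F} \leq \varepsilon/\mu$.

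To close, I would verify the two containments directly. For $u(\tilde G) \supset \tilde F - \varepsilon$: given $z \in \tilde F - \varepsilon$, set $I := u^{-1}(z) \in G$; then $|\tilde u(I) - z| = |g(I)| \leq \varepsilon$, so $\tilde u(I) \in \tilde F$, i.e.\ $I \in \tilde G$ and $u(I) = z$. For the remaining inclusion $\tilde G \subset G - 2\varepsilon/\tilde\mu$, I would take $I \in \tilde G$ with $y := \tilde u(I) \in \tilde F \subset F - 4M\varepsilon/\tilde\mu$, so the ball $B(y, 4M\varepsilon/\tilde\mu)$ sits inside $F$, and then use the local Lipschitz lower bound $\tilde\mu/2$ for $\tilde u$ together with the upper bound $M$ for $Du$ to argue that the $\tilde u$-preimage of this ball around $y$ contains a ball of radius $2\varepsilon/\tilde\mu$ around $I$, which therefore lies in $G$. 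I expect this final containment to be the main technical hurdle: one must track the constants $M$ and $\tilde\mu$ coherently, and chain the local injectivity of $\tilde u$ (only valid on scales up to $\tilde\mu/\tilde M_2$) with the global invertibility of $u$, so that the factor $4M/\tilde\mu$ in the hypothesis $\tilde F \subset F - 4M\varepsilon/\tilde\mu$ simultaneously absorbs the distortion of composing $u$ with $\tilde u^{-1}$ and the size of the perturbation $\varepsilon$.
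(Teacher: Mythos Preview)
The paper does not actually prove this lemma: its entire proof reads ``The statement is not any different than the classical one, so we are not going to prove it in here. A proof can be found in \cite{D}.'' So there is no in-paper argument to compare against; your sketch is already more detailed than what the paper provides, and it follows the standard route one finds in the cited source.

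That said, two points in your outline are genuinely incomplete and would need work in a full write-up. First, you never verify that $\tilde u$ is \emph{onto} $\tilde F$ (so that $\tilde u^{-1}$ is defined on all of $\tilde F$); the definition $\tilde G = \tilde u^{-1}(\tilde F)$ only gives injectivity into $\tilde F$, and the surjectivity requires a separate argument (typically a local inverse-function/open-mapping step combined with the buffer $4M\varepsilon/\tilde\mu$). Second, your argument for $\tilde G \subset G - 2\varepsilon/\tilde\mu$ is circular as written: you want to conclude $B(I,2\varepsilon/\tilde\mu)\subset G$ by applying $u$ (or $\tilde u$) to points $I'$ in that ball, but $u$ is only defined on $G$, which is what you are trying to prove $I'$ belongs to. The usual fix is to first locate $I_0 := u^{-1}(y)$, show $|I-I_0|\le \varepsilon/\mu$, and then use the hypothesis $y\in F - 4M\varepsilon/\tilde\mu$ together with $|Du|\le M$ and a compactness/connectedness argument to conclude $I_0$ (and hence $I$) lies well inside $G$. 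You correctly flag this step as the main hurdle; the constants are arranged exactly so that this chaining closes.
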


\begin{proof}
The statement is not any different than the classical one, so we are not going to prove it in here. A proof can be found in \cite{D}.
\end{proof}

\begin{lemma}[Inductive lemma]\label{lemma:inductive}
Let $G \subset \mathbb{R}^n$ be a compact.
$$H(\phi,I)=\hat h(I) + R(\phi,I)$$
 where $\hat h$ is defined as in \ref{eq:bm-hamiltonian} in the domain $\mathcal{D}_\rho(G)$,and $R(\phi,I)$ analytic on the same domain.
Let $\hat u = \frac{\partial \hat h}{\partial I}$ and $u = \frac{\partial h}{\partial I}$. Assume that $|\frac{\partial}{\partial I} u|_{G,\rho_2} \leq M'$ and $|u|_G \leq L$. Also, assume that $u$ is non-degenerate:
$$\left|\frac{\partial u}{\partial I}v\right| \geq \mu|v| \quad \forall I \in \mathcal{G}.$$
Let $\tilde M > M$, $\tilde L > L$ and $\tilde \mu < \mu$.
Assume $u$ is one-to-one on $G$ and denote $F = u(G)$. Assume $\tau > 0$, $0 < \beta \leq 1$ and $K$ given. Assume also that
$$F\cap \Delta_{c,\hat q}\left(K,\frac{\beta}{|k|_1^\tau}\right) = \emptyset, \quad \forall k \in \mathbb{Z}^n,|k|_1 \leq K, k\neq0.$$

Denote $\epsilon:=\|DR\|_{G,\rho,c}$, $\eta:=|R_0|_{G,\rho_2}$ and $\xi:=\left|\frac{\partial R_0}{\partial I}\right|_{G,\rho_2}$.
\begin{enumerate}
\item\label{eq:inductive_lemma_1} $\rho_2 \leq \frac{\beta}{2MK^{\tau+1}}$
\item\label{eq:inductive_lemma_2} $\epsilon \leq \min\left(\frac{\beta \hat \delta_c}{74 A K^\tau},\frac{\tilde\mu^2(\rho_2-\delta_2)}{4\tilde M}\right)$
\item\label{eq:inductive_lemma_3} \textcolor{black}{$\xi \leq \min\left((\tilde M - M)\delta_2/\mathcal{R}, (\mu - \tilde \mu)\rho_2\right)$}
\end{enumerate}
Then there exists a real canonical transformation $$\Phi:\mathcal{D}_{\rho-\frac{\delta}{2}}(G)\rightarrow \mathcal{D}_\rho(G)$$
 and a decomposition $H\circ\Phi = \tilde{\hat h}(I) + \tilde R (\phi,I)$. Writing $\tilde u = \frac{\partial}{\partial I} \tilde h$ one has.
\begin{enumerate}
\item $|\tilde u - u|_{G,\rho_2}=\xi, \quad |\tilde h - h|_{G,\rho_2} = \eta,$
\item $\tilde \epsilon := \|D\tilde R \|_{G,\rho-\delta,c}\leq e^{-K\delta_1}\epsilon + \frac{14AK^\tau}{\beta\hat\delta_c}\epsilon^2,$
\item $\tilde \eta := |\tilde R_0|_{G,\rho_2-\frac{\delta_2}{2}}\leq\frac{7AK^\tau}{c\beta}\epsilon^2,$
\item $|\Phi -\id|_{G,\rho-\frac{\delta}{2},c}\leq\frac{2AK^\tau}{\beta}\epsilon,$
\item $\left|\frac{\partial}{\partial I} \tilde u\right|_{G,\rho_2} \leq \tilde M'$, $|\tilde u|_G\leq \tilde L,$
\item $|\frac{\partial \tilde u}{\partial I}v| \geq \tilde \mu |v| \quad \forall I \in \mathcal{G},$
\item Given a subset $\tilde F \subset F -\frac{4M\epsilon}{\tilde \mu}$, $\tilde G (\tilde u)^{-1}(\tilde F)$ the map $\tilde u$ is one-to-one from $\tilde G$ to $\tilde F$, $\tilde G \subset G -\frac{2\epsilon}{\tilde \mu}$, $u(\tilde G) \supset \tilde F -\epsilon$. Moreover $|\tilde u^{-1} - u^{-1}|_{\tilde F} \leq \epsilon/\mu$.
\end{enumerate}

\end{lemma}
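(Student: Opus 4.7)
The plan is to reduce the statement to the Iterative Lemma (Theorem~\ref{lemma:iterative}) with the specific choice $\alpha = \beta/K^\tau$, and then to handle separately the bookkeeping on the averaged part $R_0$ and on the non-degeneracy transfer from $u$ to $\tilde u$. First I would verify that the non-resonance hypothesis $F \cap \Delta_{c,\hat q}(k,\beta/|k|_1^\tau) = \emptyset$ for $0<|k|_1 \le K$ translates to $u(G)$ being $\alpha, K, c, \hat q$-non-resonant with $\alpha = \beta/K^\tau$ (since $|k|_1 \le K$ implies $\beta/|k|_1^\tau \ge \beta/K^\tau$). Condition \ref{eq:inductive_lemma_1} $\rho_2 \le \beta/(2MK^{\tau+1})$ is exactly $\rho_2 \le \alpha/(2MK)$, and condition \ref{eq:inductive_lemma_2} on $\epsilon$ provides the smallness $\|DR\|_{G,\rho,c} \le \alpha\hat\delta_c/(74A)$ required by Theorem~\ref{lemma:iterative}. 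Hence Theorem~\ref{lemma:iterative} produces a real analytic map $\Phi:\mathcal{D}_{\rho-\delta/2}(G)\to\mathcal{D}_\rho(G)$, generated by the Hamiltonian flow at time $1$ of $W$ with $\|DW\|_{G,\rho,c}\le (2A/\alpha)\|DR\|_{G,\rho,c}$, immediately giving the bounds on $\tilde\epsilon$ and $|\Phi-\id|_{G,\rho-\delta/2,c}$ after substituting $\alpha=\beta/K^\tau$.

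Next I would define $\tilde h := h + R_0$ so that $\tilde{\hat h} = \hat h + R_0$ (the singular part of $\hat h$ is untouched, since $R_0$ depends only on $I$), and set $\tilde R := H\circ\Phi - \tilde{\hat h}$. The identities $|\tilde h - h|_{G,\rho_2} = |R_0|_{G,\rho_2} = \eta$ and $|\tilde u - u|_{G,\rho_2} = |\partial R_0/\partial I|_{G,\rho_2} = \xi$ are then tautological. For the bound on $\tilde\eta = |\tilde R_0|_{G,\rho_2 - \delta_2/2}$, I recall from the derivation leading to equation \ref{eq:R_tilde} that $\tilde R = R_{>K} + r_2(\hat h, W, 1) + r_1(R,W,1)$; since $R_{>K}$ has vanishing zero-th Fourier coefficient, so does $\tilde R - r_2(\hat h,W,1) - r_1(R,W,1)$, and I would estimate the zero average using the Poisson bracket bound from Lemma~\ref{lemma:1.1}(2): $\|\{R,W\}\|_{G,\rho}\le(2/c)\|DR\|_{G,\rho,c}\|DW\|_{G,\rho,c}\le (4A/(c\alpha))\epsilon^2$, and similarly $\|\{\{h,W\},W\}\|_{G,\rho}\le(4A/(c\alpha))\epsilon^2$. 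Taking supremum and summing using $\gamma_1+\gamma_2<7/4$ (as in the proof of Theorem~\ref{lemma:iterative}) yields $\tilde\eta \le (7AK^\tau)/(c\beta)\,\epsilon^2$.

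For the non-degeneracy and boundedness of $\tilde u$, I would use the triangle inequality $|\tilde u|_G \le |u|_G + \xi \le L + \xi \le \tilde L$ and likewise $|\partial \tilde u/\partial I|_{G,\rho_2-\delta_2} \le |\partial u/\partial I|_{G,\rho_2} + |\partial^2 R_0/\partial I^2|_{G,\rho_2-\delta_2}$, where the second term is bounded by $\xi/\delta_2$ via a Cauchy inequality; condition \ref{eq:inductive_lemma_3} is calibrated precisely so that this is at most $\tilde M - M$, giving the bound on $\tilde M'$. For the $\mu$-non-degeneracy: $|\partial\tilde u/\partial I \cdot v| \ge |\partial u/\partial I \cdot v| - |\partial^2 R_0/\partial I^2 \cdot v| \ge (\mu - \xi/\rho_2)|v| \ge \tilde\mu|v|$, where the last step again invokes the second half of \ref{eq:inductive_lemma_3}. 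The final item is a direct application of Lemma~\ref{lemma:2.3} with $\varepsilon = \xi$, whose hypothesis $\varepsilon\le \tilde\mu^2/(4\tilde M_2)$ is ensured by \ref{eq:inductive_lemma_2} after interpreting $\tilde M_2 \le \tilde M/\delta_2$ via Cauchy.

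The main obstacle I expect is the careful accounting of which Cauchy exponents and Poisson-bracket constants propagate in what subdomain, especially in the $\tilde\eta$ estimate (distinguishing the $R_0$ produced at this step, which is already killed in the definition of $\tilde h$, from the new zero-averaged piece coming from the remainders $r_2(\hat h, W, 1)+r_1(R,W,1)$); but all the ingredients are already packaged in Lemmas~\ref{lemma:1.1}--\ref{lemma:1.2}, Proposition~\ref{prop:14} and Lemma~\ref{lemma:2.3}, so the work is bookkeeping rather than new estimation.
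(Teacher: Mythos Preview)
Your proposal is correct and follows essentially the same route as the paper: apply Theorem~\ref{lemma:iterative} with $\alpha = \beta/K^\tau$, set $\tilde h = h + R_0$, read off items (1)--(4) directly, and handle (5)--(7) by Cauchy estimates plus Lemma~\ref{lemma:2.3}. Two small corrections to your sketch: in item (7) the paper applies Lemma~\ref{lemma:2.3} with the parameter $\varepsilon$ there equal to $\epsilon=\|DR\|_{G,\rho,c}$ (not $\xi$), which is why the conclusion reads $\tilde F \subset F - 4M\epsilon/\tilde\mu$, and the Cauchy bound for $\tilde M_2$ uses the remaining analyticity width $\rho_2-\delta_2$ rather than $\delta_2$, matching the factor $(\rho_2-\delta_2)$ in condition~\ref{eq:inductive_lemma_2}; also note that in item (5) the paper actually bounds the weighted quantity $\left|\frac{\partial}{\partial I}(\bar{\mathcal B}\tilde u+\bar{\mathcal A})\right|$, which is where the constant $\mathcal R$ in condition~\ref{eq:inductive_lemma_3} enters.
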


\begin{proof}
The set $u(I)$ is $\beta/K^\tau, K$-non-resonant with respect to $\omega$. This implies that
\begin{equation}\label{eq:nonres}
|k_1\mathcal{B}(I_1)u_1 + \bar{k}\bar{u} + \mathcal{A}(I_1)u_1| \geq \beta/K^\tau. \geq \frac{\beta}{|k|_1^\tau} \geq \frac{\beta}{K^\tau}.
\end{equation}
Then $\rho_2\leq \frac{\beta/K^\tau}{2MK} = \frac{\beta}{2MK^{\tau+1}}$, $\|DR\|_{G,\rho,c} \leq \frac{\beta/K^\tau \hat{\delta}_c}{74A} = \frac{\beta \hat{\delta}_c}{74AK^\tau}$.
We apply the iterative lemma (Theorem \ref{lemma:iterative}) to obtain $\Phi:\mathcal{D}_{\rho-\frac{\delta}{2}}(G) \rightarrow \mathcal{D}_\rho(G)$, such that $H\circ \Phi = \tilde{h} + \tilde{R}$ where $\tilde{h} = h + R_0$.

We have taken out the points that are not $\beta/K^\tau,K$-non-resonant with respect to $\omega$.
Because of conditions \ref{eq:inductive_lemma_1} and \ref{eq:inductive_lemma_2} we can apply the Iterative lemma.
Now let us prove each of the points in the statement.
\begin{enumerate}
\item We know by definition that $\tilde{u} = \frac{\partial \tilde{h}}{\partial I} = \frac{\partial (h + R_0)}{\partial I} = \frac{\partial h}{\partial I} + \frac{R_0}{\partial I}$, hence:
$$|\tilde u - u|_{G,\rho_2} = |\frac{\partial h}{\partial I} +\frac{\partial R_0}{\partial I} - \frac{\partial h}{\partial I}|_{G,\rho_2} = |\frac{\partial R_0}{\partial I}|_{G,\rho_2} = \xi$$
$$\tilde h = h + R_0 \Rightarrow |\tilde h - h|_{G,\rho_2} = |h + R_0 - h|_{G,\rho_2} = |R_0|_{G,\rho_2} = \eta$$
\item By the iterative lemma:
$$ \begin{array}{rcl}
\|D \tilde R\|_{G,\rho-\delta,c} & \leq & e^{-K \delta_1}\|DR\|_{G,\rho,c} + \frac{14A}{\alpha \hat \delta_c}\|DR\|_{G,\rho,c}\\
 & \leq & e^{-K\delta_1}\varepsilon + \frac{14A}{\alpha \hat \delta_c}\varepsilon^2\\
 & = & e^{-K\delta_1}\varepsilon + \frac{14A K^\tau}{\beta \hat \delta_c}\varepsilon^2,
\end{array}
$$
where we have used that $\alpha = \frac{\beta}{K^\tau}$.
\item At this point we use an inequality used in the proof of the iterative Lemma (theorem \ref{lemma:iterative}).
$$\begin{array}{rcl}
|\tilde R_0|_{G,\rho_2 - \delta_2/2} & \leq & |r_2(h,W,1) + r_1(R,W,1)|_{G,\rho_2 - \delta_2/2}\\
 & \leq & \frac{7A}{\alpha c} \|DR\|^2_{G,\rho,c} = \frac{7AK^\tau}{\beta}\varepsilon^2.
\end{array}$$
\item Also using the the iterative Lemma:
$$|\Phi-\text{id}|_{G,\rho-\delta/2,c} \leq \frac{2A}{\alpha} \|DR\|_{G,\rho,c} = \frac{2AK^\tau}{\beta}\|DR\|_{G,\rho,c}.$$
\item Recall that $|\frac{\partial}{\partial I} A^\omega \tilde u|_{G,\rho_2 - \delta_2}\leq \tilde M$, $|\tilde u|_G \leq \tilde L$, $\tilde h = h + R_0$, $|\frac{\partial}{\partial I} A^\omega u|_{G,\rho_2}\leq M$, $|u|_G \leq L$.
Note that $\mathcal{A}(I_1) \leq m \cdot \max_j(q_j)/\min_j(c_j)$ and $\mathcal{B}(I_1) \leq 1/ \min_j(c_j)$. Hence $\mathcal{A}(I_1) + \mathcal{B}(I_1) \leq \max_j(q_j)/\min_j(c_j) + 1/ \min_j(c_j) := \mathcal{R}$, and we have that $|A^\omega| \leq \mathcal{R}$.
$$
\begin{array}{rcl}
 |\frac{\partial}{\partial I} A^\omega \tilde u|_{G,\rho_2-\delta_2}& = &
 |\frac{\partial}{\partial I} A^\omega \tilde u + \frac{\partial}{\partial I} A^\omega u - \frac{\partial}{\partial I} A^\omega  u|_{G,\rho_2-\delta_2} \\
 & \leq & |\frac{\partial}{\partial I} A^\omega (\tilde u - u)|_{G,\rho_2-\delta_2} + |\frac{\partial}{\partial I} A^\omega  u|_{G,\rho_2-\delta_2}\\
 & \leq & |\frac{\partial}{\partial I} A^\omega R_0|_{G,\rho_2-\delta_2} + M \\
 & \leq & \frac{|A^\omega|_{G,\rho_2}|R_0|_{G,\rho}}{\delta_2} + M\\
 & \leq & \frac{|A^\omega|_{G,\rho_2} \cdot \xi}{\delta_2} + M \\
 & \leq & \frac{\mathcal{R}\xi}{\delta_2} + M\\
 & \leq & \displaystyle \frac{(\frac{(\tilde M - M)\delta_2}{\mathcal{R}})\mathcal{R}}{\delta_2} + M \leq \tilde M - M + M = \tilde M,
\end{array}
$$
where $\xi \leq (\tilde M - M)\delta_2/\mathcal{R}$.
\item We know $|\frac{\partial u}{\partial I}(I) v| \geq \mu |v|$ for all $I \in \mathcal{G}$, then $|\frac{\partial u}{\partial I}(I) v|_G \geq \mu|v|$.
We want to find $|\frac{\partial \tilde u}{\partial I} (I) v|_G \geq \mu' |v|$ if $\mu' < \mu$.
$$
\begin{array}{rcl}
 |\frac{\partial \tilde u}{\partial I} v|_G &  = & |(\frac{\partial \tilde u}{\partial I} + \frac{\partial u}{\partial I} - \frac{\partial u}{\partial I})v|_G \\
 & = & |(\frac{\partial^2 R_0}{\partial I^2} + \frac{\partial u}{\partial I})v|_G\\
  & \geq & -|\frac{\partial^2 R_0}{\partial I^2} v|_G + |\frac{\partial u}{\partial I} v|_G\\
  & \geq & \mu|v| - |\frac{\partial^2 R_0}{\partial I^2}|_G|v|\\
  & \geq & \mu|v| - |\frac{\partial R_0}{\partial I}|_G\frac{1}{\delta_2}|v|\\
    & \geq & \mu|v| - \frac{\xi}{\rho_2}|v| = (\mu - \xi/\rho_2)|v| \geq \mu'|v|,\\
\end{array}
$$
where we have used that $|\frac{\partial^2 R_0}{\partial I^2}|_G \leq |\frac{\partial R_0}{\partial I}|\frac{1}{\rho_2}$, and also that $\mu' < \mu -\xi/\rho_2$, hence $\xi \leq (\mu - \mu')\rho_2$.
\item To apply lemma \ref{lemma:2.3} we only need to check that $\varepsilon \leq \frac{\tilde \mu ^2}{ \tilde M_2}$.
$\tilde M_2$ can be chosen such that $|\frac{\partial^2 u}{\partial I^2}|_G \leq \tilde M_2$. Note that $|\frac{\partial^2 u}{\partial I^2}|_G \leq |\frac{\partial^2 u}{\partial I^2}|_{G,\rho_2-\delta_2}$.
$$
\begin{array}{rcl}
|\frac{\partial u}{\partial I}|_{G,\rho_2-\delta_2} \leq \tilde M & \Rightarrow & |\frac{\partial^2 u}{\partial I^2}|_{G, \rho_2-\delta_2}(\rho_2-\delta_2) \leq |\frac{\partial u}{\partial I}|_{G,\rho_2-\delta_2} \leq \tilde M\\
 & \Rightarrow & |\frac{\partial^2 u}{\partial I ^2}|_{G,\rho_2-\delta_2} \leq \frac{\tilde M}{\rho_2-\delta_2} = \tilde M_2\\
 & \Rightarrow & |\frac{\partial^2 u}{\partial I^2}|_G \leq \tilde M_2
\end{array}
$$
Then $\varepsilon \leq \frac{\tilde M}{4 \tilde M_2}$ if and only if $\varepsilon \leq \mu^2/(4\frac{\tilde M}{(\rho_2 - \delta_2)})$ if and only if $\varepsilon \leq \frac{\mu^2(\rho_2-\delta_2)}{4\tilde M}$ which it is assumed in the statement.
\end{enumerate}

\end{proof}

\section{ A KAM theorem on $b^m$-symplectic manifolds}

\begin{theoremB}[ A $b^m$-KAM theorem]\label{th:bm_kam}
Let $\mathcal{G} \subset \mathbb{R}^n$, $n\geq 2$ be a compact set.
Let $H(\phi, I) = \hat h (I) + f(\phi,I)$, where $\hat h$ is a $b^m$-function $\hat h (I) = h(I) + q_0 \log(I_1) + \sum_{i=1}^{m-1} \frac{q_i}{I_1^i}$ defined on $\mathcal{D}_\rho(G)$, with $h(I)$ and $f(\phi,I)$ analytic.
Let $\hat u = \frac{\partial \hat h}{\partial I}$ and $u = \frac{\partial h}{\partial I}$.
Assume $|\frac{\partial u}{\partial I}|_{G,\rho_2} \leq M$, $|u|_{\mathcal{G}} \leq L$.
Assume that $u$ is $\mu$ non-degenerate ($|\frac{\partial u}{\partial I}v|\geq \mu|v|$ for some $\mu \in \mathbb{R}^+$ and $I \in \mathcal{G}$. Take $a = 16M$.
Assume that $u$ is one-to-one on $\mathcal{G}$ and its range $F = u(\mathcal{G})$ is a $D$-set.
Let $\tau>n-1,\gamma>0$ and $0 < \nu < 1$. Let
\begin{enumerate}
\item \begin{equation}\label{eq:kam1}
\varepsilon:=\|f\|_{\mathcal{G}, \rho} \leq \frac{\nu^2 \mu^2 \hat \rho^{2\tau+2}}{2^{4\tau+32}L^6M^3} \gamma^2,
\end{equation}
\item \begin{equation}\label{eq:kam2}
\gamma \leq \min(\frac{8LM\rho_2}{\nu \hat \rho^{\tau+1}}, \frac{L}{\mathcal{K}'})
\end{equation}
\item \begin{equation}\label{eq:kam3}
\mu \leq \min(2^{\tau+5}L^2 M,2^7\rho_1 L^4 K^{\tau+1},\beta\nu^{\tau+1}2^{2\tau+1}\rho_1^\tau),
\end{equation}
\end{enumerate}
where $\hat \rho := \min \left(\frac{\nu\rho_1}{12(\tau+2)},1\right)$.
Define the set $\hat G = \hat G_\gamma := \{I \in  \mathcal{G}-\frac{2\gamma}{\mu} | u(I) \text{ is } \tau,\gamma,c,\hat q- Dioph.\}$.
Then, there exists a real continuous map $\mathcal{T}: \mathcal{W}_{\frac{\rho_1}{4}}(\mathbb{T}^n)\times \hat G \rightarrow \mathcal{D}_\rho(\mathcal{G})$ analytic with respect the angular variables such that
\begin{enumerate}
\item\label{kam:point1} For all $I \in \hat G$ the set $\mathcal{T}(\mathbb{T}^n\times \{I\})$ is an invariant torus of $H$, its frequency vector is equal to $u(I)$.
\item\label{kam:point2} Writing $\mathcal{T}(\phi,I)=(\phi + \mathcal{T}_\phi(\phi,I), I + \mathcal{T}_I(\phi,I))$ with estimates

\textcolor{black}{
$$|\mathcal{T}_\phi(\phi,I)| \leq \frac{2^{2\tau + 15} M L^2}{\nu^2 \hat \rho^{2\tau+1}}\frac{\varepsilon}{\gamma^2}$$
$$|\mathcal{T}_I(\phi,I))| \leq \frac{2^{10+\tau} L (1+M)}{\nu \hat \rho^{\tau+1}}\frac{\varepsilon}{\gamma}$$
}

\item\label{kam:point3} $\text{meas} [(\mathbb{T}^n\times \mathcal{G})\setminus\mathcal{T}(\mathbb{T}^n\times \hat G)] \leq C \gamma$ where $C$ is
\textcolor{black}{a really complicated constant depending on $n$,  $\mu$,  $D$,  $\text{diam} F$,  $M$, $\tau$, $\rho_1$, $\rho_2$, $K$ and $L$.}

\end{enumerate}

\end{theoremB}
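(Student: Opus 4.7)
The plan is to run a standard KAM iteration scheme, but with the iterative lemma (Theorem on the real canonical transformation $\Phi$) as the one-step engine, taking care that the modified non-resonance condition involving $\bar{\mathcal{B}}(I_1)u+\bar{\mathcal{A}}(I_1)$ remains enforceable at the critical set $Z=\{I_1=0\}$. First I would choose the geometrically decreasing sequences of analyticity radii, cutoff orders, and tolerated error sizes. Concretely, set $\rho^{(0)}=\rho$, $\delta^{(q)}_1=\hat\rho/2^{q+1}$, $\delta^{(q)}_2$ in proportion so that $\hat\delta^{(q)}_c$ behaves like $c\,\delta^{(q)}_1$, and pick $K_q$ so that $e^{-K_q\delta^{(q)}_1}\varepsilon_q\approx\varepsilon_q^{3/2}$ (the usual choice $K_q\delta^{(q)}_1\sim\log(1/\varepsilon_q)$). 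Take $\beta_q\searrow \gamma/2$, $\tilde M_q\nearrow 2M$, $\tilde L_q\nearrow 2L$, $\tilde\mu_q\searrow \mu/2$. Define $G_0=\mathcal G$, $F_0=u(\mathcal G)$, and recursively $F_{q+1}=F_q(d_q,\beta_q,K_q)$ with $d_q=4\tilde M_q\varepsilon_q/\tilde\mu_q$, using Lemma \ref{lemma:measures_nonresonant} to control how much of $F$ is lost at each step. The hypothesis $\gamma\leq L/\mathcal{K}'$ is exactly what guarantees that at points lying over $Z$ the set $F_q$ does not collapse: it ensures $\beta_q/|k|_1^\tau\leq 1/\mathcal{K}'$ for all $k$, so that the ``bad'' resonance sets $\Delta_{c,\hat q}(k,\beta_q/|k|_1^\tau)$ with $\bar k=0$ are empty, and only the honest $\bar k\neq 0$ resonances need to be removed.

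Next I would verify that at each step $q$ the hypotheses of the inductive lemma (Lemma \ref{lemma:inductive}) are satisfied with the chosen parameters, using inequalities \eqref{eq:kam1}--\eqref{eq:kam3} to start the induction. The inductive lemma then produces a real canonical transformation $\Phi_q:\mathcal D_{\rho^{(q+1)}}(G_q)\to\mathcal D_{\rho^{(q)}}(G_q)$ with
\[
H\circ\Phi_0\circ\cdots\circ\Phi_q=\hat h_{q+1}(I)+R_{q+1}(\phi,I),\qquad \|DR_{q+1}\|_{G_{q+1},\rho^{(q+1)},c}\leq \varepsilon_{q+1},
\]
together with the bounds $|\Phi_q-\mathrm{id}|\leq (2AK_q^\tau/\beta_q)\varepsilon_q$ and $|\tilde u_q-u_{q-1}|\leq \xi_q$. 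The quadratic convergence $\varepsilon_{q+1}\lesssim\varepsilon_q^2 K_q^\tau/(\gamma\hat\delta_c^{(q)})$ is controlled exactly as in \cite{D} once the parameters are tuned; the only $b^m$-specific subtlety is that the constant $A=1+2Mc/\alpha$ picks up the factor $K_\mathcal{B}|u|+K_\mathcal{B}M'+K_\mathcal{A}$ from \eqref{eq:M_def}, which is uniformly bounded on a neighborhood of $Z$ because $\mathcal{B}\to 0$ and $\mathcal{A}\to 1/\mathcal{K}'$.

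The third step is to show the composed maps $\Psi_q=\Phi_0\circ\cdots\circ\Phi_q$ and the frequency maps $u_q$ converge uniformly on $\mathcal W_{\rho_1/4}(\mathbb T^n)\times \hat G$. For $\Psi_q$ this follows from summing the geometric series $\sum_q(2AK_q^\tau/\beta_q)\varepsilon_q$, which by our choice of $K_q$ is dominated by $\sum_q \varepsilon_q^{1/2}<\infty$ provided $\varepsilon_0$ is small, i.e.\ \eqref{eq:kam1} holds. For the frequency map convergence, one uses Lemma \ref{lemma:2.3} iteratively to see that $u_q\to u_\infty$ and $u_q^{-1}\to u_\infty^{-1}$ on the limit set $\hat F=\bigcap_q F_q$, and the preimage $\hat G=u^{-1}(\hat F)$ agrees (up to an $O(\varepsilon)$ error controlled by $\xi_q/\mu$) with the Diophantine set described in the statement. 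Defining $\mathcal T(\phi,I)=\lim_q\Psi_q(\phi,(u\circ u_\infty^{-1})(I))$ gives a real continuous map, analytic in $\phi$, whose image over $\{I\}$ is invariant by $H$ because on each torus the limit hamiltonian depends only on action; the frequency is then $u(I)$ by construction, proving \ref{kam:point1}. The componentwise estimates in \ref{kam:point2} come from summing the bounds on $|\Phi_q-\mathrm{id}|_{G,\rho^{(q+1)}-\delta^{(q)}/2,c}$ with $c=1/L$ (so that both angular and action components are weighted by $L$), using $A\lesssim ML/\gamma$ and $K_0^\tau/\hat\rho^{2\tau+1}\sim \hat\rho^{-(2\tau+1)}$.

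The hardest step, and the final one, is the measure estimate \ref{kam:point3}. Here one writes
\[
(\mathbb T^n\times\mathcal G)\setminus\mathcal T(\mathbb T^n\times\hat G)\subset \mathbb T^n\times\bigl[(\mathcal G\setminus(\mathcal G-2\gamma/\mu))\cup (\mathcal G-2\gamma/\mu)\setminus\hat G\bigr]
\]
and applies Lemma \ref{lemma:measures_nonresonant} to the second piece with $b=2\gamma/\mu$ and $\beta=\gamma$, and the $D$-set property of $F$ to the first. The $b^m$-specific point, which will be the main technical obstacle, is that the resonant strips $\Delta_{c,\hat q}(k,\gamma/|k|_1^\tau)$ depend on $I_1$ through $\mathcal B(I_1)$ and $\mathcal A(I_1)$: their measure in lemma \ref{lemma:mesure_resonances} is uniformly bounded outside $Z$ but degenerates as $I_1\to 0$ since $\mathcal B(I_1)\to 0$, which is precisely compensated by the fact that at $Z$ only $\bar k\neq 0$ resonances contribute. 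Summing the bounds
\[
\sum_{\bar k\neq 0}\frac{\gamma}{|k|_1^\tau|k|_{2,\omega}}\lesssim \gamma\sum_{k\neq 0}\frac{1}{|k|_1^{\tau+1}}<\infty
\]
using $\tau>n-1$ gives the constant $C$ depending on all the geometric data listed. Pulling back through $\mathcal T$ (whose Jacobian is close to the identity by step 3) finally yields the desired $C\gamma$ bound and completes the proof.
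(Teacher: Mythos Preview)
Your proposal follows essentially the same architecture as the paper's proof, which is organized into six explicit steps: (1) choice of parameters, (2) verification of the inductive lemma hypotheses, (3) convergence of the frequency maps $u^{(q)}$, (4) convergence of the canonical transformations $\Psi^{(q)}$, (5) stability estimates, and (6) invariant tori and measure estimate. You correctly identify the $b^m$-specific ingredients, in particular the role of the hypothesis $\gamma\leq L/\mathcal{K}'$ in ensuring that at $Z$ the resonance sets with $\bar k=0$ are empty, and the degeneration of $|k|_{2,\omega}$ as $I_1\to 0$ being compensated by restricting to $\bar k\neq 0$.

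Two points deserve attention. First, your formula $\mathcal T(\phi,I)=\lim_q\Psi_q(\phi,(u\circ u_\infty^{-1})(I))$ has the composition reversed: since $I\in\hat G$ and $\Psi_q$ acts on $G^*$, one needs $I_0^*=(u^*)^{-1}(u(I))$, i.e.\ $u_\infty^{-1}\circ u$, as the paper writes. Second, and more substantively, your justification that $\mathcal T(\mathbb T^n\times\{I\})$ is invariant ``because on each torus the limit Hamiltonian depends only on action'' is too quick. The limit $\Psi^*$ is only a $C^0$ map, not a symplectomorphism, so the equation $H\circ\Psi^*=h^*(I)$ does not by itself imply that $\Psi^*$ conjugates the flows. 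The paper handles this via its step~5 (stability estimates): one shows directly that $\Psi^{(q)}(\hat x^{(q)}(t))$, which \emph{is} a trajectory of $\hat H$ since each $\Psi^{(q)}$ is canonical, converges to $\Psi^*(\hat x^*(t))$ uniformly on intervals $|t|\leq T_q'''\to\infty$; continuity of the flow of $\hat H$ then forces $\Psi^*(\hat x^*(t))$ to be a genuine trajectory with frequency $u'^*(I_0^*)=u'(I_0)$. This step also requires bounding $|\hat x^{(q)}(t)-\hat{\tilde x}^{(q)}(t)|$ via the equations of motion in the $b^m$-Poisson bracket, which is where the factor $\mathcal B(I_1)$ again enters benignly.

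On the measure estimate, the paper exploits that each $\Psi^{(q)}$ is canonical (hence volume-preserving) to get $\mathrm{meas}[\Psi^{(q)}(\mathbb T^n\times\hat G^*)]=(2\pi)^n\mathrm{meas}(\hat G^*)$ and then passes to the limit via a $\limsup$ of compacts; this is slightly cleaner than your ``Jacobian close to identity'' pullback, though both are viable. Your parameter choices (e.g.\ $K_q$ tuned so that $e^{-K_q\delta_1^{(q)}}\varepsilon_q\approx\varepsilon_q^{3/2}$) differ from the paper's explicit sequences $K_q=K\cdot 2^{q-1}$, $\rho_1^{(q)}=(1+2^{-\nu q})\rho_1/4$, $M_q=(2-2^{-q})M$, etc., which yield geometric rather than quadratic decay $\varepsilon_q\leq\varepsilon_{q-1}/2^{2\tau+2}$; either scheme closes, but the paper's choices are what produce the specific constants in \ref{kam:point2}.
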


\begin{proof}
This proof, as the one in \cite{D} is going to be structured in six sections. First we define the parameters used in each iteration while building the diffeomorphism. After that, we prove that we can apply the inductive lemma \ref{lemma:inductive} and we exhibit some bound that hold using the results of the inductive lemma. Next, we find that the sequence of frequency vectors and the sequence of diffeomorphisms that we built actually converges. Then we find estimates of the components of the canonical transformation that we have built. Then we find a way to identify the invariant tori and finally we give a bound for the measure of the set of invariant tori.
\begin{enumerate}
\item Choice of parameters

We are going to make iterative use of proposition \ref{lemma:inductive}. So we need to properly define all the parameters in the statement for every iteration.
Let:
$$
\left\{
\begin{array}{rcl}
M_q & = & (2 - \frac{1}{2^q})M, \\
L_q & = & (2 - \frac{1}{2^q})L, \\
\mu_q & = & (1 + \frac{1}{2^q})\frac{\mu}{2}.
\end{array}
\right.
$$

Note that $M_q, L_q$ monotonically increase from $M$ to $2M$ and $L$ to $2L$ when $q\rightarrow\infty$. On the other hand $\mu_q$ monotonically decreases from $\mu$ to $\mu/2$.
Also, let:

$$
\left\{
\begin{array}{rcl}
K_0 & = & 0, \\
K_q & = & K\cdot q^{q-1}, q \geq 1,
\end{array}
\right.
$$

where  $K$ is the minimum natural number greater or equal than $1/\hat \rho$ and \textcolor{black}{greater or equal than $(\frac{\nu\beta}{\mu 2^{2\tau+12}})^{1/\tau}$}. Moreover, $\beta:= \gamma/L \leq 1$, and

$$
\left\{
\begin{array}{rcl}
 \rho^{(q)}& = & (\rho_1^{(q)}, \rho_2^{(q)}), \\
 \rho_1^{(q)}& = & (1+\frac{1}{2^{\nu q}})\frac{\rho_1}{4}, \\
 \rho_2^{(q)}& = & \frac{\nu\beta}{32 M K_{q+1}^{\tau+1}}.
\end{array}
\right.
$$

Notice that $\rho_1^{(q)}$ decreases monotonically from $\rho_1/2$ to $\rho_1/4$. Also, $\rho_2^{(q)}$ decreases to 0. We also denote:

$$
\left\{
\begin{array}{rcl}
 \delta_1^{(q)} & = & \rho_1^{(q-1)} - \rho_1^{(q)}, \\
 \delta_2^{(q)} &= &\rho_2^{(q-1)}-\rho_2^{(q)}, \\
 c_q & = & \frac{\delta_2^{(q)}}{\delta_1^{(q)}}.
\end{array}
\right.
$$

Note that
$$
\begin{array}{rcl}
\delta_1^{(q)} & = & \left(1+\frac{1}{2^{\nu(q-1}}\right)\frac{\rho_1}{4}-\left(1 + \frac{1}{2^{\nu q}}\right)\frac{\rho_1}{4} \\
& = & \left(\frac{1}{2^{\nu(q-1)}} - \frac{1}{2^{\nu q}}\right)\frac{\rho_1}{4} \\
& = & \frac{1 - 1/2^\nu}{2^{\nu(q-1)}}\frac{\rho_1}{4}.\\
\end{array}
$$

Also, since $0 < \nu < 1$ then $\nu/2 \leq 1-1/2^\nu \leq \nu$. Plugging this in the previous equation we obtain:

\begin{equation}\label{eq:delta1ineq}
\frac{\nu \rho_1}{2^{\nu(q-1)}8} \leq \delta_1^{(q)} \leq \frac{\nu \rho_1}{2^{\nu(q-1)}4}.
\end{equation}

Also,

$$
\begin{array}{rcl}
\delta_2^{(q)} & = & \frac{\nu \beta}{32 M K_q^{\tau+1}} - \frac{\nu \beta}{32 M K_{q+1}^{\tau+1}}\\
& = & \frac{\nu \beta}{32M (K2^{q-1})^{\tau+1}} - \frac{\nu \beta}{32 M (K 2^q)^{\tau+1}}\\
& = & \frac{\nu \beta}{32M(K2^{q-1})^{\tau + 1}}\left(1-\frac{1}{2^{\tau+1}}\right) .\\
\end{array}
$$

Also, since $\tau > 0$ then $1/2 \leq (1 - 1/2^{\tau+1}) \leq 1$. Using this in the previous equation:

\begin{equation}\label{eq:delta2ineq}
\frac{\nu \beta}{64 M K_q^{\tau+1}}\leq \delta_2^{(q)} \leq \frac{\nu \beta}{32 M K_q^{\tau+1}}.
\end{equation}

Using equations \ref{eq:delta1ineq} and \ref{eq:delta2ineq} we find bounds for $c_q$

$$
\left\{
\begin{array}{rcl}
c_q & \leq & \frac{\left(\frac{\nu \beta}{32 M K_q^{\tau+1}}\right)}{\left(\frac{\nu \rho_1}{2^{\nu(q-1)}}\right)} = \frac{\beta 2^{\nu(q-1)}}{4 M K_q^{\tau+1}\rho_1},\\
c_q & \geq & \frac{\left(\frac{\nu \beta}{64 M K_q^{\tau+1}}\right)}{\left(\frac{\nu \rho_1}{2^{\nu(q-1)} 4}\right)} = \frac{\beta 2^{\nu(q-1)}}{16 M K_q^{\tau+1}\rho_1}.\\
\end{array}
\right.
$$

Then, we also define

$$
\left\{
\begin{array}{rcl}
\beta_q & = & (1-\frac{1}{2^{\nu q}})\beta,\\
\beta'_q & = & \frac{\beta_q + \beta_{q+1}}{2}.\\
\end{array}
\right.
$$

Observe that both $\beta_q$ and $\beta'_q$ tend to $\beta$. Also observe that $\beta'_q \geq \frac{\nu}{4}\beta$, because:

$$
\begin{array}{rcl}
\beta'_q & = & \frac{\beta_q + \beta_{q+1}}{2} \\
& = & \frac{\left(1 - \frac{1}{2^{\nu q}}\right) + \left(1 - \frac{1}{2^{\nu(q+1)}}\right)}{2}\beta \\
& = & \left(1 - \left(\frac{1 + \frac{1}{2^\nu}}{2^{\nu q}}\right)\frac{1}{2}\right)\beta \\
& \geq & \left(1 - (1 - 1/2^\nu)\frac{1}{2}\right)\beta \geq  \frac{\nu}{4}\beta.
\end{array}
$$

As $K$ is the minimal natural number such that $K \geq 1/\hat\rho$ then $K \leq 2/\hat \rho$. Hence $\hat\rho \leq \frac{2}{K}$. Also $$\frac{1}{\hat\rho^{\tau+1}} \geq \left(\frac{K}{2}\right)^{\tau+1}.$$

Recall that $\hat \rho = \min(\frac{\nu \rho_1}{12(\tau+2)},1)$ and, in particular, $\hat \rho \leq \nu\rho_1$ and $\hat \rho \leq 1$.

By definition $\gamma \leq \frac{8LM\rho_2}{\nu \hat \rho^{\tau+1}}$. And because $\beta = \gamma/L$:
$$\beta L \leq \frac{8L M \rho_2}{\nu \hat \rho^{\tau+1}} \leq \frac{8L M \rho_2 K^{\tau+1}}{\nu}.$$

Because we assumed $\varepsilon \leq \frac{\nu^2 \mu^2 \hat\rho^{2\tau+2}}{2^{4\tau + 32} L ^6 M^3} \gamma^2$ then, using that $\gamma = L\beta$ and $\hat \rho \leq 2/K$:

\begin{equation}\label{eq:kam_epsilon_1}
\varepsilon \leq \frac{\nu^2 \mu^2 \left(\frac{2}{K}\right)^{2\tau+2}}{2^{4\tau + 32} L ^6 M^3} \leq \frac{\nu^2 \mu^2 \beta^2}{2^{4\tau + 30} L ^4 M^3 K^{2\tau+2}}.
\end{equation}

Also using again the assumption that $\varepsilon \leq \frac{\nu^2 \mu^2 \hat \rho^{2\tau+2}}{2^{4\tau+32} L^6 M^3} \gamma^2$ we want to prove that

\begin{equation}\label{eq:kam_epsilon_2}
\varepsilon \leq \frac{\nu^3 \rho_1 \beta^2}{2^{2\tau+22} M K^{2\tau +1}}.
\end{equation}

 It is enough to check that:

$$\frac{\nu^2 \mu^2 \hat \rho^{2\tau+2} L^2 \beta^2}{2^{4\tau + 32} L^6 M^3} \leq \frac{\nu^3 \rho_1 \beta^2}{2^{2\tau + 22} M K^{2\tau+1}}$$

where we used $\gamma = L\beta$. Now observing that $\hat \rho \leq \nu \rho_1$ it suffices to see

$$\frac{\nu^2 \mu^2 \nu^{2\tau + 2} \rho_1^{2\tau +2} L^2 \beta^2}{2^{4\tau + 32} L^6 M^3}\leq\frac{\nu^3 \rho_1 \beta^2}{2^{2\tau+22} M K^{2\tau+1}},$$

which simplifies to

$$\frac{\mu^2 \rho_1^{2\tau+2}}{2^{4\tau+10}L^4M^2} \leq \frac{1}{K^{2\tau+1}}.$$

Using that $K \geq 1/(\nu\rho_1)$ is enough to check that

$$\frac{\mu^2 \rho_1^{2\tau+1}\nu^{2\tau+2}}{2^{2\tau+12} L^4 M^2} \leq (\nu \rho_1)^{2\tau+1},$$

which holds if and only if $\mu \leq 2^{\tau+5}L^2 M$ as we assumed.

\item Induction

Let us take $G_0 = \mathcal{G}$.
Now the goal is to construct a decreasing sequence of compact sets $G_q \subset \mathcal{G}$ and a sequence of real analytic canonical transformations

$$\Phi^{(q)}:\mathcal{D}_{\rho^{(q)}}(G_q) \rightarrow \mathcal{D}_{\rho^{(q-1)}}(G_{q-1}), \quad q \geq 1.$$

Denoting $\Psi^{(q)}= \Phi^{1}\circ \cdots \circ \Phi^{(q)}$ the transformed Hamiltonian functions will be noted by $H^{(q)} = H\circ \Psi^{(q)} = \hat h^{(q)}(I) + R^{(q)}(\phi,I)$. Moreover, $u^{(q)} = \frac{\partial h^{(q)}}{\partial I}$ and $\hat u^{(q)} = \frac{\partial \hat h^{(q)}}{\partial I}$.

We are going to show that the following bounds hold for all $q \geq 0$:

\begin{enumerate}
\item\label{eq:induction1} $\varepsilon_q := \|DR^{(q)}\|_{G_q,\rho^{(q)},c_{q+1}} \leq \frac{8\varepsilon}{\nu \rho_1 2^{(2\tau+2)q}},$
\item\label{eq:induction2} $\eta_q:=|R_0^{(q)}|_{G_q,\rho_2^{(q)}} \leq \textcolor{black}{\frac{\varepsilon}{2^{(2\tau+3)q}}}$ and $\xi_q:=|\frac{\partial R_0^{(q)}}{\partial I}|_{G_q,\rho_2^{(q)}} \leq \textcolor{black}{ \frac{4 M K^{\tau+1 \varepsilon}}{\nu \beta 2^{(\tau+2)q}}},$
\item\label{eq:induction3} $|\frac{\partial^2 h^{(q)}}{\partial I^2}|_{G_q,\rho_2^{(q)}}\leq M_q, \quad |u^{(q)}| \leq L_q \quad \forall I\in G_q,$
\item\label{eq:induction4} $u^{(q)}$ is $\mu_q$-non-degenerate on $G_q$,
\item\label{eq:induction5} $u^{(q)}$ is one-to-one on $G_q$, and $u^{(q)}(G_q)=F_q$ where we define:
$$F_q := (F - \beta_q)\setminus \bigcup_{\substack{k\in\mathbb{Z}^n\setminus\{0\} \\ |k|_1 \leq K}} \Delta_{c_q,\hat q}(K,\frac{\beta_q}{|k|_1^\tau})$$
\end{enumerate}

To prove this we proceed by induction. For $q = 0$:

$$
\left\{
\begin{array}{l}
G_0 = \mathcal{G}, \\
h^{(0)} = h,\hat h^{(0)} = \hat h,\\
R^{(0)} = f.
\end{array}
\right.
$$

Using the definitions from the previous point:

$$
\left\{
\begin{array}{rcl}
\rho_1^{(0)} &=& (1+1)\frac{\rho_1}{4} = \rho_1/2,\\
\rho_2^{(0)} &=& \frac{\nu \beta}{32 M K^{\tau+1}} \leq \frac{\rho_2}{2},
\end{array}
\right.
$$

where in the last inequality we have used that $\beta \leq \frac{8M\rho_2 K^{\tau+1}}{\nu}$ and hence $\rho_2 \geq \frac{\beta\nu}{8 M K^{\tau+1}}$.

Then,
\textcolor{black}{
$$\varepsilon_0 = \|Df\|_{G,\rho(0),c_1} = \|Df\|_{G,\rho(1)+\delta(1)}.$$
Now, let us use that $|D\Upsilon|_{G,\rho-\delta,c}\leq \frac{2|\Upsilon|_{G,\rho,c}}{\hat \delta_c}$ while having in mind that $\hat \delta_{c_1}^{(1)} = \min(c_1\delta_1^{(1)},\delta_2^{(1)})$.
Then,
$$\|D f\|_{G,\rho(1),c_1} \leq \frac{c_1 |f|_{G,\rho(0)}}{\hat \delta_{c_1}} \leq \frac{|f|_{G,\rho(0)}}{\delta_1^{(1)}} \leq \frac{|f|_{G,\rho(0)} 8}{\nu \rho_1} = \frac{8\varepsilon}{\nu \rho_1},$$
where we have used $\delta_1^{(1)}\geq \frac{\nu \rho_1}{8\cdot 2^{\nu(1-1)}} = \frac{\nu\rho_1}{8}$.
}
This proves the first step of the induction  for \ref{eq:induction1}).

Let us prove now the base case for \ref{eq:induction2}).
On one side $\eta_0 = |R_0^{(0)}|_{G_0,\rho_2(0)} \leq \frac{\varepsilon}{2^{(2\tau+3)0}} = \varepsilon$, which holds because $|R_0^{(0)}|_{G_0,\rho_2^{(0)}} \leq |R^{(0)}|_{\mathcal{G},\rho^{(0)}} = |f|_{\mathcal{G},\rho(0)} = \epsilon$.
On the other hand $\xi_0 = |\frac{\partial R_0^{(0)}}{\partial I}|_{G_0,\rho_2^{(0)}} \leq |\frac{\partial R_0^{(0)}}{\partial I}|_{G_0,\rho_2 -\rho_2/2} \leq \frac{1}{\rho_2/2}\|R_0\|_{G,\rho} \leq \frac{2\varepsilon}{\rho_2} \leq \frac{\varepsilon}{\rho_2^{(0)}}$, where we used that $\rho_2(0) \leq \rho_2/2 = \rho_2 - \rho_2/2$.

The base case of \ref{eq:induction3}) is immediate because $|\frac{\partial^2 h^{(0)}}{\partial I^2}|_{G_0,\rho_2^{(0)}} \leq |\frac{\partial^2 h}{\partial I^2}|_{\mathcal{G},\rho_2} = M = M_0$ and also $|u^{(0)}|_{G_0} = |u|_{\mathcal{G}} \leq L = L_0$.

The base case of \ref{eq:induction4} holds because $u^{(0)} = u$ is $\mu$ non-degenerate in $\mathcal{G} = G_0$.

The base case of \ref{eq:induction5} holds because $u^{(0)} = u$ is one-to-one in $G_0 = \mathcal{G}$ by hypothesis. $u^{(0)}(G_0) = F_0$ where $F_0 = (F - \beta_0)\setminus \{\emptyset\} = F$ because $K_0 = 0$ and $\beta_0 = 0$.

For $q \geq 1$, we assume the statements hold for $q-1$ and we prove it for $q$. Let us apply proposition \ref{lemma:inductive} (Inductive Lemma) to $H^{(q-1)} = h^{q-1} + R^{q-1}$ with $K_q$ instead of $K$.

We have to be careful with the condition $F\cap \Delta_{c,\hat q}(k,\frac{\beta}{|k|_1^\tau}) = \emptyset$ $\forall k \in \mathbb{Z}^n, |k|_1 \leq K_q, k\neq 0$ and with the definition
$$F_{q-1}:=(F - \beta_{q-1}) \setminus \bigcup_{\substack{k\in\mathbb{Z}^n\setminus\{0\} \\ |k|_1 \leq K}} \Delta_{c,\hat q}(k,\frac{\beta_{q-1}}{|k|_1^\tau}),$$
 because the resonances have to be removed up to order $K_q$, not $K_{q-1}$.
Let us define
$$F_{q-1}':= (F-\beta_{q-1}) \setminus \bigcup_{\substack{k\in\mathbb{Z}^n\setminus\{0\} \\ |k|_1 \leq K}} \Delta_{c,\hat q}(k,\frac{\beta_{q-1}'}{|k|_1^\tau})$$

where we simply replaced $\beta_{q-1}$ for $\beta_{q-1}'$ because $\Delta_{c,\hat q}(k,\frac{\beta_{q-1}}{|k|_1^\tau})$ makes no sense when $q=1,\beta_{q-1}=0$.

Accordingly let us define $G_{q-1}' := (u^{(q-1)})^{-1}(F_{q-1}')$. The conditions in proposition \ref{lemma:inductive} are going to be satisfied with $F_{q-1}'$, $\beta_{q-1}'$, $K_q$, $M_{q-1}$, $L_{q-1}$, $\mu_{q-1}$,$\rho^{(q-1)}$, $\delta^{(q)}$, $c_q$, $M_q$, $L_q$, $\mu_q$ replacing $F, \beta, K, M,L, \mu, \rho,\delta, c, \tilde M, \tilde L, \tilde \mu$. And also $a = 16M \geq 8M_q$.

We are now going to check that \ref{eq:inductive_lemma_1}, \ref{eq:inductive_lemma_2} and \ref{eq:inductive_lemma_3} are satisfied so we can apply proposition \ref{lemma:inductive}.
 \begin{itemize}

 \item[--] \ref{eq:inductive_lemma_1} We want to see that $\rho_2^{(q-1)}\leq \frac{\beta_{q-1}'}{2M_qK_q^{\tau+1}}$.
 By definition $\rho_2^{q-1} = \frac{\nu \beta}{32 M K_q^{\tau+1}} \leq \frac{4 \beta_{q-1}'}{32 M K_q^{\tau+1}}\leq \frac{\beta_{q-1}'}{8M_q K_q^{\tau+1}} \leq \frac{\beta_{q-1}}{2 M_q K_q^{\tau+1}}$, where we used that $M_q \geq M$.
 \item[--] \ref{eq:inductive_lemma_2} We want to see that $\varepsilon_{q-1} \leq \min \left(\frac{\beta_{q-1}\hat\rho_c^{(q)}}{74 A_q K_{q-1}^\tau}, \frac{\mu^\tau_q(\rho_2^{(q-1)}-\delta_c^{(q-1)})}{4 M_q}\right)$, where $A_q := 1 + \frac{2M_{q-1} c_q K_q^\tau}{\beta_{q-1}'}$.

 Notice that:

$$
\begin{array}{rcl}
A_q & := & 1 + \frac{2 M_{q-1} c_q K_q^\tau}{\beta_{q-1}'} \\
 & \leq & 1 + \frac{8 M_{q-1} c_q K_q^\tau}{\nu \beta}\\
 & \leq & 1 + \frac{8 M_{q-1}\beta 2^{\nu(q-1)}K_q^\tau}{4M K_q^{\tau+1}\rho_1 \nu \beta}\\
 & = & 1 + \frac{2 M_{q-1} 2 ^{\nu(q-1)}}{M K_q \rho_1 \nu} \\
 & = & 1 + \frac{2 M_{q-1} 2^{\nu(q-1)}}{M K 2^{q-1} \rho_1 \nu} \\
 & \leq & 1 + \frac{4 M 2^{q-1}}{M K 2^{q-1} \rho_1 \nu} \\
 & = & 1 + \frac{4}{K \rho_1 \nu} \leq 1 + 4 = 5\\
\end{array}
$$

 First, we  check that $\varepsilon_{q-1} \leq \frac{\beta_{q-1}\hat\rho_c^{(q)}}{74 A_q K_{q-1}^\tau}$.

 By induction hypothesis we know that $\varepsilon_{q-1} \leq \frac{8\varepsilon}{\nu \rho_1 2^{(2\tau+2)(q-1)}}$. Hence it is enough to see
 $$\frac{8\varepsilon}{\nu \rho_1 2^{(2\tau+2)(q-1)}} \leq \frac{\beta_{q-1}' \delta_2^{(q)}}{75\cdot 5 K_q^\tau}.$$

 Notice that
 $$
 \begin{array}{rcl}
 \frac{\beta_{q-1}' \delta_2^{(q)}}{379 K_q^\tau} & \geq & \frac{\nu \beta}{4}\frac{\nu \beta}{64 M K_q^{\tau+1}}\frac{1}{37 K_q^\tau} \\
 & = & \frac{\nu^2 \beta^2}{4\cdot 64 \cdot 370}\frac{1}{M K_q^{2\tau+1}}\\
 & = & \frac{\nu^2\beta^2}{4\cdot 64 \cdot 370 \cdot M 2^{(q-1)(2\tau+1)} K^{2\tau+1}}.
 \end{array}
$$

And this holds if the following is true:

$$\frac{8 \varepsilon}{\nu \rho_1} \leq \frac{\nu^2 \beta^2}{4\cdot 64 \cdot 379 \cdot K^{2\tau + 1} M}\Leftrightarrow \varepsilon \leq \frac{\nu^3 \beta^2 \rho_1}{2^{12} 135 M K^{2\tau+1}}.$$

This is true because in the previous section we have seen that $\varepsilon \leq \frac{\nu^3 \rho_1 \beta^2}{2^{2\tau+30}M K^{2\tau+1}}$

 Let us now prove that $\varepsilon \leq \frac{\mu_q^2 (\rho_2^{(q-1)} - \delta_2^{(q-1)})}{2 M_q}$. First of all observe that $(\rho_2^{(q-1)} - \delta_2^{(q)}) = \rho_2^{(q)}$.
 So, what we want to prove is equivalent to proving $\varepsilon_{q-1}\leq \frac{\mu_q^2 \rho_2^{(q)}}{2 M_q}$.

 On the other hand, we know that $\varepsilon_{q-1}\leq \frac{8\varepsilon}{\nu \rho_1 2^{(2\tau + 2)(q-1)}}$. And observe also that $\frac{\mu_q^2 \rho_2^{(q)}}{2M_q} \geq \frac{(\mu/2)^2\frac{\nu\beta}{32 M K^{\tau+1}}}{2M} = \frac{\mu^2 \nu \beta}{2^8 M^2 K^{\tau+1}}$.

 If are able to check that $\frac{8\varepsilon}{\nu \rho_1 2^{(2\tau+2)(q-1)}} \leq \frac{\mu\nu\beta}{2^8 M^2 K^{\tau+1}}$ we would be fine. The previous equation holds if and only if the following holds,
 $$\varepsilon \leq \frac{\mu \nu^2 \beta \rho_1 2^{(2\tau+2)(q-1)}}{2^{11}M^2 K^{\tau+1}}.$$

If we knew beforehand that $\varepsilon \leq \frac{\mu \nu^2 \beta \rho_1 2^{-(2\tau+2)}}{2^{11}M^2 K^{\tau+1}} = \frac{\mu \nu^2 \beta \rho_1}{2^{2\tau+13}M^2 K^{\tau+1}}$ we would be done.

But we also know that

$$\epsilon \leq \frac{\nu^2 \mu^2 \beta^2}{2^{2\tau+30}L^4 M^3 K^{2\tau+2}}.$$

Then it is enough to check that

$$\frac{\nu^2 \mu^2 \beta^2}{2^{2\tau+30}L^4 M^3 K^{2\tau+2}}\leq \frac{\mu \nu^2 \beta \rho_1}{2^{2\tau+13} M^2 K^{\tau+1}}.$$

And this holds because $\mu \leq 2^7 \rho_1 L^4 K^{\tau+1}$

\item[--] \ref{eq:inductive_lemma_3} Lastly we want to see that
$$\xi_{q-1} \leq \min((M_q-M_{q-1})\frac{\delta_2^{(q)}}{\mathcal{R}},(\mu_{q-1}-\mu_{q})\rho_2^{(q-1)}).$$

Observe that $\mathcal{R}$ does not depend on $q$ because at each iteration $\hat{h}^{(q)}$ singular part is not modified. $\hat h^{(q)}= \hat h^{(q)} + R_0^{(q)}$ and $R_0$ is analytic depending only on the action coordinates.
By induction hypothesis, we know that
$$\xi_{q-1} = |\frac{\partial R_0^{(q)}}{\partial I}|_{G_q,\rho_2^{(q)}} \leq \frac{4MK^{\tau+1}\varepsilon}{\nu \beta 2^{(\tau+2)q}}.$$
We are going to check the \textcolor{black}{two} different inequalities separately
\begin{enumerate}
\item $\xi_{q-1} \leq (M_q-M_{q-1})\frac{\delta_2^{(q)}}{\mathcal{R}}$.
Note that $M_q =(2-\frac{1}{2^q})M$, then $M_q-M_{q-1} = \frac{M}{2^q}$.
$$\delta_2^{(q)} \geq \frac{\nu \beta}{64 M (K 2^{q-1})^{\tau+1}} \geq \frac{\nu\beta}{64 M (K 2^q)^{\tau+1}}$$
$$= \frac{\nu \beta}{64 M K^{\tau+1}}\frac{1}{2^{q\tau+q}}.$$
We deduce
$$(M_q-M_{q-1})\delta_2^{(q)} \geq \frac{\nu \beta}{64 K^{\tau+1}}\frac{1}{2^{\tau q + 2 q}}.$$
Hence we only need to check that
$$\frac{4MK^{\tau+1}\varepsilon}{\nu \beta 2^{(\tau+2)q}}\leq\frac{\nu \beta}{64 K^{\tau+1}}\frac{1}{2^{\tau q + 2 q}}.$$
The previous condition holds if and only if
$$\frac{4 M K^{\tau+1}\varepsilon}{\nu\beta} \leq \frac{\nu \beta}{2^6 K^{\tau+1}} \Leftrightarrow \varepsilon \leq \frac{\nu^2 \beta^2}{2 K^{2\tau+2}M}.$$

On the other hand, let us use again that $\varepsilon \leq \frac{\nu^2 \mu ^2 \beta^2}{2^{2\tau+30}L^4 M^3 K^{2\tau+2}}$. If we apply the condition $\mu \leq 2^{\tau+6}L^2 M$ in the last expression we obtain:

$$\varepsilon \leq \frac{\nu^2 \beta^2 2^{2\tau+12}L^4M^2}{2^{2\tau+30}L^4 M^3 K^{2\tau+2}} = \frac{\nu^2\beta^2}{2^8 K^{2\tau+2} M}.$$








\item $\xi_{q-1} \leq (\mu_{q-1}-\mu_{q})\rho_2^{(q-1)}$.

Observe that

$$\mu_q = (1 + \frac{1}{2^q})\frac{\mu}{2},$$
$$(\mu_{q-1} - \mu_q) = ((1 + \frac{1}{2^{q-1}})-(1 + \frac{1}{2^{q}}))\frac{\mu}{2} = (\frac{1}{2^{q-1}} - \frac{1}{2^{q}})\frac{\mu}{2} $$
$$= \left(\frac{2-1}{2^q}\right)\frac{\mu}{2} = \frac{1}{2^q}\frac{\mu}{2} = \frac{\mu}{2^{q+1}}$$
Also,
$$\rho_2^{(q-1)} = \frac{\nu \beta}{32 M K_q^{\tau+1}} = \frac{\nu\beta}{32 M (K 2^{q-1})^{\tau+1}} $$
$$\geq \frac{\nu \beta}{32 M K^{\tau+1} 2^{q(\tau+1)}}.$$

Then,
$$(\mu_{q-1}-\mu_q)\rho_2^{(q-1)} \geq \frac{\mu}{2^{q+1}}\frac{\nu\beta}{32 M K^{\tau+1} 2^{q(\tau+1)}}.$$

Then we only have to check that

$$
\begin{array}{rcl}
\frac{4 M K^{\tau+1}\varepsilon}{\nu \beta 2^{\tau q + 2q -2}} & \leq & \frac{\mu}{2^{q+1}}\frac{\nu \beta}{32 M K^{\tau+1} 2^{q(\tau+1)}}\\
& = & \frac{\mu}{2^{\tau q + 2q +1}}\frac{\nu \beta}{32 M K^{\tau+1} }.\\
\end{array}
$$

Which holds if and only if

$$\frac{M K^{\tau+1} \varepsilon}{\nu \beta 2^{-2}} \leq \frac{\mu}{2}\frac{\nu \beta}{32 M K^{\tau+1}}.$$

Then,

$$\varepsilon \leq \frac{\mu 2^{-2}}{2}\frac{\nu^2 \beta^2}{32 M^2 K^{2\tau+2}} = \frac{\mu \nu^2 \beta^2}{2^8 M^2 K^{2\tau+2}}.$$

But we know

\begin{longtable}{rcl}
$\varepsilon $ & $\leq$ & $\frac{\nu^2 \mu^2 \beta^2}{2^{\tau+30} L^4 M^3 K^{2\tau+2}}$ \\
& $\leq$ & $\frac{\nu^2 \mu 2^{\tau+5}L^4M\beta^2}{2^{\tau+30}L^4 M^3 K^{2\tau+2}}$ \\
& $=$ & $\frac{\nu^2 \mu \beta^2}{2^25 M^2 K^{2\tau+2}}$ \\
& $\leq$ & $\frac{\mu \nu^2 \beta^2}{2^8 M^2 K^{2\tau+2}}$ \\
\end{longtable}

as we wanted. In the second inequality we used that $\mu \leq 2^{\tau+5}L^4 M$.

\end{enumerate}

So, finally, we can apply the inductive lemma \ref{lemma:inductive} with the parameters mentioned previously in this section. Hence we obtain a canonical transformation $\Phi^{(q)}$ and a transformed hamiltonian $H^{(q)} = h^{(q)} + R^{(q)}$.
The new domains $G_q \subset G_{q-1}'$ are going to be specified in the following lines. So now we are going to prove \ref{eq:induction1},\ref{eq:induction2},\ref{eq:induction3},\ref{eq:induction4},\ref{eq:induction5}.

\begin{itemize}
\item \ref{eq:induction1}. We want to see $\varepsilon_q := \|DR^{(q)}\|_{G_q, \rho^{(q)}, c_{q+1}} \leq \frac{8 \varepsilon}{\nu \rho_1 2^(2\tau+2)q}$.

By the second result of proposition \ref{lemma:inductive} we have:

\begin{equation}\label{eq:1qtobound}
\varepsilon_q \leq e^{-K_q \delta_1^{(q)}}\varepsilon_{q-1} + \frac{14 A_q K_q^\tau}{\beta_{q-1}'\delta_2^{(q)}} \varepsilon_{q-1}^2.
\end{equation}

Now we are going to bound each term of the right hand of the expression at a time.

Recall that $\delta_1^{(q)} \geq \frac{\nu \rho_1}{8 2^{\nu(q-1)}}$.

$$
\begin{array}{rcl}
  K_q \delta_1^{(q)} &  \geq &  K 2^{q-1} \frac{\nu \rho_1}{8} 2^{-\nu(q-1)} \\
  &  =  &  \frac{\nu \rho_1}{8}K 2^{(1-\nu)(q-1)} \\
  &  \geq &  \frac{12(\tau+2)}{8} 2^{(1-\nu)(q-1)} \\
  &  =  &  (3/2\tau + 3)2(1-\nu)(q-1) \\
  &  \geq &  (2\tau+3)\frac{3}{4} \geq (2\tau+3)\ln 2, \\
\end{array}
$$

where we used that $K\hat\rho\geq 1$ and hence $K \geq \frac{12(\tau+2)}{\nu\rho_1}$.
So we conclude that $e^{-K_q \delta_1^{(q)}} \leq \frac{1}{2^{2\tau+3}}$, and we have bounded the first term of \ref{eq:1qtobound}. Let us bind the second one.

On one hand, we have that

$$\frac{14 A_q K_q^\tau}{\beta_{q-1}'} \leq \frac{14\cdot5 K_q^\tau}{\frac{\nu \beta}{4}} \leq \frac{2^9 K_q^2}{\nu \beta} $$

where we have used that $\beta_q'\geq \frac{\nu\beta}{4}$ and $ A_q \leq 5$.

Now we are going to apply that $\varepsilon_{q-1} \leq \frac{8 \varepsilon}{\nu \rho_1 2^{(2\tau+2)(q-1)}}$, $\delta_2^{(q)}\geq \frac{\nu \beta}{64 M K_q^{\tau+1}}$ and $\epsilon \leq \frac{\nu^3 \rho_1 \beta^2}{2^{2\tau+22} M K^{2\tau+1}}$ to obtain

\begin{longtable}{rcl}
 $\ \frac{14 A_q K_q^\tau}{\beta_{q-1}' \delta_2^{(q)}} \varepsilon_{q-1}$ & $ =$ & $ \frac{14 A_q K_q^\tau}{\beta_{q-1}'}\frac{1}{\delta_2^{(q)}}\varepsilon_{q-1}$ \\
 & $ \leq$ & $ \frac{2^9 K_q^\tau}{\nu\beta} \frac{64 M K_q^{\tau+1}}{\nu \beta}\frac{8\varepsilon}{\nu \rho_1 2^{(2\tau+2)(q-1)}}$ \\
  & $ \leq$ & $ \frac{2^{18} M K_q^{2\tau+1}}{\nu^3 \beta^2 \rho_1 2^{(2\tau+2)(q-1)}}\frac{\nu^3 \rho_1 \beta^2}{2^{2\tau+22}M K^{2\tau + 1}}$ \\
  & $ \leq$ & $ 2^{18} 2^{(q-1)(2\tau+1)-(2\tau+2)(q-1)-(2\tau+22)}$ \\
  & $ =$ & $ 2^{(1-q)}2^{-2\tau-4} = \frac{1}{2^{2\tau+3}2^{q-1}}.$ \\
\end{longtable}
This gives us the bound of the second term of \ref{eq:1qtobound}. Now we put both bounds together:

$$\varepsilon_q \leq \frac{1}{2^{2\tau+3}}\varepsilon_{q-1} + \frac{1}{2^{2\tau+3}}\frac{1}{2^{q-1}}\varepsilon_{q-1} \leq \frac{1}{2^{2\tau+2}}\varepsilon_{q-1}.$$

That implies $\varepsilon_q \leq \frac{\epsilon}{2^{(2\tau+2)(q-1)}}$
as we wanted. \textcolor{black}{Because we can assume $\nu\rho_1 \leq 1$}.

\item \ref{eq:induction2}

Let us write $\sigma_2^{(q)} = \rho_2^{(q-1)} - \delta_2^{(q)}/2 = \rho_2^{(q)} + \delta_2^{(q)}/2 \geq \rho_2^{(q)}$, then $\eta_q = |R_0^{(q)}|_{G_q, \rho_2^{(q)}} \leq |R_0^{(q)}|_{G_q,\sigma_2^{(q)}}$.

By the inductive lemma \ref{lemma:inductive}:

\begin{longtable}{rcl}
 $\eta_q$ & $\leq$ & $\frac{7 A_q K_q^\tau}{c_q \beta_{q-1}'} \varepsilon_{q-1}^2 $\\
 & $\leq$ & $\frac{7 A_q K_q^\tau}{\beta_{q-1}'}\varepsilon_{q-1}^2 \frac{\delta_1^{(q)}}{\delta_2^{(q)}}$\\
 & $=$ & $\frac{14 A_q K_q^\tau}{\beta_{q-1}' \delta_2^{(q)}}\varepsilon_{q-1}^2\frac{\delta_1^{(q)}}{2}$\\
 & $\leq$ & $\frac{1}{2^{2\tau+3}2^{q-1}}\varepsilon_{q-1}\frac{\delta_1^{q)}}{2}$\\
 & $\leq$ & $\frac{1}{2}\frac{\delta_1^{(q)}}{2^{2\tau+3}2^{q-1}}\frac{\varepsilon}{\nu\rho_1 2^{(2\tau+2)(q-1)}}$ \\
 & $\leq$ & $\frac{1}{2}\frac{\nu \rho_1}{4\cdot 2^{\nu(q-1)}}\frac{1}{2^{2\tau+3}2^{q-1}}\frac{8\varepsilon}{\nu\rho_1 2^{(2\tau+2)(q-1)}}$ \\
 & $\leq$ & $\frac{\varepsilon}{2^{(2\tau + 3)q}}$. \\
\end{longtable}

For the second part we only need to apply  Cauchy inequalities:

$$\xi_q \leq \frac{2}{\delta_2^{(q)}} |R_0^{q}|_{G_q,\rho_2^{(q)}} \leq \frac{2}{\delta_2^{(q)}}\frac{\varepsilon}{2^{(2\tau+3)q}}.$$

\item \ref{eq:induction3} and \ref{eq:induction4} are direct from lemma \ref{lemma:inductive}.
\item \ref{eq:induction5} We need to consider again the results from lemma \ref{lemma:inductive} with $F_q$ as $F$.
We have to check the condition $F_q \subset  F_{q-1}' - \textcolor{black}{\frac{4M_{q-1}\varepsilon_{q-1}}{\mu_q}}$.
Let us define $d_q:=\frac{\beta_q - \beta_{q-1}}{2 K_q^{\tau+1}}$.
Using that $F_{q-1}' := (F-\beta_{q-1}) \setminus \bigcup_{\substack{k\in\mathbb{Z}^n\setminus\{0\} \\ |k|_1 \leq K}} \Delta_{c,\hat q}(k.\frac{\beta_{q-1}'}{|k|_1^\tau})$ we have
$$F_{q-1}' - d_q \supset (F-(\beta_{q-1}+d_q))\setminus \bigcup_{\substack{k\in\mathbb{Z}^n\setminus\{0\} \\ |k|_1 \leq K}}\Delta_{c,\hat q}(k,\frac{\beta_{q-1}'}{|k|_1^\tau} + |k|d_q).$$

Moreover,

$$
\left\{
\begin{array}{rcl}
\beta_{q-1} + d_q & \leq & \beta_q, \text{ and}\\
\frac{\beta_{q-1}'}{|k|_1^\tau} + |k|d_q & = & \frac{\beta_{q-1}' + |k|_1^\tau|k|\frac{\beta_q - \beta_{q-1}}{2 K_q^{\tau+1}}}{|k|_1^\tau} \\
& \leq & \frac{\beta_{q-1}' + K_q^{\tau+1}\frac{\beta_q - \beta_{q-1}}{2 K_q^{\tau+1}}}{|k|_1^\tau} = \frac{\beta_{q-1}' + \frac{\beta_q}{2} - \frac{\beta_{q-1}}{2}}{|k|_1^\tau} = \frac{\beta_q}{|k|_1^\tau}.
\end{array}
\right.
$$

Now if we see that $\frac{4 M_{q-1} \varepsilon_{q-1}}{\mu_q} \leq d_q$ we will have the inclusion we want.
Observe that $\frac{4 M_{q-1}}{\mu_q} \leq \frac{4\cdot 2M}{\mu/2} = \frac{16M}{\mu}$. So, it is enough to check that $\frac{16M}{\mu} \varepsilon_{q-1} \leq d_q$.

\begin{longtable}{rcl}
$\varepsilon_{q-1}$ & $\leq$ & $\frac{8\varepsilon}{\nu \rho_1 2^{(2\tau+2)q}}$ \\
& $\leq$ & $\frac{8 \nu^3 \rho_1 \beta^2}{\nu \rho_1 2^{(2\tau+2)q}2^{(2\tau+22)}M K^{2\tau+1}}$\\
& $\leq$ & $\frac{8 \nu^2 \beta^2}{2^{(2\tau+2)q + 2\tau + 20}M K^{2\tau+1}}$ \\
& $\leq$ & $\frac{8 \nu^2 \beta\frac{2(\beta_q-\beta_{q-1})}{\nu}}{2^{(\tau+1)q + (\tau+1)q + 2\tau + 20} M K^{2\tau+1}}$\\
& $=$ & $\frac{8\nu\beta 2(\beta_q - \beta_{q-1})}{2^{(\tau+1) + (\tau+1)q + 2\tau + 20}M K_{q}^{\tau+1}K^\tau}$\\
& $=$ & $\frac{8\nu\beta 2}{2^{(\tau+1) + (\tau+1)q + 2\tau + 19}M K^\tau}\frac{(\beta_q-\beta_{q-1})}{2 K_q^{\tau+1}}$\\
& $=$ & $\frac{\nu \beta}{2^{(\tau+1) + (\tau+1)q + 2\tau + 15} M K^\tau} d_q$\\
& $\leq$ & $\frac{\nu\beta}{2^{3\tau+16}M K^\tau}dq.$
\end{longtable}

Hence, it is enough to prove the following:

$$\frac{16 M}{\mu}\frac{\nu \beta}{2^{3\tau+16} M K^\tau}d_q \leq d_q.$$

Wich holds if and only if

$$\frac{16 M}{\mu}\frac{\nu\beta}{2^{\tau+16} M K^\tau} \leq 1 \Leftrightarrow K^\tau \geq \frac{\nu\beta}{\mu 2^{\tau+12}},$$

which we assumed when choosing $K$.

\end{itemize}

 \end{itemize}

\item Convergence of diffeomorphisms

Now we are going to prove the convergence of the successive maps

$u^{(q)}: G_q \rightarrow F_q$

i.e. we want to see that exist proper sets $G^*, F^*$ and an analytical map $u^*$ such that $u^{(q)}:G_q \rightarrow F_q$ converge to $u^*: G^* \rightarrow F^*$.

Let us use lemma \ref{lemma:inductive} as before.

For $q \geq 1$ we obtain

$$|u^{(q)} - u^{(q-1)}|_{G_q} \leq \xi_q \quad \text{ and } \quad |(u^{(q)})^{-1} -(u^{(q-1)})^{-1}|_{F_q}\leq \frac{\varepsilon_q}{\mu_q}.$$

Now, because the following two inequalities hold

$$
\left\{
\begin{array}{rcl}
\xi_q & \leq & \frac{4M K^{\tau+1}\varepsilon}{\nu \beta 2^{(\tau+2)q}}\\
\frac{\varepsilon_q}{\mu_q} & \leq & \frac{8\varepsilon}{\nu \rho_1 2^{2\tau+2}q} \frac{1}{(1+\frac{1}{2^q})\frac{\mu}{2}} = \frac{8\varepsilon 2^{q-1}}{\nu \beta 2^{(2\tau+2)q}(2^q+1)\mu}
\end{array}
\right.
$$

the sequences $u^{q}$ and $(u^{(q)})^{-1}$ converge to maps $u^*$ and $\Upsilon$ respectively.
These maps are defined on the following sets:

$$
\begin{array}{rcl}
G^* & := & \bigcap_{q\geq 0} G_q,\\
F^* & := & \bigcap_{q\geq 0} F_q = (F - \beta)\setminus \bigcup_{\substack{k\in\mathbb{Z}^n\setminus\{0\} \\ |k|_1 \leq K}}\Delta_{c,\hat q}(k,\frac{\beta}{|k|_1^\tau}).\\

\end{array}
$$

The second equality holds because $F^*$ is a compact for being the intersection of compact sets. We can now deduce that

\begin{longtable}{rcl}
$|u^* - u^{(q)}|_{G^*}$ & $\leq$ & $\sum_{s\geq q} |u^{(q)} - u^{(q-1)}|_{G^*}$ \\
& $\leq$ & $\sum_{s\geq q} |u^{(q)} - u^{(q-1)}|_{G}$ \\
& $\leq$ & $\sum_{s\geq q} \xi_q$. \\

\end{longtable}

with the same argument we see that $|\Upsilon - (u^{(q)})^{-1}|_{F^*} \leq \ldots \leq \sum_{s \geq q} \frac{\varepsilon_q}{\mu_q}$.

The next steps are going to be to prove that $G_q \subset G_{q-1} - \frac{2\varepsilon_{q-1}}{\mu_{q-1}}$ and $F_q \subset F_{q-1} - \frac{4 M_{q-1}\varepsilon_{q-1}}{\mu_{q-1}}$. If we check it and take the limit we would have:
$$G^* \subset G_q - \sum_{s\geq q} \frac{2\varepsilon_q}{\mu_q} \quad \text{ and } \quad F^* \subset F_q - \sum_{s\geq q} \frac{4 M_q \varepsilon_q}{\mu_q}.$$

Let us first check $F_q \subset F_{q-1} - \frac{4 M_{q-1} \varepsilon_{q-1}}{\mu_{q-1}}$. Let us define $x := \frac{4M_{q-1}}{\mu_{q-1}}$.

$$
\begin{array}{rcl}
 F_{q-1} -x& \supset & (F-(\beta_{q-1}+x))\setminus\bigcup_{\substack{k\in\mathbb{Z}^n\setminus\{0\} \\ |k|_1 \leq K}} \Delta_{c,\hat q}(k,\frac{\beta_{q-1}}{|k|_q^\tau} + |k|x) \\
 & \supset & (F-(\beta_{q-1}+x))\setminus\bigcup_{\substack{k\in\mathbb{Z}^n\setminus\{0\} \\ |k|_1 \leq K}} \Delta_{c,\hat q}(k,\frac{\beta_{q-1} + K_q^{\tau+1}x}{|k|_q^\tau}). \\
\end{array}
$$

To have the inclusion we want, we have to check that:

\begin{enumerate}
\item $\beta_{q-1}+x \leq \beta_q.$
\item $\frac{\beta_{q-1} + K_q^{\tau+1}x}{|k|_1^\tau} \leq \frac{\beta_q}{|k|_1^\tau} \Leftrightarrow \beta_{q-1} + K_q^{\tau+1}x.$
\end{enumerate}

Since the second one implies the first we will only check the second one.

\begin{longtable}{rcl}
$\beta_{q-1} + K_q^{\tau+1}x$ & $=$ & $\beta_{q-1} + K_q^{\tau+1}\frac{4M_{q-1}\varepsilon_{q-1}}{\mu_{q-1}}$\\
& $\leq$ & $\beta_{q-1} + K_q^{\tau+1}\frac{16M\varepsilon_{q-1}}{\mu}$\\
& $\leq$ & $\beta_{q-1} + K_q^{\tau+1}d_q$\\
& $=$ & $\beta_{q-1} + K_q^{\tau+1}\frac{\beta_q-\beta_{q-1}}{2K_q^{\tau+1}}$\\
& $=$ & $\beta_{q-1} - \beta_{q-1}/2 + \beta_{q}/2$\\
& $=$ & $\frac{\beta_{q-1} + \beta_{q}}{2}$\\
& $=$ & $\beta_{q}$\\
\end{longtable}

Where we have used that $16M\varepsilon_q/\mu\leq d_q$ and that $\beta_q$ is monotonically increasing with $q$.

The inclusion $G_q \subset G_{q-1} - \frac{2\varepsilon_{q-1}}{\mu_{q-1}}$ is given as a result of the lemma \ref{lemma:inductive}.

So we proved what we wanted. We are now going to see that $u^*$ is one-to-one on $G^*$ and that $u^*(G^*) = F^*$.

Tale $I\in G^*$, we have that $u^{(q)}(I)\in F_q$ for every $q$. Hence $u^*(I) \in F^*$, and we deduce that $u^*(G^*) \subset F^*$. With the same argument, we see $\Upsilon(F^*) \subset G^*$.
Let us prove that $\Upsilon(u^*(I))=I$.

$$
\begin{array}{rcl}
 |\Upsilon(u^*(I)) - I|& \leq & |\Upsilon(u^*(I)) - (u^{(q)})^{-1}(u^* (I)) \\
 & & \quad + (u^{(q)})^{-1}(u^*(I)) - (u^{(q)})^{-1}(u^{(q)}(I))|\\
 & \leq & |\Upsilon(u^*(I)) - (u^{(q)})^{-1}(u^* (I))| \\
 & & \quad + |(u^{(q)})^{-1}(u^*(I)) - (u^{(q)})^{-1}(u^{(q)}(I))|\\
 & \leq & |\Upsilon - (u^{(q)})^{-1}|_{F^*} + \frac{1}{\mu_q}|u^* -u^{(q)}|_{G^*}.\\
\end{array}
$$

Where to bound the second term we used the mean value theorem, i.e. $|u^{(q)}(x) - u^{(q)}(y)|_{G_q} \leq |\frac{\partial}{\partial I} u^{(q)}|_{G_q}|x-y|$, and the fact that because of the $\mu_q$-non-degeneracy, $|\frac{\partial u^{(q)}}{\partial I}| \geq \mu_q|v|$, $\forall v \in \mathbb{R}^n$ and $\forall I' \in G_q$.
Note that we can use the mean value theorem because $u^*(I) - u^{(q)}(I)$ belongs to $F_q$ because $\frac{4 M_q \varepsilon_q}{\mu_q} \geq \xi_q$. Let us prove this inequality. If we want to see $\frac{4 M_q \varepsilon_q}{\mu_q} \geq \xi_q$, it is enough to see $\frac{4 M \varepsilon_q}{\mu} \geq \xi_q$.

\begin{longtable}{rcl}
$\xi_q$ & $\leq$ & $\frac{2}{\delta_2^{(q)}} |R_0^{(q)}|_{G_q,\sigma_2^{(q)}}$\\
& $\leq$ & $\frac{2}{\delta_2^{(q)}} \frac{\delta_1^{(q)}\varepsilon_{q-1}}{2}\frac{1}{2^{2\tau+3}2^{q-1}}$\\
& $=$ & $\frac{1}{c_q}\frac{1}{2^{2\tau+2}2^{q-1}}\varepsilon_{q-1} \leq \frac{4M}{\mu}\varepsilon_{q-1}$\\
\end{longtable}

The last inequality holds true if and only if

\begin{longtable}{rcl}
$\mu$ & $\leq$ & $\frac{\beta 2^{\nu(q-1)}2^{2\tau+3}2^{q-1}}{K_q^{\tau+1}\rho_1 4}$ \\
& $=$ & $\frac{\beta 2^{\nu(q-1)}2^{2\tau+3}2^{q-1}}{K^{\tau+1} 2^{(\tau+1)(q-1)}\rho_1 4}$ \\
& $\leq$ & $\frac{\beta 2^{2\tau+3}}{K^{\tau+1}\rho_1 4}$ \\
& $=$ & $\frac{\beta 2^{2\tau+1}}{K^{\tau+1}\rho_1}$ \\
& $\leq$ & $\frac{\beta 2^{2\tau+1}}{(\frac{1}{\nu \rho_1})^{\tau+1}\rho_1} = \beta \nu^{\tau+1}2^{2\tau+1}\rho_1^tau$ \\
\end{longtable}

as we assumed in the statement of the theorem.
Since the bound obtained tends to 0, we have $\Upsilon(u^*(I)) = I$ and hence $u^*$ is one-to-one. Analogously we obtain $u^*(\Upsilon(J)) = J \quad \forall J \in F^*$. Finally, $u^*$ is one-to-one and $u^*(G^* ) = F^*$. Note also that from the inductive lemma we obtain $|h^{(q)}-h^{(q-1)}|_{G_q,\rho_2^{(q-1)}}\leq \eta_{q-1}$.
Also, observe the following bound that we are going to use in the next sections.

$$|u^* - u^{(q)}|_{G^*} \leq \sum_{s\geq q} \frac{4 M K^{\tau+1}\varepsilon}{\nu\beta 2^{(\tau+2)s}}.$$

\item Convergence of the canonical transformations

Let $\sigma^{(q)} = \rho^{(q-1)} - \delta_2^{(q)}/2$. Observe that this definition implies that $\sigma^{(q)} - \rho^{(q)} = \delta_2^{(q)}$ and $\sigma^{(q)} - \delta_2^{(q)} =  \rho^{(q)}$.
Observe that applying the inductive lemma \ref{lemma:inductive}:

\begin{longtable}{rcl}\label{eq:conv_can_trans}
$|\Phi^{(q)} - \text{id}|_{G_q,\sigma^{(q)}, c_q}$ & $\leq $ & $\frac{2A_{q-1} K_q^{\tau}}{\beta_{q-1}'}\varepsilon_{q-1}$ \\
& $\leq$ & $\frac{2\cdot 5 \cdot 4}{\nu\beta}\frac{8\varepsilon}{\nu\rho_1 2^{(2\tau+2)(q-1)}}$ \\
& $\leq$ & $\frac{2^9 K^\tau \varepsilon}{\nu^2 \rho_1 \beta 2^{(\tau+2)(q-1)}}$ \\
& $\leq$ & $\frac{2^9 K^\tau \nu^3 \rho_1 \beta^2}{\nu^2 \rho_1 \beta 2^{(\tau+2)(q-1)}2^{2\tau+22}M K^{2\tau+1}}$ \\
& $\leq$ & $\frac{2^9 \nu \beta}{2^{(\tau+2)(q-1)}2^{2\tau+20} M K^{\tau+1}}$ \\
& $=$ & $\frac{\nu\beta}{2^6 M (K 2^{q-1})^{\tau+1}}\frac{2^9}{2^{(q-1)}2^{2\tau+14}}$ \\
& $\leq$ & $\delta_2^{(q)}\frac{1}{2^{(q-1)} 2^{2\tau+5}}$ \\
& $\leq$ & $\frac{\delta_2^{(q)}}{2^{(q-1)} 32},$ \\
\end{longtable}

where we have used that $\delta_2^{(q)}\geq \frac{\nu \beta}{84 M K_qP {\tau+1}}$, $\varepsilon \leq \frac{\nu^3 \rho_1 \beta^2}{2^{2\tau+20}M K^{2\tau+1}}$, $\beta \leq \frac{8M K^{\tau+1}\rho_2}{\nu}$ and $\beta_{q-1}' \geq \frac{\nu \beta}{4}$.

Now, recall that $\hat\delta_c = \min(c\delta_1, \delta_2)$, then $\hat{\delta}_{c_q} = \min(c_q \delta_1^{(q)}, \delta_2^{(q)}) = \min(\delta_2^{(q)},\delta_2^{(q)}) = \delta_2^{(q)}$.

Now using that $|D \Upsilon|_{G,\rho-\delta,c} \leq \frac{|\Upsilon|_{G,\rho,c}}{\hat\delta_c}$, we can obtain:

\begin{longtable}{rcl}
$|D\Phi^{(q)} - \text{Id}|_{G_q,\rho^{(q)}, c_q}$ & $=$ & $|D(\Phi^{(q)}) - \text{id}|_{G_q, \rho^{(q)}, c_q}$ \\
 & $\leq$ & $|D(\Phi^{(q)}) - \text{id}|_{G_q, \sigma^{(q)} - \delta_2^{(q)}, c_q}$ \\
 & $\leq$ & $\frac{|\Phi^{(q)}-\text{id}|_{G_q,\sigma^{(q)},c_q}}{\hat\delta_{c_q}}$ \\
 & $\leq$ & $\frac{|\Phi^{(q)}-\text{id}|_{G_q,\sigma^{(q)},c_q}}{\delta^{(q)}_{2}}$ \\
 & $\leq$ & $\frac{2 |\Phi^{(q)} - \text{id}|_{G_q,\sigma^{(q)},c_q}}{\delta_2^{(q)}}$ \\
 & $\leq$ & $\frac{2}{\delta_2^{(q)}}\frac{\delta_2^{(q)}}{2^{(q-1)}\cdot 32} \leq \frac{1}{2^{q-1}16} \leq \frac{1}{2^{(q-1)}4}$ \\
\end{longtable}

Let $x,y$ be such that the segment joining them is contained in $\mathcal{D}_{\rho^{(q)}}(G_q)$. Using the mean value theorem one can deduce the following bound:

$$|\Phi^{q}(x) - \Phi^{q}(y)|_{c_q} \leq |D \Phi^{(q)}|_{G_q,\rho^{(q)}, c_q}\cdot|x-y|_{c_q}.$$

By \ref{eq:conv_can_trans}, in particular $|\Phi^{(q)}(x)-x|_{c_q} \leq \delta_2^{q}$ and $|\Phi^{(q)}(y)-y|_{c_q} \leq \delta_2^{q}$. Then the segment that join $\Phi^{(q)}(x)$ and $\Phi^{(q)}(y)$ is contained in $\mathcal{D}_{\rho^{(q-1)}}(G_{q-1}) = \mathcal{D}_{\rho^{(q)}+\delta^{(q)}}$, because $G_q \subset G_{q-1} - \frac{2 \varepsilon_{q-1}}{\mu_{q-1}}$ and because $\rho^{(q)}-\rho^{(q-1)} \leq \delta_2^{(q)}$ because $\rho^{(q)}-\rho^{(q-1)} = \delta_2^{(q)}$.

Therefore we can apply the mean value theorem once again:

$$
\begin{array}{lcl}
|\Phi^{(q-1)}(\Phi^{(q)}(x)) - \Phi^{(q-1)}(\Phi^{(q)}(y))|_{c_{q-1}} \\
  \leq  |D\Phi^{(q-1)}|_{G_{q-1},\rho^{q-1},c_{q-1}}|\Phi^{(q)}(x) - \Phi^{(q)}(y)|_{c_{q-1}}\\
  \leq  2^{\tau+1-\nu}|D\Phi^{(q-1)}|_{G_{q-1},\rho^{q-1},c_{q-1}}|\Phi^{(q)}(x) - \Phi^{(q)}(y)|_{c_{q}},\\
\end{array}
$$

where we have used that $c_{q-1}/c_q = \frac{\delta_2^{(q-1)}/\delta_1^{(q-1)}}{\delta_2^{(q)}/\delta_1^{(q)}} = \frac{\delta_2^{(q-1)}}{\delta_2^{(q)}}\frac{\delta_1^{(q)}}{\delta_1^{(q-1)}} = 2^{\tau+1}\frac{1}{2^\nu} = 2^{\tau+1-\nu}$.

Using the previous bounds and iterating by $q$, we obtain the following:

\begin{longtable}{lcl}
$|\Psi^{(q)}(x) - \Psi^{(q)}(y)|_{c_1}$ \\
 $\leq$  $2^{(\tau+1-\nu)(q-1)}|D\Phi^{(1)}|_{G_1,\rho^{(1)},c_1}\cdot\ldots\cdot|D\Phi^{(q)}|_{G_q,\rho^{(q)},c_q}|x-y|_{c_q}$ \\
 $\leq$  $2^{(\tau+1-\nu)(q-1)}(1+\frac{1}{4})(1+\frac{1}{4\cdot 2})\cdot\ldots\cdot(1 + \frac{1}{4\cdot 2^{q-1}})|x-y|_{c_q}$\\
 $\leq$  $2^{(\tau+1-\nu)(q-1)}e^{1/2}|x-y|_{c_q} \leq 2^{(\tau+1-\nu)(q-1)}\cdot 2|x-y|_{c_q}.$\\
\end{longtable}

Which holds for $q\geq 1$ and for every $x,y$ such that the segment joining them is contained in $\mathcal{D}_{\rho^{(q)}}(G_q)$. Now, given $q\geq 2$ and $x\in\mathcal{D}_{\rho^{(q)}}(G_q)$ let $y = \Phi^{(q)}(x)$:

\begin{longtable}{rcl}\label{eq:conv_can_trans2}
$|\Psi^{(q)}(x) - \Psi^{(q-1)}(x)|_{c_1}$ & $=$ & $|\Psi^{(q-1)}(\Phi^{(q)}(x)) - \Psi^{(q-1)}(x)|_{c_1}$ \\
& $\leq$ & $2^{(\tau+1+\nu)(q-2)}2|\Phi^{(q)}(x) - x|_{c_{q-1}}$\\
& $\leq$ & $2^{(\tau+1+\nu)(q-1)}2|\Phi^{(q)}(x) - x|_{c_{q}}$\\
& $\leq$ & $2^{(\tau+1+\nu)(q-1)}2\delta_2^{(q)}$\\
& $\leq$ & $2^{(\tau+1+\nu)(q-1)}2\frac{2^8 K^\tau \varepsilon}{\nu^2\rho_1\beta 2^{(\tau+2)(q-1)}}$\\
& $=$ & $\frac{2^9 K^\tau \varepsilon}{\nu^2 \rho_1 \beta 2^{(1+\nu)(q-1)}}.$\\
\end{longtable}

Which holds even for $q=1$ by setting $\Psi^{(0)} = \text{id}$ by \ref{eq:conv_can_trans}. Hence \ref{eq:conv_can_trans2} implies that $\Psi^{(q)}$ converges to a map

$$\Psi^*: D_{(\rho_1/4, 0)}(G^*) = \mathcal{W}_{\frac{\rho_1}{4}}(\mathbb{T}^n)\times G^* \rightarrow \mathcal{D}_\rho(G).$$

And we deduce for every $q\geq 0$ that

$$|\Psi^* -\Psi^{(q)}|_{G^*,(\frac{\rho_1}{4},0),c_1} \leq \frac{2^10 K^\tau \varepsilon}{\nu^2 \rho_1 \beta 2^{(1+\nu)q}}.$$

Moreover by taking the limit to the equation

$$H\circ \Psi^{(q)} = h^{(q)} + R^{(q)}$$

we see that $H\circ \Psi^* = h^*(I)$ on $\mathcal{D}_{(\frac{\rho_1}{4},0)}(G^*)$.

\item Stability estimates

Next we see that for $q\rightarrow \infty$, the motions associated to the transformed hamiltonian $\hat H^{(q)} = \hat h^{(q)} + R^{(q)}$ and the quasi-periodic motions of $\hat h^{(q)}$ get closer and closer.

Let us denote
$$
\left\{
\begin{array}{rcl}
x^{(q)}(t) = ( \phi^{(q)}(t), I ^{(q)}(t)) & \quad & \text{the trajectory of }  H^{(q)},\\
\hat x^{(q)}(t) = (\hat \phi^{(q)}(t),\hat I ^{(q)}(t)) & \quad & \text{the trajectory of } \hat H^{(q)}\\
\end{array}
\right.
$$

corresponding to a given initial condition $x^{(q)}(0) = x_0^* = (\phi_0^*, I_0^* ) \in \mathbb{T}^n\times G_q$. Let

$$
\left\{
\begin{array}{rcl}
\tilde x^{(q)}(t) & := & (\tilde \phi^{(q)}(t), I_0^* ) = (\phi_0^*  + u^{(q)}(I_0^*))t, I_0^*,\\
\hat{ \tilde x}^{(q)}(t) & := &  (\hat{\tilde \phi}^{(q)}(t), I_0^* ) = (\phi_0^*  + u'^{(q)}(I_0^*))t, I_0^*\\
\end{array}
\right.
$$

the corresponding trajectories of the integrable parts of $h^{(q)}$ and $\tilde h^{(q)}$ respectively. Recall that $\hat h^{(q)}(I) = h^{(q)}(I) + \zeta^{(q)}(I_1) = h^{(q)}(I) + q_0 \log(I_1) + \sum_{i=1}^{m-1} q_i\frac{1}{I_1^i}$ and $u'^{(q)} = \bar{\mathcal{B}}u^{(q)}+ \bar{\mathcal{A}}(I_1)$. It is clear that $\tilde x^{(q)}(t)$ and $\hat{\tilde x}^{(q)}(t)$ are defined for all $t\in\mathbb{R}$.

Let us denote:

$ T_q = \inf\{t>0: | I^{(q)}(t) - I_0^* | > \delta_2^{(q+1)} \text{ or } | \phi^{(q)}(t) - {\tilde \phi}^{(q)}(t)|_\infty > \delta_1^{(q+1)}\}.$
$\hat T_q = \inf\{t>0: |\hat I^{(q)}(t) - I_0^* | > \delta_2^{(q+1)} \text{ or } |\hat \phi^{(q)}(t) - \hat{\tilde \phi}^{(q)}(t)|_\infty > \delta_1^{(q+1)}\}.$

Observe that $x^{(q)}(t)$ and $\hat x^{(q)}(t)$ are defined and belong do $\mathcal{D}_{\rho^{(q)}}(G_q)$, for $0\leq t\leq T_q$ and $0\leq t \leq \hat T_q$ respectively, because $\delta^{(q)} \leq \rho^{(q)}$. Also recall the Hamiltonian equations. Let us first state the motion equations for our Hamiltonian function $\hat H^{(q)}$:

$$ \iota_{X_{\hat H^{(q)}}} \omega = d\hat H^{(q)} , \quad \text{ or } \quad X_{\hat H^{(q)}}  = \Pi(d \hat H^{(q)}, \cdot).$$

Let us write

$$X_{\hat H^{(q)}} = \dot{\hat{I}}_1^{(q)}\frac{\partial}{\partial I_1} + \ldots \dot{\hat{I}}_n^{(q)}\frac{\partial}{\partial I_n} + \dot{\hat{\phi}}_1^{(q)}\frac{\partial}{\partial \phi_1} + \ldots + \dot{\hat{\phi}}_n^{(q)}\frac{\partial}{\partial \phi_n}.$$

Moreover

$$
\begin{array}{rcl}
d \hat H^{(q)} & = & d \hat h^{(q)} + dR^{(q)} \\
 & = & d\zeta^{(q)} + dh^{(q)} + dR^{(q)} \\
 & = & \sum_{i=1}^n \frac{\partial \zeta^{(q)}}{\partial I_i} +
\underbrace{\sum_{i=1}^n \frac{\partial \zeta^{(q)}}{\partial \phi_i}}_{=0} +
\sum_{i=1}^n \frac{\partial h^{(q)}}{\partial I_i} \\
&& \quad +
\underbrace{\sum_{i=1}^n \frac{\partial h^{(q)}}{\partial \phi_i}}_{=0} +
\sum_{i=1}^n \frac{\partial R^{(q)}}{\partial I_i} +
\sum_{i=1}^n \frac{\partial R^{(q)}}{\partial \phi_i}.
\end{array}
$$

Recall

$$\omega = \left(\sum_{j=1}^m\frac{c_j}{I_1^j}\right)dI_1\wedge d\phi_1 + \sum_{i=2}^{n} dI_i \wedge d\phi_i, $$
$$ \Pi = \frac{1}{\left(\sum_{j=1}^{m}\frac{c_j}{I_1^J}\right)}\frac{\partial}{\partial I_1} \wedge \frac{\partial}{\partial \phi_1} + \sum_{i=2}^{n} \frac{\partial}{\partial I_i}\wedge\frac{\partial}{\partial \phi_i}.$$

Then:

$$
\left\{
\begin{array}{rcl}
 \dot{\hat{I}}_j^{(q)}  & = &  -\frac{\partial R^{(q)}}{\partial \phi_j}(\hat x^{(q)}(t)), \quad \text{ if }  j \neq 1 \text{ and }\\
 \dot{\hat{I}}_1^{(q)} & = & -\frac{1}{\left(\sum_{i=1}^{m}\frac{c_i}{I_1^i}\right)}\frac{\partial R^{(q)}}{\partial \phi_j}(\hat x^{(q)}(t)) = -\mathcal{B}(I_1) \frac{\partial R^{(q)}}{\partial \phi_1}(\hat x^{(q)}(t)).\\
\end{array}
\right.
$$

Observe that

\begin{equation}\label{eq:stab_est_1}
|\dot{\hat I}_1^{(q)}(t)| \leq \left|\frac{\partial R^{(q)}}{\partial \phi_1}(\hat x^{(q)}(t))\right|.
\end{equation}

Moreover,

$$
\left\{
\begin{array}{rcl}
 \dot{\hat \phi}_j^{(q)} & = & \hat{u}_j^{(q)}(\hat I^{(q)}(t)) + \frac{\partial R^{(q)}}{\partial I_j}(\hat x^{(q)}(t)) \\ 
 &=& u_j^{(q)}(\hat I^{(q)}) + \frac{\partial R^{(q)}}{\partial I_j}(\hat x^{(q)}(t)) \quad \text{ if }  j \neq 1 \\
 \dot{\hat{\phi}}_1^{(q)} & = & \underbrace{(\mathcal{B}(I_1)u_1^{(q)} + \mathcal{A}(I_1))}_{u_1^{(q)}}(\hat I^{(q)}(t)) + \mathcal{B}(I_1)\frac{\partial R^{(q)}}{\partial I_1}(\hat x^{(q)}(t)), \\
\end{array}
\right.
$$

where we have used that $\hat u_j^{(q)} = u_j'^{(q)}$ if $j \neq 1$.
Using \ref{eq:stab_est_1} we obtain

$$|\dot{\hat I}^{(q)}(t)| \leq \left\|\frac{\partial R^{(q)}}{\partial \phi}\right\|_{G_q,\rho^{(q)}} \leq \varepsilon_q.$$

Hence,

\begin{longtable}{rcl}
 $|\dot{\hat \phi}^{(q)} - u'^{(q)}(I_0^*)|_\infty$ & $=$ & $|u'^{(q)}(\hat I^{(q)}(t)) + \bar{\mathcal{B}}\frac{\partial R^{(q)}}{\partial I_1}(\hat x^{(q)}(t)) - u'^{(q)}(I_0^*)|_\infty$\\
 & $\leq $ & $|u'^{(q)}(\hat I ^{(q)})(\hat I^{(q)}(t)) - u'^{(q)}(I_0^* )|_\infty + |\frac{\partial R^{(q)}}{\partial I}(\hat x^{(q)}(t))|_\infty$\\
 & $\leq$ & $M_q'|\hat I^{(q)}(t) - I_0^*| + \|\frac{\partial R^{(q)}}{\partial I}\|_{F_q, \rho^{(q)},\infty}$\\
 & $\leq$ & $M_q|\hat I^{(q)}(t) - I_0^*| + \frac{\varepsilon_q}{c_{q+1}}$\\
 & $\leq$ & $2M\delta_2^{(q+1)} + \frac{\varepsilon_q}{c_{q+1}} \leq 3M\delta_2^{(q+1)}.$\\
\end{longtable}
Where in the last bound we used that

\begin{equation}\label{eq:sta_est_2}
\frac{\varepsilon_q}{c_{q+1}} \leq M \delta_2^{(q+1)},
\end{equation}

that holds because:

\begin{longtable}{rcl}
$\frac{\varepsilon_q}{c_{q+1}}$ & $\leq$ & $\frac{16 M K_{q+1}^{\tau+1} \rho_1}{\beta 2^{\nu q}}\frac{8 \varepsilon}{\nu \rho_1 2^{(2\tau+2)q}}$ \\
 & $\leq$ & $\frac{16 M K_{q+1}^{\tau+1}\rho_1}{\beta e^{\nu q}}\frac{8}{\nu\rho_1 2^{(2\tau+2)q}}\frac{\nu^2 \mu^2 \beta^2}{2^{\tau+30}L^4 M^2 K^{2\tau+2}}$ \\
 & $\leq$ & $\frac{2^7 K_{q+1}^{\tau+1}\nu\mu^2\beta}{2^{(2\tau+2)q + \nu q + \tau + 30} L^4 M^2 K^{2\tau+2}}$\\
 & $\leq$ & $\frac{\nu\beta\mu^2}{K_{q+1}^{\tau+1} 2^{\nu q + \tau + 23} L^4 M^2}$\\
 & $=$ & $\frac{\mu^2}{2^{\nu q + \tau + 17} L^4 M}\frac{\nu\beta}{2^6 M K_{q+1}^{\tau+1}}$\\
 & $\leq$ & $\frac{\mu^2}{2^{\tau+17+\nu q} L^4 M}\delta_2^{(q+1)}$\\
 & $\leq$ & $\frac{2^{2\tau+12}L^4 M^2}{2^{\tau+17+\nu q} L^4 M}\delta_2^{(q+1)}$\\
 & $\leq$ & $2^{\tau-5-\nu q} \delta_2^{(q+1)}$\\
 & $\leq$ & $\frac{2^\tau}{2^{5+\nu q}} M \delta_2^{(q+1)}$ \\
 & $\leq$ & $M \delta_2^{(q+1)}\quad $ if $q$ is large enough.\\
\end{longtable}

Thus, since one of the inequalities defining $\hat T_q$ has to be an equality for $t = T_q$ we obtain,

$$
\begin{array}{rcl}
\delta_2^{(q+1)} & = & |\hat I^{(q)}(T_q) - I_0^*| \leq T_q \varepsilon_q, \quad \text{ or}\\
\delta_1^{(q+1)} & = & |\hat \phi^{(q)}(T_q) - \hat{\tilde \phi}^{(q)}(T_1)|_\infty \leq T_q 3M\delta_2^{(q+1)}.\\
\end{array}
$$

Hence, $\hat T_q \geq \min(\frac{\delta_2^{(q+1)}}{\varepsilon_q},\frac{\delta_1^{(q+1)}}{3 M \delta_2^{(q+1)}}) \geq \frac{1}{3 M c_{q+1}}$, where we used again \ref{eq:sta_est_2}.

Let us denote $T_q' := \frac{1}{3 M c_{q+1}}$, then $\hat T_q \geq T_q'$. This implies

$$|\hat x^{(q)}(t) - \hat{\tilde x}^{(q)}(t)|_{c_{q+1}} \leq \delta_2^{(q+1)} \quad \text{ for }  |t| \leq T_q'.$$

Since $\hat H^{(q)} = \hat H \circ \Psi^{(q)}$ and $\Psi^{(q)}$ is canonical it turns out that $\Psi^{(q)}(\hat x^{(q)}(t))$ is a trajectory of $\hat H$ defined for $t \leq T_q'$. It is important to observe that for $q$ big enough this trajectory remains near the torus $\Psi^{(q)}(\mathbb{T}^n\times \{I_0^*\})$. Moreover $T_q'$ tends to infinity when $q \rightarrow \infty$.

\item Invariant tori

Assume now that $x_0^* \in \mathbb{T}^n\times G^*$ and let us write

$$
\left\{
\begin{array}{rcl}
x^* (t) &  = &  (\phi^*_0 + u^*(I_0^*)t, I_0^*) \\
\hat x^* (t) & = & (\phi^*_0 + u'^*(I_0^*)t, I_0^*)
\end{array}
\right.
\quad \text{ for } t\in\mathbb{R}.
$$

Note that

$$
\begin{array}{rcl}
|\hat{\tilde{x}}^{(q)}(t) - \hat{x}^*(t)|_{c_{q+1}} & \leq & c_{q+1}|u'^{(q)}(I_0^*) - u'^* (I_0^*)|_\infty |t|\\
 & \leq & c_{q+1}|u'^{(q)} - u'^* |_{G^*, \infty} |t|.\\
\end{array}
$$

And observe that if $|t| \leq \frac{\delta_1^{(q+1)}}{|u'^{(q)} - u'^*|_{G^* ,\infty}} =: T_q''$ then,

$$
\begin{array}{rcl}
|\hat{\tilde{x}}^{(q)}(t) - \hat{x}^*(t)|_{c_{q+1}} & \leq &  c_{q+1}|u'^{(q)} - u'^* |_{G^*, \infty} \frac{\delta_1^{(q+1)}}{|u'^{(q)} - u'^*|_{G^* ,\infty}}\\
 & \leq& \frac{\delta_2^{q+1}}{\delta_1^{q+1}}\delta_1^{q+1} = \delta_2^{q+1}.\\
\end{array}
$$

Observe that 
$$|u'^*  - u'^{(q)}|_{G^*} = |\bar{\mathcal{B}} u^* +\bar{\mathcal{A}} - \bar{\mathcal{B}} u^{(q)} - \bar{\mathcal{A}}|_{G^*}$$
$$ = |\mathcal{B}(u^* - u^{(q)})|_{G^*} \leq |u^* -u^{(q)}|_{G^*},$$
 close enough to $Z$.

Hence the bound obtained for $|u^* -u^{(q)}|_{G^*}$ also holds for $|u'^* -u'^{(q)}|_{G^*}$.

$$|u'^* -u'^{(q)}|_{G^*} \leq \sum_{s\geq q} \frac{4 M K^{\tau+1}\varepsilon}{\nu \beta 2^{(\tau+2)s}} \leq \frac{8 M K^{\tau+1}\varepsilon}{\nu\beta 2^{(\tau+2)q}}.$$

Using this bound, we see that $T_q''$ tends to infinity because

$$T_q'' \geq \left(\frac{\nu \rho_1}{8\cdot 2^{\nu q}}\right)\left(\frac{\nu \beta 2^{(\tau+2)q}}{8 M K^{\tau+1}\varepsilon}\right) = \frac{\nu^2 \beta \rho_1}{64 M K^{\tau+1}\varepsilon} 2^{(\tau+2-\nu)q}.$$

Then

$$ |\hat x^{(q)}(t) - \hat x^* (t)|_{c_{q+1}} \leq |\hat x^{(q)}(t) - \hat{\tilde x}^{(q)}(t)|_{c_{q+1}} + |\hat{\tilde x}^{(q)}(t) - \hat x^* (t)|_{c_{q+1}} \leq 2 \delta_2^{(q+1)}.$$

when $t \leq T_q''':= \min(T_q', T_q'')$.

Next, we see that the trajectory $\Psi^{(q)}(x^{(q)}(t))$ is very close to $\Psi^*(x^*(t))$ for large values of $q$. This is true because when $|t| \leq T_q'''$.

\begin{longtable}{lcl}\label{eq:inv_tor_1}
$|\Psi^{(q)}(\hat x^{(q)} - \Psi^* (\hat x^* (t)))|_{c_1}$\\
 $\leq$  $|\Psi^{(q)}(\hat x^{(q)}(t)) - \Psi^{(q)}(\hat x^* (t))|_{c_1} + |\Psi^{(q)}(\hat x^*(t)) - \Psi^* (\hat x^* (t))|_{c_1}$\\
 $\leq$  $2^{(\tau+1-\nu)(q-1)}\cdot 2|\hat x^{(q)}(t) - \hat x^* (t)|_{c_q} + |\Psi^{(q)} - \Psi^*|_{G^* , (\rho_1/4,0),c_1}$\\
 $\leq$  $2^{(\tau+1-\nu)(q-1)}\cdot 4 \delta_2^{(q+1)} + |\Psi^{(q)} - \Psi^*|_{G^*,(\rho_1/4,0),c_1}$\\
 $\leq$  $2^{(\tau+1-\nu)(q-1)}\cdot 4 \delta_2^{(q+1)} + \frac{2^{10} K^\tau \varepsilon}{\nu^2\rho_1\beta 2^{(1+\nu)q}}$\\
 $\leq$  $\frac{c_1}{c_{q+1}}\frac{4\delta_2^{(q+1)}}{2^{(\tau+1-\nu)}} + \frac{2^{10} K^\tau \varepsilon}{\nu^2\rho_1\beta 2^{(1+\nu)q}}$\\
 $\leq$  $\frac{c_1 4}{2^{(\tau+1-\nu)}}\frac{\delta_1^{(q+1)}}{\delta_2^{(q+1)}}\delta_2^{(q+1)} + \frac{2^{10} K^\tau \varepsilon}{\nu^2\rho_1\beta 2^{(1+\nu)q}}$\\
 $\leq$  $\frac{c_1 4}{2^{(\tau+1-\nu)}}\delta_1^{(q+1)} + \frac{2^{10} K^\tau \varepsilon}{\nu^2\rho_1\beta 2^{(1+\nu)q}}$\\
\end{longtable}

where we used that $c_{q-1}/c_q = 2^{\tau+1-\nu}$ then $c_1/c_{q+1} = 2^{(\tau+1-\nu)q}$.

The bound \ref{eq:inv_tor_1} tends to zero. So we deduce, for every fixed $t$, $\Psi^{(q)}(\hat x^{(q)}(t))$ exits or $q$ large enough and its limit is $\Phi^* (\hat x^* (t))$.
This fact and the continuity of the flow of $\hat H$ imply that $\Psi^* (\hat x^* (t))$ is also a trajectory of $\hat H$, which is defined for all $t\in\mathbb{R}$.

This holds for every initial condition $x_0^*=(\phi_0^* , I_0^* ) \in \mathbb{T}^n \times G^* $ for this reason $\Psi^* (\mathbb{T}^n\times\{I_0^*\})$ is an invariant torus of $\hat H$, with frequency vector $u'^*(I_0^* )$. Observe that the energy on the torus is $\hat H(\Psi^* (\phi_0^* , I_0^* )) = h^* (I_0^* )$.

The preserved invariant tori are completely determined by the transformed actions $I_0^* \in G^* $. We are now going to characterize the preserved tori by the original action coordinates.

First, let us see that $u(\hat G) \subset F^* $. Recall that:

$$\Delta_{c,\hat q}(k,\alpha) = \{J \in \mathbb{R} \text{ such that } |k\bar{\mathcal{B}}u(I) + k\bar{\mathcal{A}}| < \alpha \},$$
$$\hat G = \{I \in \mathcal{G} - \frac{2\gamma}{\mu} \text{ such that } |k\bar{\mathcal{B}}u(I) + k\bar{\mathcal{A}}|< \frac{\beta}{|k|_1^\tau} \}.$$

With this definition is obvious that if $I\in \hat G$ then $u(I)$ is $\frac{\beta}{|k|_1^\tau},K,c,\hat q$-non-resonant. Hence $u(I) \notin \Delta_{c,\hat q}(k,\frac{\beta}{|k|^\tau_1})$ for all $k \neq 0$. Then $u(\hat G) \subset F^* $.

We want to find a correspondence between the invariant tori of $\hat h$ and the invariant tori of the perturbed system $\hat H = \hat h + R$, or in the new coordinates $\hat h^*$.

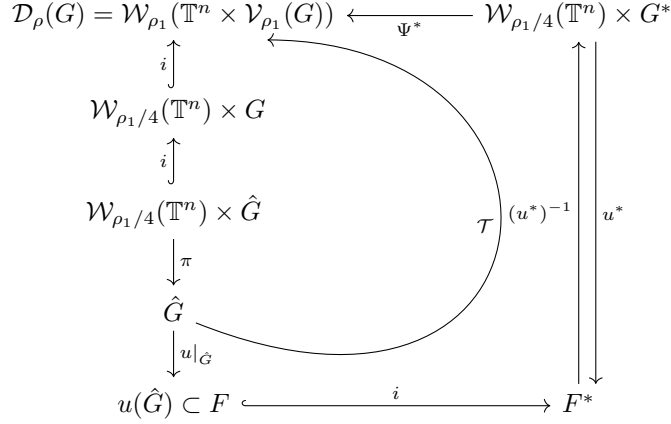
\begin{figure}
\centering
\begin{tikzcd}
\mathcal{D}_\rho(G) = \mathcal{W}_{\rho_1}(\mathbb{T}^n\times\mathcal{V}_{\rho_1}(G)) & & \mathcal{W}_{\rho_1/4}(\mathbb{T}^n)\times G^* \arrow[ll, "\Psi^* "] \arrow[dddd,  shift left=1.5ex, "u^*"]\\\
\mathcal{W}_{\rho_1/4}(\mathbb{T}^n)\times  \arrow[u, hook, "i"] G & & \\
\mathcal{W}_{\rho_1/4}(\mathbb{T}^n)\times \arrow[u, hook, "i"] \arrow[d, "\pi"] \hat G & & \\
\hat G \arrow[d, "u|_{\hat G}"]
 \arrow[uuu, controls={+(5,-2) and +(4.5,0)},end anchor={[yshift=0ex,xshift=-7ex]south east},"\mathcal{T}"]
& & \\
u(\hat G) \subset F \arrow[rr,hook,"i"] & & F^* \arrow[uuuu,"(u^*)^{-1}"]
\end{tikzcd}
\caption{Diagram of the different maps and sets used in the proof.}
\label{fig:diagram_kam}
\end{figure}

Recall 
$$u' = \bar{\mathcal{B}} u + \bar{\mathcal{A}},$$ 
$$u'^* = \bar{\mathcal{B}} u^* + \bar{\mathcal{A}} = (\frac{1}{\sum_{i=1}^m\frac{c_i}{I_1^i}} u_1^* +\frac{\sum_{i=1}^m\frac{\hat q_i}{I_1^i}}{\sum_{i=1}^m\frac{c_i}{I_1^i}}, u_2^*, \ldots, u_n^*).$$
Observe \textcolor{black}{$u'^*(0,I_2,\ldots,I_n) = \frac{\hat q_m}{c_m} = \frac{1}{\mathcal{K}'}$} the inverse of the modular period, hence $u'^*$ and $u'$ are not one-to-one at $Z$ because they project the first component of $u^*$ and $u$ to \textcolor{black}{$\frac{1}{\mathcal{K}'}$}.

Let us define $I_0^* = (u^* )^{-1}(u(I_0))$, recall that $u$ and $u^* $ are indeed one-to-one even though $u'$ and $u'^*$ are not, so $I_0^*$ is properly defined.

With this definition $u^*(I_0^*) = u(I_0)$ and this implies $u'^*(I_0^*) = u'(I_0)$.
Now, let us define $\mathcal{T}(\phi_0,I_0) = \Psi^*(\phi_0, I_0^*)$.

We obtain \ref{eq:kam1} because the set $\mathcal{T}(\mathbb{T}^n\times\{I_0\})$ is an invariant torus of the hamiltonian flow of $\hat H$ with frequency vector $u'^*(I_0^*)$ because $\mathbb{T}^n\times\{I_0^*\}$ is an invariant torus for the hamiltonian flow of $\hat h^*$. And we have seen that  $u'^*(I_0^*) = u'(I_0)$.
In a nutshell, the original frequencies (of the unperturbed system) $u(I_0)$ for $I_0 \in \hat G$ are in $F^*$ and hence can be seen as frequencies of the unperturbed system in the new coordinates $u^* (I_0^*)$. Hence we can conclude that for this $I_0 \in \hat G$ its new (perturbed) solution is also linear in a torus $(\phi_0 + u'^* t, I_0^*) \in \Psi^*(\mathbb{T}^n\times \{I_0^*\}) = \mathcal{T}(\mathbb{T}^n\times\{I_0\})$. And the new frequency vector $u'^* $ is such that $u'^* = u'$.

Let us now prove \ref{eq:kam2}. Let us write, for $(\phi_0,I_0^* ) \in \mathcal{W}_{\frac{\rho_1}{4}}(\mathbb{T}^n) \times G^*$.

$$\Psi^*(\phi_0,I_0^* ) = (\phi_0 + \Psi_\phi^*(\phi_0, I_0^*), I_0^* + \Psi_I^* (\phi_0,I_0^*)).$$

And for $(\phi_0,I_0) \in \mathcal{W}_{\frac{\rho_1}{4}(\mathbb{T}^n)\times \hat G}$.

$$\mathcal{T}(\phi_0,I_0) = (\phi_0 + \mathcal{T}_\phi(\phi_0,I_0), I_0 + \mathcal{T}_I(\phi_0,I_0)).$$

Then, for $(\phi_0,I_0) \in \mathcal{W}_{\frac{\rho_1}{4}(\mathbb{T}^n)\times \hat G}$:

$$ \mathcal{T}_\phi(\phi_0,I_0) = \Psi^*_\phi(\phi_0, I_0^*), \quad \text{ and} \quad \mathcal{T}_I(\phi_0,I_0) = \Psi^*_I(\phi_0,I_0^*) + I_0 - I_0^*.$$

Let us bound the norms of these terms:

$$
\begin{array}{rcl}
 |\Psi_\phi^* (\phi_0,I_0^* )|_\infty & \leq & \frac{1}{c_1}|\Psi^* -\text{id}|_{G^*,(\frac{\rho_1}{4},0),c_1}\\
 & \leq & \frac{16 M K^{\tau+1} \rho_1}{\beta}\frac{2^{10} K^\tau \varepsilon}{\nu^2 \rho_1 \beta}\\
 & \leq & \frac{2^{14} M K^{2\tau+1} \varepsilon}{\nu^2 \beta^2},\\
\end{array}
$$
where we used that $c_1 \geq \frac{\beta}{16 M K^{\tau+1}\rho_1}$. Then,

$$
\begin{array}{rcl}
 \Psi_I^*(\phi_0,I_o^*) & \leq & |\Phi^* - \text{id}|_{G^*,(\frac{\rho_1}{4},0,c_1)}\\
 & \leq & \frac{2^{10} k^\tau \varepsilon}{\nu^2 \rho_1 \beta}. \\
\end{array}
$$

Now it only remains the term $I_0^* -I_0$:

$$|I_0^* - I_0| \leq |(u^*)^{(-1)} - (u)^{(-1)}|_{F^*} \leq \sum_{s\geq 0} \xi_s $$
$$\leq \sum_{s\geq 0} \frac{4 M K^{\tau+1}\varepsilon}{\nu \beta 2^{(\tau+2)s}} \leq \frac{8M K^{\tau+1}\varepsilon}{\nu \beta 2^{(\tau+2)}}.$$

Let us put everything together and use $\hat \rho \leq \nu \rho_1$, $K \leq 2/\hat \rho$ and $\beta = \gamma/L$.

\begin{longtable}{rcl}
 $|\Psi_\phi^* (\phi_0, I_0^* )|_\infty$ & $\leq$ & $\frac{2^{14} M (\frac{2}{\hat \rho})^{2\tau+1}\varepsilon}{\nu^2 (\frac{\gamma}{L})^2}$\\
 &$\leq$& $\frac{2^{2\tau + 15} M L^2}{\nu^2 \hat \rho^{2\tau+1}}\frac{\varepsilon}{\gamma^2}$\\
\end{longtable}

\begin{longtable}{rcl}
 $|\Psi^*(\phi_0,I_0^*)| + |I_0^* - I_0| $ & $\leq$ & $\frac{2^{10}(\frac{2}{\hat \rho})^\tau\varepsilon}{\nu \hat \rho (\frac{\gamma}{L})} + \frac{8M (\frac{2}{\hat \rho})^{\tau+1} \varepsilon}{\nu (\frac{\gamma}{L}) 2^{(\tau+2)}}$\\
 & $=$ & $\frac{2^{10+\tau} L\varepsilon}{\nu \hat \rho^{\tau+1} \gamma} + \frac{8 M 2^{\tau+1} L \varepsilon}{\nu \hat \rho^{\tau+1}\gamma 2^{(\tau+2)}}$\\
 & $\leq$& $\frac{2^{20+\tau} L\varepsilon + M 2^{\tau+4}L\varepsilon}{\nu \hat \rho^{\tau+1}\gamma} \leq \frac{2^{10+\tau} L (1+M)}{\nu \hat \rho^{\tau+1}}\frac{\varepsilon}{\gamma}$\\
\end{longtable}

\item Estimate of the measure

Finally, we carry out the estimate of part \ref{kam:point3}. Let us write $$\hat G^* = (u^*)^{-1}(u(\hat G)).$$
The invariant tori fill the set 
$$\mathcal{T}(\mathbb{T}^n\times \hat G) = \Psi^* (\mathbb{T}^n\times \hat G^*)$$
 i.e. all the tori inside $\mathcal{T}(\mathbb{T}^n\times \hat G)$ are invariant although there are more of them.
Because $\Psi^{(q)}$ are hamiltonian transformations, in particular, they preserve the volume:

$$\text{meas}[\Psi^{(q)}(\mathbb{T}^n\times \hat G^* )] = \text{meas}(\mathbb{T}^n\times \hat G^*) = (2\pi)^{n}\text{meas}(\hat G^*).$$

Now, let us consider the measure of the limit:

$$\text{meas}[\Psi^*(\mathbb{T}^n\times \hat G^*)].$$

To do this we use the superior limit of sets:

$$\bigcap_{n=q}^{\infty} \bigcup_{j=q}^{\infty} (\Psi^{(j)}(\mathbb{T}^n \times \hat G^*)).$$

Because $\Psi^{(j)}(\mathbb{T}^n\times \hat G^*)$ are compact and we have the bound 
$$|\Psi^* - \Psi^{(q)}|_{G^* , (\frac{\rho_1}{4},0), c_1} \leq \frac{2^{10} K^\tau \varepsilon}{\nu^2 \rho_1 \beta 2^{(1+\nu)q}},$$
 $\bigcup_{j=q}^\infty(\Psi^{(j)}(\mathbb{T}^n\times \hat G^*))$ is also compact. All the measures are well-defined and we can say that

$$\text{meas}[\Psi^*(\mathbb{T}^n\times \hat G^* )] \geq (2\pi)^n\text{meas}(\hat G^*).$$

Then, to bound the measure of the complement of the invariant set it is enough to bound the measure of $\mathcal{G}\setminus \hat G^*$.

But first, we are going to define some auxiliary sets. Let $\tilde \beta = \frac{2 \gamma M}{\mu}$, $\tilde\beta_q = (1-\frac{1}{2^{\nu q}})\tilde \beta$. Note that \textcolor{black}{$\tilde \beta \geq \beta$ if and only if $\mu \leq 2ML$ and we assumed $\mu \leq 2^{\tau+6}L^2 M$.}

Then, for $q \geq 0$ we define

$$\tilde F_q = (F - \tilde\beta_q) \setminus \bigcup_{\substack{k\in\mathbb{Z}^n\setminus\{0\} \\ |k|_1 \leq K}} \Delta_{c,\hat q}(k,\frac{\tilde \beta_q}{|k|_1^\tau}), \quad \tilde G_q = (u^{(q)})^{-1}(\tilde F_q)$$

and

$$\tilde F^* = \bigcap_{q\geq 0} \tilde F_q = (F - \tilde\beta) \setminus \bigcup_{\substack{k\in\mathbb{Z}^n\setminus\{0\} \\ |k|_1 \leq K}} \Delta_{c,\hat q}(k,\frac{\tilde \beta}{|k|_1^\tau}), \quad \tilde G^* = \bigcap_{q\geq 0} \tilde G_q.$$

In order to prove the bounds, we need to prove previously the inclusions $\tilde G^* \subset \hat G^*$ and $\tilde G_0 \subset \mathcal{G}$.

\begin{enumerate}
\item $\mathcal{G} \supset \tilde G_0 = (u^{(0)})^{-1}(\tilde F_0) = (u)^{-1}(F-\tilde\beta)$, but we know $u(\mathcal{G}) = F$.

\item $\tilde G^* \subset \hat G^*$. Take $I\in \tilde G^*$, then $I\in \tilde G_q \forall q\geq 0$. Hence $\exists J \in \tilde \tilde F_q \forall q$ such that $u^{(q)}(I)$. Then $\exists J \in \tilde F^*$ such that $u^*(J) = I$.

If we check that $J \in u(\tilde G)$ we obtain $(u^*)^{-1}(J) = I \in \hat G^*$ and we will be done.
We want $\tilde F^* \subset u(\hat G)$. Because we are taking out all the resonances in $\tilde F^*$ it is enough to see $(F - \tilde \beta) \subset u(\mathcal{G} - \frac{2\gamma}{\mu})$. We only need to use that $|\frac{\partial u}{\partial I}|_{\mathcal{G},\rho_2} \leq M$. Then $F - \tilde\beta \subset u(\mathcal{G}-\frac{2\gamma}{\mu})$. This holds if and only if $\frac{\tilde \beta}{M} \leq \frac{2\gamma}{\mu}$ which is true because $\hat \beta \leq \frac{2\gamma M}{\mu}$.

\end{enumerate}

Then, we proceed as follows

\begin{longtable}{rcl}

$\text{meas}(\mathcal{G}\setminus \hat G^*)$ & $\leq$ & $\text{meas}(\mathcal{G}\setminus \tilde G^*)$\\

& $\leq$ & $\text{meas}(\tilde G_0\setminus \tilde G^*)$\\
& $\leq$ & $\sum_{q=1}^{\infty}\text{meas}(\tilde G_{q-1} \setminus  \tilde G_q).$\\

\end{longtable}

For $q \geq 1$ we obtain the following estimate:

$$\text{meas}(\tilde G_{q-1}\setminus \tilde G_q) \leq \frac{1}{|\det(\frac{\partial u^{(q-1)}}{\partial I}(I))|} \text{meas}(\overbrace{\tilde F_{q-1}}^{u^{(q-1)}(\tilde G_{q-1})}\setminus(\overbrace{\tilde F_q - \varepsilon_{q-1})}^{u^{(q-1)}(\tilde G_q)}).$$

Where we have used lemma \ref{lemma:inductive}. Also $\det(\frac{\partial u^{(q-1)}}{\partial I}(I)) \geq \mu_{q-1}^n$ because of the $\mu_{q-1}$-non-degeneracy condition all the eigenvalues have to be greater than $\mu_{q-1}$.

$$
\begin{array}{rcl}
\text{meas}(\tilde G_{q-1} \setminus \tilde G_q) & \leq & \frac{1}{\mu_{q-1}^n}\text{meas}(\tilde F_{q-1} \setminus(\tilde F_q - \varepsilon_{q-1}))\\
 & \leq& \frac{2^n}{\mu^n} \text{meas}(\tilde F_{q-1} \setminus(\tilde F_q - \varepsilon_{q-1})).\\
\end{array}
$$

Now, we are going to apply lemma \ref{lemma:measures_nonresonant} with 
$$\tilde F_{q-1} = F(\tilde \beta_{q-1}, \tilde \beta_{q-1}, K_q-1)$$
and $\tilde F_q = F(\tilde \beta_q, \tilde \beta_q, K_q)$.


Applying the lemma:

$$
\begin{array}{lcl}
\displaystyle \text{meas}(\tilde F_{q-1} \setminus \tilde F_q)  \leq  \displaystyle D(\tilde \beta_q - \tilde \beta_{q-1})\\
 \displaystyle + 2(\text{diam} F)^{n-1}\left(\sum_{\substack{k\in\mathbb{Z}^n\setminus\{0\} \\ |k|_1 \leq K_{q-1}}} \frac{\tilde \beta_q - \tilde \beta_{q-1}}{|k|_1^\tau |k|_{2,\omega}} + \sum_{\substack{k\in\mathbb{Z}^n\setminus\{0\} \\ K_{q-1} \leq |k|_1 \leq K_{q}}} \frac{\tilde \beta_q}{|k|_1^\tau |k|_{2,\omega}} \right)\\
\end{array}
$$

and

$$
\text{meas}(\tilde F_q \setminus (\tilde F_q - \varepsilon_q)) \leq (D + 2^{n+1}(\text{diam} F)^{n-1} K^n)\varepsilon_q.
$$

Putting everything together (and using that $\tilde \beta_0 = 0$), we get

\begin{equation}\label{eq:final_bound}
\begin{array}{rcl}
 \text{meas}(\mathcal{G} \setminus \hat G^*) & \leq & \displaystyle \frac{2^n}{\mu^n}\left( D \tilde \beta + 2(\text{diam}F)^{n-1}\sum_{\substack{k\in\mathbb{Z}^n\setminus\{0\} \\ }}\frac{\tilde \beta}{|k|_1^\tau|k|_{2,\omega}}\right.\\
 & & \quad \displaystyle \left. + D\sum_{q=1}^\infty \varepsilon_{q-1} + 2^{n+1}(\text{diam} F)^{n-1} \sum_{q=1}^{\infty} K_q^n \varepsilon_{q-1} \right).\\
\end{array}
\end{equation}

We now only have to check that the series  in the previous expression converge. Let us check that they converge at $Z$ first and then outside of $Z$. Recall that at $Z$ we take the vectors $\bar k \neq 0$.

\begin{longtable}{rcl}
$\sum_{\substack{k\in\mathbb{Z}^n\setminus\{0\} \\ \textcolor{black}{\bar k \neq 0}}} \frac{1}{|k|_1^\tau |k|_{2,\omega}}$ & $\leq$ & $\sum_{\substack{k\in\mathbb{Z}^n\setminus\{0\} \\ \textcolor{black}{\bar k \neq 0}}} \frac{1}{|k|_1^\tau|\bar k|}$\\
 & $\leq$ & $\sum_{\substack{\bar k\in\mathbb{Z}^{n-1}\setminus\{0\} \\ \textcolor{black}{\bar k \neq 0}}}\sum_{k_n \in \mathbb{Z}} \frac{\sqrt{n}}{(|\bar k|_1 + |k_n|)^\tau|\bar k|_1}$\\
  & $\leq$ & $\sqrt{n} 2^{n-1} \sum_{j=1}^{\infty}\sum_{k_n \in \mathbb{Z}} \frac{j^{n-3}}{j + |k_n|)^\tau}$\\
\end{longtable}
where we used that the number of vectors $\bar k \in \mathbb{Z}^{n-1}$ with $|\bar k|_1 = j \geq 1$ can be bounded by $2^{n-1} j^{n-2}$. This series can be bounded by comparing it to an integral:

$$\sum_{k_n \in \mathbb{Z}} \frac{1}{(j + |k_n|)^\tau} \leq \frac{1}{j^\tau} + 2 \int_0^\infty \frac{dx}{(j+x)^\tau} $$
$$= \frac{1}{j^\tau} + \frac{2}{(\tau+1) j^{\tau-1}} \leq \frac{\tau+1}{\tau-1}\frac{1}{j^{\tau+1}}.$$

Where we used that $\tau > 1$ because $n\geq 2$. Then

$$\sum_{\substack{k\in\mathbb{Z}^{n}\setminus\{0\} \\ \textcolor{black}{\bar k \neq 0}}} \frac{1}{|k|_1^\tau|k|_{2,\omega}}\leq \frac{\sqrt{n} 2^{n-1}(\tau+1)}{\tau-1} \sum_{j=1}^\infty \frac{1}{j^{\tau-n+2}}$$

which converges by the condition $\tau > n-1$.

Now let us check that it converges outside of $Z$.

$$
\begin{array}{rcl}
\sum_{k\in \mathbb{Z}^n\setminus \{0\}}\frac{1}{|k|_1^\tau |k|_{2,\omega}} & = & \sum_{\substack{k \in \mathbb{Z}^n \setminus \{0\} \\ \bar k \neq 0}} \frac{1}{|k|_1^\tau |k|_{2,\omega}} + \sum_{\substack{k \in \mathbb{Z}^n \setminus \{0\} \\ \bar k = 0}} \frac{1}{|k|_1^\tau |k|_{2,\omega}}\\
& = & \sum_{\substack{k \in \mathbb{Z}^n \setminus \{0\} \\ \bar k \neq 0}} \frac{1}{|k|_1^\tau |\bar k|} + \sum_{k_1 \in \mathbb{Z}} \frac{1}{|k|_1^\tau |k_1^2\mathcal{B}(I_1)^2|}\\
\end{array}
$$

We have seen before that the first term converges.
The second term:

$$\sum_{k_1 \in \mathbb{Z}}\frac{1}{|k_1|^\tau|k_1^\tau \mathcal{B}(I_1)^2|} = \frac{1}{\mathcal{B}(I_1)^2}\sum_{k_1 \in \mathbb{Z}}\frac{1}{k_1^{\tau+2}},$$

which converges $\forall I_1 \neq 0$, i.e. outside of $Z$.

Now we go back to the expression \ref{eq:final_bound}.
The other terms of that expression can be bounded simultaneously inside and outside $Z$.
Now we only have to check that the third series converges, because if the third converges so does the second.
We only have to check that $\sum_{q=1}^\infty K_q^n \varepsilon_{q-1}$ converges. We will use that $\varepsilon \leq \frac{8 \varepsilon}{\nu \rho_1 2^{(2\tau+2)(q-1)}}$.

\begin{longtable}{rcl}
$\sum_{q=1}^\infty K_q^n \varepsilon_{q-1}$ & $=$ & $K^n \sum_{q=1}^\infty 2^{n(q-1)} \varepsilon_{q-1}$ \\
 & $=$ & $K^n \sum_{q=1}^\infty \frac{8\varepsilon 2^{n(q-1)}}{\nu \rho_1 2^{(2\tau+2)(q-1)}}$ \\
  & $=$ & $K^n \frac{8\varepsilon}{\nu \rho_1} \sum_{q=1}^\infty \frac{1}{2^{(2\tau+2-n)(q-1)}}.$ \\
\end{longtable}
Which converges if and only if $2\tau+2-n \geq 1$. And we are done because $2\tau \geq n-1$ since $\tau  \geq n-1$ by hypothesis.

Putting everything together:

$$
\begin{array}{llc}
\text{meas}(\mathcal{G}\setminus \hat G^*) \\
 \leq  \displaystyle \frac{2^n}{\mu^n}\left( D^2\frac{2\gamma M}{\mu} + 2(\text{diam}F)^{n-1}\frac{2\gamma M}{\mu}\frac{\sqrt{n} 2^{n-1} (\tau+1)}{\tau-1} \sum_{j=1}^\infty \frac{1}{j^{\tau - n + 2}}\right.\\
 \displaystyle  D\frac{8\varepsilon}{\nu \rho_1}\sum_{q=1}^{\infty}\frac{1}{2^{}(2\tau+2)(q-1)} \\
\displaystyle \left. + 2^{n+1}(\text{diam} F)^{n-1} K^n \frac{8\varepsilon}{\nu \rho_1} \sum_{q=1}^\infty \frac{1}{2^{(2\tau+2-n)(q-1)}}\right)\\
\end{array}
$$

Now using that

$$
\varepsilon \leq \frac{\nu^2 \mu^2 \beta^2}{2^{\tau+30} L^4 M^3 K^{2\tau+1}} \leq \frac{2^{\tau-18}\cdot 8 M K^{\tau+1} \rho_2}{L M K^{2\tau+2}}\gamma \leq \frac{2^{\tau-15}\rho_2}{L K^{\tau+1}}\gamma
$$

We can write $\text{meas}(\mathcal{G}\setminus \hat G^*) \leq C' \gamma$ where $C'$ depends only on $n$,  $\mu$,  $D$,  $\text{diam} F$,  $M$,  $\tau$,  $\rho_1$,  $\rho_2$,  $L$,  $K$ and if we efine $C = (2\pi)^n C'$. Hence,

$$\text{meas}[(\mathbb{T}^n \times \mathcal{G}) \setminus \mathcal{T}(\mathbb{T}^n \hat G)] \leq C \gamma.$$

\end{enumerate}
\end{proof}

\chapter[Desingularization of $b^m$-integrable systems]{Desingularization of $b^m$-integrable systems}

In this chapter, we follow \cite{GMW17}, for the definition of the desingularization of the $b^m$-symplectic form.

\begin{definition}The \textbf{$f_\epsilon$-desingularization} $\omega_\epsilon$ form of $\omega= \frac{dx}{x^{m}}\wedge \left(\sum_{i=0}^{m-1}x^i\alpha_{m-i}\right) + \beta$ is:
$$\omega_\epsilon = df_\epsilon \wedge \left(\sum_{i=0}^{m-1}x^i\alpha_{m-i}\right) + \beta.
$$
Where in the even case, $f_\epsilon(x)$ is defined as $\epsilon^{-(2k -1)}f(x/\epsilon)$.
And $f \in \mathcal{C}^\infty(\mathbb{R})$ is an odd smooth function satisfying $f'(x) > 0$ for all $x \in \left[-1,1\right] $ and satisfying outside that
\begin{equation}
f(x) = \begin{cases}
\frac{-1}{(2k-1)x^{2k-1}}-2& \text{for} \quad x < -1,\\
\frac{-1}{(2k-1)x^{2k-1}}+2& \text{for} \quad x > 1.\\
\end{cases}
\end{equation}
And in the odd case, $f_\epsilon(x) = \epsilon^{-(2k)}f(x/\epsilon)$. And $f \in \mathcal{C}^\infty(\mathbb{R})$ is an even smooth positive function which satisfies: $f'(x) < 0$ if $x < 0$, $f(x) = -x^2 + 2$ for $x \in [-1,1]$, and
\begin{equation}
f(x) = \begin{cases}
\frac{-1}{(2k+2)x^{2k+2}}-2& \text{if } k > 0, x \in \mathbb{R}\setminus[-2,2]\\
\log(|x|)& \text{if } k = 0, x \in \mathbb{R}\setminus[-2,2].\\
\end{cases}
\end{equation}
\end{definition}

\begin{remark}
With the previous definition, we obtain smooth symplectic (in the even case) or smooth folded symplectic (in the odd case) forms that agree outside an $\epsilon$-neighbourhood with the original $b^m$-forms. Moreover, there is a convergence result in terms of $m$. See \cite{GMP17} for the details.
\end{remark}

To simplify notation, we introduce $F_{\epsilon}^{m-i}(x) = (\frac{d}{dx}f_\epsilon(x))x^i$, and hence $F_{\epsilon}^{i}(x) = (\frac{d}{dx}f_\epsilon(x))x^{m-i}$. With this notation the desingularization $\omega_\epsilon$ is written:

$$\omega_\epsilon = \sum_{i = 0}^{m-1} F_{\epsilon}^{m-i}(x) dx\wedge \alpha_{m-i} + \beta. $$

\begin{definition}
The desingularization for $(M,\omega,\mu)$ is the triple $(M,\omega_\varepsilon,\mu_\epsilon)$ where $\omega_\varepsilon$ is defined as above and $\mu_\varepsilon$ is:

$$
\mu    \mapsto \mu_\epsilon = \left(f_{1\epsilon} = \sum_{i = 1}^{m} \hat{c}_i G_{\epsilon}^i(x), f_2(\tilde I,\tilde \phi), \ldots, f_n(\tilde I,\tilde \phi)\right),
$$

where 
$$\mu = \left(f_1 = c_0 \log(x) + \sum_{i = 1}^{m-1} c_i \frac{1}{x^i},f_2(I,\phi) \ldots, f_n(I,\phi)\right)$$ 
$$G_\epsilon^i(x) = \int_0^x F_\epsilon^i(\tau)d\tau,$$
and $\hat{c}_1 = c_0$ and $\hat{c}_{i-1} = -ic_i$ if $i \neq 0$. Also

$$
\left\{
\begin{array}{lrcl}
\tilde I = (\tilde I_1, I_2,\ldots, I_n), &\quad \tilde I_1 & = & \int_0^{I_1}\left(\frac{\sum_{i=1}^m \mathcal{K} \hat c_i F_\varepsilon^i(\tau)}{\sum_{i=1}^m \frac{\mathcal{K} \hat c_j}{\tau^j}}\right) d\tau \\

\tilde \phi_1 = (\tilde \phi_1, \phi_2,\ldots, \phi_n), & \quad \tilde \phi_1 & = & \left(\frac{\sum_{i=1}^m \mathcal{K} \hat c_i F_\varepsilon^i(I_1)}{\sum_{i=1}^m \frac{\mathcal{K} \hat c_j}{I_1^j}}\right) \phi_1
\end{array}
\right.
$$

\end{definition}

\begin{remark}
Observe that with the last definition, when $\epsilon$ tends to $0$, $\mu_\epsilon$ tends to $\mu$.
\end{remark}

\begin{theoremC}
The desingularization transforms a $b^m$-integrable system into an integrable system for $m$ even on a symplectic manifold. For $m$ odd the desingularization transforms it into a folded integrable system. The integrable systems are such that:
$$X_{f_j}^\omega = X_{f_{j\epsilon}}^{\omega_\epsilon}.$$
\end{theoremC}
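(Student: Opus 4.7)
The plan is to reduce Theorem C to a direct vector-field computation in semilocal coordinates adapted to the integrable system. Theorem 2.11 already provides the smoothness of $\omega_\epsilon$ as a symplectic form (for $m$ even) or a folded symplectic form (for $m$ odd), so what remains is to verify that $(\omega_\epsilon,\mu_\epsilon)$ still defines an integrable system and that the Hamiltonian vector fields match. To this end, I would invoke the Action-angle theorem A to work in a chart around a Liouville torus inside $Z$ in which
$$\omega = \Big(\sum_{j=1}^m \tfrac{c_j}{\sigma_1^j}\Big)\,d\sigma_1\wedge d\theta_1 + \sum_{i=2}^n d\sigma_i\wedge d\theta_i,$$
with $f_1 = a_0\log\sigma_1 + \sum a_j/\sigma_1^j$ the singular $b^m$-function and $f_2,\dots,f_n$ smooth functions of the action variables. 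Using the Laurent decomposition built into the $f_\epsilon$-desingularization together with the notation $F_\epsilon^{\,j}(\sigma_1)=f_\epsilon'(\sigma_1)\sigma_1^{m-j}$, the desingularized form reads
$$\omega_\epsilon = \Big(\sum_{j=1}^m c_j F_\epsilon^{\,j}(\sigma_1)\Big)\,d\sigma_1\wedge d\theta_1 + \sum_{i=2}^n d\sigma_i\wedge d\theta_i,$$
so $\omega$ and $\omega_\epsilon$ share the same direct-sum structure and differ only in the conformal factor of the first wedge.

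The core calculation is the comparison of Hamiltonian vector fields. For the singular component, $f_{1\epsilon}=\sum_i\hat c_i G_\epsilon^i(\sigma_1)$ is engineered so that $df_{1\epsilon}=\sum_i\hat c_i F_\epsilon^{\,i}(\sigma_1)\,d\sigma_1$, mirroring $df_1=\sum_i\hat c_i/\sigma_1^i\,d\sigma_1$. Inverting $\omega$ and $\omega_\epsilon$ in their first direct summand and invoking the identification $c_j=\mathcal K\,\hat c_j$ supplied by the action-angle theorem, the ratios
$$\frac{\sum_i \hat c_i/\sigma_1^i}{\sum_j c_j/\sigma_1^j}\quad\text{and}\quad \frac{\sum_i \hat c_i F_\epsilon^{\,i}(\sigma_1)}{\sum_j c_j F_\epsilon^{\,j}(\sigma_1)}$$
both collapse to $1/\mathcal K$, so $X_{f_1}^\omega = X_{f_{1\epsilon}}^{\omega_\epsilon} = \mathcal K^{-1}\,\partial_{\theta_1}$. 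For the remaining $f_j$ with $j\geq 2$, the substitution $(\tilde I_1,\tilde\phi_1)$ that defines $f_{j\epsilon}=f_j(\tilde I,\tilde\phi)$ has been calibrated precisely so that $d\tilde I_1/d\sigma_1 = \big(\sum_k c_k F_\epsilon^{\,k}(\sigma_1)\big)/\big(\sum_k c_k/\sigma_1^k\big)$, with an analogous relation for $\tilde\phi_1$. The chain rule then arranges for the conformal factor relating $\omega_\epsilon$ to $\omega$ to cancel between $\partial_{\sigma_1} f_{j\epsilon}$ and the denominator that appears when inverting $\omega_\epsilon$, so that $X_{f_j}^\omega$ and $X_{f_{j\epsilon}}^{\omega_\epsilon}$ coincide as dynamical systems on the overlap.

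Once the equality of Hamiltonian vector fields is established, the remaining integrable-system axioms for $(\omega_\epsilon,\mu_\epsilon)$ follow without extra effort: Poisson commutativity $\{f_{i\epsilon},f_{j\epsilon}\}_{\omega_\epsilon} = X_{f_j}^\omega(f_{i\epsilon})$ vanishes because $f_{i\epsilon}$ is a function of quantities invariant along the flow of $X_{f_j}^\omega$, and functional independence of $df_{1\epsilon},\ldots,df_{n\epsilon}$ on a dense subset of $M$ follows from the independence of $df_1,\ldots,df_n$ together with the nonvanishing of the Jacobian $d\tilde I_1/d\sigma_1$ away from the fold locus (for odd $m$) or everywhere (for even $m$, where $F_\epsilon^m>0$ throughout). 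The main obstacle I anticipate is bookkeeping the coordinate change $(I,\phi)\leftrightarrow(\tilde I,\tilde\phi)$ globally: outside the action-angle chart one must verify that the construction is intrinsic, but since the Laurent decomposition of $\omega$ is globally defined in a tubular neighborhood of each connected component of $Z$ and both $\omega$ and $\omega_\epsilon$ agree outside such neighborhoods, the local identifications glue into a well-defined global statement, closing the proof.
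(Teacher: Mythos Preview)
Your proposal is correct and follows essentially the same strategy as the paper: a direct coordinate computation showing that the Hamiltonian vector fields coincide, first for the singular component $f_1$ and then for the smooth components $f_j$, $j\geq 2$, via the chain rule with the $(\tilde I_1,\tilde\phi_1)$ substitution. The only notable difference is in the $f_1$ step: you work entirely in action-angle coordinates and use the relation $c_j=\mathcal K\,\hat c_j$ to collapse both ratios to $1/\mathcal K$, whereas the paper stays in the Laurent decomposition $\omega=\sum_i x^{-i}\,dx\wedge\alpha_i+\beta$ and argues geometrically that both vector fields lie in $\ker dx\cap\ker\beta$, reducing the comparison to $\alpha_i(X_{f_1}^\omega)=-\hat c_i=\alpha_i(X_{f_{1\epsilon}}^{\omega_\epsilon})$; your route is slightly more computational but avoids the transversality and invariance claims the paper invokes. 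Your explicit verification of Poisson commutativity and independence for $(\omega_\epsilon,\mu_\epsilon)$, and the gluing remark at the end, are not spelled out in the paper's proof but are natural completions of the argument.
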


\begin{proof}
Let us first check the singular part, i.e. let us check that that $X_{f_1}^\omega = X_{f_{1\epsilon}}^{\omega_\epsilon}$. Let us compute the two equations that define each one of the vector fields. We have to impose $-df_1 = \iota_{X_{f_1}^\omega}\omega$ and $-df_{1\epsilon} = \iota_{X_{f_{1\epsilon}}^{\omega_\epsilon}}\omega_\epsilon$.
But observe first that we can rewrite $\omega = \sum_{i=1}^m \frac{1}{x^i}dx\wedge\alpha_i + \beta$ and $\omega_\epsilon = \sum_{i=1}^m F_\epsilon^idx\wedge\alpha_i + \beta$. The conditions translate as:

$$-\sum_{i = 1}^{m}\hat{c}_{i} \frac{1}{x^i}dx = \iota_{X_{f_1}^\omega}\left(\sum_{i = 1}^{m} \frac{1}{x^i} dx\wedge \alpha_i + \beta\right),$$
$$-\sum_{i = 0}^{m-1}\hat{c}_i F_\epsilon^i(x)dx = \iota_{X_{f_{1\epsilon}}^{\omega_\epsilon}} \left(\sum_{i = 0}^{m-1} F_{\epsilon}^i(x) dx\wedge \alpha_i + \beta\right).$$

Since the toric action leaves the form $\omega$ invariant, in particular, the singular set is invariant, and then $X_{f_{1\epsilon}}^{\omega_\epsilon}$ and $X_{f_{1}}^{\omega}$ are in the kernel of $dx$. Moreover, since $\beta$ is a symplectic form in each leaf of the foliation and $X_{f_{1\epsilon}}^{\omega_\epsilon}$ and $X_{f_{1}}^{\omega}$ are transversal to this foliation, they are also in the kernel of $\beta$.

$$-\sum_{i = 0}^{m-1}\hat{c}_i \frac{1}{x^i}dx = \sum_{i = 0}^{m-1} \frac{1}{x^i} dx\wedge \alpha_i(X_{f_1}^\omega),$$
$$-\sum_{i = 0}^{m-1}\hat{c}_i F_\epsilon^i(x)dx = \sum_{i = 0}^{m-1} F_{\epsilon}^i(x) dx\wedge \alpha_i(X_{f_{1\epsilon}}^{\omega_\epsilon}).$$

Then, the conditions over $X_{f_1}^\omega$ and $X_{f_{1\epsilon}}^{\omega_\epsilon}$ are respectively:

$$-\hat{c}_i = \alpha_i(X_{f_1}^\omega),$$
$$-\hat{c}_i = \alpha_i(X_{f_{1\epsilon}}^{\omega_\epsilon}).$$

Then, the two vector fields have to be the same.

Let us now see $X_{f_j}^\omega = X_{f_{j\epsilon}}^{\omega_\epsilon}$ for $j > 1$.
Assume now we have the $b^m$-symplectic form in action-angle coordinates $\omega = \sum_{i=1}^m \frac{\mathcal{K}\hat c_i}{I_1^i}dI_1\wedge d\phi_1 + \sum_{i=1}^n dI_i\wedge d\phi_i$.

The differential of the functions are

$$
\begin{array}{rcl}
d f_i^\varepsilon & = & \frac{\partial f_i^\varepsilon}{\partial I_1} dI_1 + \frac{\partial f_i^\varepsilon}{\partial \phi_1} d\phi_1 + \sum_{j= 2}^n\left(\frac{\partial f_i^\varepsilon}{\partial I_j} dI_j + \frac{\partial f_i^\varepsilon}{\partial \phi_j} d\phi_j\right)\\
 & = & \frac{\partial f_i}{\partial I_1}\left(\frac{\sum_{i=1}^m \mathcal{K} \hat c_i F_\varepsilon^i(\tau)}{\sum_{i=1}^m \frac{\mathcal{K} \hat c_j}{\tau^j}}\right) dI_1 + \frac{\partial f_i}{\partial \phi_1}\left(\frac{\sum_{i=1}^m \mathcal{K} \hat c_i F_\varepsilon^i(\tau)}{\sum_{i=1}^m \frac{\mathcal{K} \hat c_j}{\tau^j}}\right) d\phi_1 \\
 & & \quad + \sum_{j= 2}^n\left(\frac{\partial f_i^\varepsilon}{\partial I_j} dI_j + \frac{\partial f_i^\varepsilon}{\partial \phi_j} d\phi_j\right).\\
\end{array}
$$

On the other hand, the desingularized form is:

$$\omega^\varepsilon = \sum_{j=1}^m \mathcal{K} \hat c_i F_\varepsilon^j (I_1) d I_1 \wedge d\phi_1 + \sum_{j=2}^m dI_j \wedge d\phi_j.$$

Hence, one can see that the expression for both $X_{f_j}^\omega$ and $X_{f_{j\epsilon}}^{\omega_\epsilon}$ is

$$X_{f_j}^\omega = X_{f_{j\epsilon}}^{\omega_\epsilon} = \frac{\frac{\partial f_i}{\partial I_1}}{\sum_{i=1}^m\frac{\mathcal{K}\hat c_i}{I_1^i}}\frac{\partial}{\partial \phi_1} - \frac{\frac{\partial f_i}{\partial \phi_1}}{\sum_{i=1}^m\frac{\mathcal{K}\hat c_i}{I_1^i}}\frac{\partial}{\partial I_1} +\sum_{j= 2}^n\left(\frac{\partial f_i^\varepsilon}{\partial I_j} dI_j + \frac{\partial f_i^\varepsilon}{\partial \phi_j} d\phi_j\right)$$

\end{proof}

\begin{remark}
The previous lemma tells us that the dynamics of the desingularized system are identical to the dynamics of the original $b^m$-integrable system in the $b^m$-symplectic manifold.
\end{remark}

Hence the desingularized $b^m$-form goes to a folded symplectic form in the case $m=2k+1$ and to symplectic for $m=2k$. And the $b^m$-integrable system goes to a folded integrable system (see \cite{EvaRobert}) in the case $m=2k+1$ and to a standard integrable system for $n=2k$.


\chapter[$b^m$-KAM Desingularization]{Desingularization of the KAM theorem on $b^m$-symplectic manifolds}

The idea of this section is to recover some version of the classical KAM theorem by ``desingularizing the $b^m$-KAM theorem", as well as a new version of a KAM theorem that works for folded symplectic forms. Observe that no KAM theorem is known for folded symplectic forms. The best that is known is a KAM theorem for presymplectic structures that was done in \cite{presymplectic}. Desingularizing the KAM means applying the $b^m$-KAM in the $b^m$-manifold and then translating the result to the desingularized setting.

To be able to obtain proper desingularized theorems we need to identify which integrable systems can be obtained as a desingularization of a $b^m$-integrable system. To simplify computations we are going to use a particular case of $b^m$-integrable systems, where $f_1=\frac{1}{I_1^{m-1}}$. We call these systems \emph{simple}. Observe that by taking a particular case of $b^m$-integrable systems we will not get all the systems that can be obtained by desingularizing a $b^m$-integrable system, but some of them.

\begin{enumerate}
\item \textbf{Even case} $m = 2k$.

$F=(f_1 = \frac{1}{I_1^{2k-1}},f_2 ,\ldots, f_n)$, $\omega = \frac{1}{I_1^m} d I_1 \wedge d\phi_1 + \sum_{j=1}^n dI_j \wedge d\phi_j$.
Observe that close to $Z$ in the even case we can assume $f(I_1) = c I_1$ for some $c > (2 -\frac{1}{2^{2k-1}})$. Then $f_\varepsilon(I_1) = \frac{1}{\varepsilon^{(2k-1)}}\frac{c I_1}{\varepsilon}= c' I$, hence $\omega_\varepsilon = c' dI_1\wedge d\phi_1 + \sum_{j=1}^n dI_j \wedge d\phi_j$. Also $F_\varepsilon^m(I_1) = c'$, $G_\varepsilon^m(I_1) = c' I_1$.
Then,

$$
\left\{
\begin{array}{rcl}
\tilde I_1 & = & \int_0^{I_1}\frac{c'}{1/\tau^m}d \tau = \int_0^{I_1} c' \tau^m d\tau = c'\frac{I_1^{m+1}}{m+1},\\
\tilde \phi_1 & = & \frac{c'}{1/I_1^m}\phi_1 = c' I_1^m \phi_1
\end{array}
\right.
$$

\begin{equation}\label{eq:desingularized_even}
F^\varepsilon = ((m-1)c_{m-1}c' I_1, f_2(\tilde I, \tilde \phi), \ldots f_n(\tilde I, \tilde \phi)).
\end{equation}

Hence, the systems in this form can be viewed as a desingularization of a $b^m$-integrable system.

\begin{theoremD}[Desingularized KAM for symplectic manifolds]
Consider a neighborhood of a Liouville torus of an integrable system $F_\varepsilon$ as in \ref{eq:desingularized_even} of a symplectic manifold $(M, \omega_\varepsilon)$ semilocally endowed with coordinates $(I,\phi)$, where $\phi$ are the angular coordinates of the torus, with $\omega_\varepsilon = c' dI_1 \wedge d\phi_i + \sum_{j= 1}^n dI_j\wedge d\phi_j$. Let $H=(m-1)c_{m-1}c' I_1 + h(\tilde I) + R(\tilde I,\tilde \phi)$ be a nearly integrable system where
$$
\left\{
\begin{array}{rcl}
\tilde I_1 & = & c'\frac{I_1^{m+1}}{m+1},\\
\tilde \phi_1 & = & c' I_1^m \phi_1 ,
\end{array}
\right.
$$
and
$$
\left\{
\begin{array}{rcl}
\tilde I & = & (\tilde I_1, I_2, \ldots, I_n),\\
\tilde \phi & = & (\tilde \phi_1, \phi_2, \ldots, \phi_n).
\end{array}
\right.
$$
Then the results for the $b^m$-KAM theorem \ref{th:bm_kam} applied to $H_{\text{sing}} = \frac{1}{I_1^{2k-1}} + h(I) + R(I,\phi)$ hold for this desingularized system.
\end{theoremD}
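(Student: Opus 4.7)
The plan is to reduce Theorem D to a direct application of the $b^m$-KAM theorem (Theorem B) combined with the desingularization correspondence established in Theorem C. I would begin by viewing the ``singular avatar'' $H_{\text{sing}} = \frac{1}{I_1^{2k-1}} + h(I) + R(I,\phi)$ as a $b^m$-Hamiltonian on the $b^m$-symplectic manifold $(M, \omega)$ where $\omega = \frac{1}{I_1^m}\, dI_1 \wedge d\phi_1 + \sum_{j\geq 2} dI_j \wedge d\phi_j$. The explicit formulas in the statement identify this data as precisely the simple $b^m$-integrable system whose deblogging produces $F_\varepsilon$ and $\omega_\varepsilon$ of the hypothesis, so Theorem B applies verbatim to $H_{\text{sing}}$ and yields a real continuous map $\mathcal{T}$ parametrizing a Cantor family of invariant tori with the estimates and measure bound listed in its conclusions.

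The heart of the proof is then to transport these tori to the desingularized side. By Theorem C, the Hamiltonian vector fields satisfy $X_{f_j}^\omega = X_{f_{j\varepsilon}}^{\omega_\varepsilon}$, so the polynomial change of coordinates $(I_1, \phi_1) \mapsto (\tilde I_1, \tilde \phi_1)$ defined by $\tilde I_1 = c' I_1^{m+1}/(m+1)$, $\tilde \phi_1 = c' I_1^m \phi_1$ intertwines the flow of $H_{\text{sing}}$ with that of $H$. Consequently, every invariant torus of $H_{\text{sing}}$ produced by Theorem B becomes, after pullback by this coordinate change, an invariant torus for $H$ with frequency vector identified through the correspondence $u \leftrightarrow u'$ (the first component being sent to $1/\mathcal{K}'$ on $Z$, as noted in Section 4.3). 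The conjugation $\mathcal{T}_\varepsilon := \Theta \circ \mathcal{T}$, with $\Theta$ the desingularizing change of variables, inherits continuity, analyticity in the angles, and the corresponding estimates, up to multiplicative constants that depend only on $c'$, $m$, and $k$.

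The remaining verification is bookkeeping: I would check that the analyticity, $\mu$-nondegeneracy, one-to-one condition and $D$-set hypothesis imposed on $u = \partial h/\partial I$ on the $b^m$-side are preserved by the pullback, since $\tilde I$ and $\tilde \phi$ depend only on $I_1$ and $\phi_1$, leaving the other action-angle pairs untouched. The Diophantine conditions in terms of the twisted combination $k\bar{\mathcal{B}}u + k\bar{\mathcal{A}}$ specialize, in the simple case $\omega_\varepsilon = c'dI_1 \wedge d\phi_1 + \cdots$, to the standard Diophantine condition rescaled by $c'$, and the measure estimate of Theorem B item (3) transfers with a constant absorbed by the smooth Jacobian of $\Theta$ on the relevant compact set.

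The main obstacle will be controlling the behavior near $I_1 = 0$, where the Jacobian $\det\bigl(\partial(\tilde I_1,\tilde\phi_1)/\partial(I_1,\phi_1)\bigr)$ vanishes. Although away from $\{I_1 = 0\}$ the map $\Theta$ is a diffeomorphism and the pullback of a torus is a torus, I must ensure that the pulled-back family $\mathcal{T}_\varepsilon(\mathbb{T}^n \times \hat G)$ truly fills a relatively open portion of a neighborhood on the symplectic side and that the resonance gaps do not magnify under the pullback. In the even case $m=2k$ treated here, $\Theta$ is a smooth polynomial map and the argument reduces to tracking explicit constants rather than handling any genuine singularity, so once the comparison of frequencies and Diophantine conditions is pinned down, the conclusion follows.
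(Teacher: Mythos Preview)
Your approach is essentially the one the paper takes: the paper presents Theorem~D as an immediate consequence of the computation showing that $(M,\omega_\varepsilon,F_\varepsilon,H)$ is precisely the desingularization of the simple $b^m$-system $(M,\omega,F,H_{\text{sing}})$, together with Theorem~C and Theorem~B, and offers no further proof. One small simplification relative to your write-up: Theorem~C asserts that $X_{f_j}^\omega = X_{f_{j\varepsilon}}^{\omega_\varepsilon}$ as vector fields on the \emph{same} manifold in the \emph{same} coordinates $(I,\phi)$, so the invariant tori produced by Theorem~B for $H_{\text{sing}}$ are literally the invariant tori for $H$---no pullback $\Theta$ is required, and your concerns about the Jacobian of $\Theta$ near $I_1=0$ do not arise.
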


\begin{remark} This theorem is not as general as the standard KAM, but we also know extra information about the dynamics. For instance, the perturbation of trajectories in tori inside of $Z$ will be trajectories lying inside of $Z$. In this sense, the theorem is new because it leaves invariant an hypersurface of the manifold.
\end{remark}

\item \textbf{Odd case} $m = 2k+1$.

$F = (f_1 = \frac{1}{I_1^{2k}}, f_2,\ldots,f_n)$ and $\omega = \frac{1}{I_1^{2k+1}} dI_1 \wedge d\phi_1 + \sum_{j=1}^n dI_j\wedge d \phi_j$.
Before continuing we need the following notions defined in \cite{EvaRobert}.

\begin{definition}
A function $f:M\rightarrow \mathbb{R}$ in a folded symplectic manifold $(M,\omega)$ is folded if $df|_Z(v)=0$ for all $v\in V=\textup{ker}\omega|_Z$.
\end{definition}

\begin{definition}
An integrable system in a folded symplectic manifold $(M,\omega)$ with critical surface $Z$ is a set of functions $F =(f_1,\ldots,f_n)$ such that they define Hamiltonian vector fields which are independent ($df_1\wedge \ldots \wedge d f_n \neq 0$ in the folded cotangent bundle) on a dense subset of $Z$ and $M$, and commute with respect to $\omega$.
\end{definition}

Note that we need to prove that the desingularized functions in this case are folded.

Observe that close to $Z$ in the odd case we can assume $f(I_1) = -I_1^2 + 2$. Then $f_\varepsilon(I_1) = \varepsilon^{-(2k)} f(\frac{I_1}{\varepsilon}) = \frac{1}{\varepsilon^{2k}}(-(\frac{I_1}{\varepsilon})^2 + 2) = c I_1^2 + \frac{2}{\varepsilon^{2k}}$. Then

$$\omega_\varepsilon = 2cI_1 dI_1 \wedge d\phi_1 + \sum_{j=1}^n dI_j \wedge d\phi_j.$$

Also $F_\varepsilon^m(I_1) = 2c I_1$, $G_\varepsilon^m(I_1) = cI_1^2$. Then,

$$
\left\{
\begin{array}{rcl}
\tilde I_1 & = & \int_0^{I_1}\frac{2 c \tau}{1/\tau^m} d\tau = 2c \frac{I_1^{(m+2)}}{(m+2)},\\
\tilde \phi_1 & = & 2c I_1^{m+1}\phi_1
\end{array}
\right.
$$

Then the desingularized moment map becomes

\begin{equation}\label{eq:desingularized_odd}
F^\varepsilon = ((m-1)c_{m-1}c I_1^2, f_2(\tilde I, \tilde \phi), \ldots f_n(\tilde I, \tilde \phi)).
\end{equation}

It is a simple computation to check that these functions are actually folded and hence they form a folded integrable system. Note that the systems of the form \ref{eq:desingularized_odd} can be viewed as a desingularization of a $b^m$-integrable system. Then, as we proceeded in the even case:

\begin{theoremE}[Desingularized KAM for folded symplectic manifolds]
Consider a neighborhood of a Liouville torus of an integrable system $F_\varepsilon$ as in \ref{eq:desingularized_odd} of a folded symplectic manifold $(M, \omega_\varepsilon)$ semilocally endowed with coordinates $(I,\phi)$, where $\phi$ are the angular coordinates of the Torus, with $\omega_\varepsilon = 2cI_1 dI_1 \wedge d\phi_1 + \sum_{j=2}^m dI_j \wedge d\phi_j$.
Let $H = (m-1)c_{m-1} cI_1^2 + h(\tilde I) + R(\tilde I, \tilde \phi)$ a nearly integrable system with
$$
\left\{
\begin{array}{rcl}
\tilde I_1 & = &  2c\frac{I_1^{m+2}}{m+2},\\
\tilde \phi_1 & = &  2c I_1^{m+1} \phi_1 ,
\end{array}
\right.
$$
and
$$
\left\{
\begin{array}{rcl}
\tilde I & = & (\tilde I_1, I_2, \ldots, I_n),\\
\tilde \phi & = & (\tilde \phi_1, \phi_2, \ldots, \phi_n).
\end{array}
\right.
$$
Then the results for the $b^m$-KAM theorem \ref{th:bm_kam} applied to $H_{\text{sing}} = \frac{1}{I_1^{2k}} + h(I) + R(I,\phi)$ hold for this desingularized system.
\end{theoremE}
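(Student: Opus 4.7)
The plan is to deduce Theorem E from Theorem B (the $b^m$-KAM theorem) by inverting the desingularization construction of the previous chapter. The hypotheses are rigged precisely so that $(\omega_\varepsilon, F_\varepsilon)$ is the $f_\varepsilon$-desingularization of a simple odd $b^m$-integrable system, namely the one with $f_1 = 1/I_1^{2k}$ and $b^m$-symplectic form $\omega = \frac{1}{I_1^{m}} dI_1 \wedge d\phi_1 + \sum_{j=2}^{n} dI_j\wedge d\phi_j$ where $m=2k+1$. The computation preceding the statement already verifies that, with $f(I_1) = -I_1^2 + 2$ near $Z$, one obtains $\omega_\varepsilon = 2c I_1\, dI_1\wedge d\phi_1 + \sum_{j\geq 2} dI_j\wedge d\phi_j$ and the transformation $\tilde I_1 = 2c I_1^{m+2}/(m+2)$, $\tilde\phi_1 = 2c I_1^{m+1}\phi_1$, so the nearly integrable Hamiltonian $H$ in the statement is exactly the pullback under desingularization of $H_{\mathrm{sing}}=\tfrac{1}{I_1^{2k}}+h(I)+R(I,\phi)$.

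First I would apply Theorem B ($b^m$-KAM) to $H_{\mathrm{sing}}$ on the $b^m$-symplectic manifold $(M,\omega)$, under the size/Diophantine assumptions transported from the hypotheses via the explicit change of coordinates $(\tilde I,\tilde\phi)\leftrightarrow(I,\phi)$. Theorem B produces a real continuous map $\mathcal T: \mathcal W_{\rho_1/4}(\mathbb T^n)\times \hat G \to \mathcal D_\rho(\mathcal G)$, analytic in the angular variables, whose image foliates a large measure subset by invariant Liouville tori with frequency vector $u(I)$ and with the quantitative estimates (\ref{kam:point2}) and (\ref{kam:point3}).

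Next, I would transfer these tori through the desingularization. By Theorem C, the desingularization identifies Hamiltonian vector fields: $X_{f_j}^{\omega} = X_{f_{j\varepsilon}}^{\omega_\varepsilon}$ for every component of the moment map, and in particular for the full Hamiltonians $H_{\mathrm{sing}}$ and $H$ because $H$ is obtained from $H_{\mathrm{sing}}$ by pullback under the desingularization diffeomorphism (away from the critical set this diffeomorphism is the identity, and at/near $Z$ it is the substitution $\tilde I_1 = 2c I_1^{m+2}/(m+2)$, $\tilde\phi_1 = 2c I_1^{m+1}\phi_1$). Consequently, each invariant torus produced by Theorem B maps to an invariant torus of $H$ in $(M,\omega_\varepsilon)$, the frequency vectors are preserved componentwise (after the coordinate change), and the measure estimate $\mathrm{meas}[(\mathbb T^n\times\mathcal G)\setminus\mathcal T(\mathbb T^n\times\hat G)]\leq C\gamma$ is preserved up to a bounded Jacobian factor since the desingularization is a diffeomorphism outside $Z$ and extends continuously across $Z$.

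The main obstacle is that the desingularization is not a symplectomorphism (it intertwines a $b^m$-symplectic form with a folded one, and in particular degenerates on $Z$), so one cannot appeal to symplectic invariance of tori. The identification must instead go through the dynamical statement in Theorem C: it is the coincidence $X_{f_j}^\omega=X_{f_{j\varepsilon}}^{\omega_\varepsilon}$ that guarantees invariant sets of the $b^m$-system are invariant sets of the folded system. A secondary technical point is to check that the Diophantine and non-degeneracy hypotheses assumed on the folded side translate to the assumptions of Theorem B after the non-linear change of variables in $(I_1,\phi_1)$; since this change is analytic and non-degenerate off $Z$ (its Jacobian is $2cI_1^{m+1}\cdot 2cI_1^{m+1}=4c^2 I_1^{2(m+1)}$), the translation of bounds is routine but must be done on a shrunk neighbourhood of the Liouville torus so that the constants in (\ref{eq:int_kam1})–(\ref{eq:int_kam3}) remain valid. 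Once this is in place, conclusions (\ref{kam:point1})–(\ref{kam:point3}) of Theorem B are read off in the folded coordinates, yielding Theorem E.
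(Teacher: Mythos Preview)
Your proposal is correct and follows essentially the same route as the paper: recognize that the folded system $(M,\omega_\varepsilon,H)$ arises as the desingularization of the simple odd $b^m$-system $(M,\omega,H_{\mathrm{sing}})$, apply Theorem~B to $H_{\mathrm{sing}}$, and invoke Theorem~C to conclude that the Hamiltonian vector fields (hence the dynamics, hence the invariant tori, frequencies, and measure estimates) literally coincide. The paper gives no separate proof block for Theorem~E; the argument is just the computation preceding the statement together with Theorem~C and the phrase ``as we proceeded in the even case.''

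One simplification relative to your write-up: your discussion of Jacobian factors for the measure estimate and of ``transferring'' Diophantine/non-degeneracy hypotheses through the non-linear change $(I,\phi)\leftrightarrow(\tilde I,\tilde\phi)$ is unnecessary. The desingularization does not change the underlying manifold or the coordinates $(I,\phi)$; it only replaces $\omega$ by $\omega_\varepsilon$ and the functions $f_j$ by $f_{j\varepsilon}$. Since Theorem~C gives $X^{\omega}_{H_{\mathrm{sing}}}=X^{\omega_\varepsilon}_{H}$ as vector fields on the \emph{same} manifold, the invariant tori, the map $\mathcal T$, and the Lebesgue measure bound from Theorem~B are inherited verbatim with no coordinate change at all. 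The substitution $(\tilde I,\tilde\phi)$ enters only in the definition of $H$ from $H_{\mathrm{sing}}$, not as a map one must push tori through.
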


\begin{remark}
The last two theorems can be improved if we consider $b^m$-integrable systems not necessarily \emph{simple}.
\end{remark}

\end{enumerate}

\chapter{ Applications to Celestial mechanics}\label{ch:binvitation}

The theory presented in this monograph establishes a formalized approach to perturbation theory in singular situations that manifest in real-world physical systems. We offer numerous illustrations from Celestial mechanics to support this theory, and conclude by outlining the potential applications of our KAM theory in detecting periodic trajectories.

Within this chapter, we showcase a variety of examples from Celestial Mechanics that involve the occurrence of singular symplectic forms. A few of these instances are elaborated upon in \cite{binvitation}. The majority of these singularities arise as a result of implementing "regularization" methods. We encourage readers to consult the book \cite{knauf} for a pedagogical approach to the study of regularization. 

This compilation of examples is of particular significance for this booklet, as the theoretical outcomes we attain, such as action-angle coordinates or KAM, can be readily employed in the range of problems outlined below.

Structures that are symplectic almost everywhere can emerge from coordinate transformations  that do not preserve the canonical symplectic structure. 

For instance: For the Kepler problem given a configuration space $\mathbb{R}^2$ and phase space $T^*\mathbb{R}^2$,  the traditional (canonical) Levi-Civita transformation described as follows: identify $\mathbb{R}^2\cong\mathbb{C}$ so that $T^*\mathbb{R}^2\cong T^*\mathbb{C}\cong \mathbb{C}^2$ and treat $(q,p)$ as complex variables $(q_1+iq_2:=u,p_1+ip_2:=v)$. Take the following change of coordinates $(q,p) = (u^2/2, v/\bar{u})$, where $\bar{u}$ denotes the complex conjugation of~$u$. The resulting coordinate change can easily be seen to preserve the canonical symplectic form. 
Nevertheless, this canonical transformation can increase the complexity of the Hamiltonian equations, making it more challenging to analyze the dynamical aspects of the system. Hence, it becomes intriguing to explore alternative coordinate transformations that do not preserve the symplectic form. Some of them induce new singular forms where our geometrical and dynamical techniques can be applied.

Other examples are discussed in \cite{DKM17}.

\section{The Kepler Problem}

In suitable coordinates in $T^*\left(\mathbb{R}^2\setminus\{0\}\right)$, the Kepler problem has Hamiltonian
\begin{equation}
 H(q,p)=\frac{\|p\|^2}{2}-\frac{1}{\|q\|}.
\end{equation}
With the canonical Levi-Civita transformation $(q,p) = (u^2/2, v/\bar{u})$, this expression becomes
\begin{equation}
 H(u,v)=\frac{\|v\|^2}{2\|\bar{u}\|^2}-\frac{1}{\|u\|^2}.
\end{equation}

To avoid the complications arising from preserving the canonical symplectic form, we suggest an alternative approach. By retaining the momentum as is, we can consider a transformation of the form $(q,p) = (u^2/2, p)$, which may lead to a simpler Hamiltonian. However, this transformation is not a symplectomorphism, and as a result, the symplectic form on $T^*\mathbb{R}^2$ pulls-back under the transformation to a two-form  which is symplectic almost everywhere but degenerates on a hypersurface of $T^*\mathbb{R}^2$

Namely, the Liouville one-form $p_1 dq_1+p_2 dq_2=\Re(p d\bar{q})$ pulls back to
\begin{eqnarray*}
    \theta=\Re\left(p d\left(\frac{\bar{u}^2}{2}\right)\right)&=&\Re\left(p \bar{u}d\bar{u}\right)\\
    &=&p_1(u_1du_1-u_2du_2)+p_2(u_2du_1+u_1du_2)
\end{eqnarray*}
and the associated $2$-form  $-d\theta$ yields a form that is almost everywhere symplectic 
\begin{eqnarray*}
   \omega=u_1du_1\wedge dp_1 - u_2 du_1 \wedge dp_2 + u_2 du_2 \wedge dp_1 + u_1 du_2\wedge dp_2.
\end{eqnarray*}

To examine the characteristics of this form, we take its wedge product with itself, yielding:
\begin{eqnarray*}
   \omega\wedge\omega=(u_1^2-u_2^2) du_1 \wedge dp_1 \wedge du_2\wedge dp_2
\end{eqnarray*}
which is degenerate along the hypersurface given by $u_1=\pm u_2$.

We now consider the restriction of the form to the critical set. It does not have maximal rank so it is not a folded symplectic structure.
This form is  degenerately folded and the folding hypersurface is not regular and is described by the equations $u_1=\pm u_2$.

\section{The Problem of Two Fixed Centers}

We now regularize the problem of two fixed centers. 

The problem of two fixed centers is associated to the motion of a satellite moving in a gravitational potential generated by two fixed massive bodies. Additionally, we assume that the satellite's motion is limited to a plane in $\mathbb{R}^3$ that includes the two massive bodies.

The Hamiltonian function in suitable coordinates reads:
\begin{equation}
    H=\frac{p^2}{2m}-\frac{\mu}{r_1}-\frac{1-\mu}{r_2}
\end{equation}
where $\mu$ is the mass ratio of the two bodies (i.e. $\mu=\frac{m_1}{m_1+m_2})$.

Euler was the first to demonstrate the integrability of this problem, using \emph{elliptic} coordinates in which the coordinate lines are confocal ellipses and hyperbolas.

Explicitly, consider a coordinate system in which the two centers are placed at $(\pm 1, 0)$, in which the (Cartesian) coordinates are given by $(q_1, q_2)$. Then the elliptic coordinates of the system are given by
\begin{align}
q_1&=\sinh\lambda\cos\nu\\
q_2&=\cosh\lambda\sin\nu
\end{align}
for $(\lambda, \nu)\in \mathbb{R}\times S^1$. Thus lines of $\lambda=c$ and $\nu=c$ are given by confocal hyperbola and ellipses in the plane, respectively. Similar to the Levi-Civita transformation this results in a double-branched covering with branch points at the centers of attraction.

Pulling back the canonical symplectic structure $\omega=dq \wedge dp$ we find
\begin{equation}
\omega= \cosh\lambda \cos\nu (d\lambda \wedge dp_1+d\nu \wedge dp_2) -\sinh\lambda\sin\nu(d\nu \wedge dp_1 + d\lambda \wedge d p_2)
\end{equation}
which is degenerate along the hypersurface $(\lambda,\nu)$ satisfying $\cosh\lambda\cos\nu=\sinh\lambda\sin\lambda$.

\section{Double Collision and McGehee coordinates}\label{Sec:Doublecollision}

In this section, we describe another example of $b$-symplectic structure appearing quite naturally in physical dynamical systems. From this example, it would seem natural that a collection of different examples for $b^m$-symplectic models or even $b^m$-folded models would follow. But one finds a major problem while pursuing these examples.  By examining why this example cannot be extended to construct $b^m$-symplectic models or $b^m$-folded models for any $m$, we can identify a general pattern.

First, let us introduce the McGehee coordinate change for the problem of double collision.

The system of two particles moving under the influence of the generalized potential $U(x) = -|x|^{-\alpha}$, $\alpha > 0$, where $|x|$ is the distance between the two particles, is studied by McGehee in \cite{McGehee}. We fix the center of mass at the origin and hence can simplify the problem to the one of a single particle moving in a central force field.

The equation of motion can be written as,
\begin{equation}
\ddot{x} = -\nabla U(x) = -\alpha |x|^{-\alpha-2}x
\end{equation}
where the dot represents the derivative with respect to time. In the Hamiltonian formalism, this equation becomes
\begin{equation}
\begin{array}{rcl}
\dot{x} & = &  y, \\
\dot{y} & = & -\alpha |x|^{-\alpha-2}x.
\end{array}
\end{equation}
To study the behavior of this system, the following change of coordinates is suggested in \cite{McGehee}:
\begin{equation}\label{eq:mcgeheechange}
\begin{array}{rcl}
x & = & r^\gamma e^{i\theta}, \\
y & = & r^{-\beta\gamma}(v + iw)e^{i\theta}
\end{array}
\end{equation}
where the parameters $\beta$ and $\gamma$ are related with $\alpha$ as follows:
\begin{equation}\label{eq:relations}
\begin{array}{rcl}
\beta & = & \alpha/2, \\
\gamma & = & 1/(1 + \beta).
\end{array}
\end{equation}
Identifying the plane $\mathbb{R}^2$ with the complex plane $\mathbb{C}$, we can write the symplectic form of this problem as $\omega = \Re (dx\wedge  d\overline{y})$.

\begin{remark}
To check that a form $\omega$ is actually a $b^m$-symplectic form, it is not enough to check that the multi-vector field dual to $\omega\wedge\omega$ is a section of $\bigwedge^{2n}(^{b^m}TM)$  which is transverse to the zero section. One has to check additionally that the Poisson structure dual to $\omega$ itself is a proper section of $\bigwedge^{2}(^{b^m}TM)$.
\end{remark}

\begin{proposition} Under the coordinate change (\ref{eq:mcgeheechange}), the symplectic form $\omega$ is sent to a $b$-symplectic structure for $\alpha = 2$.
\end{proposition}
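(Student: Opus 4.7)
The plan is a direct, hands-on computation in McGehee coordinates, followed by verification of the three structural requirements: (i) $\omega$ is a genuine $b$-form, (ii) $\omega$ is non-degenerate as such, and (iii) its dual bivector is a section of $\bigwedge^2(\,^{b}TM)$ (see the warning in the remark preceding the statement).

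First I would specialize the parameters. From $\alpha = 2$ and the relations (\ref{eq:relations}) one gets $\beta = 1$ and $\gamma = 1/2$, so the transformation (\ref{eq:mcgeheechange}) becomes $x = r^{1/2}e^{i\theta}$ and $y = r^{-1/2}(v+iw)e^{i\theta}$. I would then differentiate both sides, keeping $\alpha$, $\beta$, $\gamma$ symbolic for the moment in order to see exactly where the singular exponent appears. A direct calculation gives
\begin{align*}
dx &= \gamma r^{\gamma-1} e^{i\theta}\,dr + i r^{\gamma}e^{i\theta}\,d\theta,\\
d\bar y &= -\beta\gamma r^{-\beta\gamma-1}(v-iw)e^{-i\theta}\,dr + r^{-\beta\gamma}(dv - i\,dw)e^{-i\theta} - i r^{-\beta\gamma}(v-iw)e^{-i\theta}\,d\theta.
\end{align*}

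Next I would expand $dx\wedge d\bar y$, collect terms, and take the real part. The phases $e^{\pm i\theta}$ cancel pairwise, and after grouping one obtains
\[
\Re(dx\wedge d\bar y) = \gamma\, r^{\gamma-\beta\gamma-1}\,dr\wedge dv \;+\; \gamma(\beta-1)w\, r^{\gamma-\beta\gamma-1}\,dr\wedge d\theta \;+\; r^{\gamma-\beta\gamma}\,d\theta\wedge dw.
\]
Plugging in $\gamma = 1/2$, $\beta = 1$, the middle term vanishes because of the factor $(\beta-1)$ and the exponents collapse to $\gamma-\beta\gamma-1 = -1$ and $\gamma-\beta\gamma = 0$. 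This leaves
\[
\omega \;=\; \tfrac{1}{2}\,\frac{dr}{r}\wedge dv \;+\; d\theta\wedge dw,
\]
which is manifestly in the Laurent form (\ref{eq:decomposition}) with defining function $r$, hence a $b$-form of degree $2$ on the $b$-manifold $(M, Z)$ with $Z = \{r=0\}$.

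To finish I would verify the non-degeneracy and the Poisson-side condition flagged in the remark. Wedging gives $\omega\wedge\omega = \frac{dr}{r}\wedge dv \wedge d\theta\wedge dw$, which is a nowhere vanishing section of $\bigwedge^{4}(\,^{b}T^{*}M)$, so $\omega$ is a $b$-symplectic form in the sense of the definition preceding Radko's theorem. Dualizing in the local $b$-basis $\{r\,\partial_r,\partial_v,\partial_\theta,\partial_w\}$ (cf.\ (\ref{eq:generatebvectors})) yields
\[
\Pi \;=\; 2\, r\frac{\partial}{\partial r}\wedge\frac{\partial}{\partial v} + \frac{\partial}{\partial \theta}\wedge\frac{\partial}{\partial w},
\]
which is a genuine section of $\bigwedge^2(\,^{b}TM)$, addressing precisely the concern raised in the remark. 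The map $F$ of (\ref{eq:transverse}) equals (up to a nonzero smooth factor) $r$, hence is transverse to the zero section, so $\Pi$ is a $b$-Poisson structure.

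The only real subtlety, and the step I would highlight as the crux, is the exponent-matching at $\alpha=2$: the two conditions that (a) the coefficient of $dr\wedge d\theta$ vanishes and (b) the $dr\wedge dv$ term acquires exactly a first-order pole in $r$ are simultaneously satisfied \emph{only} at $\beta=1$, i.e.\ $\alpha = 2$. For any other $\alpha$ one either gets a non-$b$ singular exponent or the cross term survives and obstructs the Laurent normal form; thus the statement is sharp and the symbolic computation above is the essential content of the argument.
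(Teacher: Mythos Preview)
Your proposal is correct and follows essentially the same approach as the paper: a direct computation of the pullback of $\omega=\Re(dx\wedge d\bar y)$ under the McGehee change, keeping the parameters symbolic, then specializing to $\alpha=2$ (i.e.\ $\beta=1$, $\gamma=1/2$) to obtain $\omega=\tfrac{1}{2}\frac{dr}{r}\wedge dv + d\theta\wedge dw$, followed by checking non-degeneracy via $\omega\wedge\omega$ and verifying that the dual bivector is a genuine section of $\bigwedge^2(\,^{b}TM)$. Your treatment is in fact slightly cleaner than the paper's, which carries a few typos in the intermediate expression for $\omega$ and in the coefficient of $\omega\wedge\omega$; your explicit dualization to $\Pi = 2\, r\partial_r\wedge\partial_v + \partial_\theta\wedge\partial_w$ is also more transparent than the paper's verbal check. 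One small imprecision in your final paragraph: condition (a) (vanishing of the $dr\wedge d\theta$ coefficient) is not an independent constraint for being a $b$-form---a term $g(r,\theta,v,w)\frac{dr}{r}\wedge d\theta$ with smooth $g$ would be perfectly admissible---so the sharpness really comes from condition (b) alone (the exponent $\gamma-\beta\gamma-1=-1$ forces $\beta=1$), and (a) then follows automatically.
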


\begin{proof}
The proof of this proposition is a straightforward computation.
Observe that the change is not a smooth change, so we are not working with standard De Rham forms. But, at the end of the computation, it will become clear that the form is a $b$-symplectic form, and hence the computations are legitimate.
If one does the change of variables, we obtain:
\begin{equation}
\begin{array}{rcl}
 \bar{y} & = & r^{\beta\gamma}(v - iw)e^{-i\theta}. \\
 dx & = & \gamma r^{\gamma-1} e^{i\theta}dr + r^{\gamma}e^{i\theta}id\theta. \\
 d\bar{y} & = & r^{-\beta\gamma-1}(-\beta\gamma)(v - iw)e^{-i\theta}dr + r^{-\beta\gamma}e^{-i\theta}dv \\
 & & \quad + r^{\beta\gamma}(v - iw)e^{-i\theta}(-i)d\theta. \\
\end{array}
\end{equation}
By wedging the previous two forms, we obtain:
\begin{equation}
\begin{array}{rcl}
dx \wedge d\bar{y} & = & dr\wedge dv (\gamma r^{\gamma-1-\beta\gamma})\\
& + & dr\wedge dw (\gamma r^{\gamma-1-\beta\gamma})\\
& + & dr\wedge d\theta (\gamma r^{\gamma-1-\beta\gamma}(-iv-w))\\
& + & d\theta \wedge dr (ir^{\gamma-1-\beta\gamma}(-\beta\gamma)(v-iw))\\
& + & d\theta\wedge dv(ir^{\gamma-\beta\gamma})\\
& + & d\theta\wedge dw(ir^{\gamma-\beta\gamma}(-i)).\\
\end{array}
\end{equation}
Now we can take the real part of this form and use that $\gamma - 1 - \beta\gamma = -\alpha\gamma$.
 In the new coordinates, the form reads.
\begin{equation}\label{eq:omegachanged}
\begin{array}{rcl}
\omega = \Re(dx \wedge d\bar{y}) & = & \gamma r^{-\beta\gamma + \gamma - 1} dr\wedge dv - \gamma(1 - \beta)r^{-\beta \gamma + \gamma -1}wdr\wedge d\theta \\
&-& r^{-\beta\gamma + \gamma} dw \wedge d\theta.
\end{array}
\end{equation}
Moreover, we can use that $\gamma(1+\beta) = 1$ to simplify the previous expression further to:
\begin{equation}
\omega = (dr\wedge dv + dr\wedge dw)\gamma r^{-\alpha\gamma} + dr\wedge d\theta(wr^{-\alpha\gamma}) + d\theta\wedge dw (r^{-\alpha\gamma + 1}).
\end{equation}
In order to classify this structure,  we wedge it with itself and look at the structure of the form in the singular set.
Wedging this form, we obtain
\begin{equation}
\begin{array}{rcl}
\omega\wedge \omega  & = &  -\gamma r^{-2\beta\gamma + 2\gamma - 1}dr\wedge dv \wedge d\theta\wedge dw\\
& = & \displaystyle -\gamma r^{\frac{2 -3\alpha}{2 + \alpha}}dr\wedge dv \wedge d\theta\wedge dw.
\end{array}
\end{equation}
where we use (\ref{eq:relations}). Let us set $f(\alpha) = \frac{2 -3\alpha}{2 + \alpha}$. This function does not take values lower than $-3$ or higher than $1$. 
When $\alpha = 2$ this gives us a $b$-symplectic structure:
$$\omega\wedge\omega = -\gamma r dr\wedge dv \wedge d\theta \wedge dw.$$
The section of $ \bigwedge^4(^bTM)$ given by the dual structure of $\omega\wedge\omega$ is clearly transverse to the zero section.

On the other hand if $\alpha = 2$, then $\beta = 1$ and hence:

$$\omega = \gamma r^{-1} dr\wedge \omega \wedge dv,$$

and its dual Poisson structure is clearly also a proper section of $ \bigwedge^2(^bTM)$.

\end{proof}

\begin{remark}
One may ask if for other values of $\alpha$ it is possible to obtain other $b^m$-symplectic structures for different $m$. For example for $\alpha = 6$,  as $\omega\wedge\omega = -\gamma r^{-2}dr\wedge dv \wedge d\theta \wedge dw$, so it seems likely to obtain a $b^2$-symplectic form. But from the expression of $\omega$ it becomes clear that it is not a proper section of $\bigwedge^2(^{b^2}T^*M)$
\end{remark}

\section[Applications]{The restricted three-body problem}\label{Sec:Escapesingularity}

In this last section of the monograph, we catch up with the circular planar restricted three-body problem.

The restricted elliptic $3$-body problem is a simplified version of the $3$-body problem. It describes the trajectory of a body with negligible mass moving in the gravitational field of two massive bodies called primaries, orbiting in elliptic Keplerian motion. The restricted planar version assumes that all motion occurs in a plane.

The associated Hamiltonian of the particle can be written as:
\begin{equation}
H(q,p)=\frac{\|p\|^2}{2}+\frac{1-\mu}{\|q-q_1\|}+\frac{\mu}{\|q-q_2\|}=T+U
\end{equation}
wit $\mu$  the reduced mass of the system.

\begin{figure}
	
	\tikzset{>=latex}
	\centering
	\definecolor{xdxdff}{rgb}{0.49019607843137253,0.49019607843137253,1.}
	\definecolor{qqqqff}{rgb}{0.,0.,1.}
	\begin{tikzpicture}[line cap=round,line join=round,x=0.8cm,y=0.8cm]
	\clip(-6,-2.8) rectangle (7,4.5);
	\draw [->] (5.96,-0) -- (-0.3,3.92);
	\draw [->] (-3.78,0) -- (-0.3,3.92);
	\draw (5.96,-0)-- (-3.78,0);
	\draw [->] (0,0) -- (-0.3,3.92);
	\draw[color=black] (-3.7,-1.1) node {$m_1 = 1 - \mu$};
	\draw[color=black] (5.9,-1.1) node {$m_2 = \mu$};
	\draw [fill=black] (-0.3,3.92) circle (2pt);
	\draw [fill=black] (-3.78,0) circle (1.5pt);
	\draw [fill=black] (5.96,-0) circle (1.5pt);
	\draw[color=black] (-0.4,4.29) node {$q$};
	\draw[color=black] (2.3,3.2) node {$r_2 = q - q_2$};
	\draw[color=black] (-2.54,3.2) node {$r_1 = q - q_1 $};
	\draw [fill=black] (0,0) circle (2pt);
	\draw[color=black] (-0.3,-0.3) node {$\text{Center of mass}$};
	\draw[color=black] (0.2,1.8) node {$r$};
	\draw [fill=cyan,draw=none,fill opacity=1] (-3.78,0) circle (0.5cm);
	\draw [fill=magenta,draw=none,fill opacity=1] (5.96,0) circle (0.3cm);
	\draw[color=black] (-3.78,0) node {$q_1$};
	\draw[color=black] (5.96,0) node {$q_2$};
	\end{tikzpicture}
	\caption{Scheme of the three-body problem.}
\end{figure}
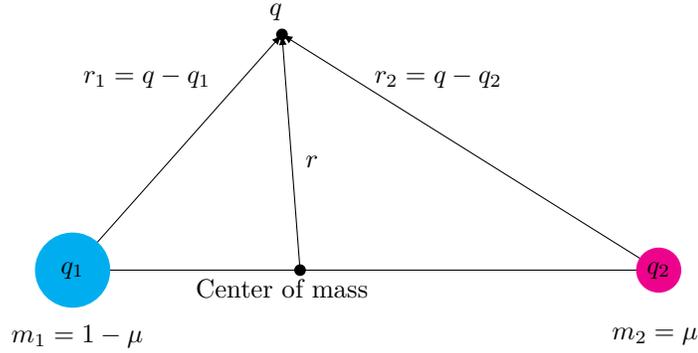

As it was observed in \cite{kiesenhofermirandascott}, it is possible to associate a singular structure to this problem. Consider the symplectic form on $\mathrm{T}^{\ast} \mathbb{R}^2$ in polar coordinates,
After making a change to polar coordinates $(q_1,q_2)=(r\cos\alpha,r\sin\alpha)$ and the corresponding canonical change of momenta we find the Hamiltonian function
\begin{equation}
H(r,\alpha,P_r,P_\alpha)=\frac{P_r^2}{2}+\frac{P_\alpha ^2}{2r^2}+U(r\cos\alpha,r\sin\alpha)
\end{equation}
where $P_r,P_\alpha$ are the associated canonical momenta and with potential energy:
$U(r\cos\alpha,r\sin\alpha)$

The McGehee change of coordinates is used to examine the behavior of orbits near infinity, see also \cite{delshams2015global}:
\begin{equation}\label{eqn:McGehee}
r=\frac{2}{x^2}.
\end{equation}
The corresponding change for the canonical momenta is easily seen to be
\begin{equation}
P_r=-\frac{x^3}{4}P_x.
\end{equation}
The Hamiltonian is transformed to
\begin{equation}
H(r,\alpha,P_r,P_\alpha)=\frac{x^6P_x^2}{32}+\frac{x^4P_\alpha^2}{8}+U(x,\alpha).
\end{equation}
By  transforming the position coordinate~(\ref{eqn:McGehee}) without modifying the momentum associated to $r$, we are left with a simpler Hamiltonian, however, the pull-back of the symplectic form  is no longer symplectic, but exhibits a singularity of order $3$ and it is called $b^3$-symplectic:
\begin{equation}
\omega= \frac{4}{x^3} dx \wedge dP_r + d\alpha\wedge d P_\alpha.
\end{equation}

Adding the line at infinity provides a description of the dynamics within the critical set $Z=\{x=0\}$.
From the change of coordinates implemented, we might think that the dynamics within  $Z$ may have  no physical meaning, but its interplay with the dynamics close to $Z$ gives information about the behaviour of escape orbits sometimes identified as \emph{singular periodic orbits} (see \cite{MO20} and \cite{cedricdanieleva}).

Given an autonomous Hamiltonian system of a symplectic manifold  of dimension $2n$, the level sets of the Hamiltonian function are often endowed with a contact structure
( a contact structure is given by a one form $\alpha$ satisfying a condition of type $\alpha\wedge (d\alpha)^{n-1}\neq 0$).

In \cite{MO18, MO20}  applications of the $b$-apparatus are discussed in this context. In particular, the notion of $b^m$-contact structures is introduced by translating the condition above for $b^m$-forms. The classical notions in the contact realm such as  Reeb vector fields can also be introduced in this set-up.

By considering the McGehee change as we did in the contact context,  the following theorem is proved in \cite{MO20}:

\begin{theorem}\label{thm:bcontact3bp}
After the McGehee change, the Liouville vector field $Y=p\frac{\partial}{\partial p}$ is a $b^3$-vector field that is everywhere transverse to the level sets of the Hamiltonian $\Sigma_c$ for $c>0$ and the level-sets $(\Sigma_c,\iota_Y \omega)$ for $c>0$ are $b^3$-contact manifolds. Topologically, the critical set of this contact manifold is a cylinder (which can be interpreted as a subset of the line at infinity) and the Reeb vector field admits infinitely many non-trivial periodic orbits on the critical set.
\end{theorem}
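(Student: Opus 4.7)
The plan is to proceed in five steps, closely following the standard recipe for producing contact structures from a Liouville vector field, adapted to the $b^m$-setting.

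First I would verify that $Y$ becomes a $b^3$-vector field after the McGehee change. Since $P_r = -x^3 P_x/4$, the inverse gives $P_x = -4 P_r/x^3$ and hence $\partial_{P_r} = -(4/x^3)\partial_{P_x}$. Substituting, one finds
\begin{equation*}
Y \;=\; P_r\frac{\partial}{\partial P_r} + P_\alpha\frac{\partial}{\partial P_\alpha} \;=\; P_x\frac{\partial}{\partial P_x} + P_\alpha\frac{\partial}{\partial P_\alpha}.
\end{equation*}
Because $Y(x)\equiv 0$, the vector field $Y$ is tangent to $Z=\{x=0\}$ to all orders, in particular it is a section of $^{b^3}TM$. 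A direct Cartan computation using $\omega=(4/x^3)dx\wedge dP_r + d\alpha\wedge dP_\alpha$ then shows $\mathcal L_Y\omega = \omega$, so $Y$ is a genuine Liouville vector field in the $b^3$-category, with primitive $\lambda := \iota_Y\omega = (4P_x/x^3)dx + P_\alpha d\alpha$ (up to the obvious sign).

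Second I would check transversality of $Y$ with $\Sigma_c$. A direct evaluation gives
\begin{equation*}
Y(H) \;=\; \tfrac{x^6 P_x^2}{16} + \tfrac{x^4 P_\alpha^2}{4},
\end{equation*}
so on the level set $\Sigma_c$ (with the rotating–frame correction that makes $H|_Z\neq 0$, as in \cite{MO20}), the zeros of $Y(H)$ are confined to the critical locus, where however the remaining angular-momentum contribution ensures $Y$ still points out of $\Sigma_c$ when $c>0$. Hence $Y\pitchfork \Sigma_c$ everywhere.

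Third, transversality together with $\mathcal L_Y\omega=\omega$ is the standard criterion giving that $\alpha_c:=\iota_Y\omega|_{\Sigma_c}$ is a $b^3$-contact form: differentiating the Liouville condition, $d\alpha_c=\omega|_{\Sigma_c}$, and because $Y\pitchfork\Sigma_c$, the restriction $\omega|_{\Sigma_c}$ has maximal rank as a $b^3$-two-form, so $\alpha_c\wedge(d\alpha_c)^{n-1}\ne0$ as a section of $\bigwedge^{2n-1}\,^{b^3}T^*\Sigma_c$. This is the step that requires care: one must verify the $b^3$-contact non-degeneracy not just away from $Z$ (which is automatic) but as a section of the $b^3$-cotangent bundle, using the explicit Laurent expansion of $\omega$.

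Fourth, the critical hypersurface of $\Sigma_c$ is $Z\cap\Sigma_c$. At $x=0$ the kinetic terms $x^6P_x^2/32+x^4P_\alpha^2/8$ vanish, so $\Sigma_c\cap Z$ is cut out by a single equation among the remaining coordinates $(\alpha,P_r,P_\alpha)$ of $Z$; as $\alpha\in S^1$ and the equation is linear in $P_\alpha$ (coming from the rotating-frame correction), the resulting submanifold is diffeomorphic to $S^1\times\mathbb R$, a cylinder.

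Fifth, the hard part: producing infinitely many periodic Reeb orbits on the cylinder. I would compute the Reeb vector field $R$ of $\alpha_c$ restricted to $Z\cap\Sigma_c$ using the Laurent coefficients of $\omega$; because the $\alpha$-coordinate is periodic and $R$ has a non-trivial component along $\partial_\alpha$ there (inherited from the $d\alpha\wedge dP_\alpha$ term in $\omega$), every level $\{P_\alpha=\mathrm{const}\}$ on the cylinder yields a closed Reeb trajectory, and varying the constant produces an $\mathbb R$-family of geometrically distinct periodic orbits. The main obstacle is checking that $R$ really is tangent to these level sets on $Z$ (i.e., that its $\partial_{P_\alpha}$-component vanishes after the $b^3$-restriction), which amounts to showing that the modular-type contribution of $\omega$ to $R|_Z$ lies in the $\partial_\alpha$-direction; this is the content one has to extract from the explicit Laurent decomposition of $\omega$ near $Z$.
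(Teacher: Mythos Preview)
The monograph does not contain its own proof of this statement: the theorem is quoted verbatim as a result of \cite{MO20}, after the text has set up the $b^3$-symplectic form $\omega=-\tfrac{4}{x^3}\,dx\wedge dP_r+d\alpha\wedge dP_\alpha$ via the position-only McGehee change. So there is no in-paper argument to compare your proposal against.

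That said, your five-step outline is the argument one finds in \cite{MO20}, with one running confusion and one place where you hand-wave past the actual content. The confusion: the paper's McGehee change keeps $P_r$ and only replaces $r$ by $x$; your auxiliary variable $P_x$ is the canonical conjugate and would \emph{undo} the singularity. In the coordinates actually used, $Y=P_r\,\partial_{P_r}+P_\alpha\,\partial_{P_\alpha}$ with no transformation needed, $Y(x)\equiv0$ is immediate, and $\iota_Y\omega=\tfrac{4P_r}{x^3}\,dx-P_\alpha\,d\alpha$ (not your displayed $\lambda$). This does not break the logic, but you should stay in one chart.

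The real soft spot is Step~2. Your displayed $Y(H)$ vanishes identically at $x=0$, so the throwaway ``the remaining angular-momentum contribution ensures $Y$ still points out of $\Sigma_c$'' is exactly where the proof lives. In the rotating frame $H=\tfrac12 P_r^2+\tfrac{x^4}{8}P_\alpha^2+U(x,\alpha)-P_\alpha$ with $U\to0$ as $x\to0$; then $Y(H)=2T-P_\alpha$, and on $\Sigma_c$ one has $P_\alpha=T+U-c$, so $Y(H)|_{\Sigma_c}=T-U+c\ge c>0$ (using $T\ge0$, $U\le0$). At $Z$ this specializes to $\tfrac12 P_r^2+c>0$. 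This one-line computation is what you must supply; without it Step~3 does not start.

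Your Step~5 is correct and simpler than you fear. The Reeb direction on $\Sigma_c$ is the characteristic line of $\omega|_{\Sigma_c}$, hence proportional to $X_H$; writing $X_H$ in $(x,\alpha,P_r,P_\alpha)$ and setting $x=0$, every term vanishes except the rotating-frame contribution $-\partial_\alpha$. So the restricted Reeb field on the cylinder $Z\cap\Sigma_c\cong S^1_\alpha\times\mathbb R_{P_r}$ is a nonvanishing multiple of $\partial_\alpha$, and each circle $\{P_r=\mathrm{const}\}$ is a non-trivial periodic orbit. No ``modular-type'' subtlety enters; it is a direct computation.
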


The  KAM theorem in this monograph can be applied to find new periodic orbits of the restricted three-body problem close to infinity by perturbing the periodic orbits described above (see also \cite{MO20}).  This perturbation technique is an old method in perturbation theory, possibly originating from Poincaré himself (known as Poincaré's continuation method, as mentioned in \cite{meyeroffin}).  This paves the way for further research that will be pursued elsewhere.

\section{Escape orbits in Celestial mechanics and Fluid dynamics}
Reeb vector fields and their dynamics are closely related to Beltrami vector fields, which provide stationary solutions of the Euler equations. Indeed, Etnyre and Ghrist \cite{EG} revealed the existence of a "mirror" that reflects Reeb vector fields as Beltrami vector fields. Thus, the applications of KAM theory to find periodic orbits can be exported to understand periodic orbits of stationary fluid flows.  Also, this correspondence yields the possibility to translate concepts in celestial mechanics into  fluid dynamics. Among these concepts is the notion of escape orbits.
The Reeb-Beltrami correspondence was extended to the $b$-setting in \cite{danielevarobert}.

In \cite{MO20} we introduced the notion of singular periodic orbit of a $b$-Reeb vector field $R_{\alpha}$:
\begin{definition}
		Let $(M,\alpha)$ be $b$-contact manifold manifold with critical hypersurface $Z$. Denote by $R_{\alpha}$ its $b$-Reeb vector field.  A \emph{singular periodic orbit} $\gamma$ is an orbit such that $\lim_{t \to  \pm \infty} \gamma(t) =p_{\pm} \in Z$ where $R_{\alpha}(p_{\pm})=0$.
	\end{definition}

	\begin{figure}[hbt!]\label{fig:singularorbit}
\begin{center}
\begin{tikzpicture}[scale=2.2]
 \draw[color=blue](1,0) arc (0:180:1 and 1);
 \draw[dashed][color=red] (-2,0) --  (2,0);
 \fill (1,0) circle[radius=0.5pt];
 \fill (-1,0) circle[radius=0.5pt];
\fill [shift={(-0.05,1)},scale=0.05,rotate=180]   (0,0) -- (-1,-0.7) -- (-1,0.7) -- cycle; 
\end{tikzpicture}

\caption{A singular periodic orbit}

\end{center}
\end{figure}
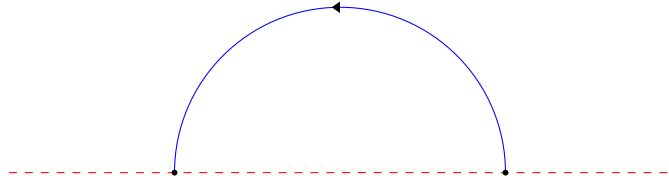

The singular Weinstein conjecture was conjecture in \cite{MO20}. 
In \cite{cedricdanieleva} the existence of  singular periodic orbits  and  generalizations
such as oscillatory motions is investigated for singular structures. Singular periodic orbits are a particular case of escape orbits.

Escape orbits for $b$-Beltrami or $b$-Reeb vector fields, are orbits whose $\alpha$- or $\omega$-limit set lies on the critical set associated to the $b$-structure. The $b$-Reeb Beltrami correspondence together with results of Uhlenbeck \cite{uhlenbeck} on Laplacian eigenfunctions yields that for the majority of asymptotically exact $b$-metrics, $b$-Beltrami vector fields have escape orbits.

 The following theorem proved in \cite{cedricdanielevajosep} gives a lower bound on the number of escape orbits for generic classes of $b$-Beltrami or $b$-Reeb vector fields. The lower bound depends on the number of connected components of the critical set of $Z$ but can often be infinite.

\begin{theorem}{\cite{cedricdanielevajosep}}
    Let $(M,Z)$ be a $3$-dimensional $b$-manifold. Then for a generic asymptotically exact $b$-metric, any $b$-Beltrami vector field has either at least $2N$ or infinitely many escape orbits, where $N$ is the number of connected components of $Z$.
\end{theorem}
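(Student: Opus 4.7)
The plan is to leverage the \emph{$b$-Reeb-Beltrami correspondence} established in \cite{danielevarobert} in order to translate the question entirely into contact-geometric terms, and then perform a local analysis around each connected component of the critical hypersurface $Z$. First, I would observe that by the Reeb-Beltrami mirror (extended to the singular setting), for an asymptotically exact $b$-metric the $b$-Beltrami vector fields are in correspondence with $b$-Reeb vector fields of an associated $b$-contact form; hence it suffices to count escape orbits of a generic $b$-Reeb vector field $R_\alpha$ on $(M,Z)$. The escape orbits I want to produce are orbits of $R_\alpha$ whose $\alpha$- or $\omega$-limit set is contained in $Z$.

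Next I would study $R_\alpha$ in a neighbourhood of a connected component $Z_i \subset Z$. Away from the zeros of $R_\alpha$ restricted to $Z_i$, the $b$-Reeb vector field is tangent to $Z_i$ (this is the standard local picture for $b$-contact structures: $Z$ is invariant). Thus escape orbits arise precisely from the stable/unstable manifolds of zeros of $R_\alpha|_{Z_i}$, together with possibly homoclinic/heteroclinic phenomena inside $Z_i$. This is where genericity enters: I would invoke Uhlenbeck's theorem on the generic simplicity and transversality of Laplace eigenfunctions (as in \cite{uhlenbeck}), which upon restriction to the asymptotically exact $b$-setting guarantees that for a residual set of $b$-metrics, the zeros of $R_\alpha$ on each $Z_i$ are non-degenerate (isolated, with hyperbolic transverse spectrum). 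The residuality here is the key input that lets us call the conclusion \emph{generic}.

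For the count, I would show that any non-degenerate zero $p \in Z_i$ of $R_\alpha$ carries a one-dimensional invariant manifold in $M$ transverse to $Z_i$ at $p$, furnishing a local orbit whose $\alpha$- or $\omega$-limit is $\{p\} \subset Z$; thus every such $p$ produces at least one escape orbit. A Poincaré--Hopf / index argument on the closed surface $Z_i$ (which, for the three-dimensional $b$-manifold setting, is a closed orientable surface) forces $R_\alpha|_{Z_i}$ to have at least two zeros in the non-degenerate case, hence at least $2N$ escape orbits globally. The dichotomy \textit{``at least $2N$ or infinitely many''} then arises because if the zero set of $R_\alpha|_{Z_i}$ is \emph{not} isolated (e.g.\ it forms a positive-dimensional set, which happens outside the generic locus but can still occur in boundary cases compatible with asymptotic exactness), a continuous family of orbits with limit in $Z$ is produced, yielding infinitely many escape orbits.

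The main obstacle I anticipate is the transfer of Uhlenbeck's genericity result from the smooth Riemannian setting to asymptotically exact $b$-metrics while preserving the eigenfunction structure used to build the Beltrami field: one must verify that the perturbation class (asymptotically exact $b$-metrics) is rich enough to apply transversality arguments on the singular manifold $(M,Z)$ and that the perturbed Laplacian eigenfunctions remain compatible with the $b$-contact structure under the mirror. A second subtlety is ruling out, in the generic case, that a single zero of $R_\alpha|_{Z_i}$ absorbs both the stable and unstable transverse directions into a single geometric escape orbit, which would halve the count; this is handled by the hyperbolic normal form near a non-degenerate zero of a $b$-Reeb vector field, where the transverse dynamics separates the incoming and outgoing branches into distinct orbits, each qualifying as an escape orbit.
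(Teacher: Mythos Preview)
The paper does not contain a proof of this theorem: it is stated with attribution to \cite{cedricdanielevajosep} and explicitly introduced with ``The following theorem proved in \cite{cedricdanielevajosep}''. It appears in the final applications chapter purely as a cited result to motivate the relevance of the $b^m$-KAM machinery to escape orbits. There is therefore nothing in the present paper to compare your proposal against.

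That said, as a standalone sketch your outline has a real gap in the counting step. Your appeal to Poincar\'e--Hopf on the closed surface $Z_i$ does not by itself force ``at least two zeros'': if $Z_i$ is a torus the Euler characteristic vanishes and a non-degenerate vector field may have no zeros at all, while for higher-genus $Z_i$ the index sum is negative and you get a different lower bound. To obtain exactly $2$ per component you need additional input specific to the $b$-contact structure (for instance, structural constraints on how the $b$-Reeb field degenerates along $Z$, or an argument that the relevant function on $Z_i$ is Morse with at least a maximum and a minimum), not a bare index argument. Your identification of the Uhlenbeck transversality transfer as the delicate point is reasonable, but the combinatorics of ``each non-degenerate zero yields one transverse escape orbit, hence $2N$'' needs a sharper mechanism than what you have written; consult \cite{cedricdanielevajosep} directly for the actual argument.
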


In view of the $b$-Reeb-Beltrami correspondence, this implies that generic $b$-Reeb vector fields within a special class of $b$-contact forms also have at least $2N$ or infinitely many escape orbits.

Poincaré continuation method and our KAM results thus can be used to localize the singular counterparts or periodic orbits, either escape orbits or singular periodic orbits in these new scenarios described in \cite{eva}, \cite{MO20},\cite{cedricdanieleva}, and \cite{cedricdanielevajosep}.

\backmatter

\bibliographystyle{amsalpha}
\bibliography{bibliarnau}

\end{document}